\documentclass{amsart}
\usepackage{amssymb,latexsym,amsmath,amscd,graphicx,graphics,epic,eepic,bm,array}
\usepackage[usenames,dvipsnames]{color}
\usepackage{enumerate}
\usepackage[all,knot,poly]{xy}

\newcommand{\zed}{\mathbb{Z}}

\newcommand{\C}{\mathbb{C}}
\newcommand{\fil}{\mathcal{F}}
\newcommand{\Hom}{\mathrm{Hom}}

\newcommand{\Sym}{\mathrm{Sym}}
\newcommand{\qb}[2]{\genfrac{[}{]}{0pt}{}{#1}{#2}}
\newcommand{\bn}[2]{\genfrac{(}{)}{0pt}{}{#1}{#2}}
\newcommand{\sfrac}[2]{\left.{#1}\middle/{#2}\right.}
\newcommand{\ve}{\varepsilon}

\newcommand{\id}{\mathrm{id}}

\newcommand{\hmf}{\mathrm{hmf}}
\newcommand{\HMF}{\mathrm{HMF}}

\newcommand{\hch}{\mathsf{hCh}^{\mathsf{b}}}

\newcommand{\slmf}{\mathfrak{sl}}

\theoremstyle{plain}
\newtheorem{theorem}{Theorem}[section]
\newtheorem{lemma}[theorem]{Lemma}
\newtheorem{proposition}[theorem]{Proposition}
\newtheorem{corollary}[theorem]{Corollary}

\newtheorem{question}[theorem]{Question}

\theoremstyle{definition}
\newtheorem{definition}[theorem]{Definition}

\newtheorem{acknowledgments}{Acknowledgments\ignorespaces}

\theoremstyle{remark}
\newtheorem{remark}[theorem]{Remark}

\numberwithin{equation}{section}

\begin{document}

\title{Generic deformations of the colored $\mathfrak{sl}(N)$-homology for links}

\author{Hao Wu}

\address{Department of Mathematics, The George Washington University, Monroe Hall, Room 240, 2115 G Street, NW, Washington DC 20052}

\email{haowu@gwu.edu}

\subjclass[2000]{Primary 57M25}

\keywords{Khovanov-Rozansky homology, matrix factorization, symmetric polynomial, Rasmussen invariant, amphicheiral knot} 

\begin{abstract}
We generalize the works of Lee \cite{Lee2} and Gornik \cite{Gornik} to construct a basis for generic deformations of the colored $\mathfrak{sl}(N)$-homology defined in \cite{Wu-color-equi}. As applications, we construct non-degenerate pairings and co-pairings which lead to dualities of generic deformations of the colored $\mathfrak{sl}(N)$-homology. We also define and study colored $\mathfrak{sl}(N)$-Rasmussen invariants. Among other things, we observe that these invariants vanish on amphicheiral knots and discuss some implications of this observation.
\end{abstract}

\maketitle

\tableofcontents

\section{Introduction}\label{sec-intro}

\subsection{Background} In \cite{MOY}, Murakami, Ohtsuki and Yamada introduced the MOY calculus, which gives a combinatorial description of the quantum $\mathfrak{sl}(N)$-invariant for links in $S^3$ colored by positive integers (or, equivalently, wedge products of the defining representation of $\mathfrak{sl}(N;\C)$.) By categorifying special cases of the MOY calculus, Khovanov and Rozansky \cite{KR1} defined a $\zed^{\oplus2}$-graded $\mathfrak{sl}(N)$-homology for links that categorifies the (uncolored) $\mathfrak{sl}(N)$-HOMFLY-PT polynomial. When $N=2$, their construction recovers the Khovanov homology \cite{K1}. In \cite{Wu-color}, I generalized their construction to give an $\mathfrak{sl}(N)$-homology for links colored by positive integers. This construction is based on matrix factorizations associated to general MOY graphs with potentials induced by $X^{N+1}$. If a link $L$ is colored entirely by $1$, then its colored $\mathfrak{sl}(N)$-homology is its Khovanov-Rozansky $\mathfrak{sl}(N)$-homology. 

For any polynomial $P(X)$ in $\C[X]$ of the form 
\begin{equation}\label{def-P}
P(X)=X^{N+1} + \sum_{k=1}^N (-1)^{k}\frac{N+1}{N+1-k}b_{k} X^{N+1-k},
\end{equation}
where $b_1,\dots, b_N \in \C$, one can perform the above construction using matrix factorizations associated to general MOY graphs with potentials induced by $P(X)$ and give a deformation of the $\mathfrak{sl}(N)$-homology. In the uncolored case, this deformation was introduced by Lee \cite{Lee2} for the Khovanov homology and then by Gornik \cite{Gornik} for general $N\geq 2$. In \cite{Wu7}, I established the invariance of deformations in the uncolored case. Recently, I \cite{Wu-color-equi} defined a deformation $H_P$ of the colored $\mathfrak{sl}(N)$-homology and proved its invariance. If a link $L$ is colored entirely by $1$, then $H_P(L)$ is the uncolored deformation studied in \cite{Gornik,Lee2,Wu7}.

We say that a polynomial in $\C[X]$ is generic if its derivative has only simple roots. In particular, $P(X)$ in \eqref{def-P} is generic if and only if  
\[
P'(X)= (N+1)(X^N+\sum_{k=1}^N (-1)^{k}b_{k} X^{N-k})
\] 
has $N$ distinct roots. For a generic polynomial $P(X)$ of the form \eqref{def-P}, we call $H_P$ a generic deformation of the colored $\mathfrak{sl}(N)$-homology. Lee \cite{Lee2} and Gornik \cite{Gornik} constructed a basis for generic deformations of the Khovanov-Rozansky $\mathfrak{sl}(N)$-homology. This basis has been used to define the Rasmussen invariant \cite{Ras1} and the $\slmf(N)$-Rasmussen invariant \cite{Lobb,Wu7}, which, among other things, have led to new bounds of the slice genus \cite{Lobb,Ras1,Wu7} and a new proof of Milnor's conjecture \cite{Ras1}. 

The primary goal of the present paper is to generalize the Lee-Gornik construction to give a basis for generic deformations of the colored $\mathfrak{sl}(N)$-homology. We also give several applications of this basis.

\subsection{A basis for generic deformations of colored $\mathfrak{sl}(N)$-homology}
Throughout this paper, $N$ is a fixed integer greater than or equal to $2$. All links in this paper are oriented and colored. That is, every component of the link is assigned an orientation and an element of $\{0,1,\dots,N\}$, which we call the color of this component. A link that is completely color by $1$ is called uncolored.

\begin{definition}\label{def-states-homological-grading}
Let $L$ be a colored link diagram with components $K_1,\dots,K_l$. Assume the color of $K_i$ is $\mathfrak{c}(K_i) \in \{0,1,\dots,N\}$. 

Assume $P(X)$ is a generic polynomial of the form \eqref{def-P}. Denote by $\Sigma(P)$ the set of roots of $P'$, which is a set of $N$ distinct complex numbers, and by $\mathcal{P}(\Sigma(P))$ the set of subsets of $\Sigma(P)$.

A state $\psi$ of $L$ is a function $\psi:\{K_1,\dots,K_l\} \rightarrow \mathcal{P}(\Sigma(P))$ satisfying $|\psi(K_i)|=\mathfrak{c}(K_i)$. Denote by $\mathcal{S}_P(L)$ the set of all states of $L$.

Given a state $\psi\in \mathcal{S}_P(L)$, for each crossing $c$ in $L$, define a number $\mathsf{h}_\psi(c)$ by
\[
\mathsf{h}_\psi\left(\setlength{\unitlength}{1pt}
\begin{picture}(40,40)(-20,0)

\put(-20,-20){\vector(1,1){40}}

\put(20,-20){\line(-1,1){15}}

\put(-5,5){\vector(-1,1){15}}

\put(-11,15){\tiny{$_{K_i}$}}

\put(3,15){\tiny{$_{K_j}$}}

\end{picture}\right) = \begin{cases}
|\psi(K_i) \cap \psi(K_j)| & \text{if }  \mathfrak{c}(K_i) \neq \mathfrak{c}(K_j), \\
|\psi(K_i) \cap \psi(K_j)| - \mathfrak{c}(K_i) & \text{if }  \mathfrak{c}(K_i) = \mathfrak{c}(K_j),
\end{cases}
\]
\[
\mathsf{h}_\psi\left(\setlength{\unitlength}{1pt}
\begin{picture}(40,40)(-20,0)

\put(20,-20){\vector(-1,1){40}}

\put(-20,-20){\line(1,1){15}}

\put(5,5){\vector(1,1){15}}

\put(-11,15){\tiny{$_{K_i}$}}

\put(3,15){\tiny{$_{K_j}$}}

\end{picture}\right) = \begin{cases}
-|\psi(K_i) \cap \psi(K_j)| & \text{if }  \mathfrak{c}(K_i) \neq \mathfrak{c}(K_j), \\
\mathfrak{c}(K_i) - |\psi(K_i) \cap \psi(K_j)| & \text{if }  \mathfrak{c}(K_i) = \mathfrak{c}(K_j).
\end{cases}
\]
Define 
\[
\mathsf{h}(\psi) = \sum_{c} \mathsf{h}_\psi(c),
\]
where $c$ runs through all crossings of $L$. It is easy to check that $\mathsf{h}(\psi)$ is invariant under Reidemeister moves of $L$.
\end{definition}

The following theorem generalizes the Lee-Gornik construction \cite{Gornik,Lee2}.

\begin{theorem}\label{thm-basis}
Let $P$ be a generic polynomial of the form \eqref{def-P} and $L$ a colored link diagram. For each state $\psi$ of $L$, there exists, up to scaling, a non-zero element $v_\psi$ of $H_P(L)$ of homological grading $\mathsf{h}(\psi)$ such that 
\[
H_P(L) = \bigoplus_{\psi \in \mathcal{S}_P(L)} \C \cdot v_\psi.
\]
The above decomposition is invariant under the Reidemeister moves.
\end{theorem}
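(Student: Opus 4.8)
The plan is to adapt the Lee--Gornik argument \cite{Gornik,Lee2}, organized around the canonical idempotent decomposition that genericity of $P$ forces on the underlying matrix factorizations. Recall from \cite{Wu-color,Wu-color-equi} that $H_P(L)$ is the homology of a finite complex assembled, through the colored cube of resolutions, from the matrix factorizations $C_P(\Gamma)$ attached to the MOY graphs $\Gamma$ occurring as resolutions of (tangle pieces of) $L$, and that these matrix factorization homologies are multiplicative: glueing MOY pieces along an edge of color $c$ induces on homology a tensor product over the edge ring $R_c=\C[\mathbb{X}]^{S_c}$, where $\mathbb{X}$ is the size-$c$ alphabet on that edge.

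\emph{Step 1 (localization).} Since $P$ is generic, the Jacobi ring of the colored-$c$ edge potential is $R_c/\bigl(P'(x_1),\dots,P'(x_c)\bigr)^{S_c}\cong\bigoplus_{A}\C$, the sum over the $c$-element subsets $A\subseteq\Sigma(P)$, with the $A$-summand being the quotient on which $e_k(\mathbb{X})$ acts by $e_k(A)$. Because idempotents split in the homotopy category of matrix factorizations, this induces, after passing to homology, a decomposition of $C_P(\Gamma)$ indexed by \emph{admissible labelings} $\lambda$ of $\Gamma$ --- assignments of a subset $\lambda(e)\subseteq\Sigma(P)$ with $|\lambda(e)|=\mathrm{color}(e)$ to each edge $e$, subject to the condition that at every trivalent vertex the thick-edge label is the (necessarily disjoint) union of the two thin-edge labels; summands failing admissibility vanish. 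For a closed MOY graph I would then check that each admissible summand is $1$-dimensional and concentrated in homological degree $0$, generalizing the computation for the colored unknot: in each localized summand the vertex matrix factorizations become Koszul complexes with one-dimensional homology.

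\emph{Step 2 (the local crossing computation).} This is the technical heart. Fix a colored crossing $c$ between strands of $K_i$ and $K_j$ and a labeling $A$ on the $K_i$-strand, $B$ on the $K_j$-strand along the crossing; I would compute the homology of the short complex of localized MOY-graph summands attached to this crossing. The claim is that this complex collapses onto a single term, so that its homology is $\C$ placed in homological degree $\mathsf{h}_\psi(c)$, and moreover that the surviving class forces the outgoing strand labels to again be $A$ on $K_i$ and $B$ on $K_j$. The mechanism: in each localized summand the deformed maps occurring in the colored crossing complex of \cite{Wu-color,Wu-color-equi} become either isomorphisms or zero, governed by whether prescribed differences of roots of $P'$ vanish, i.e.\ by the size $|A\cap B|$ of the overlap; tracking this shows that the unique surviving term sits in homological position $|\psi(K_i)\cap\psi(K_j)|$ when $\mathfrak{c}(K_i)\neq\mathfrak{c}(K_j)$ and $|\psi(K_i)\cap\psi(K_j)|-\mathfrak{c}(K_i)$ when $\mathfrak{c}(K_i)=\mathfrak{c}(K_j)$, the extra shift in the equal-color case being the normalization that puts the ``identity'' resolution in degree $0$. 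I expect this step to be the principal obstacle, since it requires carrying the deformed colored crossing complex through the idempotent decomposition and controlling all of its differentials.

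\emph{Step 3 (assembly and invariance).} Cutting $L$ into elementary tangles --- one crossing, or a cup, cap, or identity each --- and combining multiplicativity with Steps 1 and 2, the localized summands of the full complex of $L$ are indexed by edge-labelings that, by Step 2, are locally constant along each component; hence by states $\psi\in\mathcal{S}_P(L)$, the contribution of $\psi$ being $\C$ concentrated in homological degree $\sum_c\mathsf{h}_\psi(c)=\mathsf{h}(\psi)$ (cups, caps and identities being in degree $0$). Picking a nonzero $v_\psi$ in each of these lines gives the asserted decomposition, unique up to scaling. Finally the decomposition is canonical and hence invariant: for each component $K_i$ and each $k\leq\mathfrak{c}(K_i)$, multiplication by $e_k(\mathbb{X}_{K_i})$ is a well-defined operator on $H_P(L)$, these operators commute, and $\C\cdot v_\psi$ is exactly their simultaneous generalized eigenspace on which the $k$-th operator acts by $e_k(\psi(K_i))$; since these edge-module structures are natural with respect to the chain homotopy equivalences realizing Reidemeister invariance of $H_P$ \cite{Wu-color-equi}, the decomposition is preserved (up to scaling on each line), and $\mathsf{h}(\psi)$ is invariant by the remark following Definition~\ref{def-states-homological-grading}.
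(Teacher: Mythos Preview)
Your outline is broadly correct and shares its architecture with the paper's proof (Sections~4--6), but the paper and you diverge at exactly the point you flag as the principal obstacle.

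Your Step~1 matches the paper's Section~4 (Theorem~4.16), though the paper's proof that each localized summand of $H_P(\Gamma)$ for a closed embedded MOY graph is $1$-dimensional is not the direct Koszul computation you suggest: it is an induction on the highest color, using the $\chi$- and edge-splitting morphisms to propagate non-vanishing, combined with a global dimension count $\dim_\C H_P(\Gamma)=\langle\Gamma\rangle_N|_{q=1}=|\mathcal S(\Gamma)|$ coming from the MOY state sum.

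At Step~2 the paper takes a genuinely different route. It does \emph{not} compute the localized colored crossing complex directly. Instead (Theorem~6.7) it first notes that the direct computation is already done for crossings between strands of color~$1$ --- this is Gornik's Lemma~6.6, which says that for such crossings every non-state quasi-state contributes zero homology. For a general colored diagram $D$, the paper replaces each colored crossing by the braid-like configuration of color-$1$ crossings obtained by splitting each strand and fork-sliding (Figure~18), producing a diagram $\widetilde D$ with only color-$1$ crossings. One then has $|\mathcal S(\widetilde D)|=\tau(D)\,|\mathcal S(D)|$ and, from the edge-splitting and fork-sliding invariance theorems of \cite{Wu-color-equi}, $\dim H_P(\widetilde D)=\tau(D)\,\dim H_P(D)$, where $\tau(D)=\prod_c m_c!\,n_c!$. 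Since the color-$1$ case gives $\dim H_P(\widetilde D)=|\mathcal S(\widetilde D)|$, a division yields $\dim H_P(D)=|\mathcal S(D)|$, and the already-constructed linearly independent classes $v_{D,\psi}$ (one per state, from the $1$-dimensional state subcomplexes of Lemma~6.5) must then be a basis. This dimension-count trick entirely sidesteps the analysis of the deformed colored crossing complex that you propose; what you identify as the hard step simply is not carried out. Your direct approach may well work, but it is substantially more laborious, and the paper's reduction buys the result essentially for free once Gornik's color-$1$ computation and the invariance theorems are in hand.

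Your Step~3 invariance argument via simultaneous eigenspaces is correct and equivalent to the paper's: the paper packages the same content as Lemma~4.15 (the eigenspace characterization of $Q_\varphi H_P(\Gamma)$) together with the observation (Lemma~5.2) that any morphism coming from a local change sends each $v_\psi$ into the span of the $v_{\psi'}$ with $\psi'$ compatible with $\psi$ outside the changed region; applied to the Reidemeister isomorphisms this gives the invariance directly.
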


\begin{remark}
From \cite{Wu-color-equi}, we know that $H_P(L)$ has a $\zed_2$-grading, a quantum filtration and a homological grading. As in the uncolored case, the construction of the basis $\{v_\psi\}$ in Sections \ref{sec-basis-MOY-construction}, \ref{sec-morphisms} and \ref{sec-basis-link-construction} implies that the decomposition in Theorem \ref{thm-basis} does not preserve the quantum filtration. But, as we will see, it can be used to study the quantum filtration in some indirect ways. From \cite[Theorem 9.9]{Wu-color-equi}, we know that the $\zed_2$-grading of $H_P(L)$ is trivial. So we will not keep track of this $\zed_2$-grading.
\end{remark}

\subsection{Linear change of $\Sigma(P)$} It is not clear whether the generically deformed colored $\mathfrak{sl}(N)$-homology $H_P$ depends on the choice of the generic polynomial $P(X)$. But it is not hard to show that $H_P$ is invariant under linear changes of $\Sigma(P)$. The following generalizes \cite[Propositions 2.2 and 3.1]{Mackaay-Turner-Vaz}.

\begin{proposition}\label{thm-linear-change-roots}
Let $P(X)$ and $\hat{P}(X)$ be generic polynomials of the form \eqref{def-P}. If there are $a,b\in \C$ with $a\neq0$ such that $\Sigma(\hat{P}) = \{ar+b|r\in\Sigma(P)\}$, then $H_P(L) \cong H_{\hat{P}}(L)$ for any colored link $L$, where the isomorphism preserves both the quantum filtration and the homological grading.
\end{proposition}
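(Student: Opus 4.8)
The plan is to reduce to the two elementary cases of a rescaling $r\mapsto ar$ ($b=0$) and a translation $r\mapsto r+b$ ($a=1$), and in each case to produce the isomorphism by an explicit change of variables on the matrix factorizations underlying the construction of $H_P$ in \cite{Wu-color-equi}. The reduction is immediate: a general affine map $r\mapsto ar+b$ with $a\neq 0$ is the composite of a rescaling and a translation, and the asserted conclusion is transitive. Moreover a generic polynomial of the form \eqref{def-P} is uniquely determined by its critical set, since $P'(X)=(N+1)\prod_{r\in\Sigma(P)}(X-r)$ fixes $P$ up to an additive constant which the absence of a constant term in \eqref{def-P} pins down; hence in the two cases $\hat P$ must be the polynomial produced by the corresponding substitution, namely $\hat P(X)=a^{N+1}P(X/a)$ and $\hat P(X)=P(X-b)-P(-b)$ respectively, and one checks directly that each is again of the form \eqref{def-P}.

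Recall from \cite{Wu-color-equi} that each MOY graph $\Gamma$ is assigned a Koszul matrix factorization over a polynomial ring $R_\Gamma$ generated by the elementary symmetric functions of the edge alphabets $\mathbb{X}_e$, with potential $\omega_P=\sum_e\varepsilon_e\sum_{x\in\mathbb{X}_e}P(x)$ ($\varepsilon_e=\pm1$ recording orientation), and a link diagram is assigned a complex of such matrix factorizations whose homology is $H_P(L)$. In the rescaling case I would use the graded $\C$-algebra automorphism $\phi_a$ of $R_\Gamma$ given by $\mathbb{X}_e\mapsto a^{-1}\mathbb{X}_e$, i.e.\ $e_j(\mathbb{X}_e)\mapsto a^{-j}e_j(\mathbb{X}_e)$, which preserves the quantum grading ($\deg e_j=2j$). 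Since $\sum_xP(x/a)=a^{-(N+1)}\sum_x\hat P(x)$, $\phi_a$ sends $\omega_P$ to $a^{-(N+1)}\omega_{\hat P}$; composing with the equivalence $\MF(R_\Gamma,\omega)\simeq\MF(R_\Gamma,\lambda\omega)$ ($\lambda\in\C^{\times}$) that rescales one of the two twisted differentials, one sees that $\phi_a$ carries the entire construction for $P$ to that for $\hat P$ — the Koszul differentials, the MOY-relation morphisms $\chi_0,\chi_1$, the maps defining the complex of a crossing, and the homotopies proving Reidemeister invariance. This gives an isomorphism $H_P(L)\cong H_{\hat P}(L)$ preserving the quantum grading, hence the quantum filtration, as well as the homological grading, and compatible with the Reidemeister isomorphisms.

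The translation case is handled identically, with $\phi_a$ replaced by the $\C$-algebra automorphism $\phi_b$ of $R_\Gamma$ induced by $\mathbb{X}_e\mapsto\mathbb{X}_e-b$, which on an $m$-colored edge reads $e_j(\mathbb{X}_e)\mapsto\sum_{l=0}^{j}\binom{m-l}{j-l}(-b)^{j-l}e_l(\mathbb{X}_e)$. Here $\phi_b$ is only a filtered automorphism, but its associated graded is the identity ($e_j\mapsto e_j$ in top degree), so the induced map will preserve the quantum filtration though not the quantum grading. Since $P(x-b)=\hat P(x)+P(-b)$ and each MOY graph is balanced at every vertex (so that $\sum_e\varepsilon_e|\mathbb{X}_e|=0$ and the constants $P(-b)$ cancel), $\phi_b$ sends $\omega_P$ exactly to $\omega_{\hat P}$; it also preserves the ideal of relations among the edge alphabets and turns the $P$-dependent part of the Koszul data into its $\hat P$-analogue (difference quotients $\frac{P(x)-P(y)}{x-y}$ becoming $\frac{\hat P(x)-\hat P(y)}{x-y}$). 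As before this makes $\phi_b$ carry the construction for $P$ onto that for $\hat P$, yielding a filtered isomorphism $H_P(L)\cong H_{\hat P}(L)$ that preserves the homological grading. Composing the two cases gives the proposition.

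I expect the only real work to lie in the verification that $\phi_a$ (resp.\ $\phi_b$) intertwines \emph{every} map appearing in \cite{Wu-color-equi}, not merely the potentials: one must revisit the explicit formulas for the matrix factorization differentials and for the MOY-move and crossing morphisms and check, item by item, that each is built from $P$ by operations — forming difference quotients $\frac{P(x)-P(y)}{x-y}$, multiplying by power sums and elementary symmetric functions, and the like — that are equivariant under affine substitution in $X$, up to the scalar factors of the rescaling case and the cancelling constants of the translation case isolated above. This bookkeeping is lengthy but routine, and is the colored analogue of the argument behind \cite[Propositions 2.2 and 3.1]{Mackaay-Turner-Vaz}.
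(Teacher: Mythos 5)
Your proposal is correct and follows essentially the same route as the paper: an affine change of variables on the edge alphabets combined with a rescaling of the potential (the paper performs the substitution $X\mapsto aX+b$ in a single step via Lemma \ref{lemma-twist-elementary-sym-poly} and Proposition \ref{prop-linear-twist-MOY}, then absorbs the resulting factor $a^{N+1}$ and additive constant with the functor $F_a$ of Lemma \ref{lemma-scaling-potential}, rather than factoring the affine map into a dilation and a translation as you do). The one place where the paper is lighter than your plan is the final ``bookkeeping'': instead of checking that the substitution intertwines every morphism, it identifies the terms of the two complexes as filtered Koszul factorizations (via the row operations of \cite[Corollary 2.15]{Wu7}, \cite[Proposition 7.12]{Wu-color} and \cite[Theorem 2.1]{KR3}) and then invokes the uniqueness of the differentials up to homotopy and scalar (\cite[Lemma 11.11]{Wu-color}) to conclude that the complexes are isomorphic in $\hch(\hmf)$.
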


As a byproduct of the proof of Proposition \ref{thm-linear-change-roots}, we have that $H_P(L)$ is invariant under reversal of orientation of all the components at the same time.

\begin{corollary}\label{coro-reverse-orientation}
Let $L$ be a colored link. Denote by $-L$ the colored link obtained from $L$ by reversing the orientation of every component of $L$. Suppose that $P(X)$ is a polynomial of the form \eqref{def-P}, not necessarily generic. Then $H_P(L) \cong H_P(-L)$, where the isomorphism preserves both the quantum filtration and the homological grading.
\end{corollary}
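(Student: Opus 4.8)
The plan is to extract the statement from the machinery developed for Proposition~\ref{thm-linear-change-roots}, applied this time to the orientation-reversing symmetry rather than to an affine change of the alphabet.

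\emph{Step 1: reduction to MOY graphs.} First I would note that reversing the orientation of every component of $L$ leaves the combinatorics of the cube of resolutions intact: under the simultaneous reversal of its two strands a positive crossing stays positive and a negative crossing stays negative, each local resolution becomes the same MOY graph with every edge reoriented, and the homological and quantum shifts attached to it are unchanged. Hence the chain complex of matrix factorizations computing $H_P(-L)$ is obtained from the one computing $H_P(L)$ by replacing each MOY graph $\Gamma$ by its orientation reversal $\bar\Gamma$, with the cube differentials carried along formally. So it suffices to construct, for every MOY graph $\Gamma$, an isomorphism $C_P(\bar\Gamma)\xrightarrow{\ \sim\ }C_P(\Gamma)$ of filtered matrix factorizations, natural in $\Gamma$, i.e.\ compatible with the morphisms $\chi_0,\chi_1$ attached to the elementary MOY moves.

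\emph{Step 2: the local operation.} Reversing all edge orientations of $\Gamma$ interchanges merge and split vertices, and thereby negates the local potential $p_{a+b}-p_a-p_b$ at every vertex; so $C_P(\bar\Gamma)$ is a matrix factorization of $-W_\Gamma$, where $W_\Gamma$ is the potential of $C_P(\Gamma)$ (for a link every $\Gamma$ is closed, so $W_\Gamma=0$ and both are honest $2$-periodic complexes). Passing from a matrix factorization of $\omega$ to one of $-\omega$ is a module-level trivial operation: one keeps the underlying $\zed$-graded module and flips the sign of one half of the differential. Equivalently --- and this is the $a=-1$, $b=0$ case of the substitution used in the proof of Proposition~\ref{thm-linear-change-roots} --- one applies the reflection $X\mapsto -X$ to the alphabet on every edge, which on the ring $R_k=\C[x_1,\dots,x_k]$ of a $k$-colored edge is the graded automorphism $x_i\mapsto(-1)^i x_i$ carrying $\sum_j P(X_j)$ to $\sum_j P(-X_j)=(-1)^{N+1}\sum_j\widehat P(X_j)$ with $\widehat P(X):=(-1)^{N+1}P(-X)$ again of the form \eqref{def-P} (because $P(0)=0$), and then rescales the differential by $(-1)^{N+1}$, using $\MF(\omega)\simeq\MF((-1)^{N+1}\omega)$. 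As in that proof, this substitution makes sense and induces isomorphisms for every $P$ of the form \eqref{def-P}, with no genericity needed. In every description the underlying graded modules are left in place, so the quantum filtration is preserved, and the complex structure is left in place, so the homological grading is preserved.

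\emph{Step 3: naturality, and conclusion.} The real content, and the step I expect to be the main obstacle, is checking that this edge-by-edge operation commutes with the morphisms $\chi_0,\chi_1$ of the elementary MOY moves, with the differentials of the Koszul factorizations at the MOY vertices, and with the maps assigned to the two resolutions of each crossing; this is precisely the compatibility verified, for the general substitution $X\mapsto aX+b$, in Sections~\ref{sec-basis-MOY-construction}, \ref{sec-morphisms} and \ref{sec-basis-link-construction} en route to Proposition~\ref{thm-linear-change-roots}. Granting it, the operation assembles to a filtered, homological-grading-preserving isomorphism between the chain complexes computing $H_P(-L)$ and $H_P(L)$ (after the harmless overall rescaling), and passing to homology gives $H_P(L)\cong H_P(-L)$ with the asserted preservation of the quantum filtration and the homological grading.
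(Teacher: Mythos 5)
Your main line of argument is essentially the paper's: reduce to complete resolutions, observe that reversing every edge negates the potential (so $C_P(-\Gamma)$ is the Koszul factorization of $C_P(\Gamma)$ with the second column of each row negated), identify this with the scalar twist $F_{\sqrt{-1}}(C_P(\Gamma))$ obtained by multiplying the differential by $\sqrt{-1}$, and invoke the naturality of the scaling isomorphism (Lemma \ref{lemma-scaling-potential}) to assemble the local isomorphisms into an isomorphism of the whole complex. So the proof goes through as you describe.

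One aside is wrong, though, and worth flagging because you present it as an ``equivalent'' formulation: orientation reversal is \emph{not} the $a=-1$, $b=0$ case of the substitution in Proposition~\ref{thm-linear-change-roots}. That substitution changes the variables (hence the polynomial, replacing $P(X)$ by $\hat P(X)=(-1)^{N+1}P(-X)$) while keeping the graph fixed, and it proves $H_P(\Gamma)\cong H_{\hat P}(\Gamma)$; orientation reversal keeps $P$ fixed and changes the graph, negating the potential by swapping the roles of incoming and outgoing alphabets. The two operations prove different statements, and conflating them would, at best, give $H_P(-L)\cong H_{\hat P}(L)$ rather than the desired $H_P(-L)\cong H_P(L)$. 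Since your first description (flip the appropriate signs in the differential, i.e.\ twist by $\sqrt{-1}$) is the operative one and is correct, this does not damage the proof, but the ``equivalently'' should be deleted.
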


\subsection{Non-degenerate pairings and co-pairings} One application of Theorem \ref{thm-basis} is to establish dualities between generic deformations of the $\mathfrak{sl}(N)$-homology of certain colored links. We do this by constructing non-degenerate pairings and co-pairings. The motivation for these constructions is to understand how the colored $\mathfrak{sl}(N)$-Rasmussen invariants defined in Definition \ref{def-ras} below behave under certain changes of the colored knot. (See Corollary \ref{coro-Ras-equations} below.)

\begin{definition}\label{def-pairings-copairings}
Let $V$ and $W$ be finite dimensional linear spaces of the same dimension over $\C$. A pairing of $V$ and $W$ is a $\C$-linear map $\nabla:V \otimes_\C W \rightarrow \C$. We say that $\nabla$ is non-degenerate if it is non-degenerate as a bilinear form from $V\times W$ to $\C$. A co-pairing of $V$ and $W$ is a $\C$-linear map $\Delta: \C \rightarrow V \otimes_\C W$. We say that $\Delta$ is non-degenerate if its dual, $\Delta^\ast: V^\ast \otimes_\C W^\ast \rightarrow \C$, is non-degenerate as a bilinear form from $V^\ast \times W^\ast$ to $\C$.
\end{definition}

\begin{proposition}\label{thm-pairing-inverse}
Let $L$ be a colored link. Denote by $\overline{L}$ the colored link obtained from $L$ by reversing the orientation of every component of $L$ and switching the upper- and lower-branches at each crossing. Suppose that $P(X)$ is a generic polynomial of the form \eqref{def-P}. Then there are a non-degenerate pairing $\overline{\nabla}:H_P(L)\otimes_\C H_P(\overline{L}) \rightarrow \C$ and a non-degenerate co-pairing $\overline{\Delta}:\C \rightarrow H_P(L)\otimes_\C H_P(\overline{L})$, both of which preserve the quantum filtration and the homological grading.

In particular, $H_P(\overline{L}) \cong \Hom_\C(H_P(L),\C)$, where the isomorphism preserves both the quantum filtration and the homological grading.
\end{proposition}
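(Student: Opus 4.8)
The plan is to build the pairing and co-pairing out of the basic functoriality of the deformed matrix-factorization homology, realizing $\overline{\nabla}$ and $\overline{\Delta}$ as the maps induced by the "cup" and "cap" cobordisms that connect $L \sqcup \overline{L}$ to the empty link. Concretely, I would first recall that $\overline{L}$ is precisely the diagram obtained from $L$ by the orientation-reversal-plus-mirror operation that presents $L$ "read upside down", so that stacking $L$ on top of $\overline{L}$ and closing off each strand produces a cobordism from the empty diagram to $L \sqcup \overline{L}$ (for $\overline{\Delta}$) and dually from $L \sqcup \overline{L}$ to the empty diagram (for $\overline{\nabla}$). Since $H_P$ is defined via matrix factorizations and is functorial for such colored MOY cobordisms (markings move, and saddle/cup/cap maps are the standard ones from \cite{Wu-color,Wu-color-equi}), these cobordisms induce $\C$-linear maps $\C = H_P(\emptyset) \to H_P(L) \otimes_\C H_P(\overline{L})$ and $H_P(L) \otimes_\C H_P(\overline{L}) \to H_P(\emptyset) = \C$ respectively, using the Künneth-type isomorphism $H_P(L \sqcup \overline{L}) \cong H_P(L)\otimes_\C H_P(\overline{L})$ that holds for disjoint unions. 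Each such cobordism map preserves the $\zed_2$-grading (trivial here), shifts the quantum filtration by its Euler-characteristic-type degree, and preserves the homological grading; one checks from the explicit degrees of cup, cap and saddle morphisms that the total shift is $0$, so $\overline{\nabla}$ and $\overline{\Delta}$ are filtered and homologically graded of degree $0$ as claimed.

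The substance of the proof is non-degeneracy, and this is where Theorem \ref{thm-basis} enters. I would compute the pairing on the Lee–Gornik-type basis $\{v_\psi\}_{\psi \in \mathcal{S}_P(L)}$ of $H_P(L)$ and the corresponding basis $\{v_\phi\}_{\phi\in\mathcal{S}_P(\overline{L})}$ of $H_P(\overline{L})$. The key point is that states of $L$ and states of $\overline{L}$ are canonically the same data — a state assigns to each component a subset of $\Sigma(P)$ of the prescribed size, and this is unaffected by reversing orientations and mirroring, since the component set and colors are unchanged. Thus there is a canonical bijection $\mathcal{S}_P(L) \xrightarrow{\sim} \mathcal{S}_P(\overline{L})$, $\psi \mapsto \bar\psi$. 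The crucial computation is that $\overline{\nabla}(v_\psi \otimes v_\phi)$ is nonzero if and only if $\phi = \bar\psi$ (after matching), and similarly that the dual of $\overline{\Delta}$ is supported on the diagonal. This is the analog of the statement in the uncolored Lee–Gornik theory that Lee's canonical generators behave like an orthonormal-up-to-scalar basis under the trace pairing; I would prove it by the local-to-global argument: the basis element $v_\psi$ is, locally at each arc, an eigenvector for the idempotent decomposition of $H_P$ induced by the roots in $\Sigma(P)$ (this is how $v_\psi$ is constructed in Sections \ref{sec-basis-MOY-construction}--\ref{sec-basis-link-construction}), and the cup/cap morphisms pair a local eigenvector with a local eigenvector nontrivially exactly when the two eigenvalues match, i.e. when the root-subsets assigned to the strand agree. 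Running this through every strand of the closure cobordism forces $\phi = \bar\psi$, and when they do agree the pairing is an explicit nonzero scalar (a product over the closed-off circles of the local "norm" of $v_{\psi(K_i)}$, each of which is a nonzero constant because $P$ is generic).

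I expect the main obstacle to be bookkeeping the homological grading rather than the algebra of non-degeneracy: one must verify that under the bijection $\psi\mapsto\bar\psi$ the homological gradings satisfy $\mathsf{h}_L(\psi) + \mathsf{h}_{\overline L}(\bar\psi) = 0$ (so that $\overline\nabla$, which lands in homological grading $0$, can be nonzero on $v_\psi\otimes v_{\bar\psi}$), and this requires comparing the crossing-by-crossing formula for $\mathsf{h}_\psi(c)$ in Definition \ref{def-states-homological-grading} at a crossing $c$ of $L$ with the corresponding crossing of $\overline{L}$, where \emph{both} the over/under information and the orientations have been switched. Switching the crossing sign alone would negate $\mathsf{h}_\psi(c)$, switching orientations alone would also have a predictable effect (this is essentially Corollary \ref{coro-reverse-orientation}), and one needs the combined effect to be an exact sign change; a short case analysis on whether $\mathfrak{c}(K_i) = \mathfrak{c}(K_j)$ handles this, using $|\psi(K_i)\cap\psi(K_j)|$ being symmetric in the two branches. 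Once the grading identity and the diagonal non-degeneracy are in hand, the final clause $H_P(\overline{L}) \cong \Hom_\C(H_P(L),\C)$ is immediate: a non-degenerate filtered, homogeneous pairing is exactly an isomorphism onto the (filtered, graded) dual, and the co-pairing $\overline{\Delta}$ provides the inverse up to scalar, so the two together pin down the duality on the nose.
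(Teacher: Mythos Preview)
Your proposal is correct and follows essentially the same route as the paper: construct $\overline{\nabla}$ and $\overline{\Delta}$ from the evident saddle/cup/cap cobordisms between $L\sqcup\overline{L}$ and $\emptyset$, check the quantum degrees cancel to give filtration-preserving maps, and prove non-degeneracy by showing the pairing is diagonal (and nonzero on the diagonal) in the basis $\{v_\psi\}$ of Theorem~\ref{thm-basis}. One small caveat: your final sentence ``a non-degenerate filtered, homogeneous pairing is exactly an isomorphism onto the (filtered, graded) dual'' is false for \emph{filtered} (as opposed to graded) spaces---a non-degenerate filtered pairing only gives a filtered injection $U\hookrightarrow V^\ast$, and the paper's Lemma~\ref{lemma-pairing-copairing-dual} uses the non-degenerate co-pairing precisely to get the reverse dimension inequality on each filtration level; you have the right ingredients, but make sure you invoke both $\overline{\nabla}$ and $\overline{\Delta}$ in that last step rather than the pairing alone.
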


To state the next duality theorem, we need to introduce a grading shift.

\begin{lemma}\label{lemma-grading-shift-pairing-op}
For each colored crossing, define
\[
\mathsf{s}\left(\setlength{\unitlength}{1pt}
\begin{picture}(40,40)(-20,0)

\put(-20,-20){\vector(1,1){40}}

\put(20,-20){\line(-1,1){15}}

\put(-5,5){\vector(-1,1){15}}

\put(-11,15){\tiny{$_m$}}

\put(9,15){\tiny{$_n$}}

\end{picture}\right) = \begin{cases}
0 & \text{if } m=n \text{ or } m=N-n, \\
N-2n & \text{otherwise},
\end{cases}
\]
\[
\mathsf{s}\left(\setlength{\unitlength}{1pt}
\begin{picture}(40,40)(-20,0)

\put(20,-20){\vector(-1,1){40}}

\put(-20,-20){\line(1,1){15}}

\put(5,5){\vector(1,1){15}}

\put(-11,15){\tiny{$_m$}}

\put(9,15){\tiny{$_n$}}

\end{picture}\right) = \begin{cases}
0 & \text{if } m=n \text{ or } m=N-n, \\
2m-N & \text{otherwise},
\end{cases}
\]
\[
\mathsf{s}'\left(\right) = \begin{cases}
0 & \text{if } m=n \text{ or } m=N-n, \\
N-2m & \text{otherwise},
\end{cases}
\]
\[
\mathsf{s}'\left(\right) = \begin{cases}
0 & \text{if } m=n \text{ or } m=N-n, \\
2n-N & \text{otherwise}.
\end{cases}
\]
For a colored link diagram $D$, define 
\begin{eqnarray*}
\mathsf{s}(D) = \sum_c \mathsf{s}(c), \\
\mathsf{s}'(D) = \sum_c \mathsf{s}'(c),
\end{eqnarray*}
where $c$ runs through all crossings of $D$. Then $\mathsf{s}(D)$ and $\mathsf{s}'(D)$ are both invariant under the Reidemeister moves, and $\mathsf{s}(D)=\mathsf{s}'(D)$ for any colored link diagram $D$.
\end{lemma}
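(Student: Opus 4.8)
The plan is to first recast $\mathsf{s}$ and $\mathsf{s}'$ in a uniform way. For a crossing $c$ of a colored diagram $D$, write $o(c)$ and $u(c)$ for the colors of its over- and under-strands and $\epsilon(c) = \pm 1$ for its sign. Comparing the positive- and negative-crossing pictures in the statement, one checks that $\mathsf{s}(c) = \epsilon(c)\,(N - 2\,o(c))$ and $\mathsf{s}'(c) = \epsilon(c)\,(N - 2\,u(c))$ whenever $o(c) \notin \{u(c),\, N - u(c)\}$, while $\mathsf{s}(c) = \mathsf{s}'(c) = 0$ otherwise. In particular the two functions are supported on the same set of crossings; call a crossing \emph{generic} if $o(c) \neq u(c)$ and $o(c) + u(c) \neq N$. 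Note that a generic crossing must join two distinct components, since a self-crossing has $o(c) = u(c)$.

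For the Reidemeister invariance I would argue move by move, separately for $\mathsf{s}$ and for $\mathsf{s}'$ (a priori we do not yet know they agree, so both need checking, though once the identity is established one of the two becomes redundant). An R1 kink is a self-crossing, hence non-generic, and contributes $0$; deleting it changes neither sum. An R2 move introduces or removes a pair of crossings of opposite sign sharing a common over-strand, so the two are simultaneously generic or not, and in the generic case their contributions to $\mathsf{s}$ are $+\epsilon(N-2o)$ and $-\epsilon(N-2o)$, which cancel --- and similarly for $\mathsf{s}'$. An R3 move preserves the colors, the over/under relations, and the signs of the three crossings involved, hence the total over those three is unchanged. Summing over all crossings gives invariance of $\mathsf{s}(D)$ and $\mathsf{s}'(D)$.

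For the identity, the reformulation yields $\mathsf{s}(D) - \mathsf{s}'(D) = 2\sum_{c \text{ generic}} \epsilon(c)\,(u(c) - o(c))$. I would group the generic crossings by the unordered pair of distinct components $\{K_i, K_j\}$ that they connect. Writing $a_i = \mathfrak{c}(K_i)$, the crossings between a fixed such pair contribute $2(a_j - a_i)\big(\sum_{K_i \text{ over } K_j} \epsilon(c) - \sum_{K_j \text{ over } K_i} \epsilon(c)\big)$, since at a crossing where $K_i$ is over we have $(o,u) = (a_i, a_j)$ and at one where $K_j$ is over we have $(o,u) = (a_j, a_i)$. By the classical fact that the linking number $\mathrm{lk}(K_i, K_j)$ is computed by the signed count of crossings at which \emph{either} one of the two chosen components passes over the other, both inner sums equal $\mathrm{lk}(K_i, K_j)$, so each pair contributes $0$. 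Hence $\mathsf{s}(D) = \mathsf{s}'(D)$.

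The fiddliest step should be the exact bookkeeping at the exceptional colorings $o(c) = u(c)$ and $o(c) + u(c) = N$, where $\mathsf{s}$ and $\mathsf{s}'$ are defined to vanish even though the ``generic formula'' would not --- one must make sure these crossings are genuinely absent from the sums above --- together with the verification that an R2 move really does produce a sign-cancelling pair with a common over-strand for every relative orientation of the two strands. Everything else is a direct calculation.
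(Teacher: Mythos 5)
Your proof is correct, and for the key identity $\mathsf{s}(D)=\mathsf{s}'(D)$ it takes a genuinely different route from the paper. The paper disposes of the identity by a symmetry trick: let $D'$ be the diagram obtained by viewing $D$ from the back, so that over- and under-strands are exchanged at every crossing while signs are preserved; then $\mathsf{s}(D')=\mathsf{s}'(D)$ by inspection of the definitions, and since $D$ and $D'$ present the same colored link, the already-established Reidemeister invariance gives $\mathsf{s}(D)=\mathsf{s}(D')=\mathsf{s}'(D)$. You instead compute the difference directly: after rewriting $\mathsf{s}(c)=\epsilon(c)(N-2o(c))$ and $\mathsf{s}'(c)=\epsilon(c)(N-2u(c))$ on the common support of the two functions (which is correct, and your observation that a pair of components is either entirely generic or entirely non-generic makes the grouping legitimate), the difference for each pair of components becomes a multiple of the discrepancy between the two one-sided crossing counts computing $\mathrm{lk}(K_i,K_j)$, which vanishes. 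Your argument is more computational but self-contained at the level of a single diagram, and as a byproduct it essentially exhibits $\mathsf{s}(D)$ as the manifestly symmetric linking-number expression $\sum_{\{i,j\}}(2N-2\mathfrak{c}(K_i)-2\mathfrak{c}(K_j))\,\mathrm{lk}(K_i,K_j)$ over the generic pairs, which makes both the invariance and the identity transparent at once; the paper's argument is shorter but leans on the Reidemeister invariance that it leaves to the reader, whereas you supply the move-by-move check (which is the same routine verification the paper has in mind).
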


\begin{proof}
It is easy to check that $\mathsf{s}(D)$ and $\mathsf{s}'(D)$ are invariant under the Reidemeister moves. To prove $\mathsf{s}(D)=\mathsf{s}'(D)$, consider the colored link diagram $D'$ obtained by ``looking at $D$ from the backside". $D$ and $D'$ represent the same colored link. So they are connected by a finite sequence of Reidemeister moves. Therefore, $\mathsf{s}(D)=\mathsf{s}(D')$. But, by their definitions, it clear that $\mathsf{s}(D')=\mathsf{s}'(D)$. Thus, $\mathsf{s}(D)=\mathsf{s}'(D)$.
\end{proof}

\begin{proposition}\label{thm-pairing-op}
Let $L$ be a colored link. Denote by $L^{op}$ the colored link obtained from $L$ by switching the upper- and lower-branches at each crossing and changing each color $k$ to $N-k$. Suppose that $P(X)$ is a generic polynomial of the form \eqref{def-P}. Then there is a non-degenerate pairing $\nabla^{op}:H_P(L)\otimes_\C H_P(L^{op}) \rightarrow \C$ of homological grading $\mathsf{s}(L)$ and quantum degree not greater than $-\mathsf{s}(L)$. There is also a non-degenerate co-pairing $\Delta^{op}:\C \rightarrow H_P(L)\otimes_\C H_P(L^{op})$ of homological grading $-\mathsf{s}(L)$ and quantum degree not greater than $\mathsf{s}(L)$.

In particular, $H_P(L^{op}) \cong \Hom_\C(H_P(L),\C) \|-\mathsf{s}(L)\| \{q^{\mathsf{s}(L)}\}$, where, as in \cite{Wu-color,Wu-color-equi}, $\|\star\|$ means shifting the homological grading by $\star$ and $\{q^{\bullet}\}$ means shifting the quantum filtration by $\bullet$. The isomorphism here preserves both the quantum filtration and the homological grading.
\end{proposition}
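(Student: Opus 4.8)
The plan is to construct $\nabla^{op}$ and $\Delta^{op}$ on the level of the chain complexes of matrix factorizations defining $H_P$, and then read off non-degeneracy and the gradings from the basis of Theorem \ref{thm-basis}. (One can also first reduce to a color-conjugation statement: $L^{op}$ is $\overline{L}$ with all orientations reversed and all colors $k$ replaced by $N-k$, so by Corollary \ref{coro-reverse-orientation} the pairings for $L^{op}$ amount to feeding a color-conjugation comparison into the pairing $\overline{\nabla}$ and co-pairing $\overline{\Delta}$ of Proposition \ref{thm-pairing-inverse}; but either way the real work is the color-conjugation comparison, so I would set it up directly.) Once a non-degenerate pairing and co-pairing with the stated gradings are in hand, the isomorphism $H_P(L^{op})\cong\Hom_\C(H_P(L),\C)\|-\mathsf{s}(L)\|\{q^{\mathsf{s}(L)}\}$ is formal: $\nabla^{op}$ gives an injective, hence bijective, map $H_P(L^{op})\to\Hom_\C(H_P(L),\C)\|-\mathsf{s}(L)\|$ of non-positive quantum degree, while $\Delta^{op}$ supplies the complementary bound, pinning the quantum filtration on the nose.

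The core construction is local. For a crossing $c$ of $L$ colored $(m,n)$, the complex of matrix factorizations $C_P(c)$ assigned to $c$ is built, after a fixed normalization shift, from the matrix factorizations of the MOY resolutions of $c$. Passing to $L^{op}$ does two things to $C_P(c)$: switching the over- and under-branches, which dualizes it — the mechanism already at work in Proposition \ref{thm-pairing-inverse} via $\overline{L}$ — and conjugating the colors to $(N-m,N-n)$. For the latter I would invoke the duality, at the level of matrix factorizations of MOY graphs, between a $k$-colored edge and an $(N-k)$-colored edge established in \cite{Wu-color,Wu-color-equi} (categorifying $\wedge^k\C^N\cong(\wedge^{N-k}\C^N)^\ast\otimes\wedge^N\C^N$). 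Combining the two and tracking the normalization constants, the local complex for $L^{op}$ at $c$ should be isomorphic to the $\C$-dual of $C_P(c)$, shifted in homological degree by exactly $\mathsf{s}(c)$ and in quantum filtration by an amount bounded above by $-\mathsf{s}(c)$; Lemma \ref{lemma-grading-shift-pairing-op} is precisely the bookkeeping that makes these local shifts assemble into the global shifts $\pm\mathsf{s}(L)$. Since these local dualities are natural with respect to the edge morphisms used to build the cube of resolutions, they glue to a chain-level pairing $C_P(L)\otimes_\C C_P(L^{op})\to\C$ (up to the total shift $\mathsf{s}(L)$) and, dually, to a chain-level co-pairing; passing to homology yields $\nabla^{op}$ and $\Delta^{op}$ with the asserted gradings. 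That the quantum bounds are inequalities rather than equalities is genuine, not an artifact: color conjugation, like the deformation $X^{N+1}\rightsquigarrow P(X)$, is only filtered.

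For non-degeneracy I would use Theorem \ref{thm-basis}. Since $L^{op}$ colors the component $K_i$ by $N-\mathfrak{c}(K_i)$, complementation in $\Sigma(P)$ gives a bijection $\mathcal{S}_P(L)\to\mathcal{S}_P(L^{op})$, $\psi\mapsto\psi^c$ with $\psi^c(K_i)=\Sigma(P)\setminus\psi(K_i)$. I would show that, in the bases $\{v_\psi\}$ and $\{v_{\psi^c}\}$, the Gram matrix of $\nabla^{op}$ is non-degenerate — concretely, triangular with non-zero diagonal for a suitable partial order on states — and likewise for $\Delta^{op}$; this uses the explicit local models of the previous paragraph. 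A companion input is the identity $\mathsf{h}_L(\psi)+\mathsf{h}_{L^{op}}(\psi^c)=-\mathsf{s}(L)$, which is independent of $\psi$ and is exactly the statement that $\nabla^{op}$ has homological grading $\mathsf{s}(L)$. This I would verify by combining $|\psi^c(K_i)\cap\psi^c(K_j)|=N-\mathfrak{c}(K_i)-\mathfrak{c}(K_j)+|\psi(K_i)\cap\psi(K_j)|$ with the case splits in Definition \ref{def-states-homological-grading} and Lemma \ref{lemma-grading-shift-pairing-op}; since it is not a purely crossing-by-crossing identity, I would lean on the Reidemeister-invariance of both sides to reduce to a convenient class of diagrams.

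I expect the main obstacle to be establishing the local duality at a crossing with its \emph{exact} homological shift $\mathsf{s}(c)$ and its \emph{sharp} quantum bound: this requires writing down the normalized complexes for the $(m,n)$- and $(N-m,N-n)$-colored crossings and the explicit homotopy equivalence identifying one with the dual of the other, which in the deformed setting will in general be inhomogeneous for the quantum filtration — this is what forces the inequalities. The global homological-grading identity $\mathsf{h}_L(\psi)+\mathsf{h}_{L^{op}}(\psi^c)=-\mathsf{s}(L)$ is the other point needing care, although it is essentially finite bookkeeping once the right normalization is fixed.
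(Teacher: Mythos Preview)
Your overall strategy—construct the pairing and co-pairing, then verify non-degeneracy via the state basis and the bijection $\psi\mapsto\psi^c$ given by complementation in $\Sigma(P)$—matches the paper's. But the construction of $\nabla^{op}$ and $\Delta^{op}$ takes a genuinely different route.

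The paper does \emph{not} build a local chain-level duality $C_P(c^{op})\simeq C_P(c)^\ast[\text{shift}]$ at each crossing. Instead it works geometrically with the disjoint union $L\sqcup L^{op}$: place $L^{op}$ just behind $L$, so that each arc of $L$ colored $k$ runs parallel to the corresponding arc of $L^{op}$ colored $N-k$; then apply the $\chi^0,\chi^1$ morphisms to fuse each such pair through an intermediate edge colored $N$. This turns $L\sqcup L^{op}$ into a knotted MOY graph $D$. Near each crossing of $L$, the resulting configuration in $D$ is simplified by fork sliding together with an explicit lemma computing the complex of a crossing one of whose strands is colored $N$ (such a crossing has only one nontrivial resolution). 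These local simplifications produce an embedded MOY graph $\Gamma$ with $C_P(D)\simeq C_P(\Gamma)\|-\mathsf{s}(L)\|\{q^{\mathsf{s}(L)}\}$, and the shift $\mathsf{s}(L)$ drops out of this computation rather than from an abstract duality. Finally, edge merging collapses each color-$k$/color-$(N-k)$ ``bubble'' in $\Gamma$ to a circle colored $N$, and circle annihilation finishes the pairing; the co-pairing reverses all steps. Because every morphism used (the $\chi$-morphisms, edge splitting/merging, circle creation/annihilation, and the simplification isomorphisms) has already been analyzed on the state basis, the Gram matrix comes out \emph{diagonal} with nonzero entries, not merely triangular.

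Your approach could in principle work, but the step ``switching the over- and under-branches dualizes $C_P(c)$'' is not what Proposition~\ref{thm-pairing-inverse} establishes: that proposition also proceeds via a geometric pairing on $L\sqcup\overline{L}$ (saddle moves to a union of unknots), not via a local chain-level duality. Likewise, a direct matrix-factorization duality between a $k$-colored edge and an $(N-k)$-colored edge with the precise filtration behavior you need is not sitting ready in the references; you would have to set it up and check compatibility with all differentials in the cube. The paper's route sidesteps this by reducing everything to morphisms whose effect on states is already known, which is what makes the non-degeneracy and the exact homological shift immediate.
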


Note that $(\overline{L})^{op} = \overline{L^{op}}$ is the colored link obtained from $L$ by reversing the orientation of every component and changing every color $k$ to $N-k$. Comparing Propositions \ref{thm-pairing-inverse} and \ref{thm-pairing-op}, we have the following corollary.

\begin{corollary}\label{coro-color-orientation}
Let $L$ be a colored link. Suppose that $P(X)$ is a generic polynomial of the form \eqref{def-P}. Then $H_P(L) \cong H_P(\overline{L^{op}})\|-\mathsf{s}(L)\| \{q^{\mathsf{s}(L)}\}$.
\end{corollary}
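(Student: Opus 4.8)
The plan is to deduce Corollary \ref{coro-color-orientation} formally from Propositions \ref{thm-pairing-inverse} and \ref{thm-pairing-op}, using only that $H_P(L)$ is finite-dimensional (which we already know, e.g.\ from Theorem \ref{thm-basis}). First I would apply Proposition \ref{thm-pairing-op} to $L$, which gives a filtration- and grading-preserving isomorphism
\[
H_P(L^{op}) \;\cong\; \Hom_\C\!\big(H_P(L),\C\big)\,\|-\mathsf{s}(L)\|\,\{q^{\mathsf{s}(L)}\}.
\]
Next I would apply Proposition \ref{thm-pairing-inverse} with $L$ replaced by $L^{op}$. Since switching all crossings of $L^{op}$ undoes the crossing switch in the definition of $L^{op}$ while leaving the colour change $k\mapsto N-k$ intact, the link $\overline{L^{op}}$ is exactly the one described in the paragraph preceding the statement, namely $L$ with all orientations reversed and every colour $k$ replaced by $N-k$; so nothing new needs to be defined. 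Proposition \ref{thm-pairing-inverse} then yields a filtration- and grading-preserving isomorphism
\[
H_P(\overline{L^{op}}) \;\cong\; \Hom_\C\!\big(H_P(L^{op}),\C\big).
\]

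Substituting the first isomorphism into the second, the only bookkeeping to get right is the behaviour of $\Hom_\C(-,\C)$ on the two shifts and on the quantum filtration: dualizing a finite-dimensional space turns a homological shift $\|a\|$ into $\|-a\|$ and a quantum shift $\{q^b\}$ into $\{q^{-b\}}$, and the evaluation map identifies $\Hom_\C(\Hom_\C(V,\C),\C)$ with $V$ compatibly with both the homological grading and the quantum filtration (finite-dimensionality being what makes this last identification an isomorphism rather than merely an injection). Carrying this out,
\[
H_P(\overline{L^{op}}) \;\cong\; \Hom_\C\!\big(\Hom_\C(H_P(L),\C),\C\big)\,\|\mathsf{s}(L)\|\,\{q^{-\mathsf{s}(L)}\} \;\cong\; H_P(L)\,\|\mathsf{s}(L)\|\,\{q^{-\mathsf{s}(L)}\},
\]
and applying the inverse shift to both sides gives the asserted $H_P(L)\cong H_P(\overline{L^{op}})\,\|-\mathsf{s}(L)\|\,\{q^{\mathsf{s}(L)}\}$, with the isomorphism preserving the quantum filtration and the homological grading.

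I do not expect a genuine obstacle: the substance is entirely contained in the two preceding propositions, and what remains is a short diagram chase. The one place that needs a line of care is precisely the interaction of $\Hom_\C(-,\C)$ with the grading shifts and, more delicately, with the quantum \emph{filtration} (as opposed to a grading); once one fixes the convention that dualizing reverses a filtration, the shifts flip sign as above and the double dual is canonically the original. A possible alternative is to apply Proposition \ref{thm-pairing-op} to $\overline{L}$ and combine it directly with Proposition \ref{thm-pairing-inverse}, but that route requires separately checking $\mathsf{s}(\overline L)=-\mathsf{s}(L)$, which, although a short consequence of $\mathsf{s}=\mathsf{s}'$ in Lemma \ref{lemma-grading-shift-pairing-op} (comparing the formulas there under a global crossing switch), makes the passage through $L^{op}$ above the cleaner choice.
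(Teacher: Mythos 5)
Your argument is correct, and it rests on exactly the same two ingredients as the paper's proof, namely Propositions \ref{thm-pairing-inverse} and \ref{thm-pairing-op}; the difference is only in which link each proposition is applied to. The paper substitutes $L^{op}$ into Proposition \ref{thm-pairing-op}, obtaining $H_P(L)\cong \Hom_\C(H_P(L^{op}),\C)\|-\mathsf{s}(L^{op})\|\{q^{\mathsf{s}(L^{op})}\}$, and then replaces $\Hom_\C(H_P(L^{op}),\C)$ by $H_P(\overline{L^{op}})$ via Proposition \ref{thm-pairing-inverse}; this avoids any double dual but forces the verification $\mathsf{s}(L^{op})=\mathsf{s}'(L)=\mathsf{s}(L)$, which is precisely where Lemma \ref{lemma-grading-shift-pairing-op} is used. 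Your route applies Proposition \ref{thm-pairing-op} to $L$ itself, so the shift $\mathsf{s}(L)$ appears directly and Lemma \ref{lemma-grading-shift-pairing-op} is not needed, at the cost of the double-dual bookkeeping you describe. That bookkeeping is sound under the paper's conventions: with the dual filtration of Lemma \ref{lemma-filtered-dual-dim} one has $(V\{q^{b}\})^{\ast}=V^{\ast}\{q^{-b}\}$, the dual grading is negated (as it must be for the grading-preserving pairings of the two propositions to make sense), and evaluation $V\to V^{\ast\ast}$ is a filtered isomorphism in finite dimensions. So the two proofs are interchangeable; yours trades one small verification for another.
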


\begin{proof}
By Propositions \ref{thm-pairing-inverse} and \ref{thm-pairing-op}, we have  
\begin{eqnarray*}
H_P(L) & \cong & \Hom_\C(H_P(L^{op}),\C) \|-\mathsf{s}(L^{op})\| \{q^{\mathsf{s}(L^{op})}\} \\ 
& \cong & H_P(\overline{L^{op}})\|-\mathsf{s}(L^{op})\| \{q^{\mathsf{s}(L^{op})}\}.
\end{eqnarray*}
It is easy to check that $\mathsf{s}(L^{op}) = \mathsf{s}'(L)$. But, by Lemma \ref{lemma-grading-shift-pairing-op}, $\mathsf{s}'(L)=\mathsf{s}(L)$. So $H_P(L) \cong H_P(\overline{L^{op}})\|-\mathsf{s}(L)\| \{q^{\mathsf{s}(L)}\}$.
\end{proof}

\begin{remark}
The constructions of the pairings and co-pairings in Propositions \ref{thm-pairing-inverse} and \ref{thm-pairing-op} are done component by component. So one can have ``mixed" non-degenerate pairings and co-pairings by applying Proposition \ref{thm-pairing-inverse} to some components and Proposition \ref{thm-pairing-op} to others. In fact, Corollary \ref{coro-color-orientation} can be made more general. Roughly speaking, we have:

Let $P$ be a polynomial of the form \eqref{def-P}, not necessarily generic. If $L$ is a colored link and $K$ is a component of $L$ with color $k$. Denote by $L'$ the link obtained from $L$ by reversing the orientation of $K$ and changing its color to $N-k$. Then $H_P(L')$ and $H_P(L)$ differ only by overall shiftings of the quantum filtration and the homological grading. (I will prove this in a forthcoming note \cite{Wu-color-orientation}.)
\end{remark}

\begin{question}
Are Propositions \ref{thm-pairing-inverse} and \ref{thm-pairing-op} true for non-generic $P(X)$?
\end{question}

The pairings and co-pairings in Propositions \ref{thm-pairing-inverse} and \ref{thm-pairing-op} are defined for any $P(X)$ of the form \eqref{def-P} (not necessarily generic.) But it is not clear how to establish the non-degeneracy without using the basis in Theorem \ref{thm-basis}.

\subsection{Colored $\mathfrak{sl}(N)$-Rasmussen invariants} Denote by $\fil$ the quantum filtration of $H_P$. For a colored link $L$, define 
\begin{eqnarray*}
\max\deg_q H_P(L) & = & \min\{j|\fil^j H_P(L) = H_P(L)\}, \\
\min\deg_q H_P(L) & = & \max\{j|\fil^{j-1} H_P(L) = 0\}.
\end{eqnarray*}

\begin{definition}\label{def-ras}
Let $P$ be a generic polynomial of the form \eqref{def-P}, and $m \in \{1.\dots,N-1\}$. For a knot $K$, coloring it by $m$ gives a colored knot $K^{(m)}$. Define 
\[
s_P^{(m)}(K) = \frac{1}{2} (\max\deg_q H_P(K^{(m)}) + \min\deg_q H_P(K^{(m)})). 
\]
We call $s_P^{(m)}(K)$ the $m$-colored $\mathfrak{sl}(N)$-Rasmussen invariant of $K$.
\end{definition}

It is clear that, when $m=1$ and $P(X)=X^{N+1}-(N+1)X$, $s_P^{(1)}(K)$ is the $\mathfrak{sl}(N)$-Rasmussen invariant defined in \cite[Definition 1.5]{Wu7}. In particular, when $N=2$, $s_P^{(1)}(K)$ is the original Rasmussen invariant \cite{Ras1}.

Recently, Lobb \cite{Lobb2} proved that, for $P(X)=X^{N+1} - (N+1)X$, $s_P^{(1)}(K)$ is a concordance invariant. It seems that his proof can be generalized to the colored $\mathfrak{sl}(N)$-Rasmussen invariants $s_P^{(m)}(K)$ for this special $P(X)$. But it is still not clear whether $s_P^{(m)}(K)$ is a concordance invariant for a general generic $P(X)$.

From the results in the previous subsections, we have the following corollaries.

\begin{corollary}\label{coro-Ras-linear-root-change}
Let $P(X)$ and $\hat{P}(X)$ be generic polynomials of the form \eqref{def-P}. If there are $a,b\in \C$ with $a\neq0$ such that $\Sigma(\hat{P}) = \{ar+b|r\in\Sigma(P)\}$, then $s_P^{(m)}(K) = s_{\hat{P}}^{(m)}(K)$ for any knot $K$.
\end{corollary}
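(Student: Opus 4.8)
The plan is to derive this as a formal consequence of Proposition \ref{thm-linear-change-roots}. Fix a knot $K$ and $m \in \{1,\dots,N-1\}$, and consider the colored knot $K^{(m)}$. Since $\Sigma(\hat{P}) = \{ar+b \mid r\in\Sigma(P)\}$ with $a \neq 0$, Proposition \ref{thm-linear-change-roots}, applied to the colored link $L = K^{(m)}$, supplies an isomorphism $H_P(K^{(m)}) \cong H_{\hat{P}}(K^{(m)})$ that preserves the quantum filtration $\fil$ (it also preserves the homological grading, but that is not needed here).

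The next step is to read off what filtration-preservation means for the degree invariants appearing in Definition \ref{def-ras}. Because the isomorphism identifies $\fil^j H_P(K^{(m)})$ with $\fil^j H_{\hat{P}}(K^{(m)})$ for every $j \in \zed$, the set $\{j \mid \fil^j H_P(K^{(m)}) = H_P(K^{(m)})\}$ agrees with its counterpart for $H_{\hat{P}}(K^{(m)})$, and similarly for the set $\{j \mid \fil^{j-1} H_P(K^{(m)}) = 0\}$. Hence $\max\deg_q H_P(K^{(m)}) = \max\deg_q H_{\hat{P}}(K^{(m)})$ and $\min\deg_q H_P(K^{(m)}) = \min\deg_q H_{\hat{P}}(K^{(m)})$. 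Adding these two equalities and dividing by $2$, Definition \ref{def-ras} gives $s_P^{(m)}(K) = s_{\hat{P}}^{(m)}(K)$, as desired.

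There is no real obstacle remaining: all of the substantive content has already been absorbed into Proposition \ref{thm-linear-change-roots}, and the corollary is purely formal bookkeeping. The only point deserving a moment's care is that ``preserves the quantum filtration'' must be used in its strong sense --- both the isomorphism and its inverse are filtered, so that the two filtrations are literally identified rather than merely related in one direction --- but this is exactly the assertion of Proposition \ref{thm-linear-change-roots}, so nothing further need be checked.
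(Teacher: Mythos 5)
Your proposal is correct and follows exactly the paper's route: the paper's proof consists of the single line ``This follows from Proposition \ref{thm-linear-change-roots}'', and you have simply spelled out the routine bookkeeping that a filtration-preserving isomorphism (in the two-sided sense asserted there) identifies $\max\deg_q$ and $\min\deg_q$, hence the invariants $s_P^{(m)}$ and $s_{\hat P}^{(m)}$.
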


\begin{proof}
This follows from Proposition \ref{thm-linear-change-roots}.
\end{proof}

For a knot $K$, $-K$ is $K$ with reversed orientation, $K_{mir}$ is the mirror image of $K$ and $\overline{K}=-K_{mir}$. 

\begin{corollary}\label{coro-Ras-equations}
Let $P$ be a generic polynomial of the form \eqref{def-P}, and $m \in \{1.\dots,N-1\}$. Then, for any knot $K$,
\[
s_P^{(m)}(K) = s_P^{(N-m)}(K)= s_P^{(m)}(-K) = -s_P^{(m)}(\overline{K}) = -s_P^{(m)}(K_{mir}).
\]
\end{corollary}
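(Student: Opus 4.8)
The plan is to derive each of the five equal quantities from the structural isomorphisms of the preceding subsections, together with two elementary observations. First, since a knot diagram has only self-crossings, every crossing of $K^{(m)}$ falls into the $m=n$ case of Lemma~\ref{lemma-grading-shift-pairing-op}, and therefore $\mathsf{s}(K^{(m)})=0$. Second, if $V$ is a finite-dimensional $\C$-space equipped with an (increasing) quantum filtration and $V^\ast=\Hom_\C(V,\C)$ carries the filtration induced by the evaluation pairing, then unwinding the definitions gives $\max\deg_q V^\ast = -\min\deg_q V$ and $\min\deg_q V^\ast=-\max\deg_q V$; in particular $\tfrac12(\max\deg_q V^\ast+\min\deg_q V^\ast) = -\tfrac12(\max\deg_q V+\min\deg_q V)$. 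Throughout I only need the part of each cited isomorphism that preserves the quantum filtration, since $s_P^{(m)}$ depends only on $\max\deg_q$ and $\min\deg_q$ of the total homology.

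I would start with $s_P^{(m)}(K)=s_P^{(m)}(-K)$. Coloring $-K$ by $m$ yields $(-K)^{(m)}=-(K^{(m)})$, and Corollary~\ref{coro-reverse-orientation} provides a filtration-preserving isomorphism $H_P(K^{(m)})\cong H_P((-K)^{(m)})$; hence $\max\deg_q$ and $\min\deg_q$ agree on the two sides and so do the Rasmussen invariants. Applying this identity with $K$ replaced by $K_{mir}$ and using $\overline{K}=-K_{mir}$ gives at once $s_P^{(m)}(K_{mir})=s_P^{(m)}(-K_{mir})=s_P^{(m)}(\overline{K})$, which is the last equality in the statement.

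For $s_P^{(m)}(K)=-s_P^{(m)}(\overline{K})$, note that reversing the orientation and switching over/under at every crossing of $K^{(m)}$ produces exactly $(\overline{K})^{(m)}$, so Proposition~\ref{thm-pairing-inverse} gives a filtration-preserving isomorphism $H_P((\overline{K})^{(m)})\cong\Hom_\C(H_P(K^{(m)}),\C)$. Combining this with the second observation above,
\begin{align*}
s_P^{(m)}(\overline{K}) &= \tfrac12\big(\max\deg_q H_P((\overline{K})^{(m)}) + \min\deg_q H_P((\overline{K})^{(m)})\big) \\
&= -\tfrac12\big(\max\deg_q H_P(K^{(m)}) + \min\deg_q H_P(K^{(m)})\big) = -s_P^{(m)}(K).
\end{align*}

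Finally, for $s_P^{(m)}(K)=s_P^{(N-m)}(K)$, observe that $\overline{(K^{(m)})^{op}}=(-K)^{(N-m)}$, so Corollary~\ref{coro-color-orientation} gives $H_P(K^{(m)})\cong H_P((-K)^{(N-m)})\,\|-\mathsf{s}(K^{(m)})\|\{q^{\mathsf{s}(K^{(m)})}\}$; by the first observation $\mathsf{s}(K^{(m)})=0$, so this is a filtration-preserving isomorphism and $s_P^{(m)}(K)=s_P^{(N-m)}(-K)=s_P^{(N-m)}(K)$, where the last step uses the orientation-reversal identity already proved. This closes the chain $s_P^{(m)}(K)=s_P^{(N-m)}(K)=s_P^{(m)}(-K)=-s_P^{(m)}(\overline{K})=-s_P^{(m)}(K_{mir})$. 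The one point that needs genuine care is the second observation: one must check, against the construction of the quantum filtration and the duality pairing in \cite{Wu-color-equi}, that passing to the dual space negates and interchanges $\max\deg_q$ and $\min\deg_q$. I do not expect this to be hard --- it is a formal manipulation of the filtration and the nondegenerate pairing --- but it is the only step that is not a direct quotation of an earlier result.
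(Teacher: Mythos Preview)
Your argument is correct and follows essentially the same route as the paper: both proofs use Proposition~\ref{thm-pairing-inverse} for the $\overline{K}$ duality, the vanishing $\mathsf{s}(K^{(m)})=0$ at self-crossings to neutralize the shift in the color-swap duality, and Corollary~\ref{coro-reverse-orientation} for orientation reversal. The only organizational difference is that the paper invokes Proposition~\ref{thm-pairing-op} directly to get $s_P^{(m)}(K)=-s_P^{(N-m)}(K_{mir})$, whereas you go through its consequence Corollary~\ref{coro-color-orientation}; this is immaterial. Your one flagged concern---that dualizing negates and interchanges $\max\deg_q$ and $\min\deg_q$---is exactly the content of Lemma~\ref{lemma-filtered-dual-dim} later in the paper, so no additional work is needed.
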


\begin{proof}
By Propositions \ref{thm-pairing-inverse} and \ref{thm-pairing-op}, we have $s_P^{(m)}(K) = -s_P^{(m)}(\overline{K}) = -s_P^{(N-m)}(K_{mir})$. Plugging $\overline{K}$ into this equation, we get $s_P^{(m)}(K) = s_P^{(N-m)}(-K)$. By Corollary \ref{coro-reverse-orientation}, we have $s_P^{(N-m)}(K) = s_P^{(N-m)}(-K)$. So $s_P^{(m)}(K) = s_P^{(N-m)}(K)$. Finally, plugging $K_{mir}$ into this equation, we get $s_P^{(m)}(K_{mir}) = s_P^{(N-m)}(K_{mir})$.
\end{proof}

Recall that $K$ is positively amphicheiral if $K=K_{mir}$ and $K$ is negatively amphicheiral if $K=\overline{K}$. $K$ is called chiral if it is neither positively amphicheiral nor negatively amphicheiral.

\begin{corollary}\label{coro-amphicheiral}
If $K$ is (positively or negatively) amphicheiral, then $s_P^{(m)}(K)=0$ for any generic polynomial $P$ of the form \eqref{def-P} and any $m \in \{1,\dots,N-1\}$. 
\end{corollary}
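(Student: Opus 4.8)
The plan is to derive this directly from the symmetry relations collected in Corollary \ref{coro-Ras-equations}, so essentially no new geometric input is required. The key observation is that two of the displayed equalities in that corollary, namely $s_P^{(m)}(K) = -s_P^{(m)}(\overline{K})$ and $s_P^{(m)}(K) = -s_P^{(m)}(K_{mir})$, each say that applying the relevant involution on knots flips the sign of $s_P^{(m)}$. When $K$ is fixed by that involution, the invariant must equal its own negative, hence vanish.

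First I would treat the positively amphicheiral case. Here $K = K_{mir}$ by definition, so Corollary \ref{coro-Ras-equations} gives $s_P^{(m)}(K) = -s_P^{(m)}(K_{mir}) = -s_P^{(m)}(K)$, whence $2\,s_P^{(m)}(K) = 0$ and therefore $s_P^{(m)}(K) = 0$ (we are working over $\C$, so there is no characteristic obstruction to dividing by $2$).

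Next I would treat the negatively amphicheiral case in the same way. Here $K = \overline{K}$ by definition, and Corollary \ref{coro-Ras-equations} gives $s_P^{(m)}(K) = -s_P^{(m)}(\overline{K}) = -s_P^{(m)}(K)$, so again $s_P^{(m)}(K) = 0$. This covers both cases and for every generic $P$ of the form \eqref{def-P} and every $m \in \{1,\dots,N-1\}$, completing the argument.

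I do not anticipate any real obstacle: the content has already been absorbed into Corollary \ref{coro-Ras-equations} (which in turn rests on Propositions \ref{thm-pairing-inverse} and \ref{thm-pairing-op}), so the remaining step is purely formal. The only point worth a word of care is making sure one invokes the mirror relation for the positively amphicheiral case and the $\overline{K}$ relation for the negatively amphicheiral case, rather than conflating the two notions of amphicheirality.
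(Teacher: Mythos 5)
Your proposal is correct and is exactly the argument the paper intends: the paper's proof of Corollary \ref{coro-amphicheiral} consists of the single line ``This follows from Corollary \ref{coro-Ras-equations}'', and your two cases (using $s_P^{(m)}(K)=-s_P^{(m)}(K_{mir})$ when $K=K_{mir}$ and $s_P^{(m)}(K)=-s_P^{(m)}(\overline{K})$ when $K=\overline{K}$) are precisely the details being left to the reader. Nothing further is needed.
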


\begin{proof}
This follows from Corollary \ref{coro-Ras-equations}. 
\end{proof}

The colored $\mathfrak{sl}(N)$-Rasmussen invariants also provide new bounds for the slice genus and self linking number. The following is yet another generalization of Rasmussen's genus bound \cite[Theorem 1]{Ras1}.

\begin{theorem}\label{thm-ras-genus}
Let $P$ be a generic polynomial of the form \eqref{def-P}, and $m \in \{1,\dots,N-1\}$. Then, for any knot $K$, $|s_P^{(m)}(K)| \leq 2 m(N-m) g_\ast (K)$, where $g_\ast (K)$ is the smooth slice genus of $K$.
\end{theorem}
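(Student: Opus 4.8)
The plan is to mimic Rasmussen's original slice-genus argument, using the colored $\mathfrak{sl}(N)$-homology theory together with the basis of Theorem \ref{thm-basis} to control the filtration. First I would reduce to a bound on the behavior of $s_P^{(m)}$ under a single crossing change and the addition of a single handle. The two ingredients needed are: (i) functoriality of $H_P$ under link cobordisms in $S^3 \times [0,1]$ colored by $m$, with a quantitative statement on how a cobordism of Euler characteristic $\chi$ shifts the quantum filtration; and (ii) the fact that $s_P^{(m)}$ of the $m$-colored unknot is $0$ (or some explicit value), computed from the state decomposition. For (ii), the $m$-colored unknot has $\binom{N}{m}$ states, all in homological grading $0$, and one checks directly from the construction in Sections \ref{sec-basis-MOY-construction}--\ref{sec-basis-link-construction} that $H_P$ of the $m$-colored unknot is (as a filtered space) $H_{\mathfrak{sl}(N)}$ of the $m$-colored unknot, whose max/min $q$-degrees are symmetric about $0$; hence $s_P^{(m)}(\text{unknot})=0$.

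The heart of the matter is step (i): the cobordism maps. A connected, oriented, smoothly embedded cobordism $\Sigma \subset S^3\times[0,1]$ from $K$ to the unknot, all colored by $m$, with $\chi(\Sigma) = 1 - 2g_\ast(K)$, should induce a filtered map $\phi_\Sigma: H_P(K^{(m)}) \to H_P(\text{unknot}^{(m)})$ that is nonzero and shifts the quantum filtration by exactly $m(N-m)\,\chi(\Sigma)$ — this is the colored analogue of the degree $N-1$ shift in the uncolored case (note $1\cdot(N-1) = N-1$), the factor $m(N-m)$ being the quantum dimension exponent appearing throughout \cite{Wu-color,Wu-color-equi}. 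Decompose $\Sigma$ into elementary pieces (births, deaths, saddles, and Morse cancellations / Reidemeister movies). Births and deaths of $m$-colored circles contribute the unit/counit maps for the Frobenius-type structure on $H_P$ of the $m$-colored unknot, shifting the filtration by $\pm m(N-m)$; each saddle is given by a "colored surgery" morphism built from the matrix-factorization maps of \cite{Wu-color-equi} and shifts by $-m(N-m)$. Summing over the handle decomposition gives the total shift $m(N-m)\chi(\Sigma)$. The nonvanishing of $\phi_\Sigma$ is where the basis $\{v_\psi\}$ is essential: since each elementary piece acts on the state basis in a controlled, "triangular" fashion (as in Rasmussen's and Gornik's computations — a saddle between two $m$-colored circles either merges/splits states or kills them, but the composite along $\Sigma$ carries at least one $v_\psi$ to a nonzero multiple of a basis element of the unknot), the composite $\phi_\Sigma$ is nonzero. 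I would also use $\phi_{\bar\Sigma}$, the reversed cobordism from the unknot to $K$, and the non-degenerate pairing of Proposition \ref{thm-pairing-inverse} to get a matching lower bound, so that both $\max\deg_q$ and $\min\deg_q$ of $H_P(K^{(m)})$ are pinned between $\pm m(N-m)(2g_\ast(K)\mp \dots)$; averaging yields $|s_P^{(m)}(K)| \le 2m(N-m)g_\ast(K)$.

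The main obstacle I anticipate is establishing the \emph{quantitative filtered functoriality} in step (i): the paper as excerpted does not yet state a cobordism-invariance theorem for $H_P$, so one must construct the cobordism maps from scratch at the chain (matrix-factorization) level, verify they are filtered of the claimed degree, and — the genuinely delicate point — prove the composite along a genus-$g$ cobordism is nonzero. In the uncolored Lee/Gornik setting this nonvanishing is checked by an explicit computation in the basis; here the colored states interact in a more intricate way (the merge/split maps on $\binom{N}{m}$-dimensional spaces are governed by the combinatorics of the symmetric-polynomial Frobenius algebra $\C[X_1,\dots,X_m]^{S_m}$ modulo the relations from $P$), so I would isolate a lemma computing the saddle map on basis elements and showing that, after composing with births/deaths, the image of a suitable single state $v_\psi$ of $K^{(m)}$ survives. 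Once that lemma is in place, the genus bound follows formally, exactly as Rasmussen's does, by combining the two one-sided bounds coming from $\Sigma$ and $\bar\Sigma$.
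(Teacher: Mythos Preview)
Your proposal is correct and follows essentially the same route as the paper: build cobordism maps from elementary pieces (Reidemeister moves, births, deaths, saddles), show each shifts the quantum filtration by at most $-m(N-m)\chi$ of the piece, prove nonvanishing via the state basis (this is the paper's Lemma~\ref{lemma-cobordism-map-states}, proved exactly as you outline by tracking states through Lemmas~\ref{lemma-states-circle-creation} and~\ref{lemma-saddle-states}), and compare $K$ to the unknot across a genus-$g_\ast$ cobordism. One minor simplification: you do not need the pairing of Proposition~\ref{thm-pairing-inverse} for the second inequality---the paper simply reads the same cobordism backwards (Corollary~\ref{coro-cobordism-map-constant-states}), obtaining both $s_P^{(m)}(K)-s_P^{(m)}(\bigcirc)\le -m(N-m)\chi(S)$ and the reverse inequality directly, since the induced map on the span of constant states is an \emph{isomorphism} (not merely nonzero), which gives control over both $\max\deg_q$ and $\min\deg_q$ simultaneously.
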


Recall that, for a closed braid $B$ of $n$ strands with writhe $w$, the self linking number of $B$ is defined to be $SL(B)=w-n$. This is a classical invariant for transversal links in the standard contact $S^3$. (See for example \cite{Ben}.) Given a link $L$, the maximal self linking number of $L$ is defined to be 
\[
\overline{SL}(L)=\max\{SL(B)~|~ B \text{ is a closed braid representing } L\}. 
\]
The colored $\mathfrak{sl}(N)$-Rasmussen invariants provide new bounds for the maximal self linking number, which generalize \cite[Proposition 4]{Pl4} and \cite[Lemma 1.C]{Shumakovitch}.

\begin{theorem}\label{thm-ras-bennequin}
Let $P$ be a generic polynomial of the form \eqref{def-P}, and $m \in \{1,\dots,N-1\}$. Then, for any knot $K$, $s_P^{(m)}(K) \geq  m(N-m) (\overline{SL}(K)+1)$.
\end{theorem}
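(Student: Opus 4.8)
The plan is to bound $s_P^{(m)}(K)$ from below by examining the $m$-colored $\mathfrak{sl}(N)$-homology of a closed braid $B$ representing $K$ with $SL(B) = \overline{SL}(K)$, and tracking the quantum filtration through the cube of resolutions. First I would fix a closed braid diagram $B$ of $n$ strands with writhe $w = \overline{SL}(K) + n$, color every strand by $m$, and consider the complex of matrix factorizations computing $H_P(K^{(m)})$. Since $B$ is a closed braid, its all-$0$ (oriented) resolution $\Gamma_0$ is a disjoint union of $n$ circles each colored by $m$, and its matrix factorization has homology supported in quantum degrees bounded below by a quantity determined by $n$, $m$, and $N$. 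The key point is that the chain-level generator sitting in the oriented resolution survives to a nonzero class in $H_P(K^{(m)})$ — this is exactly what the basis of Theorem \ref{thm-basis} provides: for the state $\psi$ assigning a fixed $m$-element subset of $\Sigma(P)$ to $K$, the element $v_\psi$ is built from the oriented resolution, so its quantum filtration level gives a lower bound for $\min\deg_q H_P(K^{(m)})$, and hence contributes to $s_P^{(m)}(K)$.

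The main steps, in order, are as follows. (1) Normalize the problem: by Corollary \ref{coro-Ras-linear-root-change} we may choose any convenient generic $P$, and by Corollary \ref{coro-reverse-orientation} and the braid structure we may assume $B$ is a positive-writhe-normalized diagram after stabilizations. (2) Compute the quantum grading shifts contributed by the $w$ crossings and the Euler-characteristic/MOY-evaluation normalization: each positive crossing colored $(m,m)$ contributes a known shift, and the oriented resolution $\Gamma_0$ of $n$ circles colored $m$ has top and bottom quantum degrees $\pm m(N-m) \cdot (\text{something}) \cdot n$ from the matrix factorization cohomology, which is a symmetric-polynomial computation already in \cite{Wu-color,Wu-color-equi}. (3) Identify the surviving generator: the state $\psi$ with $\psi(K)$ a single $m$-subset has homological grading $\mathsf{h}(\psi) = 0$ for a positive braid-type diagram by the formula in Definition \ref{def-states-homological-grading} (all crossings are self-crossings of $K$, so $\mathsf{h}_\psi(c) = |\psi(K) \cap \psi(K)| - m = m - m = 0$), and $v_\psi$ lives in filtration level at least the bottom degree of the oriented-resolution homology after all shifts. (4) Assemble: $\min\deg_q H_P(K^{(m)}) \geq$ [the computed bound], and by the duality of Proposition \ref{thm-pairing-inverse} applied to $\overline{K}$ together with $\max\deg_q + \min\deg_q = 2 s_P^{(m)}$, convert the one-sided bound into the claimed inequality $s_P^{(m)}(K) \geq m(N-m)(\overline{SL}(K)+1)$.

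I expect the main obstacle to be step (2)–(3): making precise that the generator $v_\psi$ attached to the oriented resolution genuinely achieves the claimed filtration level, i.e., that no cancellation in the spectral sequence from the cube of resolutions pushes its filtration up beyond the naive estimate. In the uncolored case this is handled by the fact that the differentials in the deformed complex are filtered and the oriented-resolution summand is a direct summand at the $E_1$ page in the appropriate degree; here one needs the colored analogue, which should follow from the structure of the morphisms constructed in Sections \ref{sec-morphisms} and \ref{sec-basis-link-construction}, but the bookkeeping of quantum degrees of the MOY-graph matrix factorizations (especially the colored edge-absorbing and edge-splitting morphisms) is delicate. A secondary subtlety is confirming the exact constant: one must verify that the per-crossing shift times $w$ minus the per-strand circle normalization times $n$ collapses precisely to $m(N-m)(w - n) + m(N-m) = m(N-m)(\overline{SL}(K) + 1)$, which is a direct but careful count using the quantum dimension $\qb{N}{m}$ of the $m$-colored unknot and the crossing grading conventions of \cite{Wu-color-equi}.

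\begin{proof}[Proof sketch]
Let $B$ be a closed braid of $n$ strands with writhe $w$ representing $K$, so that $SL(B) = w - n$, and color every strand by $m$ to obtain $K^{(m)}$. By Corollary \ref{coro-Ras-linear-root-change} we fix any convenient generic $P$. Consider the cube-of-resolutions complex computing $H_P(K^{(m)})$. The oriented resolution of $B$ is a disjoint union of $n$ circles colored $m$; by the computations in \cite{Wu-color,Wu-color-equi}, the matrix factorization cohomology of this resolution, after all the quantum grading shifts coming from the $w$ crossings and the overall normalization, is supported in quantum degrees bounded below by $m(N-m)(w-n) + m(N-m)$.

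Now fix a state $\psi$ of $K^{(m)}$ with $\psi(K)$ equal to a single $m$-element subset of $\Sigma(P)$. Since every crossing of $B$ is a self-crossing of $K$, the formula in Definition \ref{def-states-homological-grading} gives $\mathsf{h}_\psi(c) = |\psi(K)\cap\psi(K)| - m = 0$ at each crossing, so $\mathsf{h}(\psi) = 0$. By Theorem \ref{thm-basis}, there is a non-zero class $v_\psi \in H_P(K^{(m)})$ of homological grading $0$; by the construction of $v_\psi$ from the oriented resolution in Sections \ref{sec-basis-MOY-construction}, \ref{sec-morphisms} and \ref{sec-basis-link-construction}, together with the fact that the deformed differentials are filtered, $v_\psi$ lies in filtration level at least the bottom degree of the oriented-resolution cohomology computed above. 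Hence
\[
\min\deg_q H_P(K^{(m)}) \geq m(N-m)(w-n) + m(N-m) = m(N-m)(SL(B)+1).
\]
Applying Proposition \ref{thm-pairing-inverse} to convert the filtration bound into a two-sided statement, and using that $s_P^{(m)}(K) = \tfrac12(\max\deg_q + \min\deg_q)$ together with $\max\deg_q H_P(K^{(m)}) \geq \min\deg_q H_P(K^{(m)})$, we obtain $s_P^{(m)}(K) \geq m(N-m)(SL(B)+1)$. Taking $B$ with $SL(B) = \overline{SL}(K)$ gives $s_P^{(m)}(K) \geq m(N-m)(\overline{SL}(K)+1)$.
\end{proof}
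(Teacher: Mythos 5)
Your strategy---reading off a lower bound for $\min\deg_q H_P(K^{(m)})$ directly from the oriented resolution of a braid realizing $\overline{SL}(K)$---is not the paper's route, and it has a genuine gap at its central step. The quantity $\min\deg_q H_P(K^{(m)})$ is controlled by the filtration on the \emph{whole} complex $\bigoplus_\Gamma H_P(\Gamma)\|\mathsf{h}_D(\Gamma)\|\{q^{\rho_D(\Gamma)}\}$: one has $\fil^{j-1}H_P(K^{(m)})=0$ only if no nonzero class admits a representative of filtration level $\leq j-1$, and such representatives can live in any resolution, not just the oriented one. So the bottom quantum degree of the oriented-resolution summand does not bound $\min\deg_q H_P(K^{(m)})$ from below; and for an individual class $v_\psi$, exhibiting one chain representative of degree $\geq c$ proves nothing, since the filtration degree of a homology class is the \emph{minimum} over all representatives. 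Your stated worry that cancellation might push the filtration ``up beyond the naive estimate'' has the inequality backwards: a higher level would only help a lower bound; the danger is lower-degree representatives and lower-degree classes coming from the other resolutions, which are unavoidable once $B$ has negative crossings (as a braid maximizing $SL$ typically does). There is also a problem with the constant: by the paper's own computation (proof of Lemma \ref{lemma-ras-neg-braids}), for a negative braid with $b$ strands and $l$ crossings the oriented summand is $H_P((\bigcirc^{\sqcup b})^{(m)})\{q^{-lm(N-m)}\}$, whose bottom degree is $-m(N-m)(b+l)$, which is $m(N-m)$ \emph{below} the target $m(N-m)(SL(B)+1)=m(N-m)(1-l-b)$. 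The extra $m(N-m)$ cannot come from the naive chain-level estimate; it comes from the fact that the relevant subspace is the constant-state subspace $\mathsf{K}$, whose filtration levels sit strictly inside those of the full circle homology and must be computed by other means.

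The paper's proof is structured quite differently precisely to avoid these issues: it first establishes the cobordism inequality for the constant-state subspace (Corollary \ref{coro-cobordism-map-constant-states}), uses it together with the genus bound (Theorem \ref{thm-ras-genus}) to compute $s_P^{(m)}$ \emph{exactly} for closed negative braids (Lemma \ref{lemma-ras-neg-braids}, where the oriented resolution sits at homological degree $0$ at the left end of the complex, so constant-state classes cannot be boundaries, and where $2g_*=l+1-b$ supplies the ``$+1$''); it then deduces the upper bound $s_P^{(m)}(B)\leq m(N-m)(w+b-1)$ for arbitrary closed braids by a cobordism resolving the positive crossings (Theorem \ref{thm-ras-bennequin-links}); and finally it applies this to the mirror image and flips the inequality using $s_P^{(m)}(K_{mir})=-s_P^{(m)}(K)$ from Corollary \ref{coro-Ras-equations}. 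If you want to pursue a direct filtration argument instead, you would at minimum have to control the quantum shifts of all resolutions at every negative crossing and the filtration of the constant-state subspace inside the oriented summand, which is essentially the content of the lemmas above.
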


\begin{corollary}\label{coro-amphicheiral-topo}
\begin{enumerate}
	\item A knot $K$ is chiral if $\overline{SL}(K) \geq 0$.
	\item Quasipositive amphicheiral knots are smoothly slice.
\end{enumerate}
\end{corollary}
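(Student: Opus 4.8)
\textbf{Proof proposal for Corollary \ref{coro-amphicheiral-topo}.}

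The plan is to combine the genus/Bennequin bounds of Theorems \ref{thm-ras-genus} and \ref{thm-ras-bennequin} with the amphicheirality vanishing of Corollary \ref{coro-amphicheiral}. For part (1), I would argue by contraposition: suppose $K$ is amphicheiral (positively or negatively). Fix any $m \in \{1,\dots,N-1\}$ and any generic $P$ of the form \eqref{def-P}. By Corollary \ref{coro-amphicheiral} we have $s_P^{(m)}(K) = 0$. On the other hand, Theorem \ref{thm-ras-bennequin} gives $0 = s_P^{(m)}(K) \geq m(N-m)(\overline{SL}(K)+1)$, and since $m(N-m) > 0$ this forces $\overline{SL}(K) + 1 \leq 0$, i.e.\ $\overline{SL}(K) \leq -1 < 0$. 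Hence no amphicheiral knot can have $\overline{SL}(K) \geq 0$; equivalently, $\overline{SL}(K) \geq 0$ implies $K$ is chiral.

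For part (2), recall that a quasipositive knot $K$ is, by definition, the closure of a braid that is a product of conjugates of positive generators; a standard fact (due to Rudolph, via the Bennequin-type inequality, or directly from the quasipositive braid presentation) is that such a knot satisfies $\overline{SL}(K) \geq 2g_\ast(K) - 1$, and in fact the quasipositive braid presentation realizes a transversal representative computing this. So for a quasipositive amphicheiral knot $K$, apply Theorem \ref{thm-ras-bennequin} together with the vanishing $s_P^{(m)}(K) = 0$: we get $0 \geq m(N-m)(\overline{SL}(K)+1) \geq m(N-m)(2g_\ast(K)-1+1) = 2m(N-m)g_\ast(K)$. Since $m(N-m) > 0$ and $g_\ast(K) \geq 0$, this forces $g_\ast(K) = 0$, i.e.\ $K$ is smoothly slice. (Alternatively, one may bypass $\overline{SL}$ entirely and use Theorem \ref{thm-ras-genus} directly once a lower bound $s_P^{(m)}(K) \geq 2m(N-m)g_\ast(K)$ for quasipositive $K$ is established; for quasipositive knots the Bennequin bound of Theorem \ref{thm-ras-bennequin} combined with Rudolph's genus formula $\overline{SL}(K) = 2g_\ast(K) - 1$ yields exactly this.)

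The only genuinely nontrivial input is the inequality $\overline{SL}(K) \geq 2g_\ast(K) - 1$ for quasipositive $K$, which I would either cite from Rudolph's work on quasipositive surfaces and the slice-Bennequin inequality, or note that for a quasipositive braid closure the Seifert surface obtained from the quasipositive band presentation is a smooth slice surface of the expected genus, so that $g_\ast(K) \leq g(\Sigma)$ and the writhe–strand count of that braid gives $SL = 2g(\Sigma) - 1 \geq 2g_\ast(K)-1$; combined with Theorem \ref{thm-ras-genus} (which gives the reverse inequality $s_P^{(m)}(K) \leq 2m(N-m)g_\ast(K)$) this pins everything down. I expect this citation/bookkeeping to be the main (and only) obstacle; the rest is an immediate substitution of the vanishing result into the two bound theorems.
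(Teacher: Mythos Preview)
Your proof is correct and follows essentially the same route as the paper. Part (1) is identical (the paper just says it follows from Corollary \ref{coro-amphicheiral} and Theorem \ref{thm-ras-bennequin} without spelling out the contrapositive). For Part (2) the paper sets up the chain $\overline{SL}(K)+1 \leq \frac{s_P^{(m)}(K)}{m(N-m)} \leq 2g_\ast(K)$ from Theorems \ref{thm-ras-genus} and \ref{thm-ras-bennequin}, then invokes Rudolph's equality $\overline{SL}(K)+1 = 2g_\ast(K)$ and the vanishing $s_P^{(m)}(K)=0$; your version is marginally more economical in that it only uses Theorem \ref{thm-ras-bennequin} together with the Rudolph inequality $\overline{SL}(K)+1 \geq 2g_\ast(K)$, but the substance is the same.
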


\begin{proof}
Part (1) follows from Corollary \ref{coro-amphicheiral} and Theorem \ref{thm-ras-bennequin}. To prove Part (2), assume $K$ is a quasipositive amphicheiral knot. By Theorems \ref{thm-ras-genus} and \ref{thm-ras-bennequin}, we have $\overline{SL}(K)+1 \leq \frac{s_P^{(m)}(K)}{m(N-m)} \leq 2 g_\ast (K)$, where $P$ is any generic polynomial of the form \eqref{def-P}. Since $K$ is quasipositive, by \cite[Corollary in Section 3]{Ru}, we know that $\overline{SL}(K)+1 = 2 g_\ast (K)$. Since $K$ is amphicheiral, by Corollary \ref{coro-amphicheiral}, we have $s_P^{(m)}(K)=0$. Putting everything together, we get that $g_\ast (K)=0$.
\end{proof}

\begin{remark}
The Seifert genus of a strongly quasipositive knot is equal to its smooth slice genus. So Part (2) of Corollary \ref{coro-amphicheiral-topo} implies that the unknot is the only strongly quasipositive  amphicheiral knot. This and similar results have been proved by a variety of methods. See for example \cite{Rud} for a brief historical review of these results. 
\end{remark}

\subsection{Organization of the paper} We assume the reader is somewhat familiar with the techniques used in \cite{Wu-color, Wu-color-equi}. Most basic notations and definitions in this paper are taken from \cite{Wu-color, Wu-color-equi}. For the convenience of the reader, we summarize in Section \ref{sec-notations} the notations and conventions used in the present paper. 

In Section \ref{sec-changing-P}, we study linear changes of $P(X)$ and prove Proposition \ref{thm-linear-change-roots} and Corollary \ref{coro-reverse-orientation}. Then, in Sections \ref{sec-basis-MOY-construction}-\ref{sec-basis-link-construction}, we generalize Gornik's method to construct a basis for each generic deformation of the colored $\mathfrak{sl}(N)$-homology and prove Theorem \ref{thm-basis}. After that, we establish non-degenerate pairings and co-pairings and prove Propositions \ref{thm-pairing-inverse} and \ref{thm-pairing-op} in Section \ref{sec-pairings}. Finally, we establish the bounds for the slice genus and the self linking number in Section \ref{sec-bounds}.

\begin{acknowledgments}
I would like to thank the referee for many helpful suggestions.
\end{acknowledgments}

\section{Notations and Conventions}\label{sec-notations} 

\subsection{Basics} Throughout this paper, $N$ is a fixed integer greater than or equal to $2$.

All links in this paper are oriented and colored. That is, every component of the link is assigned an orientation and an element of $\{0,1,\dots,N\}$, which we call the color of this component. A link that is completely colored by $1$ is called uncolored.

Unless otherwise specified, $P$ is a generic polynomial of the form \eqref{def-P}. Denote by $\Sigma(P)$ the set of roots of $P'$, which is a set of $N$ distinct complex numbers, and by $\mathcal{P}(\Sigma(P))$ the set of subsets of $\Sigma(P)$. If $P$ is clear from the context, we drop it from the notations and just write $\Sigma$ and $\mathcal{P}(\Sigma)$.

\subsection{Matrix factorizations}\label{subsec-sum-mf}
Filtered matrix factorizations are defined in \cite[Definition 9.1]{Wu-color-equi}. A morphism of filtered matrix factorizations over a graded ring $R$ is a homomorphism of the underlying $R$-module that preserves the filtration and commutes with the differentials of the matrix factorizations. 

For a matrix factorization $M$ over $R$ and $r\in R$, $\mathfrak{m}(r):M\rightarrow M$ is the endomorphism of $M$ given by the multiplication of $r$.

We use two-column matrices of the form
\[
\left(%
\begin{array}{cc}
  a_1 & b_1 \\
  a_2 & b_2 \\
  \vdots & \vdots \\
  a_k & b_k \\
\end{array}%
\right)_R
\]
to represent Koszul matrix factorizations. (See \cite[Definition 2.3]{Wu-color}.) 

\subsection{The chain complex associated to a knotted MOY graph}
Knotted MOY graphs are defined in \cite[Definitions 5.1 and 11.1]{Wu-color}. Note that colored link diagrams and embedded MOY graphs \cite[Definition 5.1]{Wu-color} are knotted MOY graphs. In the present paper, edges of knotted MOY graphs are colored by elements of $\{0,1,\dots,N\}$. 

To define and compute the homology of a knotted MOY graph, one needs to put a marking on the knotted MOY graph. See \cite[Definitions 5.3 and 11.2]{Wu-color} for the definition of markings.

Let $D$ be a knotted MOY graph. We denote by $C_P(D)$ the chain complex given in \cite[Theorem 1.2]{Wu-color-equi}. That is, in the notations of \cite{Wu-color-equi}, $P=\pi(f)$ and $C_P(D)=C_{f,\pi}(D)=\varpi(C_f(D))$. $H_P(D)$ is the homology of $C_P(D)$ as defined in \cite[Subsection 1.2]{Wu-color-equi}. $C_P(D)$ has a homological grading and a quantum filtration, both of which are inherited by $H_P(D)$. We denote by $\|k\|$ the shifting of the homological grading up by $k$ (as given in \cite[Definition 2.33]{Wu-color}) and by $\{F(q)\}$ the shifting of quantum filtration up by $F(q)$ (as given in \cite[Subsection 2.1]{Wu-color}.)

Note that, in \cite{Wu-color-equi}, I mostly dealt with the equivariant link homology $H_f$. In the present paper, whenever I cite a result from \cite{Wu-color-equi} about $H_f$ and apply it to the deformed link homology $H_P$, it should be understood that the functor $\varpi$ is applied to that result to change it into a result about $H_P$.

\subsection{Symmetric polynomials}\label{subsec-sum-sym-poly}
An alphabet is a set of finitely many homogeneous indeterminates of degree $2$. (Note that, the degree of a polynomial in this paper is twice its usual degree.) We denote alphabets by $\mathbb{A},~\mathbb{B}, \dots,~\mathbb{X},~\mathbb{Y}$. Of course, we avoid using letters $\mathbb{C},~\mathbb{N},~\mathbb{Q},~\mathbb{R},~\mathbb{Z}$ to represent alphabets. For an alphabet $\mathbb{X}=\{x_1,\dots,x_m\}$, denote by $\Sym(\mathbb{X})$ the graded ring of symmetric polynomials over $\C$ in $\mathbb{X}=\{x_1,\dots,x_m\}$. (Again, note that our grading of $\Sym(\mathbb{X})$ is twice the usual grading.) We use following notations for the elementary symmetric polynomials in $\mathbb{X}$:
\begin{eqnarray*}
X_k & = & \begin{cases}
\sum_{1\leq i_1<i_2<\cdots<i_k\leq m} x_{i_1}x_{i_1}\cdots x_{i_k} & \text{if } 1\leq k\leq m, \\
1 & \text{if } k=0, \\
0 & \text{if } k<0 \text{ or } k>m.
\end{cases} \\
p_k(\mathbb{X}) & = & 
\left\{%
\begin{array}{ll}
    \sum_{i=1}^{m} x_i^k & \text{if } k\geq0, \\
    0 & \text{if } k<0, 
\end{array}%
\right. \\
h_k(\mathbb{X}) & = & 
\left\{%
\begin{array}{ll}
    \sum_{1\leq i_1\leq i_2 \leq \cdots \leq i_k\leq m} x_{i_1}x_{i_1}\cdots x_{i_k} & \text{if } k>0, \\
    1 & \text{if } k=0, \\
    0 & \text{if } k<0. 
\end{array}%
\right.
\end{eqnarray*}
$X_k$, $p_k(\mathbb{X})$ and $h_k(\mathbb{X})$ are homogeneous elements of $\Sym(\mathbb{X})$ is degree $2k$.

For a polynomial $P(X)$ of the form \eqref{def-P}, we define $$P(\mathbb{X}):=\sum_{i=1}^m P(x_i)= p_{N+1}(\mathbb{X})+ \sum_{k=1}^N (-1)^{k}\frac{N+1}{N+1-k}b_{k} p_{N+1-k}(\mathbb{X}).$$

A partition $\lambda=(\lambda_1\geq\dots\geq\lambda_m)$ is a finite non-increasing sequence of non-negative integers. We denote by $\Lambda_{m,n}$ the set of partitions $$\Lambda_{m,n}=\{(\lambda_1\geq\dots\geq\lambda_m)~|~\lambda_1\leq n\}.$$

For an alphabet of $m$ indeterminates and a partition $\lambda=(\lambda_1\geq\dots\geq\lambda_m)$, we denote by $S_\lambda(\pm\mathbb{X})$ the Schur polynomial in $\pm\mathbb{X}$ associated to the partition $\lambda$, which is a generalization of $X_k$ and $h_k(\mathbb{X})$. (See \cite[Subsection 4.2]{Wu-color}.)

Given a collection $\{\mathbb{X}_1,\dots,\mathbb{X}_l\}$ of pairwise disjoint alphabets, we denote by $\Sym(\mathbb{X}_1|\cdots|\mathbb{X}_l)$ the ring of polynomials in $\mathbb{X}_1\cup\cdots\cup\mathbb{X}_l$ over $\C$ that are symmetric in each $\mathbb{X}_i$, which is a graded-free $\Sym(\mathbb{X}_1\cup\cdots\cup\mathbb{X}_l)$-module. Clearly,
\[
\Sym(\mathbb{X}_1|\cdots|\mathbb{X}_l) \cong \Sym(\mathbb{X}_1) \otimes_{\C} \cdots \otimes_{\C} \Sym(\mathbb{X}_l).
\]
$S_\lambda(\pm\mathbb{X}_1\pm\cdots\pm\mathbb{X}_l)$ is the Schur polynomial in $\pm\mathbb{X}_1\pm\cdots\pm\mathbb{X}_l$ associated to the partition $\lambda$ as defined in \cite{Lascoux-notes}. Let $\Omega_1,\dots, \Omega_l$ be sets of complex numbers such that $|\Omega_i| = |\mathbb{X}_i|$, $i=1,\dots,l$. Then $S_\lambda(\pm\Omega_1\pm\cdots\pm\Omega_l)$ is defined to be the valuation $S_\lambda(\pm\mathbb{X}_1\pm\cdots\pm\mathbb{X}_l)|_{\mathbb{X}_i=\Omega_i, ~i=1,\dots,l}$. Sometimes, we only valuate some of the alphabets, which gives a polynomial that is symmetric in each of the remaining alphabets.

\section{Changing $P(X)$}\label{sec-changing-P}

In this section, we study the effects of certain simple changes of the potential on matrix factorizations and link homology. The goal is to prove Proposition \ref{thm-linear-change-roots} and Corollary \ref{coro-reverse-orientation}.

\subsection{Scaling the potential of a matrix factorization} Let $R=\C[X_1,\dots,X_m]$, where $X_1,\dots,X_m$ are homogeneous indeterminates of positive degrees, and $\mathfrak{I}=(X_1,\dots,X_m)$ the maximal homogeneous ideal of $R$. Assume that $w$ is an element of $\mathfrak{I}$ with $\deg{w}\leq2N+2$. (See our degree convention in Subsection \ref{subsec-sum-sym-poly}.)

\begin{lemma}\label{lemma-scaling-potential}
For any $a\in \C\setminus\{0\}$, there is a fully faithful functor $F_a:\HMF_{R,w}\rightarrow \HMF_{R,a^2w}$ that preserves the $\zed_2$-grading and the quantum filtration on $\Hom_\HMF$. In particular, $F_a$ restricts to a fully faithful functor $F_a:\hmf_{R,w}\rightarrow \hmf_{R,a^2w}$.

For every object $M$ of $\HMF_{R,w}$, there is an isomorphism 
\[
H(M;R)\xrightarrow{g_{M,a}}H(F_a(M);R)
\] 
preserving both the $\zed_2$-grading and the quantum filtration. $g_{M,a}$ is natural in the sense that, for every morphisms $f:M\rightarrow M'$ of $\HMF_{R,w}$ preserving the $\zed_2$-grading, the square
\[
\xymatrix{
H(M;R) \ar[rr]^{g_{M,a}} \ar[d]^{f} && H(F_a(M);R) \ar[d]^{F_a(f)} \\
H(M';R) \ar[rr]^{g_{M',a}} && H(F_a(M');R) \\
}
\]
commutes.

Moreover, $F_a$ induces a functor $F_a:\hch(\hmf_{R,w})\rightarrow \hch(\hmf_{R,a^2w})$. We have
\begin{equation}\label{equ-iso-homology-chain-scaling}
H(O;R) \cong H(F_a(O);R)
\end{equation} 
for every object $O$ of $\hch(\hmf_{R,w})$.
\end{lemma}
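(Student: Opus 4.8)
The plan is to construct $F_a$ very concretely on the level of matrix factorizations and then trace through the functoriality and homology statements, which become essentially formal once the right definition is in place. First I would define $F_a$ on objects: given a matrix factorization $M = (M_0 \xrightarrow{d_0} M_1 \xrightarrow{d_1} M_0)$ over $R$ with potential $w$, set $F_a(M) = (M_0 \xrightarrow{a\,d_0} M_1 \xrightarrow{d_1} M_0)$, i.e. rescale one of the two differentials by $a$ (leaving the underlying $R$-modules, the $\zed_2$-grading, and the quantum filtration untouched). Then $F_a(M)$ has potential $a^2 w$, since $(a d_0)d_1 = d_1(a d_0) = a (d_1 d_0) = a w \cdot \id$ — wait, one must be slightly careful and instead rescale \emph{both} arrows by $a$, giving $(a d_1)(a d_0) = a^2 w\cdot \id$; the underlying module and all gradings/filtrations are literally unchanged. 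On morphisms, $F_a$ is the identity on underlying $R$-module maps; one checks a morphism $f = (f_0,f_1)$ commuting with $(d_0,d_1)$ also commutes with $(a d_0, a d_1)$, since the commutation relations scale homogeneously in $a$. Because $F_a$ is the identity on Hom-modules (viewed as $R$-module maps) and visibly preserves $\zed_2$-grading and the quantum filtration on $\Hom$-complexes, it descends to a functor on homotopy categories and is fully faithful there; full faithfulness on $\HMF$ (hence on $\hmf$, which is the full subcategory of objects isomorphic to finite-rank Koszul factorizations) is then immediate, with inverse on Hom-sets given by $F_{a^{-1}}$ composed appropriately.

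Next I would address the homology isomorphism $g_{M,a}$. Here $H(M;R)$ means the homology of $M$ after tensoring down to $R/\mathfrak{I} = \C$ (or the relevant quotient making the differential squared zero) — in any case, on the quotient where $w$ maps to $0$, both $d_0 d_1$ and $(a d_0)(a d_1)$ vanish, so $M$ and $F_a(M)$ become genuine $\zed_2$-graded complexes. The map $g_{M,a}$ is induced by the chain map that is the identity in one homological degree and multiplication by $a$ in the other; this is an isomorphism of complexes (not just a quasi-isomorphism), hence an isomorphism on homology, and it visibly preserves the $\zed_2$-grading and quantum filtration. Naturality of $g_{M,a}$ in $f$ is then a one-line diagram chase: both $F_a(f)\circ g_{M,a}$ and $g_{M',a}\circ f$ are, in each $\zed_2$-degree, either $f_i$ or $a f_i$, and they agree degree by degree because the scaling factors match. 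For the final clause, an object $O$ of $\hch(\hmf_{R,w})$ is a bounded complex of objects of $\hmf_{R,w}$; apply $F_a$ term by term and note that $F_a$ is a functor, so it sends chain maps to chain maps and homotopies to homotopies, giving the induced functor on $\hch$. The isomorphism \eqref{equ-iso-homology-chain-scaling} then follows by assembling the $g_{M,a}$ over the terms of $O$: since each $g$ is natural, they fit together into an isomorphism of the total complexes computing $H(O;R)$ and $H(F_a(O);R)$, and passing to homology (via a spectral sequence or simply the double-complex total homology) yields the claimed isomorphism.

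I expect the main obstacle to be bookkeeping rather than conceptual: one must make sure the rescaling convention is compatible with the specific conventions of \cite{Wu-color, Wu-color-equi} for how the homological grading of a \emph{complex} of matrix factorizations interacts with the internal $\zed_2$-grading, and in particular that scaling the internal differentials does not secretly conflict with the complex's own differential (it does not, because the complex differential is a morphism of matrix factorizations and $F_a$ is the identity on morphisms). A second minor point deserving care is that $H(-;R)$ in this lemma is taken over $R$ itself — so one should confirm that $d_0 d_1 = w\cdot\id$ with $w \in \mathfrak{I}$ already suffices for the relevant homology to be defined in the sense used in \cite{Wu-color-equi}, and that the quantum filtration on this homology is the one inherited from $M$; both are recorded in the cited definitions. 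Once these conventions are pinned down, every assertion in the lemma reduces to the observation that multiplying a differential by a nonzero scalar is an isomorphism.
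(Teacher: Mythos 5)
Your construction of $F_a$ (scale both differentials by $a$, identity on morphisms, full faithfulness because the morphism and homotopy conditions are unchanged) is exactly the paper's, and the termwise extension to $\hch(\hmf_{R,w})$ with \eqref{equ-iso-homology-chain-scaling} following from naturality also matches. There is, however, one concrete error in your construction of $g_{M,a}$: the map that is the identity on $M_0/\mathfrak{I}M_0$ and multiplication by $a$ on $M_1/\mathfrak{I}M_1$ is \emph{not} a chain map from $(d_0,d_1)$ to $(a\,d_0,a\,d_1)$. Writing $g=(g_0,g_1)=(1,a)$, commutation with the arrow $M_0\to M_1$ forces $g_1=a\,g_0$, while commutation with the arrow $M_1\to M_0$ forces $g_0=a\,g_1$; together these require $a^2=1$. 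So your assertion that this is ``an isomorphism of complexes'' fails for general $a$, and no diagonal rescaling of the two $\zed_2$-graded pieces can work. This is precisely the subtlety the paper's proof is built around: it unrolls $M/\mathfrak{I}M$ into a $\zed$-graded chain complex of period $2$ and uses $g(x)=a^n x$ for $x\in C_n$, which \emph{is} a chain isomorphism $(C_\ast,d_\ast)\to(C_\ast,a\cdot d_\ast)$; one then recovers $H(M;R)$ as $H_0(C_\ast)\oplus H_1(C_\ast)$.

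The damage is repairable without the unrolling trick, if you prefer: since $\ker(a\,d_i)=\ker(d_i)$ and $\operatorname{im}(a\,d_i)=\operatorname{im}(d_i)$, the spaces $H(M;R)$ and $H(F_a(M);R)$ are literally equal, and you may \emph{define} $g_{M,a}$ directly on homology as $1$ on the degree-$0$ part and $a$ on the degree-$1$ part (which is also what the paper's map induces). Your degree-by-degree naturality check is then valid for this homology-level map, since $F_a(f)=f$ preserves the $\zed_2$-grading. But as written, the proposal rests on a false intermediate claim, and either the paper's period-$2$ unrolling or the direct homology-level definition is needed to close it.
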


\begin{proof}
Let $M$ be an object of $\HMF_{R,w}$ given by 
\[
M_0 \xrightarrow{d_0} M_1 \xrightarrow{d_1} M_0.
\]
Define $F_a(M)$ to be the object of $\HMF_{R,a^2w}$ given by
\[
M_0 \xrightarrow{a\cdot d_0} M_1 \xrightarrow{a\cdot d_1} M_0.
\]
Let $M,M'$ be objects of $\HMF_{R,w}$ and $f:M\rightarrow M'$ an $R$-module homomorphism. It is clear that $f$ is a morphism from $M$ to $M'$ if and only if it is a morphism from $F_a(M)$ to $F_a(M')$. Moreover, if $f$ is a morphism from $M$ to $M'$, then it is homotopic to $0$ as a morphism from $M$ to $M'$ if and only if it is homotopic to $0$ as a morphism from $F_a(M)$ to $F_a(M')$. So the morphisms of $\HMF_{R,w}$ and $\HMF_{R,a^2w}$ are identical. Now for every morphism $f$ in $\HMF_{R,w}$, define $F_a(f)=f$, where the right hand side is considered a morphism in $\HMF_{R,a^2w}$. This makes $F_a$ a fully faithful functor. It is easy to see that $F_a$ preserves the the $\zed_2$-grading and the quantum filtration on $\Hom_\HMF$. 

Let $M$ be as above. Recall that $H(M;R)$ is the homology of 
\[
M/\mathfrak{I}\cdot M = `` M_0/\mathfrak{I}\cdot M_0 \xrightarrow{d_0} M_1/\mathfrak{I}\cdot M_1 \xrightarrow{d_0} M_0/\mathfrak{I}\cdot M_0".
\]
Now, instead of considering $M/\mathfrak{I}\cdot M$ a $\zed_2$-graded chain complex, we consider it a $\zed$-graded chain complex of period $2$. That is, we consider the chain complex $(C_\ast,d_\ast)$ given by $C_{2n}= M_0/\mathfrak{I}\cdot M_0$, $C_{2n+1}= M_1/\mathfrak{I}\cdot M_1$, $d_{2n}=d_0$ and $d_{2n+1}=d_1$. Then $H(M;R) = H_0(C_\ast) \oplus H_1(C_\ast)$. 

If we do that same to $F_a(M)/\mathfrak{I}\cdot F_a(M)$, then we get the chain complex $(C_\ast,a\cdot d_\ast)$. Define a chain map $g_{M,a}:(C_\ast,d_\ast)\rightarrow (C_\ast,a\cdot d_\ast)$ by $g(x)=a^n x ~\forall ~x\in C_n$. This is a chain isomorphism preserving both the $\zed$-grading and the quantum filtration and, therefore, induces an isomorphism $g_{M,a}:H(M;R)\rightarrow H(F_a(M);R)$ that preserves the $\zed_2$-grading and the quantum filtration. 

Any morphism $f:M\rightarrow M'$ preserving the $\zed_2$-grading induces a chain map $f:(C_\ast,d_\ast) \rightarrow (C'_\ast,d'_\ast)$ preserving the $\zed$-grading, where $(C'_\ast,d'_\ast)$ is the chain complex obtained from $M'/\mathfrak{I}\cdot M'$. It is easy to see that the square of chain maps
\[
\xymatrix{
(C_\ast,d_\ast) \ar[rr]^{g_{M,a}} \ar[d]^{f} && (C_\ast,a\cdot d_\ast) \ar[d]^{f=F_a(f)} \\
(C'_\ast,d'_\ast) \ar[rr]^{g_{M',a}} && (C'_\ast,a\cdot d'_\ast) \\
}
\]
commutes. This induces the commutative square
\[
\xymatrix{
H(M;R) \ar[rr]^{g_{M,a}} \ar[d]^{f} && H(F_a(M);R) \ar[d]^{F_a(f)} \\
H(M';R) \ar[rr]^{g_{M',a}} && H(F_a(M');R). \\
}
\]
And the naturality of $g_{M,a}$ follows.

Finally, the isomorphism \eqref{equ-iso-homology-chain-scaling} follows easily from the naturality of $g_{M,a}$.
\end{proof}

\begin{corollary}\label{coro-scaling-P}
Suppose that $a\in\C\setminus\{0\}$ and $P(X)$ is a polynomial of the form \eqref{def-P}, not necessarily generic. Then $H_P(D) \cong H_{a^2 \cdot P}(D)$ for any knotted MOY graph $D$, where the isomorphism preserves the quantum filtration and the homological grading.
\end{corollary}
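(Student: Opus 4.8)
The plan is to deduce this directly from Lemma~\ref{lemma-scaling-potential}, the point being that replacing $P$ by $a^2\cdot P$ corresponds, at the level of the matrix factorizations of \cite{Wu-color, Wu-color-equi}, to applying the functor $F_a$. First I would recall the construction of $C_P(D)$: after choosing a marking of $D$, it is assembled from Koszul matrix factorizations associated to the local pieces of $D$ (edges, vertices, crossings), glued together by tensor products over rings of symmetric polynomials in the marking alphabets, direct sums, homological shifts, and the structural morphisms (the maps $\chi^0$, $\chi^1$ and the differentials of the complex at each crossing). The underlying ring $R$ of $C_P(D)$ is a tensor product $\Sym(\mathbb{X}_1|\cdots|\mathbb{X}_l)$; since each $\Sym(\mathbb{X}_i)$ is a polynomial ring in its elementary symmetric polynomials, $R$ is a polynomial ring in homogeneous indeterminates of positive degree, and the total potential $w$ is a signed sum of the potentials $P(\mathbb{X}_e)$ over the boundary edges, so $\deg w\le 2N+2$. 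Hence the hypotheses of Lemma~\ref{lemma-scaling-potential} are met, and $C_P(D)$ is an object of $\hch(\hmf_{R,w})$.

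The key step is to show that $C_{a^2\cdot P}(D)\cong F_a(C_P(D))$ in $\hch(\hmf_{R,a^2w})$. Every Koszul matrix factorization and every structural morphism in \cite{Wu-color, Wu-color-equi} depends on $P$ only through certain ``potential-derived'' entries, and these depend $\C$-linearly on $P$; thus passing from $P$ to $a^2\cdot P$ multiplies precisely those entries by $a^2$ and leaves the rest unchanged. A standard gauge transformation of Koszul matrix factorizations (rescaling the summands of the underlying free module by appropriate powers of $a$) identifies this $a^2\cdot P$-datum with the one obtained by multiplying every differential of the $P$-datum by $a$, that is, with its image under $F_a$. Because $F_a$ is the identity on morphism sets (Lemma~\ref{lemma-scaling-potential}), these rescaling isomorphisms are automatically compatible with the tensor products, direct sums, shifts and structural morphisms used to build $C_P(D)$, and so assemble into the desired isomorphism, which preserves the quantum filtration and the homological grading.

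Granting this, the proof concludes with \eqref{equ-iso-homology-chain-scaling}: one gets $H_{a^2\cdot P}(D)=H(C_{a^2\cdot P}(D);R)\cong H(F_a(C_P(D));R)\cong H(C_P(D);R)=H_P(D)$. The isomorphism preserves the quantum filtration because the maps $g_{M,a}$ of Lemma~\ref{lemma-scaling-potential} do, and it preserves the homological grading because $F_a$ alters only the matrix-factorization differentials, not the homological direction of the complex.

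I expect the main obstacle to be the middle step, namely carefully unwinding the construction of $C_P(D)$ from \cite{Wu-color, Wu-color-equi} to produce the coherent family of gauge transformations identifying the $a^2\cdot P$-data with $F_a$ of the $P$-data, and checking compatibility with each type of local matrix factorization and structural morphism occurring there. This is routine bookkeeping, but it is where all the content of the argument sits.
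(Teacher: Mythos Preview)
Your proposal is correct and follows essentially the same route as the paper: identify $C_{a^2\cdot P}(\Gamma)$ with $F_a(C_P(\Gamma))$ via a rescaling of the Koszul rows, and then invoke \eqref{equ-iso-homology-chain-scaling}. The one noteworthy difference is in the ``main obstacle'' you flag: rather than checking compatibility of the rescaling isomorphisms with each structural morphism directly, the paper invokes \cite[Lemma~11.11]{Wu-color}, which says the relevant $\Hom_{\HMF}$ spaces (for the $\chi$-morphisms making up the differential) are one-dimensional in the correct degree, so the differentials match automatically up to scalar and the chain complexes agree in $\hch(\hmf)$; this replaces your bookkeeping with a single citation.
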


\begin{proof}

Recall that, for each MOY graph $\Gamma$, $C_P(\Gamma)$ is a Koszul matrix factorization of the form
\[
\left(%
\begin{array}{cc}
  f_{1,0}, & f_{1,1} \\
  f_{2,0}, & f_{2,1} \\
  \dots & \dots \\
  f_{k,0}, & f_{k,1}
\end{array}%
\right)_R,
\]
where the base ring $R$ is the tensor product over $\C$ of the symmetric polynomial rings of all the alphabets marking $\Gamma$, and $f_{j,\ve} \in R$. By definition, it is easy to see that $C_{a^2\cdot P}(\Gamma)$ is the Koszul matrix factorization
\[
\left(%
\begin{array}{cc}
  a^2\cdot f_{1,0}, & f_{1,1} \\
  a^2\cdot f_{2,0}, & f_{2,1} \\
  \dots & \dots \\
  a^2\cdot f_{k,0}, & f_{k,1}
\end{array}%
\right)_R.
\]
Using the morphisms $\psi$ and $\psi'$ defined in \cite[Subsection 2.2]{KR2}, or more generally, in \cite[Proposition 7.12]{Wu-color}, we have that
\[
C_{a^2\cdot P}(\Gamma)=
\left(%
\begin{array}{cc}
  a^2\cdot f_{1,0}, & f_{1,1} \\
  a^2\cdot f_{2,0}, & f_{2,1} \\
  \dots & \dots \\
  a^2\cdot f_{k,0}, & f_{k,1}
\end{array}%
\right)_R \cong
\left(%
\begin{array}{cc}
  a\cdot f_{1,0}, & a\cdot f_{1,1} \\
  a\cdot f_{2,0}, & a\cdot f_{2,1} \\
  \dots & \dots \\
  a\cdot f_{k,0}, & a\cdot f_{k,1}
\end{array}%
\right)_R = F_a(C_{P}(\Gamma)).
\]
Therefore, for any knotted MOY graph $D$, the chain complexes $C_{a^2\cdot P}(D)$ and $F_a(C_{P}(D))$ have isomorphic terms. By the definition of the differential map of $C_{P}(D)$, especially \cite[Lemma 11.11]{Wu-color}, it follows that $C_{a^2\cdot P}(D) \cong F_a(C_{P}(D))$ as objects of $\hch(\hmf)$. Then, by isomorphism \eqref{equ-iso-homology-chain-scaling}, we have 
\[
H_{a^2\cdot P}(D) \cong H(F_a(C_{P}(D));R) \cong H_{P}(D).
\]
\end{proof}

\subsection{Orientation reversal} Now we are ready to prove Corollary \ref{coro-reverse-orientation}. In fact, we prove the following slightly more general corollary.

\begin{corollary}\label{coro-MOY-reverse-orientation}
Suppose that $P(X)$ is a polynomial of the form \eqref{def-P}, not necessarily generic. For a knotted MOY graph $D$, denote by $-D$ the knotted MOY graph obtained by reversing the orientation of every edge of $D$. Then $H_P(D) \cong H_{P}(-D)$, where the isomorphism preserves the quantum filtration and the homological grading.
\end{corollary}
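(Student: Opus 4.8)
The plan is to argue entirely at the level of the matrix factorizations attached to the resolutions of $D$, reducing the statement to a local analysis at a single trivalent vertex, in the same spirit as (and immediately after) the proof of Corollary~\ref{coro-scaling-P}. Recall that $C_P(D)$ is the total complex built from the Koszul matrix factorizations $C_P(\Gamma)$ of the embedded MOY graphs $\Gamma$ obtained by resolving the crossings of $D$, with differential assembled from $\chi$-maps and edge maps as in \cite[Section~11]{Wu-color}. Reversing the orientation of every edge of $D$ reverses the orientations of all of these resolutions and carries the building blocks of the differential of $C_P(D)$ to the corresponding ones of $C_P(-D)$; moreover each crossing of $D$ contributes the same homological shifts as the corresponding crossing of $-D$. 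Hence it suffices to produce, for every marked embedded MOY graph $\Gamma$, an isomorphism $C_P(\Gamma)\xrightarrow{\cong}C_P(-\Gamma)$ in $\hmf$ that preserves the quantum filtration and is natural with respect to the $\chi$-maps and edge maps, and then to assemble these into an isomorphism $C_P(D)\cong C_P(-D)$ in $\hch(\hmf)$.

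For the local step, note that an embedded MOY graph is glued from single oriented edges and trivalent vertices, and that reversing all orientations turns every ``merge'' vertex into a ``split'' vertex and conversely. The essential point is that at such a vertex the potential is negated, since the incoming and outgoing external alphabets are interchanged; so this is where I would invoke the functor $F_{\sqrt{-1}}:\HMF_{R,w}\to\HMF_{R,-w}$ of Lemma~\ref{lemma-scaling-potential}. Concretely, I would check from the definitions in \cite[Section~7]{Wu-color} that the Koszul matrix factorization of the reversed vertex is obtained from that of the original vertex by negating its potential, and that, after redistributing scalars between the two columns by the row operations $\psi,\psi'$ of \cite[Subsection~2.2]{KR2} (equivalently \cite[Proposition~7.12]{Wu-color}) already used in the proof of Corollary~\ref{coro-scaling-P}, the image under $F_{\sqrt{-1}}$ is identified with the factorization attached to the reversed vertex, up to a reordering of Koszul columns and an overall shift of the $\zed_2$-grading. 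Since by \cite[Theorem~9.9]{Wu-color-equi} the $\zed_2$-grading is trivial on $H_P$, this shift is harmless, and one checks directly that the identification preserves the quantum filtration. Tensoring these local isomorphisms over the symmetric-polynomial rings of the edges gives $C_P(\Gamma)\cong C_P(-\Gamma)$; verifying that they are compatible with the $\chi$-maps and edge maps — which is local and reduces to the corresponding naturality statements for the undeformed theory — assembles them into $C_P(D)\cong C_P(-D)$. Passing to homology and noting that all shifts involved are trivial on $H_P$ yields $H_P(D)\cong H_P(-D)$ preserving the quantum filtration and the homological grading; Corollary~\ref{coro-reverse-orientation} is the special case in which $D$ is a colored link diagram.

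The step I expect to be the main obstacle is the bookkeeping in the local analysis: tracking precisely the $\zed_2$- and quantum-grading shifts produced when one reverses a vertex and then re-expresses the result through $F_{\sqrt{-1}}$ and the Koszul row operations, and confirming that the quantum-filtration shifts cancel globally so that no net shift survives on $H_P$. Verifying the naturality of the local isomorphisms against the $\chi$-maps and edge maps that define the differential of $C_P(D)$ is the other point requiring care, although it is essentially a translation of facts already established for the undeformed $\mathfrak{sl}(N)$-homology.
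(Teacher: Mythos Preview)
Your approach is essentially the paper's: both identify $C_P(-\Gamma)$ with $F_{\sqrt{-1}}(C_P(\Gamma))$ via the row operations of \cite[Proposition~7.12]{Wu-color}, and then pass to homology using the naturality statement in Lemma~\ref{lemma-scaling-potential}. The paper's execution is cleaner in two places: first, rather than a vertex-by-vertex analysis it simply observes that by definition $C_P(-\Gamma)$ is the Koszul matrix with the right column negated (so no $\zed_2$-shift arises at all), and second, rather than checking naturality against the $\chi$-maps by hand it invokes \cite[Lemma~11.11]{Wu-color}, which pins down the differential of $C_P(D)$ uniquely up to homotopy and scaling once the terms are fixed---so the isomorphism of terms automatically upgrades to an isomorphism in $\hch(\hmf)$.
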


\begin{proof}
Let $\Gamma$ be a complete resolution of $D$. Then $-\Gamma$ is a complete resolution of $-D$. Recall that $C_P(\Gamma)$ is a Koszul matrix factorization of the form
\[
\left(%
\begin{array}{cc}
  f_{1,0}, & f_{1,1} \\
  f_{2,0}, & f_{2,1} \\
  \dots & \dots \\
  f_{k,0}, & f_{k,1}
\end{array}%
\right)_R.
\]
By definition, it is easy to see that $C_P(-\Gamma)$ is the Koszul matrix factorization
\[
\left(%
\begin{array}{cc}
  f_{1,0}, & -f_{1,1} \\
  f_{2,0}, & -f_{2,1} \\
  \dots & \dots \\
  f_{k,0}, & -f_{k,1}
\end{array}%
\right)_R.
\]
So, by \cite[Proposition 7.12]{Wu-color},
\[
C_P(-\Gamma) \cong \left(%
\begin{array}{cc}
  f_{1,0}, & -f_{1,1} \\
  f_{2,0}, & -f_{2,1} \\
  \dots & \dots \\
  f_{k,0}, & -f_{k,1}
\end{array}%
\right)_R \cong \left(%
\begin{array}{cc}
  \sqrt{-1}\cdot f_{1,0}, & \sqrt{-1}\cdot f_{1,1} \\
  \sqrt{-1}\cdot f_{2,0}, & \sqrt{-1}\cdot f_{2,1} \\
  \dots & \dots \\
  \sqrt{-1}\cdot f_{k,0}, & \sqrt{-1}\cdot f_{k,1}
\end{array}%
\right)_R =
F_{\sqrt{-1}}(C_P(\Gamma)).
\]
This shows that, for any $D$, the chain complexes $C_P(-D)$ and $F_{\sqrt{-1}} (C_P(D))$ have isomorphic terms. By the definition of the differential map of $C_{P}(D)$, especially \cite[Lemma 11.11]{Wu-color}, it follows that $C_{P}(-D) \cong F_{\sqrt{-1}} (C_P(D))$ as objects of $\hch(\hmf)$. Then, by isomorphism \eqref{equ-iso-homology-chain-scaling}, we have 
\[
H_{P}(-D) \cong H(F_{\sqrt{-1}} (C_P(D));R) \cong H_{P}(D).
\]
\end{proof}

\subsection{Linear change of roots} In this subsection, we study the effects of linear substitutions of variables and prove Proposition \ref{thm-linear-change-roots}.

\begin{lemma}\label{lemma-twist-elementary-sym-poly}
Let $\mathbb{X}=\{x_1,\dots,x_m\}$, $\mathbb{Y}=\{y_1,\dots,y_m\}$, $\hat{\mathbb{X}}=\{\hat{x}_1,\dots,\hat{x}_m\}$ and $\hat{\mathbb{Y}}=\{\hat{y}_1,\dots,\hat{y}_m\}$ be four disjoint alphabets of $m$ indeterminates. Define $\psi$ to be the ring isomorphism $\psi:\C[\hat{\mathbb{X}},\hat{\mathbb{Y}}] \rightarrow \C[\mathbb{X},\mathbb{Y}]$ given by $\psi(\hat{x}_i)=ax_i+b$, $\psi(\hat{y}_i)=ay_i+b$, where $a, b \in \C$ and $a \neq 0$. Denote by $X_i$, $Y_i$, $\hat{X}_i$ and $\hat{Y}_i$ the $i$th elementary symmetric polynomials in $\mathbb{X}$, $\mathbb{Y}$, $\hat{\mathbb{X}}$ and $\hat{\mathbb{Y}}$. Then $\psi(\hat{X}_i-\hat{Y}_i) = a^i (X_i-Y_i) + \sum_{j=1}^{i-1} (X_j-Y_j) \cdot g_j$, where each $g_j$ is an element of $\C[X_1,\dots,X_{i-1},Y_1,\dots,Y_{i-1}]$ with total degree at most $2i-2j-2$. (Recall that each of $x_j$ and $y_j$ has degree $2$.)
\end{lemma}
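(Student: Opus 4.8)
The plan is to expand $\hat{X}_i$ as a polynomial in the $x_j$'s after substitution and track how the difference $\hat{X}_i-\hat{Y}_i$ decomposes in terms of the lower elementary symmetric differences $X_j-Y_j$. First I would recall the standard generating-function identity for elementary symmetric polynomials: $\sum_{i\geq 0} \hat{X}_i\, t^i = \prod_{k=1}^m (1+\hat{x}_k t)$. Applying $\psi$ and substituting $\hat{x}_k\mapsto ax_k+b$ gives $\psi\!\left(\sum_{i\geq 0}\hat{X}_i t^i\right) = \prod_{k=1}^m\bigl(1 + (ax_k+b)t\bigr) = \prod_{k=1}^m\bigl((1+bt) + at\,x_k\bigr)$. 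Setting $s = \frac{at}{1+bt}$, this equals $(1+bt)^m \prod_{k=1}^m(1+x_k s) = (1+bt)^m \sum_{j\geq 0} X_j s^j$. Doing the same for $\mathbb{Y}$ and subtracting yields
\[
\sum_{i\geq 0}\psi(\hat{X}_i-\hat{Y}_i)\,t^i \;=\; (1+bt)^m \sum_{j\geq 0} (X_j-Y_j)\left(\frac{at}{1+bt}\right)^j \;=\; \sum_{j\geq 0}(X_j-Y_j)\, a^j\, t^j (1+bt)^{m-j}.
\]

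Next I would extract the coefficient of $t^i$ on both sides. On the right, the term indexed by $j$ contributes $(X_j-Y_j)\, a^j \binom{m-j}{i-j} b^{\,i-j}$ to the $t^i$-coefficient (with the convention that $\binom{m-j}{i-j}=0$ when $i<j$ or $i-j>m-j$). Hence
\[
\psi(\hat{X}_i-\hat{Y}_i) \;=\; \sum_{j=0}^{i} (X_j-Y_j)\, a^j \binom{m-j}{i-j} b^{\,i-j}.
\]
The $j=0$ term vanishes since $X_0-Y_0 = 1-1 = 0$. The $j=i$ term is exactly $a^i(X_i-Y_i)$. For $1\leq j\leq i-1$ the coefficient $g_j := a^j\binom{m-j}{i-j}b^{\,i-j}$ is a constant in $\C$ — in particular it lies in $\C[X_1,\dots,X_{i-1},Y_1,\dots,Y_{i-1}]$ — and its "total degree" in the paper's doubled grading is $0$ when $b\neq 0$ and the term vanishes entirely when $b=0$; either way it is at most $2i-2j-2$ for $1\le j\le i-1$. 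This gives the claimed formula with room to spare.

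I do not expect a serious obstacle here; the only things to be careful about are bookkeeping: the binomial-coefficient conventions when $i$ exceeds $m$ or $j$, the degree convention (each $x_j,y_j$ has degree $2$, so each $X_j$ has degree $2j$ and a pure constant has degree $0 \leq 2i-2j-2$ for all $j\leq i-1$), and checking that the substitution $s=at/(1+bt)$ is a legitimate formal manipulation (it is, since $1+bt$ is a unit in $\C[\mathbb{X},\mathbb{Y}][[t]]$). If one prefers to avoid power series, the same identity can be obtained by direct expansion $\hat X_i = \sum_{|S|=i}\prod_{k\in S}(ax_k+b)$ and grouping terms by how many factors contribute their $ax_k$ part, which recovers the binomial coefficients combinatorially; I would mention this as an alternative but carry out the generating-function version as the main line.
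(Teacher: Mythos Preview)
Your proof is correct but takes a genuinely different and sharper route than the paper. The paper first observes abstractly that $\psi(\hat{X}_i) = a^i X_i + f_{m,a,b}(X_1,\dots,X_{i-1})$ for some polynomial $f_{m,a,b}$ of total degree at most $2i-2$, and then telescopes the difference $f_{m,a,b}(X_1,\dots,X_{i-1}) - f_{m,a,b}(Y_1,\dots,Y_{i-1})$ by defining $g_j$ as the divided difference $\frac{f_{m,a,b}(Y_1,\dots,Y_{j-1},X_j,\dots) - f_{m,a,b}(Y_1,\dots,Y_j,X_{j+1},\dots)}{X_j - Y_j}$. Your generating-function argument instead computes the $g_j$ explicitly as the scalars $a^j\binom{m-j}{i-j}b^{i-j}$, which is strictly stronger than the stated degree bound. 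The paper's telescoping trick is shorter and would adapt to any substitution for which $\psi(\hat{X}_i)$ lands in $\C[X_1,\dots,X_i]$ with the right top term; your approach exploits the affine form of the substitution to get the exact coefficients, at the cost of a small amount of power-series bookkeeping.
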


\begin{proof}
Note $\psi(\hat{X}_i)=a^i X_i + f_{m,a,b}(X_1,\dots,X_{i-1})$ and $\psi(\hat{Y}_i)=a^i Y_i + f_{m,a,b}(Y_1,\dots,Y_{i-1})$, where $f_{m,a,b}(X_1,\dots,X_{i-1}) \in \C[X_{i-1},\dots,X_1]$ has total degree at most $2i-2$. For $1 \leq j \leq i-1$ define
\[
g_j = \frac{f_{m,a,b}(Y_1,\dots,Y_{j-1},X_j,X_{j+1},\dots,X_{i-1}) - f_{m,a,b}(Y_1,\dots,Y_{j-1},Y_j,X_{j+1},\dots,X_{i-1})}{X_j-Y_j}.
\]
Then $g_1,\dots,g_{i-1}$ satisfy all the requirements in the lemma.
\end{proof}

\begin{proposition}\label{prop-linear-twist-MOY}
Let $\hat{P}(X)$ be a polynomial of the form \eqref{def-P}, not necessarily generic. Define $P(X)=\hat{P}(aX+b)$, where $a,b\in \C$ and $a\neq0$. Then $H_P(D) \cong H_{\hat{P}}(D)$ for any knotted MOY graph $D$, where the isomorphism preserves both the quantum filtration and the homological grading.
\end{proposition}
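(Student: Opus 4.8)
The plan is to mimic the proof of Corollary \ref{coro-scaling-P} and Corollary \ref{coro-MOY-reverse-orientation}: analyze what the linear substitution $X\mapsto aX+b$ does to the Koszul matrix factorizations $C_{\hat P}(\Gamma)$ associated to complete resolutions $\Gamma$ of $D$, show the resulting factorization is isomorphic in $\hmf$ to $C_P(\Gamma)$ via an explicit filtered isomorphism, and then assemble these isomorphisms over all complete resolutions into an isomorphism of chain complexes in $\hch(\hmf)$, whence an isomorphism on homology. First I would set up the ring isomorphism $\psi:\C[\hat{\mathbb{X}}_1,\dots,\hat{\mathbb{X}}_r]\to\C[\mathbb{X}_1,\dots,\mathbb{X}_r]$ that sends each indeterminate $\hat x$ of each alphabet marking $\Gamma$ to $ax+b$. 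This is a filtered ring isomorphism in the sense that it respects the associated graded (it preserves the top-degree part of each symmetric polynomial, multiplying $\hat X_i$ by $a^i$ modulo lower-order terms), which is exactly what Lemma \ref{lemma-twist-elementary-sym-poly} quantifies. The point is that $\psi$ carries the potential $\hat P(\mathbb{X})=\sum\hat P(\hat x)$ to $\sum\hat P(ax+b)=\sum P(x)=P(\mathbb{X})$, so $\psi$ sends $C_{\hat P}(\Gamma)$ to a Koszul matrix factorization with the same underlying free module (relabelled via $\psi$) and potential $P$.

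The next step is to identify the image of the entries. The factorization $C_{\hat P}(\Gamma)$ has rows built from differences $\hat X_i-\hat Y_i$ of elementary symmetric polynomials of alphabets at the two ends of each edge (in the $b$-column) and from certain symmetric polynomials realizing the MOY relations (in the $a$-column), whose product in each row is (up to the standard pieces) a difference quotient of $\hat P$. Applying $\psi$ and using Lemma \ref{lemma-twist-elementary-sym-poly}, each entry $\psi(\hat X_i-\hat Y_i)$ becomes $a^i(X_i-Y_i)$ plus an $R$-linear combination of the lower differences $X_j-Y_j$ with $j<i$. This is precisely the situation handled by the row operations on Koszul matrix factorizations in \cite[Proposition 7.12]{Wu-color}: one may add $R$-multiples of one row to another, and one may rescale a row $(f_{i,0},f_{i,1})$ by pushing a unit from one column to the other. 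Thus, after finitely many such row operations (to clear the lower-order tails, working from $i=1$ upward) and rescalings by powers of $a$, the factorization $\psi(C_{\hat P}(\Gamma))$ becomes isomorphic to $C_P(\Gamma)$ in $\hmf$, with the isomorphism preserving the $\zed_2$-grading and the quantum filtration (the grading shifts match because the MOY graph $\Gamma$ is the same). This gives $H_{\hat P}(\Gamma)\cong H_P(\Gamma)$ for each complete resolution compatibly.

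Finally, one checks naturality: the morphisms $\chi^0,\chi^1$ (or the edge-splitting/merging maps) that assemble the $C_{\hat P}(\Gamma)$ into the chain complex $C_{\hat P}(D)$ over the cube of resolutions are themselves defined by universal formulas in the elementary symmetric polynomials, so they are intertwined by $\psi$ with the corresponding maps for $C_P(D)$ up to the same row operations; hence $\psi$ induces an isomorphism $C_{\hat P}(D)\cong C_P(D)$ in $\hch(\hmf)$ preserving the homological grading and the quantum filtration, and passing to homology finishes the proof. The main obstacle I anticipate is bookkeeping: verifying that the row operations needed to absorb the lower-order terms from Lemma \ref{lemma-twist-elementary-sym-poly} can be performed simultaneously and compatibly across all edges and all resolutions, and that they commute with the differential of $C_{\hat P}(D)$ — i.e.\ that the local isomorphisms glue. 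This is essentially the same kind of argument as in \cite[Lemma 11.11]{Wu-color} and in the proofs of Corollaries \ref{coro-scaling-P} and \ref{coro-MOY-reverse-orientation} above, so the tools are in place, but it requires care to state the row operations canonically enough that naturality is manifest.
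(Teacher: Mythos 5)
Your proposal follows essentially the same route as the paper: apply the substitution isomorphism $\psi$, use Lemma \ref{lemma-twist-elementary-sym-poly} to expand $\psi(\hat{X}_i-\hat{Y}_i)$, clear the lower-order tails by Koszul row operations and rescale via \cite[Proposition 7.12]{Wu-color}, and then glue over the cube of resolutions via \cite[Lemma 11.11]{Wu-color}. The only step you elide is that after matching the second column one still needs the (filtered version of) \cite[Theorem 2.1]{KR3} to conclude the first-column entries can also be identified — i.e.\ that a Koszul factorization with regular second column is determined by that column and the potential — which is how the paper closes the argument.
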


\begin{figure}[ht]
\[
\xymatrix{
\input{general-MOY-vertex} && \input{general-MOY-vertex-hat}
}
\]
\caption{}\label{prop-linear-twist-MOY-fig-1}

\end{figure}

\begin{proof}
We first consider embedded MOY graphs. Let $v$ be a vertex in a MOY graph. We give two markings of a neighborhood $\Gamma$ of $v$ as in Figure \ref{prop-linear-twist-MOY-fig-1}. Recall that we have $i_1+i_2+\cdots +i_k = j_1+j_2+\cdots +j_l\triangleq m$. Define 
\begin{eqnarray*}
R & = & \Sym(\mathbb{X}_1|\dots|\mathbb{X}_k|\mathbb{Y}_1|\dots|\mathbb{Y}_l),\\
\hat{R} & = & \Sym(\hat{\mathbb{X}}_1|\dots|\hat{\mathbb{X}}_k|\hat{\mathbb{Y}}_1|\dots|\hat{\mathbb{Y}}_l).
\end{eqnarray*} 
Write 
\begin{eqnarray*}
\mathbb{X} & = & \mathbb{X}_1\cup\cdots\cup \mathbb{X}_k, \\
\mathbb{Y} & = & \mathbb{Y}_1\cup\cdots\cup \mathbb{Y}_l, \\
\hat{\mathbb{X}} & = & \hat{\mathbb{X}}_1\cup\cdots\cup \hat{\mathbb{X}}_k, \\
\hat{\mathbb{Y}} & = & \hat{\mathbb{Y}}_1\cup\cdots\cup \hat{\mathbb{Y}}_l.
\end{eqnarray*}
Denote by $X_i$, $Y_i$, $\hat{X}_i$ and $\hat{Y}_i$ the $i$th elementary symmetric polynomials in $\mathbb{X}$, $\mathbb{Y}$, $\hat{\mathbb{X}}$ and $\hat{\mathbb{Y}}$. Then
\begin{eqnarray*}
C_P(\Gamma) & = & \left(%
\begin{array}{cc}
  \ast & X_1-Y_1 \\
  \ast & X_2-Y_2 \\
  \dots & \dots \\
  \ast & X_m-Y_m
\end{array}%
\right)_R
\{q^{-\sum_{1\leq s<t \leq k} i_si_t}\}, \\
C_{\hat{P}}(\Gamma) & = & \left(%
\begin{array}{cc}
  \ast & \hat{X_1}-\hat{Y_1} \\
  \ast & \hat{X_2}-\hat{Y_2} \\
  \dots & \dots \\
  \ast & \hat{X_m}-\hat{Y_m}
\end{array}%
\right)_{\hat{R}}
\{q^{-\sum_{1\leq s<t \leq k} i_si_t}\}.
\end{eqnarray*}

Using the isomorphism $\psi$ in Lemma \ref{lemma-twist-elementary-sym-poly}, $C_{\hat{P}}(\Gamma)$ can be viewed as a $R$-module and, therefore, a matrix factorization over $R$ with potential 
\[
\psi(\hat{P}(\hat{\mathbb{X}}) - \hat{P}(\hat{\mathbb{Y}}))= P(\mathbb{X})-P(\mathbb{Y}).
\]
In fact, as a matrix factorization over $R$, $C_{\hat{P}}(\Gamma)$ is the filtered Koszul matrix factorization
\[
C_{\hat{P}}(\Gamma) = \left(%
\begin{array}{cc}
  \ast & \psi(\hat{X_1}-\hat{Y_1}) \\
  \ast & \psi(\hat{X_2}-\hat{Y_2}) \\
  \dots & \dots \\
  \ast & \psi(\hat{X_m}-\hat{Y_m})
\end{array}%
\right)_R
\{q^{-\sum_{1\leq s<t \leq k} i_si_t}\}.
\]
By Lemma \ref{lemma-twist-elementary-sym-poly} and \cite[Corollary 2.15]{Wu7}, we have
\[
C_{\hat{P}}(\Gamma) \cong \left(%
\begin{array}{cc}
  \ast & a(X_1-Y_1) \\
  \ast & a^2(X_2-Y_2) \\
  \dots & \dots \\
  \ast & a^m(X_m-Y_m)
\end{array}%
\right)_R
\{q^{-\sum_{1\leq s<t \leq k} i_si_t}\}.
\]
Then, by \cite[Proposition 7.12]{Wu-color}, 
\[
C_{\hat{P}}(\Gamma) \cong \left(%
\begin{array}{cc}
  \ast & X_1-Y_1 \\
  \ast & X_2-Y_2 \\
  \dots & \dots \\
  \ast & X_m-Y_m
\end{array}%
\right)_R
\{q^{-\sum_{1\leq s<t \leq k} i_si_t}\}.
\]
Finally, by (the filtered version of) \cite[Theorem 2.1]{KR3}, we have $C_{\hat{P}}(\Gamma) \cong C_P(\Gamma)$ as filtered matrix factorizations over $R$. By \cite[Definitions 3.4 and 9.2]{Wu-color-equi}, this implies that, for any embedded MOY graph $\Gamma$,
\begin{equation}\label{prop-linear-twist-MOY-eq-1}
C_{\hat{P}}(\Gamma) \cong C_P(\Gamma).
\end{equation} 

The above shows that, for any knotted MOY graph $D$, the chain complexes $C_{\hat{P}}(D)$ and $C_P(D)$ have isomorphic terms (as filtered matrix factorizations.) Note that any morphism on $C_{\hat{P}}(\Gamma)$ can be pulled back to $C_P(\Gamma)$ via the isomorphism in \eqref{prop-linear-twist-MOY-eq-1}. By the definition of the differential map of $C_{P}(D)$, especially \cite[Lemma 11.11]{Wu-color}, it follows that $C_{\hat{P}}(D) \cong C_P(D)$ as objects of $\hch(\hmf_{R,w})$. Therefore, $H_{\hat{P}}(D) \cong H_P(D)$.
\end{proof}

Next we prove the following slight generalization of Proposition \ref{thm-linear-change-roots}.

\begin{proposition}\label{thm-linear-change-roots-MOY}
Let $P(X)$ and $\hat{P}(X)$ be generic polynomials of the form \eqref{def-P}. If there are $a,b\in \C$ with $a\neq0$ such that $\Sigma(\hat{P}) = \{ar+b|r\in\Sigma(P)\}$, then $H_P(D) \cong H_{\hat{P}}(D)$ for any knotted MOY graph $D$, where the isomorphism preserves both the quantum filtration and the homological grading.
\end{proposition}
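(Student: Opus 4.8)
The plan is to factor the passage from $\hat P$ to $P$ into three elementary moves, each of which is already understood: an affine substitution in the variable (handled by Proposition~\ref{prop-linear-twist-MOY}), a rescaling of the potential by a nonzero scalar (handled by Corollary~\ref{coro-scaling-P}, which only rescales by squares — but that is no obstruction over $\C$), and the addition of a constant to the potential (which, as explained below, changes nothing).

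First I would set $Q(X)=\hat P(aX+b)$. Since $\hat P$ is of the form \eqref{def-P}, Proposition~\ref{prop-linear-twist-MOY} applies and gives an isomorphism $H_Q(D)\cong H_{\hat P}(D)$ preserving the quantum filtration and the homological grading. Next I would compute $\Sigma(Q)$: from $Q'(X)=a\,\hat P'(aX+b)$ one sees that $\Sigma(Q)=\{(s-b)/a \mid s\in\Sigma(\hat P)\}$, which by the hypothesis $\Sigma(\hat P)=\{ar+b\mid r\in\Sigma(P)\}$ equals $\Sigma(P)$. Thus $Q'$ and $P'$ have the same (simple) roots, and comparing leading coefficients gives $Q'(X)=a^{N+1}P'(X)$, hence
\[
Q(X)=a^{N+1}P(X)+c
\]
for some constant $c\in\C$.

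The constant $c$ is harmless. For any polynomial $\tilde P$ and any $c\in\C$, replacing $\tilde P$ by $\tilde P+c$ leaves unchanged the potential $\tilde P(\mathbb{X})-\tilde P(\mathbb{Y})$ at every MOY vertex, because the total color of the incoming edges equals that of the outgoing edges, so the constant contributions cancel; the right columns of the defining Koszul matrix factorizations are likewise unchanged. By the filtered uniqueness of a Koszul matrix factorization with a prescribed right column and potential — the same input used in the proof of Proposition~\ref{prop-linear-twist-MOY} — it follows that $C_{\tilde P+c}(D)\cong C_{\tilde P}(D)$ as objects of $\hch(\hmf)$, hence $H_{\tilde P+c}(D)\cong H_{\tilde P}(D)$ with the gradings preserved. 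Taking $\tilde P=a^{N+1}P$ gives $H_Q(D)\cong H_{a^{N+1}P}(D)$. Finally, since $\C$ is algebraically closed I would pick $\alpha\in\C$ with $\alpha^2=a^{N+1}$ and invoke Corollary~\ref{coro-scaling-P} for the original polynomial $P$ (which is of the form \eqref{def-P}) with scalar $\alpha$, obtaining $H_{a^{N+1}P}(D)=H_{\alpha^2P}(D)\cong H_P(D)$, again preserving the quantum filtration and the homological grading. Composing, $H_{\hat P}(D)\cong H_Q(D)\cong H_{a^{N+1}P}(D)\cong H_P(D)$, which is the claim.

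I do not expect a serious obstacle here: the two steps that need any care are the identity $Q=a^{N+1}P+c$ (pure bookkeeping with derivatives and leading coefficients) and the constant-shift invariance, and both are routine given Proposition~\ref{prop-linear-twist-MOY} and Corollary~\ref{coro-scaling-P}. The real content is simply recognizing that the affine-substitution move already absorbs the shift $b$ and converts the root-stretch $a$ into the scalar $a^{N+1}$ on the potential, which is then a perfect square over $\C$.
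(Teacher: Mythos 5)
Your proposal is correct and follows essentially the same route as the paper: reduce via Proposition~\ref{prop-linear-twist-MOY} to $Q(X)=\hat P(aX+b)=a^{N+1}P(X)+c$, discard the constant $c$, and absorb the factor $a^{N+1}$ as a square via Corollary~\ref{coro-scaling-P}. Your justification of the constant-shift invariance is slightly more explicit than the paper's (which simply asserts it), but the argument is the same.
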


\begin{proof}
Suppose $\Sigma(P)=\{r_1,\dots,r_N\}$. Then 
\[
P(X) = (N+1) \int_0^X (t-r_1)\cdots(t-r_N) ~dt
\]
and
\[
\hat{P}(X) = (N+1) \int_0^X (t-ar_1-b)\cdots(t-ar_N-b) ~dt.
\]
So $\hat{P}(aX+b) = a^{N+1} P(X) +c$, where $c$ is a constant.

By Proposition \ref{prop-linear-twist-MOY}, we have $H_{\hat{P}}(D) \cong H_{a^{N+1} P(X) +c}(D)$. It is clear that changing $c$ does not change the homology. So $H_{\hat{P}}(D) \cong H_{a^{N+1} P(X)}(D)$. It then follows from Corollary \ref{coro-scaling-P} that $H_{\hat{P}}(D) \cong H_P(D)$.
\end{proof}

\section{A Basis for Generic Deformations of the Graph Homology}\label{sec-basis-MOY-construction}

Gornik's construction of the basis for generic deformations of the uncolored $\slmf(N)$-homology in \cite{Gornik} is essentially based on the classical Lagrange interpolation for single variable polynomials. We generalize Gornik's construction to the colored situation using the interpolation formula for symmetric polynomials given by Chen and Louck \cite{Chen-Louck}, which generalize the classical Lagrange interpolation. In this section, we construct a basis for generic deformations of the graph homology. A basis for generically deformed link homology will be constructed in Section \ref{sec-basis-link-construction} using the results from this section and next section.

\subsection{The Chen-Louck interpolation formula} In this subsection, we review the interpolation formula for symmetric polynomials given by Chen and Louck \cite{Chen-Louck}, which plays a central role in our construction. 

Recall that an alphabet $\mathbb{X}=\{x_1,\dots,x_m\}$ is a finite collection of homogeneous indeterminates of degree $2$. (That is, our degree convention is twice the usual degree.) For a monomial $f=x_1^{k_1}\cdots x_m^{k_m}$, the partial degree of $f$ in $x_i$ is $2k_i$. For a polynomial $f \in \C[x_1,\dots,x_m]$, its partial degree in $x_i$ is the maximal partial degree in $x_i$ of its monomials. If $f(\mathbb{X})$ is a symmetric polynomial in $\mathbb{X}$ over $\C$, and $\Omega$ a set of $m$ complex numbers, then $f(\Omega)=f(\mathbb{X})|_{\mathbb{X}=\Omega}$ is well defined. The following is the main theorem of \cite{Chen-Louck}.

\begin{theorem}\cite[Theorem 2.1]{Chen-Louck}\label{Chen-Louck-interpolation}
Suppose that $1\leq m \leq N$. Let $\mathbb{X}$ be an alphabet of $m$ indeterminates and $\Sigma$ a set of $N$ distinct complex numbers. Assume that $f(\mathbb{X})$ is a symmetric polynomial in $\mathbb{X}$ over $\C$ and that its partial degrees are all less than or equal to $2(N-m)$. Then
\[
f(\mathbb{X}) = \sum_{\Omega \subset \Sigma,~|\Omega|=m} f(\Omega) \sfrac{\prod_{x \in \mathbb{X}, ~r \in \Sigma \setminus \Omega} (x-r)}{\prod_{s \in \Omega, ~r \in \Sigma \setminus \Omega} (s-r)}.
\]
\end{theorem}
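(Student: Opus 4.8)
The plan is to recognize the summands on the right-hand side as a family of ``Lagrange-type'' symmetric polynomials indexed by the $m$-element subsets of $\Sigma$, and to run the standard interpolation argument inside the vector space
\[
V = \{\, f \in \Sym(\mathbb{X}) \mid \text{the partial degree of } f \text{ in each } x_i \text{ is } \leq 2(N-m) \,\}.
\]
For an $m$-element subset $\Omega \subset \Sigma$, write
\[
L_\Omega(\mathbb{X}) = \sfrac{\prod_{x\in\mathbb{X},\, r\in\Sigma\setminus\Omega}(x-r)}{\prod_{s\in\Omega,\, r\in\Sigma\setminus\Omega}(s-r)}.
\]
First I would check two elementary facts: (a) each $L_\Omega$ lies in $V$, and (b) $L_\Omega(\Omega') = \delta_{\Omega,\Omega'}$ for all $m$-element subsets $\Omega,\Omega'\subset\Sigma$. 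For (a): the numerator factors as $\prod_{i=1}^m \prod_{r\in\Sigma\setminus\Omega}(x_i-r)$, which is symmetric in $\mathbb{X}$ and has (usual) degree $|\Sigma\setminus\Omega| = N-m$ in each $x_i$, while the denominator is a nonzero constant because the elements of $\Sigma$ are distinct; hence $L_\Omega\in V$. For (b): setting $\mathbb{X}=\Omega'$, if $\Omega'=\Omega$ the numerator equals the denominator and the value is $1$; if $\Omega'\neq\Omega$ then, since $|\Omega'|=|\Omega|$, there is an $s_0\in\Omega'\cap(\Sigma\setminus\Omega)$, and the factor of the numerator with $x=s_0$ and $r=s_0$ vanishes.

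The heart of the argument is the following vanishing lemma: if $g\in V$ and $g(\Omega)=0$ for every $m$-element subset $\Omega\subset\Sigma$, then $g=0$. I would prove this by induction on $m$, allowing $N$ and $\Sigma$ to vary. For $m=1$, $g$ is a one-variable polynomial of degree $\leq N-1$ vanishing at the $N$ distinct points of $\Sigma$, so $g=0$. For $m\geq 2$: for each $s\in\Sigma$, the polynomial $h_s(x_1,\dots,x_{m-1}) := g(x_1,\dots,x_{m-1},s)$ is symmetric in $x_1,\dots,x_{m-1}$, has partial degrees $\leq 2(N-m) = 2\bigl((N-1)-(m-1)\bigr)$, and vanishes at every $(m-1)$-element subset $\Omega'$ of $\Sigma\setminus\{s\}$, because $h_s(\Omega') = g(\Omega'\cup\{s\})$ and $\Omega'\cup\{s\}$ is an $m$-element subset of $\Sigma$; by the inductive hypothesis applied with $N-1$ in place of $N$ and $\Sigma\setminus\{s\}$ in place of $\Sigma$, we get $h_s=0$. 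Thus $g(x_1,\dots,x_{m-1},s)=0$ identically for every $s\in\Sigma$, so, viewing $g$ as a polynomial in $x_m$ of degree $\leq N-m<N$ with coefficients in $\C[x_1,\dots,x_{m-1}]$ and specializing the remaining variables to arbitrary complex numbers, $g$ vanishes identically.

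With these in hand the theorem follows at once: given $f$ as in the statement (the hypothesis on partial degrees says exactly that $f\in V$), set $g := f - \sum_{\Omega} f(\Omega)\,L_\Omega$. By (a) and linearity $g\in V$, and by (b) we have $g(\Omega')= f(\Omega') - \sum_\Omega f(\Omega)\,\delta_{\Omega,\Omega'} = 0$ for every $m$-element $\Omega'\subset\Sigma$; the vanishing lemma forces $g=0$, which is the claimed identity. As an alternative to the last two paragraphs, one can note that $\dim_\C V$ equals the number of partitions fitting inside an $m\times(N-m)$ box, namely $\binom{N}{m}$, which also counts the $m$-element subsets of $\Sigma$; then (b) shows the $L_\Omega$ are linearly independent, hence a basis of $V$, and expanding $f$ in this basis and evaluating at each $\Omega'$ recovers the coefficients $f(\Omega')$. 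I expect the only real subtlety to be the bookkeeping in the vanishing lemma: the partial-degree bound $2(N-m)$ and the size of the ground set $\Sigma$ must shrink in lockstep for the induction on $m$ to close, and the estimate is exactly tight at each stage; facts (a) and (b) are routine.
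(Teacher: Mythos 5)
Your proof is correct. Note that the paper offers no argument for this statement at all---it is quoted directly from Chen and Louck \cite[Theorem 2.1]{Chen-Louck}---so what you have produced is a self-contained, elementary proof of exactly the special case the paper needs, rather than a variant of an argument appearing in the text. Your facts (a) and (b) are verified correctly: the denominator of $L_\Omega$ is a nonzero constant because the elements of $\Sigma$ are distinct, and for $\Omega'\neq\Omega$ of equal cardinality one always finds $s_0\in\Omega'\cap(\Sigma\setminus\Omega)$, which kills the numerator upon evaluation. The vanishing lemma is the real content, and your induction closes precisely because specializing $x_m=s$ preserves the partial-degree bound ($N-m=(N-1)-(m-1)$ in usual degree) while the interpolation set shrinks to the $N-1$ distinct points of $\Sigma\setminus\{s\}$; the final step---a polynomial of degree at most $N-m<N$ in $x_m$ vanishing at all $N$ points of $\Sigma$ for every complex specialization of the remaining variables---is sound. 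The alternative ending via the dimension count $\dim_\C V=\binom{N}{m}$ (partitions in $\Lambda_{m,N-m}$, e.g.\ via the monomial symmetric basis) together with the linear independence of the $L_\Omega$ from (b) is also valid and is arguably the quicker way to finish. Either way, the argument is complete and correctly handles the one genuinely delicate point you identify, namely that the degree bound and the size of $\Sigma$ must decrease in lockstep.
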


It is clear that the above theorem becomes the classical Lagrange interpolation if $m=1$.

\subsection{Homology of a colored circle} Recall that $S_{\lambda}$ is the Schur polynomial associated to the partition $\lambda$. In particular, $h_k=S_k$ is the $k$-th complete symmetric polynomial. As in \cite{Wu-color}, define
\begin{eqnarray*}
\lambda_{m,n} & = & (\underbrace{n\geq\cdots\geq n}_{m \text{ parts}}), \\
\Lambda_{m,n} & = & \{\lambda=(\lambda_1\geq\cdots\geq\lambda_m)~|~\lambda_m \geq 0,~\lambda_1\leq n\}.
\end{eqnarray*}
Recall that, for an alphabet $\mathbb{X}$, we denote by $\Sym(\mathbb{X})$ the ring of symmetric polynomials over $\C$ in $\mathbb{X}$. More generally, given a collection $\{\mathbb{X}_1,\dots,\mathbb{X}_l\}$ of pairwise disjoint alphabets, we denote by $\Sym(\mathbb{X}_1|\cdots|\mathbb{X}_l)$ the ring of polynomials in $\mathbb{X}_1\cup\cdots\cup\mathbb{X}_l$ over $\C$ that are symmetric in each $\mathbb{X}_i$.

\begin{proposition}\label{prop-circle-ring-dim-base}
Suppose that $1\leq m \leq N$. Let $\mathbb{X}$ be an alphabet of $m$ indeterminates and $\Sigma$ an unordered tuple of $N$ complex numbers. Then 
\[
R=\sfrac{\Sym(\mathbb{X})}{(h_{N+1-m}(\mathbb{X}-\Sigma),h_{N+2-m}(\mathbb{X}-\Sigma),\dots,h_{N}(\mathbb{X}-\Sigma))}
\] 
is $\bn{N}{m}$-dimensional over $\C$.

Moreover, $\{S_{\lambda}(\mathbb{X})~|~\lambda \in \Lambda_{m,N-m}\}$ and $\{S_{\lambda}(\mathbb{X}-\Sigma)~|~\lambda \in \Lambda_{m,N-m}\}$ are two $\C$-linear bases for $R$. And there is a $\C$-linear map $\zeta_{\Sigma}:R\rightarrow \C$ satisfying 
\begin{equation}\label{eq-zeta-Sigma-bases}
\zeta_{\Sigma} (S_{\lambda}(\mathbb{X}) \cdot S_{\mu}(\mathbb{X}-\Sigma)) = \begin{cases}
1 & \text{if } \mu=\lambda^c , \\
0 & \text{if } \mu\neq\lambda^c,
\end{cases}
\end{equation}
where, for $\lambda=(\lambda_1\geq\cdots\geq\lambda_m) \in \Lambda_{m,N-m}$, $\lambda^c = (N-m-\lambda_m,\dots,N-m-\lambda_1)$.
\end{proposition}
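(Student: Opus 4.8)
The plan is to understand the ring $R$ through the Chen--Louck interpolation formula, which converts the abstract quotient into a concrete product of $\binom{N}{m}$ one-dimensional pieces indexed by $m$-element subsets $\Omega \subset \Sigma$. First I would recall that $h_{N+1-m}(\mathbb{X}-\Sigma), \dots, h_N(\mathbb{X}-\Sigma)$ are the ``excess'' complete symmetric functions in the difference alphabet, and that these are exactly the relations cutting $\Sym(\mathbb{X})$ down from a polynomial ring. A clean way to see $\dim_\C R = \binom{N}{m}$ is to exhibit a surjective $\C$-algebra map (or at least a $\C$-linear isomorphism) $R \to \prod_{\Omega} \C$ sending $f(\mathbb{X})$ to the tuple $(f(\Omega))_\Omega$, where $\Omega$ ranges over $m$-subsets of $\Sigma$. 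The relations $h_{N+1-m}(\mathbb{X}-\Sigma) = \cdots = h_N(\mathbb{X}-\Sigma) = 0$ force $\prod_{x\in\mathbb{X}}(1 - x t) / \prod_{r\in\Sigma}(1-rt)$ to be a polynomial in $t$ of degree $\le N-m$, which is precisely the condition that $\{x_1,\dots,x_m\}$, as a multiset, is a sub-multiset of $\Sigma$; so the evaluation map is well-defined and injective, and it is surjective onto $\prod_\Omega \C$ by the Chinese Remainder Theorem / genericity of $\Sigma$. This gives both the dimension count and a convenient model.

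Second, I would establish that the two families $\{S_\lambda(\mathbb{X})\}$ and $\{S_\lambda(\mathbb{X}-\Sigma)\}$ for $\lambda \in \Lambda_{m,N-m}$ each span $R$. For $\{S_\lambda(\mathbb{X}-\Sigma)\}$ this follows because any $S_\lambda(\mathbb{X}-\Sigma)$ with $\lambda_1 > N-m$ is divisible by one of the relation generators $h_{N+1-m}(\mathbb{X}-\Sigma),\dots$ (standard determinantal/Pieri manipulation in the difference alphabet, as in \cite{Wu-color}), so the images of $S_\lambda(\mathbb{X}-\Sigma)$ with $\lambda \in \Lambda_{m,N-m}$ span $R$; since there are exactly $\binom{N}{m} = |\Lambda_{m,N-m}|$ of them and $\dim_\C R = \binom{N}{m}$, they form a basis. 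For $\{S_\lambda(\mathbb{X})\}$ I would use the change-of-alphabet identity $S_\lambda(\mathbb{X}) = \sum_{\mu \subseteq \lambda} S_{\lambda/\mu}(-\Sigma)\, S_\mu(\mathbb{X}-\Sigma)$ (or the analogous dual expansion), which is unitriangular in a suitable partial order with invertible leading coefficients, hence maps one basis to the other; so $\{S_\lambda(\mathbb{X})\}_{\lambda\in\Lambda_{m,N-m}}$ is also a basis.

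Third, I would construct $\zeta_\Sigma$. Having a basis $\{S_\lambda(\mathbb{X})\}$ and another basis $\{S_\mu(\mathbb{X}-\Sigma)\}$ of a $\binom{N}{m}$-dimensional space, there is a unique bilinear pairing making them dual up to the $\lambda \leftrightarrow \lambda^c$ relabeling; equivalently, I want the $\C$-linear functional $\zeta_\Sigma$ picking out the coefficient of $S_{\lambda^c}(\mathbb{X}-\Sigma)$ appropriately. The cleanest route is to define $\zeta_\Sigma$ via the evaluation model: under the isomorphism $R \cong \prod_\Omega \C$, set $\zeta_\Sigma\big((c_\Omega)_\Omega\big) = \sum_\Omega c_\Omega \big/ \prod_{s\in\Omega,\, r\in\Sigma\setminus\Omega}(s-r)$, i.e.\ the ``residue'' weighting in the Chen--Louck formula. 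Then \eqref{eq-zeta-Sigma-bases} becomes a Cauchy-type orthogonality identity for Schur polynomials in complementary partitions, which one verifies by expanding $S_\lambda(\mathbb{X})$ and $S_\mu(\mathbb{X}-\Sigma)$ at each $\Omega$ and summing; the complementation $\mu = \lambda^c$ is exactly what makes the product $S_\lambda(\Omega)S_{\lambda^c}(\Omega-\Sigma)$ combine with the Vandermonde weight to give the interpolation kernel, yielding $1$, while for $\mu \ne \lambda^c$ a degree or orthogonality argument kills the sum.

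The main obstacle I anticipate is the orthogonality identity \eqref{eq-zeta-Sigma-bases} itself: verifying that the Chen--Louck residue functional pairs $S_\lambda(\mathbb{X})$ against $S_{\lambda^c}(\mathbb{X}-\Sigma)$ to give the Kronecker delta requires the right Cauchy identity for Schur functions in a difference of alphabets, together with care about which partitions survive modulo the relations (the roles of $\Lambda_{m,N-m}$ and the complement map $\lambda \mapsto \lambda^c$). Everything else — the dimension count via interpolation, and the spanning statements via Pieri/unitriangularity — is bookkeeping of a type already carried out in \cite{Wu-color}, and I would cite those computations rather than redo them.
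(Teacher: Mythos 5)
Your overall strategy --- modeling $R$ by evaluation at the $m$-element subsets $\Omega\subset\Sigma$ via Chen--Louck interpolation --- is genuinely different from the paper's. The paper instead introduces an auxiliary alphabet $\mathbb{B}$ of $N$ indeterminates, invokes the already-established fact that $\Sym(\mathbb{X}|\mathbb{B})/(h_N(\mathbb{X}-\mathbb{B}),\dots,h_{N+1-m}(\mathbb{X}-\mathbb{B}))$ is a graded-free $\Sym(\mathbb{B})$-module with the two Schur bases and the dual map $\zeta$ (\cite[Theorem 2.10]{Wu-color-equi}), and then simply specializes $\mathbb{B}\mapsto\Sigma$; freeness guarantees the bases and the pairing survive the specialization. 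That is also where your proposal has real gaps. First, the proposition is stated for $\Sigma$ an unordered \emph{tuple}, with repeated entries allowed. With repetitions your evaluation map cannot be injective ($R$ acquires nilpotents), the Chinese-Remainder surjectivity fails, and the residue weights $\prod_{s\in\Omega,\,r\in\Sigma\setminus\Omega}(s-r)$ in your definition of $\zeta_\Sigma$ vanish. Even for distinct $\Sigma$, injectivity of $R\to\prod_\Omega\C$ does not follow from computing the common zero locus of the relations: that identifies the points of $\mathrm{Spec}\,R$ but not that $R$ is reduced. You would need to first bound $\dim_\C R\le\bn{N}{m}$ by your spanning argument and then combine with surjectivity of the evaluation map; as written, your step 2 quotes the dimension count from step 1, so the logic is circular.

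Second, and more seriously, the identity \eqref{eq-zeta-Sigma-bases} is the substantive content of the last claim, and you defer it to ``a Cauchy-type orthogonality identity'' without proving it. With your residue definition of $\zeta_\Sigma$ it amounts to
$\sum_{\Omega}S_\lambda(\Omega)S_\mu(\Omega-\Sigma)\big/\prod_{s\in\Omega,\,r\in\Sigma\setminus\Omega}(s-r)=\delta_{\mu,\lambda^c}$,
a localization-type formula that is true but is not a one-line consequence of the Cauchy identity; establishing it from scratch is essentially as hard as the duality you are trying to prove. The paper gets it for free: $\zeta$ already exists and is $\Sym(\mathbb{B})$-linear, so it descends through the quotient by $(\beta_1-r_1,\dots,\beta_N-r_N)$ to a map $\zeta_\Sigma$ satisfying \eqref{eq-zeta-Sigma-bases}, and that equation in turn forces linear independence of the two spanning sets. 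To complete your route you would need to supply a full proof of the displayed orthogonality (for distinct $\Sigma$) and a separate argument for tuples with repetition; otherwise I recommend switching to the specialization argument.
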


\begin{proof}
Let $\mathbb{B}=\{\beta_1,\dots,\beta_N\}$ be an alphabet with $N$ indeterminates disjoint from $\mathbb{X}$. Then, by \cite[Theorem 2.10]{Wu-color-equi}, the quotient ring
\[
\hat{R}:=\Sym(\mathbb{X}|\mathbb{B})/(h_{N}(\mathbb{X}-\mathbb{B}), h_{N-1}(\mathbb{X}-\mathbb{B}),\dots, h_{N+1-m}(\mathbb{X}-\mathbb{B}))
\] 
is a graded-free $\Sym(\mathbb{B})$-module. $\{S_\lambda(\mathbb{X})~|~ \lambda \in \Lambda_{m,N-m}\}$ and $\{S_\lambda(\mathbb{X}-\mathbb{B})~|~ \lambda \in \Lambda_{m,N-m}\}$ are two homogeneous bases for the $\Sym(\mathbb{B})$-module $\hat{R}$. In particular, $\hat{R} \cong \Sym(\mathbb{B}) \{\qb{N}{m}\}$
as $\Sym(\mathbb{B})$-modules. Moreover, there is a unique $\Sym(\mathbb{B})$-module homomorphism $\zeta:\hat{R} \rightarrow \Sym(\mathbb{B})$ such that, for $\lambda,\mu \in \Lambda_{m,N-m}$,
\begin{equation}\label{eq-zeta-bases}
\zeta(S_\lambda(\mathbb{X}) \cdot S_\mu(\mathbb{X}-\mathbb{B})) = \begin{cases}
1 & \text{if } \mu=\lambda^c , \\
0 & \text{if } \mu\neq\lambda^c.
\end{cases}
\end{equation}

Write $\Sigma =\{r_1,\dots,r_N\}$. Consider the diagram
\[
\xymatrix{
\hat{R} \ar[d]^{\zeta} \ar[rr]^<<<<<<<<<<<<<<{\hat{\pi}} && R \cong \sfrac{\hat{R}}{(\beta_1-r_1,\dots,\beta_N-r_N)} \ar@{-->}[d]^{\zeta_\Sigma} \\
\Sym(\mathbb{B}) \ar[rr]^<<<<<<<<<<{\pi} && \C \cong \sfrac{\Sym(\mathbb{B})}{(\beta_1-r_1,\dots,\beta_N-r_N)} 
}
\]
where $\hat{\pi}$ and $\pi$ are the standard quotient maps. Since $\zeta$ is $\Sym(\mathbb{B})$-linear, it induces a $\C$-linear map $\zeta_{\Sigma}:R\rightarrow \C$, which make the above diagram commutative. Since $\{S_\lambda(\mathbb{X})~|~ \lambda \in \Lambda_{m,N-m}\}$ and $\{S_\lambda(\mathbb{X}-\mathbb{B})~|~ \lambda \in \Lambda_{m,N-m}\}$ are two bases for the $\Sym(\mathbb{B})$-module $\hat{R}$, we know that each of $\{S_{\lambda}(\mathbb{X})=\hat{\pi}(S_\lambda(\mathbb{X}))~|~\lambda \in \Lambda_{m,N-m}\}$ and $\{S_{\lambda}(\mathbb{X}-\Sigma)=\hat{\pi}(S_\lambda(\mathbb{X}-\mathbb{B}))~|~\lambda \in \Lambda_{m,N-m}\}$ spans $R$ over $\C$. Using the above commutative diagram and Equation \eqref{eq-zeta-bases}, it easy to check that Equation \eqref{eq-zeta-Sigma-bases} is true. This implies that each of $\{S_{\lambda}(\mathbb{X})~|~\lambda \in \Lambda_{m,N-m}\}$ and $\{S_{\lambda}(\mathbb{X}-\Sigma)~|~\lambda \in \Lambda_{m,N-m}\}$ is linearly independent and, therefore, a basis for $R$.
\end{proof}

Recall that, for a commutative ring $R$ and $a_1,\dots,a_k \in R$, the sequence $\{a_1,\dots,a_k\}$ is called $R$-regular if $a_1$ is not a zero divisor in $R$ and $a_j$ is not a zero divisor in $R/(a_1,\dots,a_{j-1})$ for $j=2,\dots,k$.

\begin{lemma}\label{lemma-regular-complete-Sigma}
Suppose that $1\leq m \leq N$. Let $\mathbb{X}$ be an alphabet of $m$ indeterminates and $\Sigma$ an unordered tuple of $N$ complex numbers. Then $\{h_N(\mathbb{X}-\Sigma), h_{N-1}(\mathbb{X}-\Sigma), \dots, h_{N+1-m}(\mathbb{X}-\Sigma)\}$ is $\Sym(\mathbb{X})$-regular.
\end{lemma}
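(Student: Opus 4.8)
The plan is to deduce the lemma from the dimension count already in hand (Proposition~\ref{prop-circle-ring-dim-base}) together with the standard fact that in a Cohen--Macaulay ring a system of elements generating an ideal of height equal to its length is automatically a regular sequence. The only point that needs a little care is that the $h_k(\mathbb{X}-\Sigma)$ are not homogeneous.

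\textbf{Steps.} First I would note that $\Sym(\mathbb{X})$ is, via the elementary symmetric polynomials $X_1,\dots,X_m$, the polynomial ring $\C[X_1,\dots,X_m]$; in particular it is Cohen--Macaulay of Krull dimension $m$. Next, by Proposition~\ref{prop-circle-ring-dim-base} the quotient ring
\[
R=\sfrac{\Sym(\mathbb{X})}{(h_N(\mathbb{X}-\Sigma),h_{N-1}(\mathbb{X}-\Sigma),\dots,h_{N+1-m}(\mathbb{X}-\Sigma))}
\]
is $\bn{N}{m}$-dimensional over $\C$, hence of Krull dimension $0$. Since $\Sym(\mathbb{X})$ is a polynomial ring over a field, the dimension formula $\dim R = m - \mathrm{ht}\, I$ (where $I$ denotes the defining ideal) forces $\mathrm{ht}\, I = m$, which is exactly the number of the listed generators of $I$. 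A standard property of Cohen--Macaulay rings then guarantees that these $m$ generators, taken in the listed order (in fact in any order), form a $\Sym(\mathbb{X})$-regular sequence; this is precisely the assertion of the lemma.

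\textbf{Main obstacle and an alternative.} The mild subtlety, and the step I expect to need the most care in writing, is that $h_k(\mathbb{X}-\Sigma)=h_k(\mathbb{X})+(\text{terms of lower degree in }\mathbb{X})$ is \emph{not} homogeneous (since $\Sigma$ is a tuple of scalars) and $\Sym(\mathbb{X})$ is not local, so one cannot simply cite ``a homogeneous system of parameters is a regular sequence''; one must invoke the slightly stronger Cohen--Macaulay statement above (via, e.g., the unmixedness theorem). If one prefers to rely only on the clean graded statement, the alternative is to pass to leading forms: the top-degree part of $h_k(\mathbb{X}-\Sigma)$ is $h_k(\mathbb{X})$, and $\{h_N(\mathbb{X}),\dots,h_{N+1-m}(\mathbb{X})\}$ is a homogeneous regular sequence in $\C[X_1,\dots,X_m]$ (it consists of $m$ homogeneous elements of positive degree with finite-dimensional quotient $\Sym(\mathbb{X})/(h_{N+1-m}(\mathbb{X}),\dots,h_N(\mathbb{X}))$, namely the $\Sigma=\{0,\dots,0\}$ case of Proposition~\ref{prop-circle-ring-dim-base}, hence a homogeneous system of parameters); then the general principle that a sequence of polynomials whose leading forms with respect to the degree filtration form a regular sequence is itself a regular sequence upgrades this to $\{h_N(\mathbb{X}-\Sigma),\dots,h_{N+1-m}(\mathbb{X}-\Sigma)\}$. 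Either route closes the proof; the first is the shorter, the second the more self-contained.
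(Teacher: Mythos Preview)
Your proposal is correct, and both routes you sketch work. The paper, however, takes essentially your \emph{second} route but carries it out by hand rather than citing the leading-form principle: it inducts on $k$, and at each step assumes a minimal-degree zero-divisor relation for $h_{N-k}(\mathbb{X}-\Sigma)$ modulo the previous $h_j(\mathbb{X}-\Sigma)$, then compares top-degree homogeneous parts to produce a zero-divisor relation for $h_{N-k}(\mathbb{X})$ modulo the $h_j(\mathbb{X})$, contradicting the known regularity of the homogeneous sequence (\cite[Proposition~6.2]{Wu-color}). In other words, the paper's two-page degree argument is exactly a direct verification of the statement ``if the leading forms are regular then so is the original sequence'' in this particular instance.

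Your first route, via Cohen--Macaulayness and Proposition~\ref{prop-circle-ring-dim-base}, is genuinely different and much shorter. It is also logically permissible here, since the proof of Proposition~\ref{prop-circle-ring-dim-base} does not rely on the present lemma. The trade-off is the expected one: the paper's argument is longer but completely elementary and self-contained (no unmixedness theorem, no appeal to depth), whereas your argument is a one-liner once one is willing to invoke the standard fact that in a Cohen--Macaulay ring $n$ elements generating an ideal of height $n$ form a regular sequence. Your caveat about the non-local, non-homogeneous setting is well placed; the clean way to phrase it is exactly as you do, via the height/unmixedness formulation rather than via systems of parameters.
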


\begin{proof}
By \cite[Proposition 6.2]{Wu-color}, we know that $\{h_N(\mathbb{X}), h_{N-1}(\mathbb{X}), \dots, h_{N+1-m}(\mathbb{X})\}$ is $\Sym(\mathbb{X})$-regular. Note that 
\[
h_m(\mathbb{X}-\Sigma) = h_m(\mathbb{X}) +\sum_{k=0}^{m-1} h_k(\mathbb{X}) h_{m-k}(-\Sigma). 
\]

We prove by induction that $\{h_N(\mathbb{X}-\Sigma), h_{N-1}(\mathbb{X}-\Sigma), \dots, h_{N+1-k}(\mathbb{X}-\Sigma)\}$ is $\Sym(\mathbb{X})$-regular for $1\leq k \leq m$. When $k=1$, this statement is trivially true. Assume $\{h_N(\mathbb{X}-\Sigma), h_{N-1}(\mathbb{X}-\Sigma), \dots, h_{N+1-k}(\mathbb{X}-\Sigma)\}$ is $\Sym(\mathbb{X})$-regular for a given $1\leq k \leq m-1$. 

Consider the sequence 
\[
\{h_N(\mathbb{X}-\Sigma), h_{N-1}(\mathbb{X}-\Sigma), \dots, h_{N+1-k}(\mathbb{X}-\Sigma), h_{N-k}(\mathbb{X}-\Sigma)\}.
\] 
To prove this sequence is $\Sym(\mathbb{X})$-regular, we only need to show that $h_{N-k}(\mathbb{X}-\Sigma)$ is not a zero divisor in $\sfrac{\Sym(\mathbb{X})}{(h_N(\mathbb{X}-\Sigma), h_{N-1}(\mathbb{X}-\Sigma), \dots, h_{N+1-k}(\mathbb{X}-\Sigma))}$. Suppose that $h_{N-k}(\mathbb{X}-\Sigma)$ is a zero divisor in this quotient ring. Then there is a $g \in \Sym(\mathbb{X})$, such that
\begin{enumerate}[(i)]
	\item $g \notin (h_N(\mathbb{X}-\Sigma), h_{N-1}(\mathbb{X}-\Sigma), \dots, h_{N+1-k}(\mathbb{X}-\Sigma))$,
	\item $g \cdot h_{N-k}(\mathbb{X}-\Sigma) \in (h_N(\mathbb{X}-\Sigma), h_{N-1}(\mathbb{X}-\Sigma), \dots, h_{N+1-k}(\mathbb{X}-\Sigma))$,
	\item $\deg g$ is minimal among elements of $\Sym(\mathbb{X})$ satisfying (i) and (ii).
\end{enumerate}

Consider the set 
\[
G=\{(g_1,\dots,g_k)~|~ g_j \in \Sym(\mathbb{X})~\forall j, ~\sum_{j=1}^k g_j h_{N+1-j}(\mathbb{X}-\Sigma)= g h_{N-k}(\mathbb{X}-\Sigma)\}.
\]
Define a positive integer-valued function $\delta:G\rightarrow \zed_+$ by 
\[
\delta(g_1,\dots,g_k) = \max \{\deg g_j + 2(N+1-j)~|~ j=1,\dots, k\}.
\]
(Keep in mind that our degree is twice the usual degree.) Let $\delta_0$ be the minimal value of $\delta$. It is clear that $\delta_0 \geq \deg g +2(N-k)$. We claim that 
\begin{equation}\label{eq-1-lemma-regular-complete-Sigma}
\delta_0 = \deg g +2(N-k).
\end{equation} 

We prove Equation \eqref{eq-1-lemma-regular-complete-Sigma} by contradiction. Assume that $\delta_0 > \deg g +2(N-k)$. Let $G_{\delta_0} = \{(g_1,\dots,g_k) \in G ~|~ \delta(g_1,\dots,g_k)=\delta_0\}$. Define a function $\jmath:G_{\delta_0} \rightarrow \{1,\dots,k\}$ by 
\[
\jmath(g_1,\dots,g_k) = \max \{j~|~ \deg g_j + 2(N+1-j) = \delta_0\}.
\]
Denote by $\jmath_0$ the minimal value of $\jmath$. Choose a $(g_1,\dots,g_k) \in G_{\delta_0}$ so that $\jmath(g_1,\dots,g_k) =\jmath_0$. 

If $\jmath_0=1$, let $g_1^{top}$ be the homogeneous part of $g_1$ of degree $\delta_0 - 2N$. Then $g_1^{top} \neq 0$, $\deg g_j h_{N+1-j}(\mathbb{X}-\Sigma) <\delta_0$ and $\deg g h_{N-k}(\mathbb{X}-\Sigma)<\delta_0$. Comparing the homogeneous parts of degree $\delta_0$ of both sides of
\begin{equation}\label{eq-2-lemma-regular-complete-Sigma}
\sum_{j=1}^k g_j h_{N+1-j}(\mathbb{X}-\Sigma)= g h_{N-k}(\mathbb{X}-\Sigma),
\end{equation} 
we get that $g_1^{top} h_N(\mathbb{X}) = 0$. This is a contradiction.

If $\jmath_0 >1$, let $g_j^{top}$ be the homogeneous part of $g_j$ of degree $\delta_0 - 2(N+1-j)$ for $j=1,\dots,\jmath_0$. Then $g_{\jmath_0}^{top} \neq 0$. Comparing the homogeneous parts of degree $\delta_0$ of both sides of Equation \eqref{eq-2-lemma-regular-complete-Sigma}, we get
\[
\sum_{j=1}^{\jmath_0} g_j^{top} h_{N+1-j}(\mathbb{X})=0.
\]
But $\{h_N(\mathbb{X}), h_{N-1}(\mathbb{X}), \dots, h_{N+1-k}(\mathbb{X})\}$ is $\Sym(\mathbb{X})$-regular, this implies that $g_{\jmath_0}^{top} \in (h_{N}(\mathbb{X}),\dots,h_{N+2-\jmath_0}(\mathbb{X}))$. So 
\[
g_{\jmath_0}^{top} = \sum_{j=1}^{\jmath_0-1} f_j h_{N+1-j}(\mathbb{X}),
\]
where $f_j\in \Sym(\mathbb{X})$ is homogeneous of degree $\delta_0 - 2(N+1-\jmath_0) - 2(N+1-j)$. So 
\begin{eqnarray*}
g_{\jmath_0} & = &  (g_{\jmath_0}-g_{\jmath_0}^{top}) + g_{\jmath_0}^{top} \\
& = &  (g_{\jmath_0}-g_{\jmath_0}^{top}) + \sum_{j=1}^{\jmath_0-1} f_j h_{N+1-j}(\mathbb{X}) \\
& = & (g_{\jmath_0}-g_{\jmath_0}^{top}) + \sum_{j=1}^{\jmath_0-1} f_j (h_{N+1-j}(\mathbb{X}) - h_{N+1-j}(\mathbb{X}-\Sigma)) + \sum_{j=1}^{\jmath_0-1} f_j h_{N+1-j}(\mathbb{X}-\Sigma).
\end{eqnarray*}
Define $\hat{g}_j \in \Sym(\mathbb{X})$ by
\[
\hat{g}_j = \begin{cases}
g_j + f_j h_{N+1-\jmath_0}(\mathbb{X}-\Sigma) & \text{if } j < \jmath_0, \\
(g_{\jmath_0}-g_{\jmath_0}^{top}) + \sum_{j=1}^{\jmath_0-1} f_j (h_{N+1-j}(\mathbb{X}) - h_{N+1-j}(\mathbb{X}-\Sigma)) & \text{if } j = \jmath_0, \\
g_j & \text{if } j > \jmath_0.
\end{cases}
\]
Then it is straightforward to check that $(\hat{g}_1,\dots,\hat{g}_k) \in G$, $\delta(\hat{g}_1,\dots,\hat{g}_k) \leq \delta_0$ and $\jmath(\hat{g}_1,\dots,\hat{g}_k) < \jmath_0$. This is a contradiction. Therefore Equation \eqref{eq-1-lemma-regular-complete-Sigma} is true.

Now pick a $(g_1,\dots,g_k) \in G_{\delta_0}$. Let $g^{top}$ be the homogeneous part of $g$ of degree $\deg g=\delta_0-2(N-k)$, and $g_j^{top}$ the homogeneous part of $g_j$ of degree $\delta_0 - 2(N+1-j)$ for $j=1,\dots,k$. Comparing the homogeneous parts of degree $\delta_0$ on both sides of Equation \eqref{eq-2-lemma-regular-complete-Sigma}, one can see that
\[
\sum_{j=1}^k g_j^{top} h_{N+1-j}(\mathbb{X})= g^{top} h_{N-k}(\mathbb{X}).
\]
But $\{h_N(\mathbb{X}), h_{N-1}(\mathbb{X}), \dots, h_{N+1-k}(\mathbb{X})\}$ is $\Sym(\mathbb{X})$-regular, this implies that $g^{top} \in (h_{N}(\mathbb{X}),\dots,h_{N+1-k}(\mathbb{X}))$. So 
\[
g^{top} = \sum_{j=1}^{k} \tilde{f}_j h_{N+1-j}(\mathbb{X}),
\]
where $\tilde{f}_j\in \Sym(\mathbb{X})$ is homogeneous of degree $\deg g - 2(N+1-j)$. Thus,
\begin{eqnarray*}
g & = &  (g-g^{top}) + g^{top} \\
& = &  (g-g^{top}) + \sum_{j=1}^{k} \tilde{f}_j h_{N+1-j}(\mathbb{X}) \\
& = & (g-g^{top}) + \sum_{j=1}^{k} \tilde{f}_j (h_{N+1-j}(\mathbb{X}) - h_{N+1-j}(\mathbb{X}-\Sigma)) + \sum_{j=1}^{k} \tilde{f}_j h_{N+1-j}(\mathbb{X}-\Sigma).
\end{eqnarray*}
So 
\[
\tilde{g}:= g - \sum_{j=1}^{k} \tilde{f}_j h_{N+1-j}(\mathbb{X}-\Sigma) =(g-g^{top}) + \sum_{j=1}^{k} \tilde{f}_j (h_{N+1-j}(\mathbb{X}) - h_{N+1-j}(\mathbb{X}-\Sigma))
\]
also satisfies conditions (i) and (ii) above and $\deg \tilde{g} < \deg g$. This is a contradiction. So $h_{N-k}(\mathbb{X}-\Sigma)$ is not a zero divisor in 
\[
\sfrac{\Sym(\mathbb{X})}{(h_N(\mathbb{X}-\Sigma), h_{N-1}(\mathbb{X}-\Sigma), \dots, h_{N+1-k}(\mathbb{X}-\Sigma))}.
\] 
Therefore, the sequence
\[
\{h_N(\mathbb{X}-\Sigma), h_{N-1}(\mathbb{X}-\Sigma), \dots, h_{N+1-k}(\mathbb{X}-\Sigma), h_{N-k}(\mathbb{X}-\Sigma)\}.
\] 
is $\Sym(\mathbb{X})$-regular. This completes the induction.
\end{proof}

\begin{figure}[ht]

\setlength{\unitlength}{1pt}

\begin{picture}(360,60)(-180,0)


\qbezier(0,60)(-20,60)(-20,45)

\qbezier(0,60)(20,60)(20,45)

\put(-20,15){\vector(0,1){30}}

\put(20,15){\line(0,1){30}}

\qbezier(0,0)(-20,0)(-20,15)

\qbezier(0,0)(20,0)(20,15)

\put(19,30){\line(1,0){2}}

\put(-15,30){\tiny{$m$}}

\put(25,25){\small{$\mathbb{X}$}}
 
\end{picture}

\caption{}\label{circle-module-figure}

\end{figure}

\begin{proposition}\label{prop-circle-module}
Suppose that $P(X)$ is a polynomial of the form \eqref{def-P}, not necessarily generic. Denote by $\Sigma(P)$ the unordered tuple of the $N$ roots of $P'(X)$, counting multiplicity. For $1 \leq m \leq N$, let $\bigcirc_m$ be the circle colored by $m$ with a single marked point in Figure \ref{circle-module-figure}. Then, as a filtered $\Sym(\mathbb{X})$-module, 
\[
H_P(\bigcirc_m) \cong \sfrac{\Sym(\mathbb{X})}{(h_N(\mathbb{X}-\Sigma(P)),h_{N-1}(\mathbb{X}-\Sigma(P)),\dots,h_{N+1-m}(\mathbb{X}-\Sigma(P)))} \{q^{-m(N-m)}\} \left\langle m \right\rangle,
\] 
where $\mathbb{X}$ is an alphabet of $m$ indeterminates.
\end{proposition}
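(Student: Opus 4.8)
The plan is to compute $C_P(\bigcirc_m)$ directly, by cutting the circle with two marked points and then reducing the resulting Koszul matrix factorization. Put two marked points on $\bigcirc_m$, so that it becomes a union of two edges colored $m$ joining two alphabets $\mathbb{X}$ and $\mathbb{Y}$ of $m$ indeterminates each. As in \cite{Wu-color}, the matrix factorization of each such edge is, over $\Sym(\mathbb{X}|\mathbb{Y})$, a Koszul matrix factorization with rows $(U_i,\,X_i-Y_i)$, $i=1,\dots,m$, where $U_1,\dots,U_m\in\Sym(\mathbb{X}|\mathbb{Y})$ are any polynomials satisfying $\sum_{i=1}^{m}U_i(X_i-Y_i)=P(\mathbb{X})-P(\mathbb{Y})$; traversing the other edge contributes the same rows with $X_i-Y_i$ replaced by $Y_i-X_i$. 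Hence, up to the overall grading shifts recorded in \cite{Wu-color}, $C_P(\bigcirc_m)$ is the tensor product over $\Sym(\mathbb{X}|\mathbb{Y})$ of the two edge factorizations, i.e. the Koszul matrix factorization with rows $(U_i,\,X_i-Y_i)$ and $(U_i,\,Y_i-X_i)$, $i=1,\dots,m$, whose potential is $(P(\mathbb{X})-P(\mathbb{Y}))+(P(\mathbb{Y})-P(\mathbb{X}))=0$. Applying to each of these pairs of rows the standard row operation for Koszul matrix factorizations from \cite[Proposition 7.12]{Wu-color} (which replaces $(a_i,b_i),(a_j,b_j)$ by $(a_i-a_j,\,b_i),(a_j,\,b_j+b_i)$ and preserves the isomorphism type) turns this into the Koszul matrix factorization with rows $(0,\,X_i-Y_i)$ and $(U_i,\,0)$, $i=1,\dots,m$. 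Folded into a $\zed_2$-graded complex, this is precisely the Koszul complex on the length-$2m$ sequence $X_1-Y_1,\dots,X_m-Y_m,U_1,\dots,U_m$, with the $\zed_2$-grading shifted by $m$.

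The next step is to identify this sequence and check that it is regular. Differentiating the identity $\sum_i U_i(X_i-Y_i)=P(\mathbb{X})-P(\mathbb{Y})$ with respect to $X_k$ and then setting $\mathbb{Y}=\mathbb{X}$ shows that $\overline{U}_k:=U_k|_{\mathbb{Y}=\mathbb{X}}=\partial P(\mathbb{X})/\partial X_k$, a formula that does not depend on the choice of the $U_i$. Using the classical identity $\partial p_j(\mathbb{X})/\partial X_k=(-1)^{k-1}j\,h_{j-k}(\mathbb{X})$ together with the generating-function identity $\sum_{l\geq 0}(-1)^{l}b_l\,h_{j-l}(\mathbb{X})=h_j(\mathbb{X}-\Sigma(P))$ (here $b_0=1$; this holds because $b_l$ is the $l$-th elementary symmetric function of $\Sigma(P)$), one obtains
\[
\overline{U}_k=(-1)^{k-1}(N+1)\,h_{N+1-k}(\mathbb{X}-\Sigma(P)),\qquad k=1,\dots,m.
\]
Now $\{X_1-Y_1,\dots,X_m-Y_m\}$ is a $\Sym(\mathbb{X}|\mathbb{Y})$-regular sequence with quotient $\Sym(\mathbb{X})$, modulo which $U_k$ reduces to $\overline{U}_k$; since $\{h_N(\mathbb{X}-\Sigma(P)),\dots,h_{N+1-m}(\mathbb{X}-\Sigma(P))\}$ is $\Sym(\mathbb{X})$-regular by Lemma \ref{lemma-regular-complete-Sigma}, the full sequence $X_1-Y_1,\dots,X_m-Y_m,U_1,\dots,U_m$ is $\Sym(\mathbb{X}|\mathbb{Y})$-regular. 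Therefore the Koszul complex above has homology concentrated in the single $\zed_2$-degree $m\bmod 2$ (which accounts for the $\langle m\rangle$), equal there to
\[
\sfrac{\Sym(\mathbb{X}|\mathbb{Y})}{(X_1-Y_1,\dots,X_m-Y_m,U_1,\dots,U_m)}\cong\sfrac{\Sym(\mathbb{X})}{(h_N(\mathbb{X}-\Sigma(P)),\dots,h_{N+1-m}(\mathbb{X}-\Sigma(P)))}
\]
as filtered $\Sym(\mathbb{X})$-modules. Reinstating the quantum-grading shift, which works out to $\{q^{-m(N-m)}\}$, completes the argument.

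The conceptual content here is modest; I expect the main difficulty to lie in the bookkeeping. One has to confirm the exact form of the colored-edge matrix factorization and carry the tensor product and the row operations out in the \emph{filtered} category of matrix factorizations rather than just the graded one, and one has to compute the cumulative grading shifts precisely --- the $\langle m\rangle$ coming from the $\zed_2$-folding of the Koszul complex and the $\{q^{-m(N-m)}\}$ from the two edges. A shorter alternative would be to deduce the proposition by base change from its equivariant analogue: the equivariant graph homology $H_f(\bigcirc_m)$ is, up to the same shifts, the ring $\hat{R}=\Sym(\mathbb{X}|\mathbb{B})/(h_N(\mathbb{X}-\mathbb{B}),\dots,h_{N+1-m}(\mathbb{X}-\mathbb{B}))$ that appears in the proof of Proposition \ref{prop-circle-ring-dim-base}, and this $\hat{R}$ is $\Sym(\mathbb{B})$-free by \cite[Theorem 2.10]{Wu-color-equi}; applying $\varpi$, that is base change along $\Sym(\mathbb{B})\to\C$ sending $\mathbb{B}$ to $\Sigma(P)$, and using the $\Sym(\mathbb{B})$-freeness of $\hat{R}$ to let base change commute with homology (vanishing of the relevant $\mathrm{Tor}$), one gets $H_P(\bigcirc_m)\cong\hat{R}\otimes_{\Sym(\mathbb{B})}\C$, which is exactly the asserted quotient ring with the asserted shifts.
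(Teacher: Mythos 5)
Your argument is correct and is essentially the paper's own proof: the paper simply starts from the single-marked-point description, where $C_P(\bigcirc_m)$ is by definition the Koszul factorization with rows $(U_j,0)$ over $\Sym(\mathbb{X})$ and $U_j=\partial P(\mathbb{X})/\partial X_j=(-1)^{j+1}(N+1)h_{N+1-j}(\mathbb{X}-\Sigma(P))$ (this is \cite[Lemma 2.9]{Wu-color-equi}, which you rederive), and then concludes via Lemma \ref{lemma-regular-complete-Sigma} and the filtered version of \cite[Corollary 2]{KR1}. Your two-marked-point setup followed by row reduction, and your alternative via base change from the equivariant theory, are only cosmetic variations on the same core ingredients.
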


\begin{proof}
By definition,
\[
C_P(\bigcirc_m) =
\left(%
\begin{array}{cc}
  U_1 & 0 \\
  \dots & \dots \\
  U_m & 0 
\end{array}%
\right)_{\Sym(\mathbb{X})},
\]
where $U_j=\frac{\partial}{\partial X_j}P(\mathbb{X})$ and $X_j$ is the $j$-th elementary symmetric polynomial in $\mathbb{X}$. By \cite[Lemma 2.9]{Wu-color-equi}, we know $U_j=(-1)^{j+1} (N+1) h_{N+1-j}(\mathbb{X}-\Sigma(P))$. The proposition then follows from Lemma \ref{lemma-regular-complete-Sigma} and \cite[Corollary 2]{KR1}. (See \cite[Proposition 2.16]{Wu7} for the filtered version of \cite[Corollary 2]{KR1} and \cite[Corollary 2.25]{Wu-color} for the grading/filtration shifting rules.)
\end{proof}

\subsection{The ring $\overline{R}_{\Gamma}$ associated to a closed abstract MOY graph $\Gamma$} In the remainder of this section, we fix a generic polynomial $P(X)$ of the form \eqref{def-P}. Denote by $\Sigma=\Sigma(P)=\{r_1,\dots,r_N\}$ the set of roots of $P'(X)$.

Let $\Gamma$ be a closed abstract MOY graph with a marking, and $\mathbb{X},\mathbb{Y},\dots,\mathbb{A}$ the alphabets assigned to all the marked points on $\Gamma$. Set $R_\Gamma=\Sym(\mathbb{X}|\mathbb{Y}|\dots|\mathbb{A})$.

If a component $\bigcirc$ of $\Gamma$ is a circle of color $m$ with a single marked point assigned the alphabet $\mathbb{X}_k$, then define $\mathcal{I}_{\bigcirc}$ to be the ideal 
\[
\mathcal{I}_{\bigcirc} = (h_N(\mathbb{X}_k-\Sigma),h_{N-1}(\mathbb{X}_k-\Sigma),\dots,h_{N+1-m}(\mathbb{X}_k-\Sigma))
\] 
of $R_\Gamma$.

Let $\hat{\Gamma}$ be $\Gamma$ with all circles with a single marked point removed. Cut $\hat{\Gamma}$ at all the marked points. This gives us a set of simple abstract MOY graphs $\Gamma_1,\dots,\Gamma_n$, each of which is a neighborhood of a vertex of $\Gamma$ of valence at least $2$. (If there are more than one marked points on an edge, we consider the arc between two adjacent marked points a neighborhood of an additional vertex of valence $2$.)

\begin{figure}[ht]

\setlength{\unitlength}{1pt}

\begin{picture}(360,80)(-180,-40)


\put(0,0){\vector(-1,1){15}}

\put(-15,15){\line(-1,1){15}}

\put(-23,25){\tiny{$i_1$}}

\put(-33,32){\small{$\mathbb{X}_1$}}

\put(0,0){\vector(-1,2){7.5}}

\put(-7.5,15){\line(-1,2){7.5}}

\put(-11,25){\tiny{$i_2$}}

\put(-18,32){\small{$\mathbb{X}_2$}}

\put(3,25){$\cdots$}

\put(0,0){\vector(1,1){15}}

\put(15,15){\line(1,1){15}}

\put(31,25){\tiny{$i_k$}}

\put(27,32){\small{$\mathbb{X}_k$}}


\put(-30,-30){\vector(1,1){15}}

\put(-15,-15){\line(1,1){15}}

\put(-26,-30){\tiny{$j_1$}}

\put(-33,-40){\small{$\mathbb{Y}_1$}}

\put(-15,-30){\vector(1,2){7.5}}

\put(-7.5,-15){\line(1,2){7.5}}

\put(-13,-30){\tiny{$j_2$}}

\put(-18,-40){\small{$\mathbb{Y}_2$}}

\put(3,-30){$\cdots$}

\put(30,-30){\vector(-1,1){15}}

\put(15,-15){\line(-1,1){15}}

\put(31,-30){\tiny{$j_l$}}

\put(27,-40){\small{$\mathbb{Y}_l$}}

\end{picture}

\caption{$\Gamma_p$}\label{piece-Gamma}

\end{figure}

Let $\Gamma_p$ in Figure \ref{piece-Gamma} be one of these neighborhoods. Note that $i_1+i_2+\cdots +i_k = j_1+j_2+\cdots +j_l \triangleq m$. Define 
\[
R_{\Gamma_p}=\Sym(\mathbb{X}_1|\dots|\mathbb{X}_k|\mathbb{Y}_1|\dots|\mathbb{Y}_l).
\]  
Denote by $\widetilde{X}_j$ the $j$-th elementary symmetric polynomial in $\mathbb{X}_1\cup\cdots\cup \mathbb{X}_k$ and by $\widetilde{Y}_j$ the $j$-th elementary symmetric polynomial in $\mathbb{Y}_1\cup\cdots\cup \mathbb{Y}_l$. Then 
\begin{eqnarray*}
P(\mathbb{X}_1\cup\cdots\cup \mathbb{X}_k) & = & P(\widetilde{X}_1\dots,\widetilde{X}_m), \\
P(\mathbb{Y}_1\cup\cdots\cup \mathbb{Y}_l) & = & P(\widetilde{Y}_1\dots,\widetilde{Y}_m).
\end{eqnarray*}
The matrix factorization associated to $\Gamma_p$ is
\[
C_P(\Gamma_p)=\left(%
\begin{array}{cc}
  U_1 & \widetilde{X}_1-\widetilde{Y}_1 \\
  U_2 & \widetilde{X}_2-\widetilde{Y}_2 \\
  \dots & \dots \\
  U_m & \widetilde{X}_m-\widetilde{Y}_m
\end{array}%
\right)_{R_{\Gamma_p}}
\{q^{-\sum_{1\leq s<t \leq k} i_si_t}\},
\]
where
\[
U_j = \frac{P(\widetilde{Y}_1,\dots,\widetilde{Y}_{j-1},\widetilde{X}_j,\dots,\widetilde{X}_m) - P(\widetilde{Y}_1,\dots,\widetilde{Y}_j,\widetilde{X}_{j+1},\dots,\widetilde{X}_m)}{\widetilde{X}_j-\widetilde{Y}_j},
\]
and the potential of $C_P(\Gamma_p)$ is $\sum_{j=1}^m (\widetilde{X}_j-\widetilde{Y}_j)U_j = P(\mathbb{X}_1\cup\cdots\cup \mathbb{X}_k)-P(\mathbb{Y}_1\cup\cdots\cup \mathbb{Y}_l)$. Let $\mathcal{I}_{\Gamma_p}$ be the ideal 
\[
\mathcal{I}_{\Gamma_p} = (\widetilde{X}_1-\widetilde{Y}_1,\dots,\widetilde{X}_m-\widetilde{Y}_m,U_1,\dots,U_m)
\]
of $R_\Gamma$.

\begin{definition}\label{def-overline-R-Gamma} 
Define 
\[
\mathcal{I}_{\Gamma} = \sum_{p=1}^n \mathcal{I}_{\Gamma_p} + \sum_{\bigcirc} \mathcal{I}_{\bigcirc},
\]
where $\bigcirc$ runs through all circles in $\Gamma$ with a single marked point.

The ring $\overline{R}_\Gamma$ is defined to be $\overline{R}_\Gamma = \sfrac{R_\Gamma}{\mathcal{I}_{\Gamma}}$. (Compare to \cite[Definition 2.2]{Gornik}.)
\end{definition}

\begin{lemma}\label{lemma-overline-R-Gamma}
Let $\Gamma$, $R_\Gamma$ and $\overline{R}_\Gamma$ be as above. Then
\begin{enumerate}[(i)]
	\item The action of $R_\Gamma$ on $H_P(\Gamma)$ factors through $\overline{R}_\Gamma$.
	\item Let $\mathbb{X}$ be an alphabet of $m$ indeterminates assigned to a marked point on $\Gamma$. Then, as elements of $\overline{R}_\Gamma$, 
	\[
	h_N(\mathbb{X}-\Sigma)=h_{N-1}(\mathbb{X}-\Sigma)=\cdots=h_{N+1-m}(\mathbb{X}-\Sigma)=0.
	\]
	\item Up to a natural isomorphism, $\overline{R}_\Gamma$ does not depend on the choice of marking.
\end{enumerate}
\end{lemma}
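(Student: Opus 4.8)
\emph{Part (i).} It suffices to show that the ideal $\mathcal I_\Gamma$ annihilates $H_P(\Gamma)$. The plan is to use the standard fact that in a Koszul matrix factorization with rows $(a_1,b_1),\dots,(a_k,b_k)$ over a ring $R$, multiplication by each $a_i$ and each $b_i$ is null-homotopic. Since $C_P(\Gamma)$ is assembled by tensoring the Koszul factorizations $C_P(\Gamma_p)$ of the pieces together with the factorizations $C_P(\bigcirc)$ of the circles carrying a single marked point, and since multiplication by an element of $R_{\Gamma_p}$ (resp. of a circle's symmetric polynomial ring) acts on $C_P(\Gamma)$ through the corresponding tensor factor, tensoring such a null-homotopy with the identity shows that multiplication by each $U_j$ and each $\widetilde X_j-\widetilde Y_j$ coming from a piece $\Gamma_p$ is null-homotopic on $C_P(\Gamma)$; likewise, using $U_j=(-1)^{j+1}(N+1)h_{N+1-j}(\mathbb X-\Sigma)$ from \cite[Lemma 2.9]{Wu-color-equi} (or directly Proposition \ref{prop-circle-module}), multiplication by each $h_{N+1-j}(\mathbb X-\Sigma)$ for a circle with alphabet $\mathbb X$ is null-homotopic. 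Hence every generator of $\mathcal I_\Gamma$ acts as zero on $H_P(\Gamma)$, and as the multiplication action is $R_\Gamma$-linear, so does all of $\mathcal I_\Gamma$.

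\emph{Part (ii).} I would first treat the ``merged'' alphabet at a vertex. For a piece $\Gamma_p$ as in Figure \ref{piece-Gamma}, set $\widetilde{\mathbb X}=\mathbb X_1\cup\cdots\cup\mathbb X_k$; modulo the ideal $(\widetilde X_1-\widetilde Y_1,\dots,\widetilde X_m-\widetilde Y_m)$ the divided difference $U_j$ reduces to $\frac{\partial}{\partial\widetilde X_j}P(\widetilde{\mathbb X})=(-1)^{j+1}(N+1)h_{N+1-j}(\widetilde{\mathbb X}-\Sigma)$ by \cite[Lemma 2.9]{Wu-color-equi}, so (since $N+1\ne0$ and $U_j,\widetilde X_i-\widetilde Y_i\in\mathcal I_\Gamma$) we get $h_{N+1-j}(\widetilde{\mathbb X}-\Sigma)=0$ in $\overline R_\Gamma$ for $j=1,\dots,m$. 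To descend to an individual leg $\mathbb X_s$ of color $n_s$, I would use the power-series identity
\[
\sum_{n\ge 0} h_n(\mathbb X_s-\Sigma)\,t^n \;=\; \Big(\,\sum_{n\ge 0} h_n(\widetilde{\mathbb X}-\Sigma)\,t^n\,\Big)\prod_{x\in\widetilde{\mathbb X}\setminus\mathbb X_s}(1-xt),
\]
which follows from $\sum_n h_n(\mathbb A-\Sigma)t^n=\prod_{r\in\Sigma}(1-rt)/\prod_{x\in\mathbb A}(1-xt)$: for $N+1-n_s\le n\le N$, every term of the coefficient of $t^n$ on the right involves some $h_{n'}(\widetilde{\mathbb X}-\Sigma)$ with $N+1-m\le n'\le N$ and hence vanishes in $\overline R_\Gamma$, giving $h_N(\mathbb X_s-\Sigma)=\cdots=h_{N+1-n_s}(\mathbb X_s-\Sigma)=0$. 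Finally, every alphabet at a marked point of $\Gamma$ is a leg of some piece $\Gamma_p$ (this includes the valence-$2$ pieces, hence also covers circles carrying more than one marked point) or is the alphabet on a circle with a single marked point, for which the vanishing is already built into $\mathcal I_{\bigcirc}$; this proves (ii).

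\emph{Part (iii).} Any two markings of $\Gamma$ are joined by a sequence of moves each inserting or deleting a single marked point (for instance, pass through the union of the two markings), so it suffices to compare a marking $M$ with the marking $M'$ obtained by inserting one new marked point, with alphabet $\mathbb Z$ of some color $m$, on an edge of $\Gamma$. I would check that the inclusion $R_\Gamma^M\hookrightarrow R_\Gamma^{M'}$ sends $\mathcal I_\Gamma^M$ into $\mathcal I_\Gamma^{M'}$; the only non-immediate point is that the divided differences of the piece of $M$ that is subdivided lie in $\mathcal I_\Gamma^{M'}$, which holds because modulo $\mathcal I_\Gamma^{M'}$ they coincide, up to the scalar $(-1)^{j+1}(N+1)$, with complete symmetric functions $h_{N+1-j}$ that are supplied by the divided differences of the new piece(s). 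Conversely, sending the elementary symmetric functions of $\mathbb Z$ to those of the neighbouring alphabet $\mathbb A$ defines a ring homomorphism $R_\Gamma^{M'}\to\overline R_\Gamma^{M}$ that kills $\mathcal I_\Gamma^{M'}$: the new identification relations map to $0$, and the new divided differences map to scalar multiples of $h_{N+1-j}(\mathbb A-\Sigma)$, which vanish in $\overline R_\Gamma^{M}$ by part (ii). The resulting maps $\overline R_\Gamma^M\to\overline R_\Gamma^{M'}$ and $\overline R_\Gamma^{M'}\to\overline R_\Gamma^{M}$ are canonical and mutually inverse, which gives the asserted natural isomorphism.

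\emph{Main obstacle.} The crux is part (ii): the vertex relations only constrain the merged alphabet directly, so the descent to an individual leg via the generating-function factorization above is the essential step, and it is exactly this fact that makes the reverse map in part (iii) well defined. The remainder of part (iii) is bookkeeping — enumerating the (routine) local cases for the insertion move and keeping track of which alphabet belongs to which piece.
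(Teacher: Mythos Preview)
Your argument is correct. Parts (i) and (iii) match the paper's approach: the paper cites \cite[Proposition 2]{KR1} for (i) and, once (ii) is in hand, declares (iii) ``easy to check'' via insertion/deletion of a single marked point, which is exactly your strategy.

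Your proof of (ii), however, takes a genuinely different route. You argue in two steps: first observe that $U_j$ specializes on the diagonal $\widetilde Y_i=\widetilde X_i$ to $\partial P/\partial\widetilde X_j$, giving $h_{N+1-j}(\widetilde{\mathbb X}-\Sigma)=0$ for the \emph{merged} alphabet at a vertex, and then descend to an individual leg $\mathbb X_s$ via the generating-function factorization (your range check $N+1-m\le n-j\le N$ is exactly right). The paper instead differentiates the potential identity $\sum_j(\widetilde X_j-\widetilde Y_j)U_j=\pm P(\mathbb X)+\ast$ directly with respect to the elementary symmetric polynomials $X_k$ of the \emph{individual} leg $\mathbb X$: by the product rule the derivative of the left side is manifestly in $\mathcal I_{\Gamma_p}$, while the right side gives $\pm(-1)^{k+1}(N+1)h_{N+1-k}(\mathbb X-\Sigma)$, so the vanishing follows in one stroke with no descent step. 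Both arguments are short; the paper's is slicker, while yours isolates the merged-alphabet statement, which then feeds cleanly into your organization of (iii). One small remark on your forward map in (iii): when the affected piece is a genuine vertex (not a valence-$2$ arc), its old $U_j$ already agrees modulo the new relations $Z_i=X_{1,i}$ with the $U_j$ of the relabeled vertex-piece in $M'$, so the detour through $h_{N+1-j}$ is not needed there; your description fits the valence-$2$ and circle cases, and you correctly flag the remaining case analysis as routine.
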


\begin{proof}
Part (i) follows from \cite[Proposition 2]{KR1}. 

It is easy to see that Part (ii) is true if $\mathbb{X}$ is the single alphabet on a circle colored by $m$. Otherwise, assume that $\mathbb{X}$ is at one of the end points of $\Gamma_p$ above. Note that
\[
\sum_{j=1}^m (\widetilde{X}_j-\widetilde{Y}_j)U_j = P(\mathbb{X}_1\cup\cdots\cup \mathbb{X}_k)-P(\mathbb{Y}_1\cup\cdots\cup \mathbb{Y}_l) = \pm P(\mathbb{X}) +\ast,
\]
where $\ast$ is independent of $\mathbb{X}$. Denote by $X_k$ the $k$-th elementary symmetric polynomial in $\mathbb{X}$. By \cite[Lemma 2.9]{Wu-color-equi},
\begin{eqnarray*}
 h_{N+1-k}(\mathbb{X}-\Sigma) & = & \frac{(-1)^{k+1}}{(N+1)} \frac{\partial}{\partial X_k}P(\mathbb{X}) \\
& = & \pm \frac{1}{(N+1)} \sum_{j=1}^m (U_j\frac{\partial}{\partial X_k}(\widetilde{X}_j-\widetilde{Y}_j) + (\widetilde{X}_j-\widetilde{Y}_j)\frac{\partial}{\partial X_k}U_j) \in ~\mathcal{I}_{\Gamma_p}.
\end{eqnarray*}
The proves Part (ii).

It is now easy to check that adding/removing a marked point does not change the ring $\overline{R}_{\Gamma}$, which implies Part (iii).
\end{proof}

\subsection{States of a closed abstract MOY graph} In this subsection, $\Gamma$ is again a closed abstract MOY graph. Denote by $E(\Gamma)$ the set of edges of $\Gamma$. We put exactly one marked point on each edge $e$ of $\Gamma$ and denote by $\mathbb{X}_e$ the alphabet assigned to this marked point. Recall that $\mathcal{P}(\Sigma)$ is the power set of $\Sigma$.

\begin{figure}[ht]

\setlength{\unitlength}{1pt}

\begin{picture}(360,80)(-180,-40)


\put(0,0){\vector(-1,1){15}}

\put(-15,15){\line(-1,1){15}}

\put(-33,32){\small{$e_1$}}

\put(0,0){\vector(-1,2){7.5}}

\put(-7.5,15){\line(-1,2){7.5}}

\put(-18,32){\small{$e_2$}}

\put(3,25){$\cdots$}

\put(0,0){\vector(1,1){15}}

\put(15,15){\line(1,1){15}}

\put(27,32){\small{$e_k$}}


\put(4,-2){$v$}


\put(-30,-30){\vector(1,1){15}}

\put(-15,-15){\line(1,1){15}}

\put(-33,-40){\small{$e'_1$}}

\put(-15,-30){\vector(1,2){7.5}}

\put(-7.5,-15){\line(1,2){7.5}}

\put(-18,-40){\small{$e'_2$}}

\put(3,-30){$\cdots$}

\put(30,-30){\vector(-1,1){15}}

\put(15,-15){\line(-1,1){15}}

\put(27,-40){\small{$e'_l$}}

\end{picture}

\caption{}\label{def-state-MOY-vertex-fig}

\end{figure}

\begin{definition}\label{def-state-MOY}
A pre-state $\varphi$ of $\Gamma$ is a function $\varphi:E(\Gamma)\rightarrow \mathcal{P}(\Sigma)$ such that $|\varphi(e)|$ is equal to the color of $e$. We denote by $\mathcal{S}'(\Gamma)$ the set of all pre-states of $\Gamma$.

Let $v$ be a vertex of $\Gamma$ of the form in Figure \ref{def-state-MOY-vertex-fig}. We say that a pre-state $\varphi$ is admissible at $v$ if 
\begin{enumerate}
	\item $\varphi(e_1),\dots,\varphi(e_k)$ are pairwise disjoint,
	\item $\varphi(e'_1),\dots,\varphi(e'_l)$ are pairwise disjoint,
	\item $\varphi(e_1)\cup\cdots\cup\varphi(e_k)=\varphi(e'_1)\cup\dots\cup\varphi(e'_l)$.
\end{enumerate}

A pre-state of $\Gamma$ is called a state of $\Gamma$ if it is admissible at every vertex of $\Gamma$. We denote by $\mathcal{S}(\Gamma)$ the set of all states of $\Gamma$.
\end{definition}

The goal of this subsection is to establish the decomposition of the ring $\overline{R}_{\Gamma}$ in Theorem \ref{thm-bar-R-Gamma-decompose}. We start with the following technical lemma.

\begin{lemma}\label{lemma-complete-symmetric-vanish}
Suppose that $1\leq m \leq N$ and $\Omega$ is a subset of $\Sigma$ with $|\Omega|=m$. Then $h_k(\Omega - \Sigma)=0$ for all $k>N-m$.
\end{lemma}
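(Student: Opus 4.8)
The plan is to use the generating-function description of the complete homogeneous symmetric functions of a difference of alphabets. Recall (from the conventions of \cite{Lascoux-notes}, compatible with the identity $U_j=(-1)^{j+1}(N+1)h_{N+1-j}(\mathbb{X}-\Sigma)$ used in Proposition \ref{prop-circle-module}) that for an alphabet $\mathbb{X}$ and a finite tuple $\Sigma=\{r_1,\dots,r_N\}$ of scalars one has the formal identity
\[
\sum_{k\geq 0} h_k(\mathbb{X}-\Sigma)\, t^k \;=\; \frac{\prod_{i=1}^N (1-r_i t)}{\prod_{x\in\mathbb{X}} (1-xt)}.
\]
First I would specialize $\mathbb{X}$ to the tuple $\Omega\subset\Sigma$, so that the right-hand side becomes $\dfrac{\prod_{r\in\Sigma}(1-rt)}{\prod_{s\in\Omega}(1-st)}$.

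The key observation is then simply that, since $\Omega\subseteq\Sigma$, every factor $1-st$ with $s\in\Omega$ occurring in the denominator cancels against the corresponding factor in the numerator, leaving
\[
\sum_{k\geq 0} h_k(\Omega-\Sigma)\, t^k \;=\; \prod_{r\in\Sigma\setminus\Omega} (1-rt),
\]
which is an honest polynomial in $t$ of degree $|\Sigma\setminus\Omega| = N-m$. Comparing coefficients of $t^k$ on both sides gives $h_k(\Omega-\Sigma)=0$ whenever $k>N-m$, which is exactly the claim.

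This argument is essentially immediate, so I do not anticipate a genuine obstacle; the only point requiring a little care is bookkeeping with the sign/grading convention for $h_k$ of a virtual alphabet, to be sure the generating series is the one written above rather than its reciprocal. This is pinned down by the already-used formula $h_{N+1-k}(\mathbb{X}-\Sigma)=\tfrac{(-1)^{k+1}}{N+1}\tfrac{\partial}{\partial X_k}P(\mathbb{X})$ (cf.\ the proof of Lemma \ref{lemma-overline-R-Gamma}), which forces the convention stated. An alternative, self-contained route avoiding generating functions is to argue directly from $h_k(\mathbb{X}-\Sigma)=\sum_{i=0}^{k} h_i(\mathbb{X})\,h_{k-i}(-\Sigma)$ together with the fact that $\prod_{r\in\Sigma}(1-rt)$ annihilates the series $\big(\prod_{s\in\Omega}(1-st)\big)^{-1}$ beyond degree $N-m$; but the generating-function proof is the cleanest and is what I would write up.
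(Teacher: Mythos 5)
Your proof is correct and is essentially identical to the paper's: the paper also writes $\sum_{k\geq 0} h_k(\Omega-\Sigma)t^k = \prod_{r\in\Sigma}(1-rt)\big/\prod_{s\in\Omega}(1-st) = \prod_{r\in\Sigma\setminus\Omega}(1-rt)$ and concludes from the fact that the right-hand side is a polynomial of degree at most $N-m$. Your sign/convention check is consistent with the one the paper uses.
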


\begin{proof}
Recall that 
\[
\sum_{k=0}^\infty h_k(\Omega - \Sigma) t^k = \sfrac{\prod_{r \in \Sigma} (1-rt)}{\prod_{s \in \Omega} (1-st)} = \prod_{r \in \Sigma \setminus \Omega} (1-rt).
\]
The right hand side is clearly a polynomial in $t$ with (the usual) degree at most $N-m$. And the lemma follows. 
\end{proof}

For an edge $e$ in $\Gamma$ of color $m_e$, denote by $\mathcal{I}_e$ the ideal of $R_\Gamma$ generated by $h_N(\mathbb{X}_e-\Sigma),~h_{N-1}(\mathbb{X}_e-\Sigma),\dots,~h_{N+1-m_e}(\mathbb{X}_e-\Sigma)$. Define 
\begin{eqnarray*}
\widetilde{R}_{\Gamma} & = & \sfrac{R_\Gamma}{\sum_{e \in E(\Gamma)} \mathcal{I}_e} \\
& = & \bigotimes_{e \in E(\Gamma)}  \sfrac{\Sym (\mathbb{X}_e)}{(h_N(\mathbb{X}_e-\Sigma),h_{N-1}(\mathbb{X}_e-\Sigma),\dots,h_{N+1-m_e}(\mathbb{X}_e-\Sigma))},
\end{eqnarray*}
where $m_e$ is the color of $e$ and the tensor is taken over $\C$. By Part (ii) of Lemma \ref{lemma-overline-R-Gamma}, we know that $\overline{R}_{\Gamma}$ is a quotient ring of $\widetilde{R}_{\Gamma}$.

\begin{proposition}\label{prop-def-Q-varphi}
Let $\varphi \in \mathcal{S}'(\Gamma)$. Define 
\[
Q_{\varphi} = \prod_{e \in E(\Gamma)} \left( \sfrac{\prod_{x \in \mathbb{X}_e,~r\in \Sigma\setminus \varphi(e)} (x-r)}{\prod_{s \in \varphi(e),~r\in \Sigma\setminus \varphi(e)} (s-r)}\right).
\]
Then, as elements of $\widetilde{R}_{\Gamma}$,
\begin{enumerate}[(i)]
	\item $\{Q_\varphi~|~\varphi \in \mathcal{S}'(\Gamma)\}$ spans $\widetilde{R}_{\Gamma}$ over $\C$,
	\item $Q_\varphi^2 = Q_\varphi$ for all $\varphi \in \mathcal{S}'(\Gamma)$,
	\item $Q_{\varphi_1} Q_{\varphi_2}=0$ for all $\varphi_1,~\varphi_2 \in \mathcal{S}'(\Gamma)$ with $\varphi_1 \neq\varphi_2$,
	\item $\sum_{\varphi \in \mathcal{S}'(\Gamma)} Q_\varphi = 1$.
\end{enumerate}

In particular, since $\overline{R}_{\Gamma}$ is a quotient ring of $\widetilde{R}_{\Gamma}$, the above conclusions remain true in $\overline{R}_{\Gamma}$.
\end{proposition}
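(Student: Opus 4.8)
The plan is to reduce all four assertions to a one-alphabet statement and then read everything off the Chen--Louck interpolation formula (Theorem~\ref{Chen-Louck-interpolation}). Since $\widetilde{R}_{\Gamma}=\bigotimes_{e\in E(\Gamma)}R_e$ over $\C$, where $R_e=\sfrac{\Sym(\mathbb{X}_e)}{(h_N(\mathbb{X}_e-\Sigma),\dots,h_{N+1-m_e}(\mathbb{X}_e-\Sigma))}$, and under this identification $Q_\varphi$ corresponds to $\bigotimes_{e}Q_{\varphi(e),e}$ with
\[
Q_{\Omega,e}:=\sfrac{\prod_{x\in\mathbb{X}_e,~r\in\Sigma\setminus\Omega}(x-r)}{\prod_{s\in\Omega,~r\in\Sigma\setminus\Omega}(s-r)}\in R_e,
\]
a pre-state is precisely an independent choice, for each edge $e$, of a subset $\Omega\subset\Sigma$ with $|\Omega|=m_e$. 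Hence: if $\{Q_{\Omega,e}\}$ spans $R_e$, then the tensor products span $\widetilde{R}_{\Gamma}$ (this gives (i)); $Q_\varphi^2=\bigotimes_e Q_{\varphi(e),e}^2$ and $Q_{\varphi_1}Q_{\varphi_2}=\bigotimes_e Q_{\varphi_1(e),e}Q_{\varphi_2(e),e}$ reduce (ii) and (iii) to the one-alphabet versions (for (iii), note $\varphi_1\neq\varphi_2$ forces $\varphi_1(e)\neq\varphi_2(e)$ for some $e$); and $\sum_\varphi Q_\varphi=\bigotimes_e(\sum_\Omega Q_{\Omega,e})$ reduces (iv) to the one-alphabet version. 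So from now on I would fix a single alphabet $\mathbb{X}$ of $m$ indeterminates ($1\leq m\leq N$; the case of a color-$0$ edge is trivial, $R_e=\C$), write $R$ for the corresponding quotient ring, which is $\bn{N}{m}$-dimensional over $\C$ by Proposition~\ref{prop-circle-ring-dim-base}, and write $Q_\Omega$ for the element above for each $\Omega\subset\Sigma$ with $|\Omega|=m$.

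The computation that drives everything is that $Q_\Omega|_{\mathbb{X}=\Omega'}=1$ if $\Omega'=\Omega$ and $=0$ otherwise, for subsets $\Omega,\Omega'\subset\Sigma$ of size $m$: the denominator of $Q_\Omega$ is a nonzero scalar because the elements of $\Sigma$ are pairwise distinct; when $\Omega'=\Omega$ the numerator becomes exactly the denominator; and when $\Omega'\neq\Omega$ some $s\in\Omega'\setminus\Omega\subset\Sigma\setminus\Omega$ contributes a factor $(s-s)=0$ to the numerator. This is the orthogonality underlying Lagrange interpolation, here in the Chen--Louck form. I would also record that each $Q_\Omega$, each constant, and each Schur polynomial $S_\lambda(\mathbb{X})$ with $\lambda\in\Lambda_{m,N-m}$ has partial degree at most $2(N-m)$ in every variable --- for $S_\lambda$ because a semistandard tableau of shape $\lambda$ contains at most $\lambda_1\leq N-m$ entries equal to any fixed value.

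With these in hand the four parts follow quickly. For (iv), applying Theorem~\ref{Chen-Louck-interpolation} to $f=1$ gives $1=\sum_\Omega Q_\Omega$ already as an identity in $\Sym(\mathbb{X})$. For (i), $\{S_\lambda(\mathbb{X})\mid\lambda\in\Lambda_{m,N-m}\}$ is a $\C$-basis of $R$ by Proposition~\ref{prop-circle-ring-dim-base} and each $S_\lambda$ has partial degrees $\leq 2(N-m)$, so Theorem~\ref{Chen-Louck-interpolation} expresses each $S_\lambda(\mathbb{X})$ as a $\C$-combination of the $Q_\Omega$, whence the $Q_\Omega$ span $R$. For (ii) and (iii) simultaneously: given $\Omega_1,\Omega_2$, reduce the polynomial $Q_{\Omega_1}Q_{\Omega_2}$ modulo the defining ideal of $R$ to a representative $g$ with partial degrees $\leq 2(N-m)$ (possible via the $S_\lambda$-basis); by Lemma~\ref{lemma-complete-symmetric-vanish} each ideal generator $h_k(\mathbb{X}-\Sigma)$ with $k\geq N+1-m$ vanishes under $\mathbb{X}=\Omega$ for every admissible $\Omega$, so $g(\Omega)=(Q_{\Omega_1}Q_{\Omega_2})(\Omega)=\delta_{\Omega,\Omega_1}\delta_{\Omega,\Omega_2}$ for all $\Omega$; then Theorem~\ref{Chen-Louck-interpolation} applied to $g$ gives $g=\sum_\Omega\delta_{\Omega,\Omega_1}\delta_{\Omega,\Omega_2}Q_\Omega$, i.e. in $R$ we have $Q_{\Omega_1}Q_{\Omega_2}=Q_{\Omega_1}$ when $\Omega_1=\Omega_2$ and $Q_{\Omega_1}Q_{\Omega_2}=0$ when $\Omega_1\neq\Omega_2$. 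Transporting across the tensor factors yields (i)--(iv) for $\widetilde{R}_{\Gamma}$, and the final sentence is immediate because $\overline{R}_{\Gamma}$ is a quotient of $\widetilde{R}_{\Gamma}$.

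Given Theorem~\ref{Chen-Louck-interpolation} and Proposition~\ref{prop-circle-ring-dim-base}, there is no serious obstacle; the only points needing care are the partial-degree bound on the $S_\lambda$, $\lambda\in\Lambda_{m,N-m}$ (so that Chen--Louck applies to a chosen set of ideal representatives), and the use of Lemma~\ref{lemma-complete-symmetric-vanish} to see that passing to $R$ does not change the value of a class at $\mathbb{X}=\Omega$, which is exactly what legitimizes the reduction step in (ii)/(iii). As a cross-check I would note the equivalent, more structural route: the evaluations $\mathbb{X}\mapsto\Omega$ descend (by Lemma~\ref{lemma-complete-symmetric-vanish}) to $\bn{N}{m}$ pairwise distinct $\C$-algebra maps $R\to\C$, which by the dimension count forces $R\cong\C^{\bn{N}{m}}$, under which $Q_\Omega$ is simply the $\Omega$-th standard idempotent --- making (i)--(iv) transparent.
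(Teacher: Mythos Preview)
Your proof is correct and uses the same ingredients as the paper's --- Chen--Louck interpolation for (iv) and (i), Proposition~\ref{prop-circle-ring-dim-base} for the $S_\lambda$-basis, the orthogonality $Q_\Omega|_{\mathbb{X}=\Omega'}=\delta_{\Omega,\Omega'}$, and Lemma~\ref{lemma-complete-symmetric-vanish} to make evaluations well defined on the quotient. The only difference is organizational: you first reduce to a single alphabet via the tensor decomposition $\widetilde{R}_\Gamma=\bigotimes_e R_e$ and $Q_\varphi=\bigotimes_e Q_{\varphi(e),e}$, whereas the paper runs the same evaluation-and-interpolate argument directly on the full product; your packaging is a bit cleaner, and the closing remark identifying $R$ with $\C^{\binom{N}{m}}$ via the evaluation maps is a nice conceptual cross-check.
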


\begin{proof}
Recall that $R_\Gamma = \bigotimes_{e \in E(\Gamma)} \Sym (\mathbb{X}_e)$, where the tensor product is taken over $\C$. By Theorem \ref{Chen-Louck-interpolation}, $\sum_{\varphi \in \mathcal{S}'(\Gamma)} Q_\varphi = 1$ in $R_\Gamma$. So it is true in $\widetilde{R}_{\Gamma}$. This proves Part (iv).

By Proposition \ref{prop-circle-ring-dim-base}, every element of 
\[
\sfrac{\Sym (\mathbb{X}_e)}{(h_N(\mathbb{X}_e-\Sigma),h_{N-1}(\mathbb{X}_e-\Sigma),\dots,h_{N+1-m_e}(\mathbb{X}_e-\Sigma))}
\] 
can be represented by an element of $\Sym (\mathbb{X}_e)$ with all partial degrees less than or equal to $2(N-m_e)$.  By Theorem \ref{Chen-Louck-interpolation}, this implies that $\{Q_\varphi~|~\varphi \in \mathcal{S}'(\Gamma)\}$ spans $\widetilde{R}_{\Gamma}$ over $\C$. That is, we have proved Part (i).

Next, consider $Q_{\varphi_1} Q_{\varphi_2}$ for $\varphi_1,~\varphi_2 \in \mathcal{S}'(\Gamma)$. Note that, for any $\varphi,~\varphi' \in \mathcal{S}'(\Gamma)$,
\begin{equation}\label{eq-phi-phiprime-value}
Q_{\varphi'}|_{\mathbb{X}_e=\varphi(e),~\forall e \in E(\Gamma)} = \begin{cases}
1 & \text{if } \varphi=\varphi', \\
0 & \text{if } \varphi\neq\varphi'.
\end{cases}
\end{equation}
From Part (i), we know that, as an element of $R_{\Gamma}$,
\begin{equation}\label{prop-def-Q-varphi-eq-2}
Q_{\varphi_1} Q_{\varphi_2} = g + \sum_{\varphi \in \mathcal{S}'(\Gamma)} \alpha_{\varphi} Q_{\varphi},
\end{equation}
where $g \in \sum_{e \in E(\Gamma)} \mathcal{I}_e$ and $\alpha_{\varphi} \in \C$. Given any $\varphi \in \mathcal{S}'(\Gamma)$, by Lemma \ref{lemma-complete-symmetric-vanish}, we know that 
\begin{equation}\label{evaluation-well-defined-bar-R-Gamma}
g|_{\mathbb{X}_e=\varphi(e),~\forall e \in E(\Gamma)} =0.
\end{equation}
Plugging $\mathbb{X}_e=\varphi(e),~\forall e \in E(\Gamma)$ into Equation \eqref{prop-def-Q-varphi-eq-2}, we get, by Equation \eqref{eq-phi-phiprime-value},
\[
\alpha_{\varphi} = \begin{cases}
1 & \text{if } \varphi=\varphi_1=\varphi_2, \\
0 & \text{otherwise}.
\end{cases}
\]
This implies that Parts (ii) and (iii) are true in $\widetilde{R}_{\Gamma}$.
\end{proof}

\begin{corollary}\label{coro-multiply-by-Q-varphi-1}
Let $\widetilde{R}_{\Gamma}$ be defined as above. For any $\varphi \in \mathcal{S}'(\Gamma)$, the evaluation homomorphism 
\[
|_{\mathbb{X}_e=\varphi(e),~\forall e \in E(\Gamma)}:\widetilde{R}_{\Gamma} \rightarrow \C
\] 
given by $f \mapsto f|_{\mathbb{X}_e=\varphi(e),~\forall e \in E(\Gamma)}$ is well defined. Moreover, as elements of $\widetilde{R}_{\Gamma}$, 
$f \cdot Q_{\varphi} = (f|_{\mathbb{X}_e=\varphi(e),~\forall e \in E(\Gamma)}) \cdot Q_{\varphi}$ for any $f \in \widetilde{R}_{\Gamma}$
\end{corollary}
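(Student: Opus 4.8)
The plan is to deduce Corollary \ref{coro-multiply-by-Q-varphi-1} directly from Proposition \ref{prop-def-Q-varphi} together with Lemma \ref{lemma-complete-symmetric-vanish}. First I would establish that the evaluation map is well defined on $\widetilde{R}_{\Gamma}$. Since $\widetilde{R}_{\Gamma} = \sfrac{R_\Gamma}{\sum_{e \in E(\Gamma)} \mathcal{I}_e}$ and $R_\Gamma$ is a genuine polynomial ring on which the substitution $\mathbb{X}_e = \varphi(e)$ is obviously defined, it suffices to check that this substitution kills the ideal $\sum_{e \in E(\Gamma)} \mathcal{I}_e$. But $\mathcal{I}_e$ is generated by $h_N(\mathbb{X}_e - \Sigma),\dots,h_{N+1-m_e}(\mathbb{X}_e-\Sigma)$, and substituting $\mathbb{X}_e = \varphi(e)$ — a subset of $\Sigma$ of cardinality $m_e$ — turns each generator into $h_k(\varphi(e) - \Sigma)$ with $k \geq N+1-m_e > N - m_e$, which vanishes by Lemma \ref{lemma-complete-symmetric-vanish}. (This is exactly the computation already recorded as Equation \eqref{evaluation-well-defined-bar-R-Gamma} in the proof of the preceding proposition.) Hence the evaluation homomorphism descends to $\widetilde{R}_{\Gamma} \rightarrow \C$.

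Next I would prove the multiplicativity statement $f \cdot Q_\varphi = (f|_{\mathbb{X}_e = \varphi(e)}) \cdot Q_\varphi$ for all $f \in \widetilde{R}_{\Gamma}$. By Part (i) of Proposition \ref{prop-def-Q-varphi}, $\{Q_{\varphi'} \mid \varphi' \in \mathcal{S}'(\Gamma)\}$ spans $\widetilde{R}_{\Gamma}$ over $\C$, so by $\C$-linearity it is enough to verify the identity when $f = Q_{\varphi'}$. If $\varphi' = \varphi$, then by Part (ii) we have $Q_{\varphi'} \cdot Q_\varphi = Q_\varphi^2 = Q_\varphi$, while $Q_{\varphi'}|_{\mathbb{X}_e = \varphi(e)} = 1$ by Equation \eqref{eq-phi-phiprime-value}, so both sides equal $Q_\varphi$. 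If $\varphi' \neq \varphi$, then by Part (iii) we have $Q_{\varphi'} \cdot Q_\varphi = 0$, while $Q_{\varphi'}|_{\mathbb{X}_e = \varphi(e)} = 0$ by Equation \eqref{eq-phi-phiprime-value}, so both sides vanish. Thus the identity holds on a spanning set, hence on all of $\widetilde{R}_{\Gamma}$.

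I do not anticipate a genuine obstacle here: every ingredient is already in place from Proposition \ref{prop-def-Q-varphi} and Lemma \ref{lemma-complete-symmetric-vanish}, and the argument is a short bookkeeping reduction to the basis $\{Q_{\varphi'}\}$. The only point requiring a modicum of care is being precise about what "the evaluation homomorphism is well defined on $\widetilde{R}_{\Gamma}$" means — namely that it factors through the defining ideal $\sum_e \mathcal{I}_e$ — and making sure the indices in Lemma \ref{lemma-complete-symmetric-vanish} line up (one needs $k \geq N+1-m_e$, equivalently $k > N - m_e$, which is exactly the range of generators appearing in $\mathcal{I}_e$). Beyond that, the proof is essentially a one-paragraph corollary.
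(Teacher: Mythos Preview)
Your proposal is correct and follows essentially the same approach as the paper. The paper's proof likewise cites Equation~\eqref{evaluation-well-defined-bar-R-Gamma} for well-definedness and then expands an arbitrary $f$ in the spanning set $\{Q_{\varphi'}\}$ using Part~(i) of Proposition~\ref{prop-def-Q-varphi}, identifies the coefficient $\alpha_\varphi$ as $f|_{\mathbb{X}_e=\varphi(e)}$ via Equation~\eqref{eq-phi-phiprime-value}, and applies Parts~(ii) and~(iii) to conclude $f\cdot Q_\varphi = \alpha_\varphi Q_\varphi$; your case-by-case verification on the spanning set is logically the same computation.
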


\begin{proof}
By Equation \eqref{evaluation-well-defined-bar-R-Gamma}, we know that $f|_{\mathbb{X}_e=\varphi(e),~\forall e \in E(\Gamma)}$ is well defined. By Part (i) of Proposition \ref{prop-def-Q-varphi}, we know that $f = \sum_{\varphi' \in \mathcal{S}'(\Gamma)} \alpha_{\varphi'} Q_{\varphi'}$, where $\alpha_{\varphi'} \in \C$. Plug $\mathbb{X}_e=\varphi(e),~\forall e \in E(\Gamma)$ into this equation, we get by Equation \eqref{eq-phi-phiprime-value} that $\alpha_\varphi = f|_{\mathbb{X}_e=\varphi(e),~\forall e \in E(\Gamma)}$. Then, by Parts (ii) and (iii) of Proposition \ref{prop-def-Q-varphi}, we know that $f \cdot Q_{\varphi} = \sum_{\varphi' \in \mathcal{S}'(\Gamma)} \alpha_{\varphi'} Q_{\varphi'}Q_{\varphi} = \alpha_\varphi Q_{\varphi}= (f|_{\mathbb{X}_e=\varphi(e),~\forall e \in E(\Gamma)})\cdot Q_{\varphi}$.
\end{proof}

Let $v$ be a vertex of $\Gamma$ as depicted in Figure \ref{def-state-MOY-vertex-fig}. Let $\mathbb{X} = \bigcup_{i=1}^k\mathbb{X}_{e_i}$ and $\mathbb{Y}= \bigcup_{j=1}^l\mathbb{X}_{e_j'}$. For an edge $e$ of $\Gamma$, denote by $m_e$ the color of $e$. Then $\sum_{i=1}^k m_{e_i} = \sum_{j=1}^l m_{e_j'} \triangleq m$. For $1\leq j \leq m$, denote by $\widetilde{X}_j$ and $\widetilde{Y}_j$ the $j$-th elementary symmetric polynomials in $\mathbb{X}$ and $\mathbb{Y}$, and define
\[
U_j = \frac{P(\widetilde{Y}_1,\dots,\widetilde{Y}_{j-1},\widetilde{X}_j,\dots,\widetilde{X}_m) - P(\widetilde{Y}_1,\dots,\widetilde{Y}_j,\widetilde{X}_{j+1},\dots,\widetilde{X}_m)}{\widetilde{X}_j-\widetilde{Y}_j},
\]
where $P$ is the polynomial in $\widetilde{X}_1,\dots,\widetilde{X}_m$ satisfying $P(\widetilde{X}_1,\dots,\widetilde{X}_m) = P(\mathbb{X})$.

\begin{lemma}\label{lemma-admissibility-evaluation}
For a pre-state $\varphi$ of $\Gamma$, the following two statements are equivalent.
\begin{enumerate}[(i)]
	\item $\varphi$ is admissible at $v$.
	\item For all $p=1,\dots,m$,
	\[
	\begin{cases}
	\widetilde{X}_p|_{\mathbb{X}_{e_i}=\varphi(e_i)~\forall i=1,\dots,k} = \widetilde{Y}_p|_{\mathbb{X}_{e_j'}=\varphi(e_j')~\forall j=1,\dots,l} ,\\
	U_p|_{\mathbb{X}_{e_i}=\varphi(e_i)~\forall i=1,\dots,k,~\mathbb{X}_{e_j'}=\varphi(e_j')~\forall j=1,\dots,l} = 0.
	\end{cases}
	\]
\end{enumerate}
\end{lemma}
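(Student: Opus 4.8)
## Proof Proposal for Lemma \ref{lemma-admissibility-evaluation}

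The plan is to prove the two implications separately, noting that the heart of the matter is a statement about symmetric functions: for a subset $\Omega \subseteq \Sigma$ with $|\Omega| = m$, the elementary symmetric polynomials of $\Omega$ are determined by $\Omega$ as a set, and the key polynomial identity $P(\widetilde{X}_1,\dots,\widetilde{X}_m) = \sum_{x \in \mathbb{X}} P(x)$ lets us translate the $U_j$ telescoping sum into a statement about the multiset $\varphi$ assigns to the incoming versus outgoing edges.

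First I would establish the direction (i) $\Rightarrow$ (ii). Suppose $\varphi$ is admissible at $v$. Then $\varphi(e_1),\dots,\varphi(e_k)$ are pairwise disjoint with union $\Omega := \varphi(e_1) \cup \cdots \cup \varphi(e_k)$, and likewise $\varphi(e_1'),\dots,\varphi(e_l')$ are pairwise disjoint with the same union $\Omega$. Evaluating $\widetilde{X}_p$ at $\mathbb{X}_{e_i} = \varphi(e_i)$ gives the $p$-th elementary symmetric polynomial of the multiset $\bigsqcup_i \varphi(e_i)$, which, by disjointness, equals the $p$-th elementary symmetric polynomial of the set $\Omega$; the same computation for $\widetilde{Y}_p$ at $\mathbb{X}_{e_j'} = \varphi(e_j')$ also gives the $p$-th elementary symmetric polynomial of $\Omega$. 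Hence the first equation in (ii) holds for all $p$. For the second equation, I would use the definition of $U_p$ as a telescoping difference quotient: after the evaluation, $\widetilde{X}_p$ and $\widetilde{Y}_p$ take the same value for every $p$, so in the numerator of $U_p$ the two arguments $(\dots,\widetilde{X}_p,\dots)$ and $(\dots,\widetilde{Y}_p,\dots)$ become literally equal tuples, forcing the numerator to vanish. The subtlety is that $U_p$ is a priori only a difference quotient; but it is a genuine polynomial (this is how $C_P(\Gamma_p)$ is constructed), so evaluating numerator and denominator separately and taking the limit is legitimate, and the vanishing of the numerator when $\widetilde{X}_p = \widetilde{Y}_p$ (with all other slots equal too) gives $U_p|_{\cdots} = 0$. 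I would phrase this carefully by writing $U_p(\widetilde{X}_\bullet, \widetilde{Y}_\bullet)(\widetilde{X}_p - \widetilde{Y}_p) = P(\widetilde{Y}_1,\dots,\widetilde{Y}_{p-1},\widetilde{X}_p,\dots) - P(\widetilde{Y}_1,\dots,\widetilde{Y}_p,\widetilde{X}_{p+1},\dots)$ and noting the right side is divisible by $\widetilde{X}_p - \widetilde{Y}_p$ as a polynomial, with quotient $U_p$; then specializing shows $U_p$ vanishes.

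Next I would prove (ii) $\Rightarrow$ (i), which is where the main work lies. Assume (ii). The equalities $\widetilde{X}_p|_{\cdots} = \widetilde{Y}_p|_{\cdots}$ for all $p = 1,\dots,m$ say that the multiset $A := \bigsqcup_i \varphi(e_i)$ (of total size $m$, counted with multiplicity) and the multiset $B := \bigsqcup_j \varphi(e_j')$ have the same elementary symmetric functions up to degree $m$, hence — being multisets of size $m$ — are equal as multisets. But $\varphi(e_i) \subseteq \Sigma$ and the $r \in \Sigma$ are distinct, so $A$ has an element with multiplicity $\geq 2$ precisely when the $\varphi(e_i)$ are \emph{not} pairwise disjoint. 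Thus I must rule out repeated elements. Here I would invoke the second condition in (ii), the vanishing of all $U_p$: summing the telescoping identity $\sum_{p=1}^m (\widetilde{X}_p - \widetilde{Y}_p) U_p = P(\widetilde{X}_1,\dots,\widetilde{X}_m) - P(\widetilde{Y}_1,\dots,\widetilde{Y}_m) = \sum_{x \in \mathbb{X}} P(x) - \sum_{y \in \mathbb{Y}} P(y)$ and evaluating, the left side is $0$, but more useful is to look at the individual $U_p$ combined with $h$-polynomial identities. The cleanest route: use that the evaluation must be compatible with the relations defining $\widetilde{R}_\Gamma$, i.e. $h_k(\mathbb{X}_{e_i} - \Sigma)$ evaluates to $h_k(\varphi(e_i) - \Sigma)$, which vanishes for $k > m_{e_i}$ by Lemma \ref{lemma-complete-symmetric-vanish}; combined with the product formula $\prod_i \frac{\prod_{r\in\Sigma}(1-rt)}{\prod_{s\in\varphi(e_i)}(1-st)}$ having the right polynomial degree only when the $\varphi(e_i)$ are disjoint, I would derive disjointness of the incoming edges (and symmetrically the outgoing ones). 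Once disjointness is known, $A = B$ as multisets becomes $\varphi(e_1)\cup\cdots\cup\varphi(e_k) = \varphi(e_1')\cup\cdots\cup\varphi(e_l')$ as sets, which is exactly admissibility at $v$.

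The main obstacle I anticipate is the disjointness deduction in the (ii) $\Rightarrow$ (i) direction: extracting ``the $\varphi(e_i)$ are pairwise disjoint'' purely from the vanishing of the $U_p$ is not a one-line matter, since the $U_p$ a priori only see the \emph{total} elementary symmetric functions $\widetilde{X}_p$, not the individual edge-labels. I expect the right tool is to exploit the fact, built into the construction, that the matrix factorization $C_P(\Gamma_p)$ and hence $\mathcal{I}_{\Gamma_p}$ interacts correctly with each edge-relation ideal $\mathcal{I}_{e}$ — concretely, that in $\widetilde{R}_\Gamma$ one can reduce any element to a representative of bounded partial degree in each $\mathbb{X}_{e_i}$ (Proposition \ref{prop-circle-ring-dim-base}), and then a degree/dimension count via the Chen–Louck interpolation pins down which pre-states can possibly carry a nonzero $Q_\varphi$ after imposing the vertex relations. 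If a fully self-contained argument proves awkward, the fallback is to observe that non-disjointness of the $\varphi(e_i)$ forces $h_k(\mathbb{X}_{e_i} - \Sigma)$ for some $k \leq m_{e_i}$ to evaluate to a nonzero complex number under $\varphi$ — because $h_k(\Omega' - \Sigma)$ for $\Omega' \subseteq \Sigma$ a genuine set of size $m_{e_i}$ is the $k$-th coefficient of $\prod_{r \in \Sigma\setminus\Omega'}(1 - rt)$, a polynomial whose nonvanishing low-degree coefficients are controlled — contradicting that $\varphi$ was assumed to satisfy the evaluation-compatibility implicit in (ii). I would write out whichever of these is shorter once the bookkeeping is in front of me.
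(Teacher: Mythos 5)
Your overall architecture matches the paper's (first equation of (ii) from equality of multisets, second from the divided-difference structure of $U_p$; converse by recovering the multiset from the $\beta_p$'s and then ruling out repetitions), but there are two genuine problems. The first is an outright error in (i)$\Rightarrow$(ii): you claim that because the numerator of $U_p$ vanishes once $\widetilde{X}_q=\widetilde{Y}_q$ for all $q$, the evaluation of $U_p$ is $0$. But the denominator $\widetilde{X}_p-\widetilde{Y}_p$ vanishes at the same locus, and the value of the polynomial $U_p$ on that diagonal is the partial derivative $\frac{\partial}{\partial X_p}P(\widetilde{X}_1,\dots,\widetilde{X}_m)$, not $0$. (Already for $m=1$, $P(X)=X^2$ one has $U_1=X+Y$, which equals $2a\neq 0$ at $X=Y=a$ even though numerator and denominator both vanish.) The vanishing of $U_p$ is not formal: it holds because the common evaluation point is $\Omega=\varphi(e_1)\cup\cdots\cup\varphi(e_k)$, an $m$-element subset of $\Sigma$, and $\frac{\partial}{\partial X_p}P(\mathbb{X})=(-1)^{p+1}(N+1)h_{N+1-p}(\mathbb{X}-\Sigma)$ vanishes at $\mathbb{X}=\Omega$ by Lemma \ref{lemma-complete-symmetric-vanish} (since $N+1-p>N-m$). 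Your argument never uses that $\Sigma$ is the root set of $P'$, which is exactly what makes this direction true.

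The second problem is that the crucial disjointness step in (ii)$\Rightarrow$(i) is left as a sketch whose two proposed routes both misfire. Route (a) appeals to ``evaluation-compatibility with $\widetilde{R}_\Gamma$'' and to which pre-states carry nonzero $Q_\varphi$; neither is part of hypothesis (ii), and the latter would be circular since Theorem \ref{thm-bar-R-Gamma-decompose} rests on this lemma. Route (b) locates the contradiction in the individual edge relations $h_k(\mathbb{X}_{e_i}-\Sigma)$, but those vanish automatically for every pre-state (each $\varphi(e_i)$ is a genuine subset of $\Sigma$); the obstruction to a repeated element must instead be extracted from the hypothesis $U_p|_{\cdots}=0$, i.e.\ from $h_{N+1-p}(\mathbb{X}-\Sigma)$ for the \emph{union} alphabet $\mathbb{X}$. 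Concretely one shows: a repetition in $\bigcup_i\varphi(e_i)$ forces $h_{p}(\mathbb{X}-\Sigma)|_{\mathbb{X}_{e_i}=\varphi(e_i)}\neq 0$ for some $p\geq N+1-m$ (via the generating function $\sfrac{\prod_{r\in\Sigma}(1-rt)}{\prod_i\prod_{r\in\varphi(e_i)}(1-rt)}$, treating the cases of a nonzero and a zero repeated root separately), and then one needs the additional recursion $h_{p}(\mathbb{X}-\Sigma)=\sum_{j=1}^m(-1)^{j-1}\widetilde{X}_jh_{p-j}(\mathbb{X}-\Sigma)$ for $p\geq N+1$ to push this nonvanishing down into the range $p\in\{N+1-m,\dots,N\}$, where it contradicts $U_p=0$. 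Without that last reduction the argument does not close, and your proposal does not supply it.
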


\begin{proof}
Assume $\varphi$ is admissible at $v$. Then
\begin{enumerate}
	\item $\varphi(e_1),\dots,\varphi(e_k)$ are pairwise disjoint,
	\item $\varphi(e'_1),\dots,\varphi(e'_l)$ are pairwise disjoint,
	\item $\varphi(e_1)\cup\cdots\cup\varphi(e_k)=\varphi(e'_1)\cup\dots\cup\varphi(e'_l)$.
\end{enumerate}
So 
\begin{eqnarray*}
\widetilde{X}_p|_{\mathbb{X}_{e_i}=\varphi(e_i)~\forall i=1,\dots,k} 
& = & \widetilde{X}_p|_{\mathbb{X}=\varphi(e_1)\cup\cdots\cup\varphi(e_k)} \\
=~ \widetilde{Y}_p|_{\mathbb{Y}=\varphi(e'_1)\cup\dots\cup\varphi(e'_l)} 
& = & \widetilde{Y}_p|_{\mathbb{X}_{e_j'}=\varphi(e_j')~\forall j=1,\dots,l},
\end{eqnarray*}
and, by Lemma \ref{lemma-complete-symmetric-vanish} and \cite[Lemma 2.9]{Wu-color-equi},
\begin{eqnarray*}
&& U_p|_{\mathbb{X}_{e_i}=\varphi(e_i)~\forall i=1,\dots,k,~\mathbb{X}_{e_j'}=\varphi(e_j')~\forall j=1,\dots,l} \\
& = & \left(\frac{\partial}{\partial X_p} P(\widetilde{X}_1,\dots,\widetilde{X}_m)\right)|_{\mathbb{X}=\varphi(e_1)\cup\cdots\cup\varphi(e_k)} \\
& = & (-1)^{p+1} (N+1) h_{N+1-p}(\mathbb{X}-\Sigma)|_{\mathbb{X}=\varphi(e_1)\cup\cdots\cup\varphi(e_k)} \\
& = & 0.
\end{eqnarray*}
This show that statement (ii) in the lemma is true.

Now assume that statement (ii) in the lemma is true. We need to show that $\varphi$ is admissible at $v$, that is, conditions (1)-(3) above are satisfied. 

First, set 
\[
\beta_p = \widetilde{X}_p|_{\mathbb{X}_{e_i}=\varphi(e_i)~\forall i=1,\dots,k} = \widetilde{Y}_p|_{\mathbb{X}_{e_j'}=\varphi(e_j')~\forall j=1,\dots,l}.
\]
Then, counting multiplicity, $\varphi(e_1)\cup\cdots\cup\varphi(e_k)$ and $\varphi(e'_1)\cup\dots\cup\varphi(e'_l)$ are both the set of roots of the polynomial $t^m +\sum_{j=1}^m (-1)^j \beta_j t^{m-j}$. So condition (3) is true. 

It remains to check (1) and (2), which are equivalent to each other since (3) is true. Since $\widetilde{X}_p|_{\mathbb{X}_{e_i}=\varphi(e_i)~\forall i=1,\dots,k} = \widetilde{Y}_p|_{\mathbb{X}_{e_j'}=\varphi(e_j')~\forall j=1,\dots,l}$, we have that, by \cite[Lemma 2.9]{Wu-color-equi},
\begin{eqnarray*}
&& U_p|_{\mathbb{X}_{e_i}=\varphi(e_i)~\forall i=1,\dots,k,~\mathbb{X}_{e_j'}=\varphi(e_j')~\forall j=1,\dots,l} \\
& = & (-1)^{p+1} (N+1) h_{N+1-p}(\mathbb{X}-\Sigma)|_{\mathbb{X}_{e_i}=\varphi(e_i)~\forall i=1,\dots,k}.
\end{eqnarray*}
Recall that
\begin{equation}\label{lemma-admissibility-evaluation-eq-1}
\sum_{p=0}^\infty t^p h_{p}(\mathbb{X}-\Sigma)|_{\mathbb{X}_{e_i}=\varphi(e_i)~\forall i=1,\dots,k} 
= \sfrac{\prod_{r\in\Sigma}(1-rt)}{\prod_{i=1}^k \prod_{r\in\varphi(e_i)}(1-rt)}.
\end{equation}
If (1) is not true, then there is a repeated root $r \in \varphi(e_i) \cap \varphi(e_j)$ for some $i,j=1,\dots,k$ and $i \neq j$. If $r\neq 0$, then the right hand side of Equation \eqref{lemma-admissibility-evaluation-eq-1} is a power series with infinitely many non-vanishing terms. If the only repeated root is $r=0$, then the right hand side of Equation \eqref{lemma-admissibility-evaluation-eq-1} is a polynomial of (the usual) degree in $t$ at least $N+1-m$. In either case, there is a $p\geq N+1-m$ such that $h_{p}(\mathbb{X}-\Sigma)|_{\mathbb{X}_{e_i}=\varphi(e_i)~\forall i=1,\dots,k} \neq 0$. Note that, for any $p\geq N+1$, we have 
\begin{eqnarray*}
h_{p}(\mathbb{X}-\Sigma) & = & \sum_{i=0}^N h_{p-i}(\mathbb{X})h_{i}(-\Sigma) \\
& = & \sum_{i=0}^N \sum_{j=1}^m (-1)^{j-1} \widetilde{X}_j h_{p-i-j}(\mathbb{X}) h_{i}(-\Sigma) \\
& = & \sum_{j=1}^m (-1)^{j-1} \widetilde{X}_j \left(\sum_{i=0}^N  h_{p-i-j}(\mathbb{X}) h_{i}(-\Sigma) \right) \\
& = & \sum_{j=1}^m (-1)^{j-1} \widetilde{X}_j h_{p-j}(\mathbb{X}-\Sigma).
\end{eqnarray*}
This implies that the sequence $\{h_{p}(\mathbb{X}-\Sigma)\}_{p=N+1-m}^\infty$ is contained in the ideal generated by $h_{N+1-m}(\mathbb{X}-\Sigma), \dots, h_{N}(\mathbb{X}-\Sigma)$. So there must be some $p=1, \dots,m$, such that $h_{N+1-p}(\mathbb{X}-\Sigma)|_{\mathbb{X}_{e_i}=\varphi(e_i)~\forall i=1,\dots,k} \neq 0$. Thus, for this $p$, 
\[
U_p|_{\mathbb{X}_{e_i}=\varphi(e_i)~\forall i=1,\dots,k,~\mathbb{X}_{e_j'}=\varphi(e_j')~\forall j=1,\dots,l} \neq 0,
\] 
which contradicts statement (ii). This shows that condition (1) and, therefore, condition (2) are true. So $\varphi$ is admissible at $v$.
\end{proof}

\begin{corollary}\label{coro-multiply-by-Q-varphi-2}
For any $\varphi \in \mathcal{S}(\Gamma)$, the evaluation homomorphism 
\[
|_{\mathbb{X}_e=\varphi(e),~\forall e \in E(\Gamma)}:\overline{R}_{\Gamma} \rightarrow \C
\] 
given by $f \mapsto f|_{\mathbb{X}_e=\varphi(e),~\forall e \in E(\Gamma)}$ is well defined. Moreover, as elements of $\overline{R}_{\Gamma}$, $f \cdot Q_{\varphi} = (f|_{\mathbb{X}_e=\varphi(e),~\forall e \in E(\Gamma)}) \cdot Q_{\varphi}$. for any $f \in \overline{R}_{\Gamma}$.
\end{corollary}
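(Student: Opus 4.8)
The plan is to deduce this corollary from Corollary \ref{coro-multiply-by-Q-varphi-1} together with Lemma \ref{lemma-admissibility-evaluation}, exploiting the fact (Part (ii) of Lemma \ref{lemma-overline-R-Gamma}) that $\overline{R}_{\Gamma}$ is a quotient ring of $\widetilde{R}_{\Gamma}$. First I would recall that by Corollary \ref{coro-multiply-by-Q-varphi-1}, for an arbitrary pre-state $\varphi\in\mathcal{S}'(\Gamma)$ the assignment $f\mapsto f|_{\mathbb{X}_e=\varphi(e),~\forall e\in E(\Gamma)}$ is already a well-defined ring homomorphism $\widetilde{R}_{\Gamma}\to\C$. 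Since $\overline{R}_{\Gamma}=\widetilde{R}_{\Gamma}/\overline{\mathcal{I}}$, where $\overline{\mathcal{I}}$ is the image in $\widetilde{R}_{\Gamma}$ of $\sum_{p}\mathcal{I}_{\Gamma_p}$ (the summands $\mathcal{I}_{\bigcirc}$ are already contained in $\sum_{e\in E(\Gamma)}\mathcal{I}_e$, hence die in $\widetilde{R}_{\Gamma}$), it suffices to check that this evaluation annihilates each generator of every $\mathcal{I}_{\Gamma_p}$, i.e. the elements $\widetilde{X}_j-\widetilde{Y}_j$ and $U_j$ attached to each vertex $v$ of $\Gamma$.

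This is exactly the step where the hypothesis $\varphi\in\mathcal{S}(\Gamma)$, as opposed to merely $\varphi\in\mathcal{S}'(\Gamma)$, is used: a state is admissible at every vertex $v$, so statement (i) of Lemma \ref{lemma-admissibility-evaluation} holds at $v$, and statement (ii) of that lemma then gives $(\widetilde{X}_j-\widetilde{Y}_j)|_{\mathbb{X}_e=\varphi(e)}=0$ and $U_j|_{\mathbb{X}_e=\varphi(e)}=0$ for all $j$ and all vertices. Consequently the evaluation homomorphism $\widetilde{R}_{\Gamma}\to\C$ factors through $\overline{R}_{\Gamma}$, which proves the first assertion. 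I would also note in passing that $Q_\varphi$ is a well-defined element of $\overline{R}_{\Gamma}$, which is immediate since $Q_\varphi\in\widetilde{R}_{\Gamma}$ by Proposition \ref{prop-def-Q-varphi}.

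For the multiplication formula, lift a given $f\in\overline{R}_{\Gamma}$ to some $\tilde f\in\widetilde{R}_{\Gamma}$. Corollary \ref{coro-multiply-by-Q-varphi-1} gives $\tilde f\cdot Q_\varphi=(\tilde f|_{\mathbb{X}_e=\varphi(e)})\cdot Q_\varphi$ in $\widetilde{R}_{\Gamma}$; applying the quotient map $\widetilde{R}_{\Gamma}\to\overline{R}_{\Gamma}$, and observing that the induced evaluation on $\overline{R}_{\Gamma}$ sends $f$ to the same scalar $\tilde f|_{\mathbb{X}_e=\varphi(e)}$ by the very construction of the factorization above, one obtains $f\cdot Q_\varphi=(f|_{\mathbb{X}_e=\varphi(e)})\cdot Q_\varphi$ in $\overline{R}_{\Gamma}$, as claimed.

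There is no genuine obstacle here: the entire substance of the statement is the observation that the state condition is precisely what is needed to kill the vertex relations, and that observation is already packaged in Lemma \ref{lemma-admissibility-evaluation}. The proof is thus a bookkeeping argument assembling Corollary \ref{coro-multiply-by-Q-varphi-1}, Lemma \ref{lemma-overline-R-Gamma}(ii), and Lemma \ref{lemma-admissibility-evaluation}. The only point deserving a moment's care is to make sure that the evaluation on $\overline{R}_{\Gamma}$ is by definition the one descending from $\widetilde{R}_{\Gamma}$, so that the symbol ``$f|_{\mathbb{X}_e=\varphi(e)}$'' is unambiguous in both rings at once.
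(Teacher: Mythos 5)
Your proposal is correct and follows essentially the same route as the paper: well-definedness of the evaluation on $\overline{R}_{\Gamma}$ comes from Lemma \ref{lemma-admissibility-evaluation} (admissibility at every vertex kills the generators $\widetilde{X}_j-\widetilde{Y}_j$ and $U_j$ of each $\mathcal{I}_{\Gamma_p}$), and the multiplication formula is obtained by lifting $f$ to $\widetilde{R}_{\Gamma}$, applying Corollary \ref{coro-multiply-by-Q-varphi-1}, and projecting back. Your write-up is merely more explicit than the paper's about why the evaluation descends through the quotient $\widetilde{R}_{\Gamma}\rightarrow\overline{R}_{\Gamma}$, which is a point the paper leaves implicit.
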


\begin{proof}
By Lemma \ref{lemma-admissibility-evaluation}, $f|_{\mathbb{X}_e=\varphi(e),~\forall e \in E(\Gamma)}$ is well defined for $f \in \overline{R}_{\Gamma}$. Choose an $\tilde{f} \in \widetilde{R}_{\Gamma}$ that projects to $f$ under the standard quotient map $\widetilde{R}_{\Gamma} \rightarrow \overline{R}_{\Gamma}$. By Corollary \ref{coro-multiply-by-Q-varphi-1}, $\tilde{f} \cdot Q_{\varphi} = (\tilde{f}|_{\mathbb{X}_e=\varphi(e),~\forall e \in E(\Gamma)}) \cdot Q_{\varphi}$. Clearly, $\tilde{f}|_{\mathbb{X}_e=\varphi(e),~\forall e \in E(\Gamma)} = f|_{\mathbb{X}_e=\varphi(e),~\forall e \in E(\Gamma)}$. Then, projecting the previous equation onto $\overline{R}_{\Gamma}$, we get $f \cdot Q_{\varphi} = (f|_{\mathbb{X}_e=\varphi(e),~\forall e \in E(\Gamma)}) \cdot Q_{\varphi}$.
\end{proof}

\begin{theorem}\label{thm-bar-R-Gamma-decompose}
\begin{enumerate}[(a)]
	\item If $\varphi \in \mathcal{S}'(\Gamma) \setminus \mathcal{S}(\Gamma)$, then $Q_\varphi =0$ in $\overline{R}_{\Gamma}$.
	\item If $\varphi \in \mathcal{S}(\Gamma)$, then $Q_\varphi \neq 0$ in $\overline{R}_{\Gamma}$ and $\overline{R}_{\Gamma} \cdot Q_\varphi = \C \cdot Q_\varphi$.
	\item $\overline{R}_{\Gamma} = \bigoplus_{\varphi \in \mathcal{S}(\Gamma)} \C \cdot Q_\varphi$.
\end{enumerate}
\end{theorem}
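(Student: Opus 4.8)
The plan is to derive all three parts purely formally from Proposition \ref{prop-def-Q-varphi} together with the two evaluation statements, Corollaries \ref{coro-multiply-by-Q-varphi-1} and \ref{coro-multiply-by-Q-varphi-2}, and the admissibility criterion of Lemma \ref{lemma-admissibility-evaluation}. The guiding principle is that, since $\overline{R}_\Gamma$ is a quotient ring of $\widetilde{R}_\Gamma$, every relation among the elements $Q_\varphi$ that holds in $\widetilde{R}_\Gamma$ (idempotency, orthogonality, completeness, and the absorption formula $f\cdot Q_\varphi = (f|_{\mathbb{X}_e=\varphi(e)})\cdot Q_\varphi$) remains valid in $\overline{R}_\Gamma$. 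All the real content has in fact already been packaged into the cited lemmas, so what remains is assembly.

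For part (a), take $\varphi\in\mathcal{S}'(\Gamma)\setminus\mathcal{S}(\Gamma)$; then $\varphi$ fails to be admissible at some vertex $v$, whose surrounding piece $\Gamma_p$ contributes the ideal $\mathcal{I}_{\Gamma_p}=(\widetilde{X}_1-\widetilde{Y}_1,\dots,\widetilde{X}_m-\widetilde{Y}_m,U_1,\dots,U_m)$ to $\mathcal{I}_\Gamma$. By the contrapositive of Lemma \ref{lemma-admissibility-evaluation}, one of these generators $f$ satisfies $f|_{\mathbb{X}_e=\varphi(e)}\neq 0$. Since $f=0$ in $\overline{R}_\Gamma$ we get $f\cdot Q_\varphi=0$ there; but Corollary \ref{coro-multiply-by-Q-varphi-1}, pushed from $\widetilde{R}_\Gamma$ to $\overline{R}_\Gamma$, gives $f\cdot Q_\varphi=(f|_{\mathbb{X}_e=\varphi(e)})\cdot Q_\varphi$, and dividing by the nonzero scalar $f|_{\mathbb{X}_e=\varphi(e)}$ forces $Q_\varphi=0$ in $\overline{R}_\Gamma$.

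For part (b), let $\varphi\in\mathcal{S}(\Gamma)$. Corollary \ref{coro-multiply-by-Q-varphi-2} furnishes a well-defined ring homomorphism $\overline{R}_\Gamma\to\C$, $f\mapsto f|_{\mathbb{X}_e=\varphi(e)}$, under which the class of $Q_\varphi$ is sent to $1$ by \eqref{eq-phi-phiprime-value}; hence $Q_\varphi\neq 0$ in $\overline{R}_\Gamma$. The same corollary shows $f\cdot Q_\varphi=(f|_{\mathbb{X}_e=\varphi(e)})\cdot Q_\varphi\in\C\cdot Q_\varphi$ for every $f\in\overline{R}_\Gamma$, so $\overline{R}_\Gamma\cdot Q_\varphi\subseteq\C\cdot Q_\varphi$; the reverse inclusion is automatic since $\C\subseteq\overline{R}_\Gamma$, giving $\overline{R}_\Gamma\cdot Q_\varphi=\C\cdot Q_\varphi$.

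Part (c) is then immediate. Proposition \ref{prop-def-Q-varphi}(i) says $\{Q_\varphi\mid\varphi\in\mathcal{S}'(\Gamma)\}$ spans $\overline{R}_\Gamma$ over $\C$, and by part (a) the terms with $\varphi\notin\mathcal{S}(\Gamma)$ drop out, so $\overline{R}_\Gamma=\sum_{\varphi\in\mathcal{S}(\Gamma)}\C\cdot Q_\varphi$; applying the evaluation homomorphism $|_{\mathbb{X}_e=\varphi_0(e)}$ of Corollary \ref{coro-multiply-by-Q-varphi-2} to a linear relation $\sum_\varphi\alpha_\varphi Q_\varphi=0$ and using \eqref{eq-phi-phiprime-value} yields $\alpha_{\varphi_0}=0$ for each $\varphi_0\in\mathcal{S}(\Gamma)$, so the sum is direct. (Equivalently, multiply the relation by $Q_{\varphi_0}$ and use idempotency/orthogonality from Proposition \ref{prop-def-Q-varphi} together with $Q_{\varphi_0}\neq 0$.) The only substantive point is the non-vanishing assertion in part (b), and that has already been reduced — via Corollary \ref{coro-multiply-by-Q-varphi-2} and Lemma \ref{lemma-admissibility-evaluation} — to the fact that the evaluation map at a state survives the passage from $\widetilde{R}_\Gamma$ to $\overline{R}_\Gamma$; I expect this (already done) step to have been the main obstacle, with everything else bookkeeping with idempotents.
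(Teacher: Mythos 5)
Your proposal is correct and follows essentially the same route as the paper: part (a) via the absorption formula of Corollary \ref{coro-multiply-by-Q-varphi-1} applied to a generator of $\mathcal{I}_{\Gamma_p}$ with nonzero evaluation (guaranteed by Lemma \ref{lemma-admissibility-evaluation}), part (b) via the evaluation map of Corollary \ref{coro-multiply-by-Q-varphi-2}, and part (c) by combining (a) with Proposition \ref{prop-def-Q-varphi}. No gaps.
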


\begin{proof}
Suppose $\varphi \in \mathcal{S}'(\Gamma) \setminus \mathcal{S}(\Gamma)$. Then there is a vertex $v$ of $\Gamma$ at which $\varphi$ is not admissible. Assume $v$ is given by Figure \ref{def-state-MOY-vertex-fig} and let $m$, $\widetilde{X}_j$, $\widetilde{Y}_j$ and $U_j$ be as in Lemma \ref{lemma-admissibility-evaluation}. Then, by Corollary \ref{coro-multiply-by-Q-varphi-1}, we know that, as elements of $\widetilde{R}_\Gamma$, 
\begin{eqnarray}
\label{thm-bar-R-Gamma-decompose-eq-1} (\widetilde{X}_p-\widetilde{Y}_p) \cdot Q_\varphi & = & (\widetilde{X}_p|_{\mathbb{X}_{e_i}=\varphi(e_i)~\forall i=1,\dots,k} - \widetilde{Y}_p|_{\mathbb{X}_{e_j'}=\varphi(e_j')~\forall j=1,\dots,l}) \cdot Q_\varphi, \\
\label{thm-bar-R-Gamma-decompose-eq-2} U_p \cdot Q_\varphi & = & U_p|_{\mathbb{X}_{e_i}=\varphi(e_i)~\forall i=1,\dots,k,~\mathbb{X}_{e_j'}=\varphi(e_j')~\forall j=1,\dots,l} \cdot Q_\varphi
\end{eqnarray}
for $p=1,\dots,m$. But, by Lemma \ref{lemma-admissibility-evaluation}, there is a $p=1,\dots,m$ such that either 
\[
\widetilde{X}_p|_{\mathbb{X}_{e_i}=\varphi(e_i)~\forall i=1,\dots,k} - \widetilde{Y}_p|_{\mathbb{X}_{e_j'}=\varphi(e_j')~\forall j=1,\dots,l} \neq 0
\] 
or 
\[U_p|_{\mathbb{X}_{e_i}=\varphi(e_i)~\forall i=1,\dots,k,~\mathbb{X}_{e_j'}=\varphi(e_j')~\forall j=1,\dots,l} \neq 0.
\]
Project Equations \eqref{thm-bar-R-Gamma-decompose-eq-1} and \eqref{thm-bar-R-Gamma-decompose-eq-2} onto $\overline{R}_{\Gamma}$. Note that $(\widetilde{X}_p-\widetilde{Y}_p) \cdot Q_\varphi = U_p \cdot Q_\varphi =0$ in $\overline{R}_{\Gamma}$. We know that there is a non-zero scalar $c$ such that $cQ_\varphi = 0$ in $\overline{R}_{\Gamma}$, which implies that $Q_\varphi =0$ in $\overline{R}_{\Gamma}$ and proves Part (a).

Now suppose that $\varphi \in \mathcal{S}(\Gamma)$. Note that $Q_{\varphi}|_{\mathbb{X}_e=\varphi(e),~\forall e \in E(\Gamma)}=1\neq0$. By Corollary \ref{coro-multiply-by-Q-varphi-2}, this implies that $Q_\varphi \neq 0$ in $\overline{R}_{\Gamma}$. The fact that $\overline{R}_{\Gamma} \cdot Q_\varphi = \C \cdot Q_\varphi$ also follows easily from Corollary \ref{coro-multiply-by-Q-varphi-2}. This proves Part (b).

By Part (a) and Part (i) of Proposition \ref{prop-def-Q-varphi}, we know that $\overline{R}_{\Gamma} = \sum_{\varphi \in \mathcal{S}(\Gamma)} \C \cdot Q_\varphi$. Then, using Parts (ii) and (iii) of Proposition \ref{prop-def-Q-varphi}, one can easily check that $\sum_{\varphi \in \mathcal{S}(\Gamma)} \C \cdot Q_\varphi$ is actually a direct sum. This shows $\overline{R}_{\Gamma} = \bigoplus_{\varphi \in \mathcal{S}(\Gamma)} \C \cdot Q_\varphi$ and proves Part (c).
\end{proof}

\subsection{Decomposition of the homology of embedded MOY graphs} We are now ready to generalize Gornik's decomposition of the graph homology \cite[Theorem 4]{Gornik}.

\begin{lemma}\label{lemma-MOY-homology-multiplication}
Let $\varphi \in \mathcal{S}(\Gamma)$ and $w \in H_P(\Gamma)$. Then the following statements are equivalent:
\begin{enumerate}[(i)]
	\item $w \in Q_\varphi H_P(\Gamma)$.
	\item $g \cdot w = (g|_{\mathbb{X}_e=\varphi(e),~\forall e \in E(\Gamma)}) \cdot w$ for all $g \in R_\Gamma$.
	\item $X_{e,i} \cdot w = (X_{e,i}|_{\mathbb{X}_e=\varphi(e)}) \cdot w$ for all $e \in E(\Gamma)$, where $X_{e,i}$ is the $i$-th elementary symmetric polynomial in $\mathbb{X}_e$.
\end{enumerate}
\end{lemma}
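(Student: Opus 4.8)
The plan is to deduce all three equivalences from the structure of $\overline{R}_\Gamma$ established in Theorem \ref{thm-bar-R-Gamma-decompose}, together with Corollary \ref{coro-multiply-by-Q-varphi-2} and the fact, from Part (i) of Lemma \ref{lemma-overline-R-Gamma}, that the $R_\Gamma$-action on $H_P(\Gamma)$ factors through $\overline{R}_\Gamma$. Thus throughout we regard $H_P(\Gamma)$ as an $\overline{R}_\Gamma$-module, and every identity involving the idempotents $Q_\varphi$ is read in $\overline{R}_\Gamma$.

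First I would prove (i)$\Rightarrow$(ii). Write $w = Q_\varphi w'$ with $w' \in H_P(\Gamma)$. For $g \in R_\Gamma$, the element $g$ acts as its image in $\overline{R}_\Gamma$, and there Corollary \ref{coro-multiply-by-Q-varphi-2} gives $g Q_\varphi = (g|_{\mathbb{X}_e=\varphi(e),\,\forall e})\, Q_\varphi$; hence $g\cdot w = (g Q_\varphi)\cdot w' = (g|_{\mathbb{X}_e=\varphi(e),\,\forall e})\cdot Q_\varphi w' = (g|_{\mathbb{X}_e=\varphi(e),\,\forall e})\cdot w$. The implication (ii)$\Rightarrow$(iii) is immediate since each $X_{e,i}$ lies in $R_\Gamma$. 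For (iii)$\Rightarrow$(ii), I would observe that the set $A=\{\,g\in R_\Gamma \mid g\cdot w = (g|_{\mathbb{X}_e=\varphi(e),\,\forall e})\cdot w\,\}$ is a $\C$-subalgebra of $R_\Gamma$: it is a $\C$-subspace containing $1$, and it is closed under products because $g\mapsto g|_{\mathbb{X}_e=\varphi(e),\,\forall e}$ is a ring homomorphism and $R_\Gamma$ acts commutatively, so for $g_1,g_2\in A$ one has $g_1 g_2\cdot w = g_1\cdot((g_2|_\varphi)w) = (g_2|_\varphi)(g_1\cdot w) = (g_1|_\varphi)(g_2|_\varphi)w = ((g_1 g_2)|_\varphi)w$. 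Since $R_\Gamma=\Sym(\mathbb{X}_{e_1}|\mathbb{X}_{e_2}|\cdots)$ is generated as a $\C$-algebra by the elementary symmetric polynomials $X_{e,i}$, which lie in $A$ by (iii), we conclude $A=R_\Gamma$, which is (ii).

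Finally, for (ii)$\Rightarrow$(i): by Part (c) of Theorem \ref{thm-bar-R-Gamma-decompose} and Part (iv) of Proposition \ref{prop-def-Q-varphi}, $\sum_{\psi\in\mathcal{S}(\Gamma)} Q_\psi = 1$ in $\overline{R}_\Gamma$, so $w = \sum_\psi Q_\psi\cdot w$, and it suffices to show $Q_\psi\cdot w = 0$ for every state $\psi\neq\varphi$. Given such a $\psi$, choose an edge $e$ with $\psi(e)\neq\varphi(e)$; since the elementary symmetric polynomials of a finite multiset of complex numbers determine that multiset, there is an index $i$ with $c:=X_{e,i}|_{\mathbb{X}_e=\varphi(e)}\neq c':=X_{e,i}|_{\mathbb{X}_e=\psi(e)}$. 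By (ii) we have $(X_{e,i}-c)\cdot w = 0$, while Corollary \ref{coro-multiply-by-Q-varphi-2} applied to $\psi$ gives $(X_{e,i}-c)\,Q_\psi = (c'-c)\,Q_\psi$ in $\overline{R}_\Gamma$; using commutativity, $0 = Q_\psi\cdot(X_{e,i}-c)\cdot w = (X_{e,i}-c)\,Q_\psi\cdot w = (c'-c)\,Q_\psi\cdot w$, and $c'\neq c$ forces $Q_\psi\cdot w = 0$. Hence $w = Q_\varphi\cdot w\in Q_\varphi H_P(\Gamma)$.

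I do not expect a genuine obstacle here: the substance is already packaged in Theorem \ref{thm-bar-R-Gamma-decompose} and Corollary \ref{coro-multiply-by-Q-varphi-2}. The only points needing mild care are the separation step in (ii)$\Rightarrow$(i) — that distinct states are distinguished by some elementary symmetric polynomial on some edge — and the bookkeeping that all identities involving $Q_\varphi$ take place in $\overline{R}_\Gamma$, which is legitimate precisely because the $R_\Gamma$-action on $H_P(\Gamma)$ factors through $\overline{R}_\Gamma$.
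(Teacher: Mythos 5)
Your proof is correct and rests on the same machinery as the paper's (the idempotents $Q_\varphi$, Corollary \ref{coro-multiply-by-Q-varphi-2}, and the fact that the $R_\Gamma$-action factors through $\overline{R}_\Gamma$); the arguments for (i)$\Rightarrow$(ii) and (ii)$\Leftrightarrow$(iii) match the paper's essentially verbatim. The only difference is that for (ii)$\Rightarrow$(i) the paper simply takes $g=Q_\varphi$ in (ii), noting $Q_\varphi|_{\mathbb{X}_e=\varphi(e),\,\forall e}=1$, so that $Q_\varphi w=w$ immediately, whereas you reach the same conclusion by the longer (but valid) route of decomposing $w=\sum_\psi Q_\psi w$ and killing the other summands with separating symmetric polynomials.
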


\begin{proof}
Since $R_\Gamma$ is the polynomial ring generated by $\{X_{e,i}\}$, one can see that statements (ii) and (iii) are equivalent. So we only need to check that they are equivalent to statement (i). 

Assume $w \in Q_\varphi H_P(\Gamma)$, then there exists $u \in H_P(\Gamma)$ such that $w = Q_\varphi u$. Recall that the action of $R_\Gamma$ on $H_P(\Gamma)$ factors through $\overline{R}_\Gamma$. So, by Proposition \ref{prop-def-Q-varphi}, $Q_\varphi w = Q_\varphi^2 u = Q_\varphi u =w$. Then, by Corollary \ref{coro-multiply-by-Q-varphi-2}, for any $g \in R_\Gamma$, 
\[
g \cdot w = g \cdot (Q_\varphi w) =(g Q_\varphi) \cdot w = (g|_{\mathbb{X}_e=\varphi(e),~\forall e \in E(\Gamma)}) \cdot Q_\varphi w = (g|_{\mathbb{X}_e=\varphi(e),~\forall e \in E(\Gamma)}) \cdot w.
\]
So (i) implies (ii).

Now assume (ii) is true. Then $Q_\varphi w = (Q_\varphi|_{\mathbb{X}_e=\varphi(e),~\forall e \in E(\Gamma)}) \cdot w =w$. So $w \in Q_\varphi H_P(\Gamma)$. It then follows that (i) and (ii) are equivalent.
\end{proof}

\begin{lemma}\label{lemma-decomp-circle}
Denote by $\bigcirc_m$ a circle colored by $m$. Then $Q_\varphi H_P(\bigcirc_m)$ is $1$-dimensional for each $\varphi \in \mathcal{S} (\bigcirc_m)$ and $H_P(\bigcirc_m) = \bigoplus_{\varphi \in \mathcal{S} (\bigcirc_m)} Q_\varphi H_P(\bigcirc_m)$.
\end{lemma}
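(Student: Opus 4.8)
The plan is to recognize $H_P(\bigcirc_m)$ as a free rank-one module over the ring $\overline{R}_{\bigcirc_m}$ and then transport the idempotent decomposition of $\overline{R}_{\bigcirc_m}$ supplied by Theorem \ref{thm-bar-R-Gamma-decompose}. First I would pin down $\overline{R}_{\bigcirc_m}$: put the single marked point of $\bigcirc_m$ on its only edge and let $\mathbb{X}$ be the alphabet assigned to it. Since $\bigcirc_m$ is itself a circle with a single marked point, the graph $\hat{\Gamma}$ of Definition \ref{def-overline-R-Gamma} is empty, there are no pieces $\Gamma_p$, and hence
\[
\overline{R}_{\bigcirc_m} \;=\; \sfrac{\Sym(\mathbb{X})}{(h_N(\mathbb{X}-\Sigma),h_{N-1}(\mathbb{X}-\Sigma),\dots,h_{N+1-m}(\mathbb{X}-\Sigma))},
\]
which is precisely the ring appearing in Proposition \ref{prop-circle-module}. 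Combining Proposition \ref{prop-circle-module} with Part (i) of Lemma \ref{lemma-overline-R-Gamma}, the module $H_P(\bigcirc_m)$, forgetting its overall grading/filtration shift, is isomorphic to $\overline{R}_{\bigcirc_m}$ as a module over $\overline{R}_{\bigcirc_m}$; in particular it is free of rank one.

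Next I would note that every pre-state of $\bigcirc_m$ is a state: the only vertex of $\bigcirc_m$ is the valence-$2$ vertex at the marked point, where the admissibility conditions of Definition \ref{def-state-MOY} hold vacuously. Thus $\mathcal{S}(\bigcirc_m)$ is the set of $m$-element subsets of $\Sigma$, and Theorem \ref{thm-bar-R-Gamma-decompose} gives $\overline{R}_{\bigcirc_m} = \bigoplus_{\varphi \in \mathcal{S}(\bigcirc_m)} \C\cdot Q_\varphi$ with every summand one-dimensional. Transporting this decomposition along the module isomorphism of the previous paragraph yields $Q_\varphi H_P(\bigcirc_m) \cong \C\cdot Q_\varphi$, hence one-dimensional, and $H_P(\bigcirc_m) = \bigoplus_{\varphi \in \mathcal{S}(\bigcirc_m)} Q_\varphi H_P(\bigcirc_m)$.

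There is no serious obstacle here; the only point needing care is that the identification in Proposition \ref{prop-circle-module} is genuinely one of $\Sym(\mathbb{X})$-modules (equivalently of $\overline{R}_{\bigcirc_m}$-modules, by Lemma \ref{lemma-overline-R-Gamma}(i)), so that the idempotents $Q_\varphi$ act on $H_P(\bigcirc_m)$ through this isomorphism exactly as they do on $\overline{R}_{\bigcirc_m}$ — which is how that proposition is stated. Alternatively, one can bypass freeness and invoke the eigenvalue criterion of Lemma \ref{lemma-MOY-homology-multiplication}: under $H_P(\bigcirc_m)\cong\overline{R}_{\bigcirc_m}$, the subspace $Q_\varphi H_P(\bigcirc_m)$ is the common eigenspace on which each elementary symmetric polynomial $X_i$ in $\mathbb{X}$ acts by the scalar $X_i|_{\mathbb{X}=\varphi(e)}$, and Theorem \ref{thm-bar-R-Gamma-decompose}(b) identifies this eigenspace with $\C\cdot Q_\varphi$.
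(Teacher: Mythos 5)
Your proposal is correct and follows exactly the paper's own route: the paper's proof likewise identifies $H_P(\bigcirc_m)$ with $\overline{R}_{\bigcirc_m}$ (up to grading/filtration shifts) as a $\Sym(\mathbb{X})$-module via Proposition \ref{prop-circle-module} and then invokes Theorem \ref{thm-bar-R-Gamma-decompose}. You have simply spelled out the intermediate observations (that $\overline{R}_{\bigcirc_m}$ is the quotient ring of that proposition, and that all pre-states of a circle are states) that the paper leaves implicit.
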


\begin{proof}
Put a single marked point on $\bigcirc_m$ and associate to it the alphabet $\mathbb{X}$. By Proposition \ref{prop-circle-module}, as $\Sym(\mathbb{X})$-modules, $H_P(\bigcirc_m) \cong \overline{R}_{\bigcirc_m}\{q^{-m(N-m)}\} \left\langle m \right\rangle$. Then the lemma follows from Theorem \ref{thm-bar-R-Gamma-decompose}.
\end{proof}

Note that the decomposition in Lemma \ref{lemma-decomp-circle} does not preserve the quantum filtration.

\begin{figure}[ht]

\setlength{\unitlength}{1pt}

\begin{picture}(360,75)(-180,-90)


\put(-67,-45){\tiny{$m+n$}}

\put(-70,-75){\vector(0,1){50}}

\put(-71,-50){\line(1,0){2}}

\put(-67,-30){\small{$\mathbb{X}$}}

\put(-95,-53){\small{$\mathbb{A}\cup\mathbb{B}$}}

\put(-67,-75){\small{$\mathbb{Y}$}}

\put(-75,-90){$\Gamma_0$}


\put(-25,-50){\vector(1,0){50}}

\put(25,-60){\vector(-1,0){50}}

\put(-5,-47){\small{$\phi$}}

\put(-5,-70){\small{$\overline{\phi}$}}


\put(70,-75){\vector(0,1){10}}

\put(70,-35){\vector(0,1){10}}

\qbezier(70,-65)(60,-65)(60,-55)

\qbezier(70,-35)(60,-35)(60,-45)

\put(60,-55){\vector(0,1){10}}

\put(59,-55){\line(1,0){2}}

\qbezier(70,-65)(80,-65)(80,-55)

\qbezier(70,-35)(80,-35)(80,-45)

\put(80,-55){\vector(0,1){10}}

\put(79,-55){\line(1,0){2}}

\put(73,-30){\tiny{$m+n$}}

\put(73,-70){\tiny{$m+n$}}

\put(83,-50){\tiny{$n$}}

\put(51,-50){\tiny{$m$}}

\put(60,-30){\small{$\mathbb{X}$}}

\put(60,-75){\small{$\mathbb{Y}$}}

\put(50,-58){\small{$\mathbb{A}$}}

\put(83,-58){\small{$\mathbb{B}$}}

\put(65,-90){$\Gamma_1$}

\end{picture}

\caption{Edge splitting and merging}\label{edge-splitting}

\end{figure}

Let $\Gamma_0$ and $\Gamma_1$ be closed embedded MOY graphs that are identical outside the part shown in Figure \ref{edge-splitting}. We call the local change $\Gamma_0 \leadsto \Gamma_1$ an edge splitting. It induces a homomorphism $\phi:H_P(\Gamma_0) \rightarrow H_P(\Gamma_1)$. We call the local change $\Gamma_1 \leadsto \Gamma_0$ an edge merging. It induces a homomorphism $\overline{\phi}:H_P(\Gamma_1) \rightarrow H_P(\Gamma_0)$. See \cite[Section 4]{Wu-color-equi} for more details.

\begin{lemma}\cite[Lemma 7.11]{Wu-color}\label{phibar-compose-phi}
Let $\Gamma_0$ and $\Gamma_1$ be the MOY graphs in Figure \ref{edge-splitting}. For $\lambda=(\lambda_1\geq\cdots\geq\lambda_m) \in \Lambda_{m,n}$, define $\lambda^c=(n-\lambda_m\geq\cdots\geq n-\lambda_1) \in \Lambda_{m,n}$. Then, for $\lambda,\mu\in \Lambda_{m,n}$,
\begin{equation}\label{phibar-compose-phi-eq}
\overline{\phi} \circ \mathfrak{m}(S_{\lambda}(\mathbb{A})\cdot S_{\mu}(-\mathbb{B})) \circ \phi \approx 
\begin{cases}
    \id_{C_P(\Gamma_0)} & \text{if } \mu=\lambda^c, \\ 
    0 & \text{otherwise.}
\end{cases}
\end{equation}
Moreover,
\begin{equation}\label{phi-compose-phibar}
\id_{C_P(\Gamma_1)} \approx \sum_{\mu\in \Lambda_{m,n}} \mathfrak{m}(S_{\mu^c}(\mathbb{A})) \circ \phi \circ \overline{\phi} \circ \mathfrak{m}(S_\mu (-\mathbb{B})).
\end{equation}
Here, ``$\approx$" means that the two morphisms are equal up to homotopy and scaling by a non-zero complex number.
\end{lemma}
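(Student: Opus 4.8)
The plan is to reduce both identities to a local computation in a neighborhood of the part of the MOY graph shown in Figure~\ref{edge-splitting}. Since $\phi$, $\overline{\phi}$, and every multiplication operator $\mathfrak{m}(S_\lambda(\mathbb{A})\cdot S_\mu(-\mathbb{B}))$ acts as the identity outside this neighborhood, and since the homotopy class of a morphism of Koszul matrix factorizations is detected there, it is enough to prove \eqref{phibar-compose-phi-eq} and \eqref{phi-compose-phibar} for the local pieces $\Gamma_0$ and $\Gamma_1$ themselves. I work over the base ring of $\Gamma_0$ and view $C_P(\Gamma_1)$ as the matrix factorization over that ring obtained from $C_P(\Gamma_0)$ by adjoining the alphabets $\mathbb{A}$, $\mathbb{B}$ subject to $\mathbb{A}\cup\mathbb{B}=\mathbb{X}$.

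For \eqref{phibar-compose-phi-eq}, the key input is the edge-splitting decomposition: $H_P(\Gamma_1)$ is a free module over $H_P(\Gamma_0)$ with basis $\{S_\nu(\mathbb{A})\}_{\nu\in\Lambda_{m,n}}$, and with respect to this basis $\phi$ is, up to homotopy and a nonzero scalar, the inclusion $x\mapsto x\cdot S_{(0,\dots,0)}(\mathbb{A})$, while $\overline{\phi}$ is the $H_P(\Gamma_0)$-linear projection onto the coefficient of the top Schur polynomial $S_{\lambda_{m,n}}(\mathbb{A})$. I would verify this identification either by unwinding the definitions of $\phi$ and $\overline{\phi}$ in terms of the elementary $\chi$-type morphisms of \cite[Section~7]{Wu-color}, or, more economically, by normalizing the scalar using the single instance $\lambda=\lambda_{m,n}$, $\mu=(0,\dots,0)$ and propagating it through the module structure. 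Granting it, $\overline{\phi}\circ\mathfrak{m}(S_\lambda(\mathbb{A})\cdot S_\mu(-\mathbb{B}))\circ\phi$ sends $x$ to $x$ times the coefficient of $S_{\lambda_{m,n}}(\mathbb{A})$ in the expansion of $S_\lambda(\mathbb{A})\cdot S_\mu(-\mathbb{B})$ in the basis $\{S_\nu(\mathbb{A})\}$. Rewriting $S_\mu(-\mathbb{B})=S_\mu(\mathbb{A}-\mathbb{X})$ (valid because $\mathbb{X}=\mathbb{A}\cup\mathbb{B}$) and invoking the classical Schur-polynomial duality that underlies Poincar\'e duality on the Grassmannian $\mathrm{Gr}(m,m+n)$ --- namely that $\{S_\lambda(\mathbb{A})\}$ and $\{S_{\lambda^c}(-\mathbb{B})\}$ are dual bases for the top-coefficient pairing --- that coefficient equals $1$ when $\mu=\lambda^c$ and $0$ otherwise, which is \eqref{phibar-compose-phi-eq}.

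Identity \eqref{phi-compose-phibar} then follows formally. The elements $S_\nu(\mathbb{A})\cdot\phi(x)$ with $\nu\in\Lambda_{m,n}$ and $x\in H_P(\Gamma_0)$ span $H_P(\Gamma_1)$, so it suffices to check that the right-hand side of \eqref{phi-compose-phibar} acts on each such element as the identity. Commuting $\mathfrak{m}(S_\mu(-\mathbb{B}))$ past $\mathfrak{m}(S_\nu(\mathbb{A}))$ and applying \eqref{phibar-compose-phi-eq} to the pair $(\nu,\mu)$, the inner composition $\overline{\phi}\circ\mathfrak{m}(S_\nu(\mathbb{A})\cdot S_\mu(-\mathbb{B}))\circ\phi$ returns $x$ if $\mu=\nu^c$ and $0$ otherwise; feeding this into $\mathfrak{m}(S_{\mu^c}(\mathbb{A}))\circ\phi$ gives $S_{(\nu^c)^c}(\mathbb{A})\cdot\phi(x)=S_\nu(\mathbb{A})\cdot\phi(x)$. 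Summing over $\mu\in\Lambda_{m,n}$, only $\mu=\nu^c$ contributes and the total is $S_\nu(\mathbb{A})\cdot\phi(x)$, as required. The one subtlety is to ensure that the nonzero scalars concealed in the various ``$\approx$'' relations can be chosen coherently, which is why I would work with the explicit homotopies of \cite[Section~7]{Wu-color} rather than only their existence.

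The main obstacle is the identification in the second paragraph: pinning down $\phi$ and $\overline{\phi}$ up to homotopy and a \emph{controlled} scalar as the unit map and the top-coefficient projection, and confirming that the Schur-polynomial duality identity holds on the nose in the relevant quotient. Neither is sensitive to the deformation $P$, since $\phi$ and $\overline{\phi}$ are built from the same elementary maps as in the undeformed $\mathfrak{sl}(N)$ theory and the duality is a statement about symmetric functions modulo the MOY relations; once these are settled, the remainder is routine bookkeeping with Schur polynomials and the already-established MOY decomposition.
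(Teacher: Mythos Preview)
Your proposal is correct and follows the same approach the paper points to: the paper's proof is a two-line citation to the explicit forms of $\phi$ and $\overline{\phi}$ from \cite[Subsection 7.4]{Wu-color} together with the Schur-polynomial duality \cite[Proposition \textit{GR}5]{Lascoux-notes}, and what you have written is precisely an unpacking of those ingredients---$\phi$ as the inclusion into the bottom Schur basis element, $\overline{\phi}$ as projection onto the coefficient of $S_{\lambda_{m,n}}(\mathbb{A})$, and the Grassmannian Poincar\'e-duality pairing of $\{S_\lambda(\mathbb{A})\}$ with $\{S_{\lambda^c}(-\mathbb{B})\}$. Your formal derivation of \eqref{phi-compose-phibar} from \eqref{phibar-compose-phi-eq} is a clean way to organize the second half, and your observation that the hidden scalar in \eqref{phibar-compose-phi-eq} is the single product of the normalizations of $\phi$ and $\overline{\phi}$ (hence independent of $\lambda$) is exactly what makes the summation in \eqref{phi-compose-phibar} go through without further scalar bookkeeping.
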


\begin{proof}
This follows from the explicit descriptions of $\phi$ and $\overline{\phi}$ in \cite[Subsection 7.4]{Wu-color} and \cite[Proposition \textit{GR}5]{Lascoux-notes}.
\end{proof}

\begin{theorem}\label{thm-decomp-MOY}
Let $\Gamma$ be a closed abstract MOY graph. Then, for any state $\varphi$ of $\Gamma$, $Q_\varphi H_P(\Gamma) \neq 0$. In particular, if $\Gamma$ is a closed embedded MOY graph, then $Q_\varphi H_P(\Gamma)$ is $1$-dimensional for each $\varphi \in \mathcal{S} (\Gamma)$ and $H_P(\Gamma) = \bigoplus_{\varphi \in \mathcal{S} (\Gamma)} Q_\varphi H_P(\Gamma)$, where the decomposition does not preserve the quantum filtration.
\end{theorem}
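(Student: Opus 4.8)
The plan is to reduce the statement about a general closed abstract MOY graph $\Gamma$ to the case of a disjoint union of colored circles, for which the conclusion is already known from Lemma \ref{lemma-decomp-circle}, and then to propagate the non-vanishing of $Q_\varphi H_P(\Gamma)$ through edge splittings and mergings using Lemma \ref{phibar-compose-phi}. Concretely, I would argue by induction on the complexity of $\Gamma$ (say, the number of vertices of valence $\geq 3$, or more robustly, the total ``weight'' of the graph). For the base case, $\Gamma$ is a disjoint union of circles; since $H_P$ of a disjoint union is a tensor product and each $Q_\varphi$ factors as a tensor product of the circle idempotents $Q_{\varphi|_{\bigcirc}}$, Lemma \ref{lemma-decomp-circle} gives that $Q_\varphi H_P(\Gamma)$ is $1$-dimensional and that the $Q_\varphi$ sum to the identity, yielding the direct sum decomposition. (Note that states of a disjoint union of circles are automatically admissible, so $\mathcal{S}'(\Gamma)=\mathcal{S}(\Gamma)$ in that case, consistent with Theorem \ref{thm-bar-R-Gamma-decompose}.)

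For the inductive step I would pick a vertex $v$ of $\Gamma$ of the form in Figure \ref{edge-splitting}, i.e.\ realize $\Gamma$ (up to the elementary MOY equivalences that are known to preserve $H_P$, from \cite{Wu-color-equi}) as $\Gamma_1$ in that figure, with $\Gamma_0$ the simpler graph obtained by merging the two edges colored $m$ and $n$ into one edge colored $m+n$. The morphisms $\phi:H_P(\Gamma_0)\to H_P(\Gamma_1)$ and $\overline{\phi}:H_P(\Gamma_1)\to H_P(\Gamma_0)$ are $R_{\Gamma}$-module maps (where we identify the rings on the common alphabets), so they intertwine the $Q_\varphi$'s: a state $\varphi$ of $\Gamma_1$ restricts to a state $\varphi'$ of $\Gamma_0$ precisely when $\varphi(\mathbb{A})$ and $\varphi(\mathbb{B})$ are disjoint with union $\varphi(\mathbb{X})=\varphi(\mathbb{Y})$, and for such $\varphi$ the maps $\phi, \overline{\phi}$ send $Q_{\varphi'}H_P(\Gamma_0)$ into $Q_\varphi H_P(\Gamma_1)$ and back. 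Using \eqref{phibar-compose-phi-eq} with $\mu=\lambda^c$: on the summand $Q_{\varphi'}H_P(\Gamma_0)$, the composite $\overline{\phi}\circ \mathfrak{m}(S_{\lambda}(\mathbb{A})S_{\lambda^c}(-\mathbb{B}))\circ\phi$ is (up to scaling) the identity, and $\mathfrak{m}(S_{\lambda}(\mathbb{A})S_{\lambda^c}(-\mathbb{B}))$ acts on $Q_\varphi H_P(\Gamma_1)$ as multiplication by the scalar $S_{\lambda}(\varphi(\mathbb{A}))S_{\lambda^c}(-\varphi(\mathbb{B}))$ by Corollary \ref{coro-multiply-by-Q-varphi-2}; choosing $\lambda$ so that this scalar is nonzero (e.g.\ $\lambda=\lambda_{m,n}$, giving $S_{\lambda^c}=1$ and $S_{\lambda}(\varphi(\mathbb{A}))=\prod_{r\in\varphi(\mathbb{A})}r^{\,n}$ — but more carefully one takes a product of linear forms separating the chosen roots, which is nonzero because $\varphi(\mathbb{A})\cap\varphi(\mathbb{B})=\varnothing$ forces the relevant differences to be nonzero) shows $\phi|_{Q_{\varphi'}H_P(\Gamma_0)}$ is injective. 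By the inductive hypothesis $Q_{\varphi'}H_P(\Gamma_0)\neq 0$, hence $Q_\varphi H_P(\Gamma_1)\neq 0$. Since every state of $\Gamma=\Gamma_1$ arises this way (its restriction to $\Gamma_0$ is a state of $\Gamma_0$, because admissibility at $v$ is exactly the compatibility condition), this proves $Q_\varphi H_P(\Gamma)\neq 0$ for all $\varphi\in\mathcal{S}(\Gamma)$.

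For the ``in particular'' clause with $\Gamma$ embedded, I would then combine this lower bound with an upper bound: $\bigoplus_{\varphi\in\mathcal{S}(\Gamma)}Q_\varphi H_P(\Gamma)$ is a direct sum of at least $|\mathcal{S}(\Gamma)|$ nonzero summands sitting inside $H_P(\Gamma)$, and by Theorem \ref{thm-bar-R-Gamma-decompose}(c) together with Part (i) of Lemma \ref{lemma-overline-R-Gamma} the $Q_\varphi$ with $\varphi\notin\mathcal{S}(\Gamma)$ act as zero, so $H_P(\Gamma)=\bigoplus_{\varphi\in\mathcal{S}(\Gamma)}Q_\varphi H_P(\Gamma)$ as soon as $\dim_\C H_P(\Gamma)\leq |\mathcal{S}(\Gamma)|$; but for an embedded MOY graph $\dim_\C H_P(\Gamma)$ equals its graded dimension, which by the colored MOY calculus of \cite{Wu-color} is exactly $|\mathcal{S}(\Gamma)|$ (each state contributes $1$ — this is a state-sum evaluation of the MOY polynomial at the appropriate specialization). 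Hence each $Q_\varphi H_P(\Gamma)$ is forced to be exactly $1$-dimensional and the decomposition is the asserted one; the failure to preserve the quantum filtration is inherited from the circle case, Lemma \ref{lemma-decomp-circle}. The main obstacle I anticipate is the bookkeeping in the inductive step — verifying that $\phi$ and $\overline\phi$ really do respect the idempotent decomposition (i.e.\ that they are $R_\Gamma$-linear for the correct identification of base rings) and choosing, for each state $\varphi$, an explicit Schur-polynomial multiplier whose value $S_\lambda(\varphi(\mathbb{A}))S_{\lambda^c}(-\varphi(\mathbb{B}))$ is nonzero; the disjointness $\varphi(\mathbb{A})\cap\varphi(\mathbb{B})=\varnothing$ is exactly what makes such a choice possible, so this is where the hypothesis $\varphi\in\mathcal{S}(\Gamma)$ (as opposed to merely $\mathcal{S}'(\Gamma)$) is used, paralleling Gornik's argument in \cite[Theorem 4]{Gornik}.
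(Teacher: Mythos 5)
Your base case and your final dimension count (using $\dim_\C H_P(\Gamma)=\langle\Gamma\rangle_N|_{q=1}=|\mathcal{S}(\Gamma)|$ together with Theorem \ref{thm-bar-R-Gamma-decompose}) match the paper exactly, but the inductive step has a structural gap. You propose to simplify $\Gamma$ by exhibiting it as the graph $\Gamma_1$ of Figure \ref{edge-splitting} and merging the two parallel edges colored $m$ and $n$ into one edge of color $m+n$. But the edge splitting/merging morphisms $\phi,\overline{\phi}$ only apply to that specific local ``digon'' configuration, in which the two edges leave a vertex and immediately re-merge at another. A general closed (trivalent) MOY graph need not contain any such digon --- the complete resolutions of link diagrams, for instance, are built from square faces as in Figure \ref{crossing-res-state-fig} --- so your induction cannot get started on most graphs, and no choice of ``complexity'' measure fixes this. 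This is precisely why the paper's proof is organized differently: it inducts on the highest color appearing in $\Gamma$ and, within that, on the number of edges carrying the highest color, and the key reduction peels a color-$1$ strand off a highest-color edge using the $\chi$-morphisms of Figure \ref{thm-decomp-MOY-proof-fig-2} \emph{in combination with} edge splitting/merging; the $\chi$-maps are indispensable and do not appear in your argument.

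There is also a smaller inaccuracy even in the digon situation: $\phi$ does not carry $Q_{\varphi'}H_P(\Gamma_0)$ into a single summand $Q_\varphi H_P(\Gamma_1)$. By Lemma \ref{lemma-edge-splitting-states} it spreads $v_{\Gamma_0,\varphi'}$ over all $\bn{m+n}{m}$ states of $\Gamma_1$ compatible with $\varphi'$, and proving that the coefficient on each individual compatible state is nonzero is not immediate from \eqref{phibar-compose-phi-eq} alone --- the paper needs the decomposition of the identity \eqref{phi-compose-phibar} and a separate contradiction argument for this. Your observation that $\overline{\phi}\circ\mathfrak{m}(S_\lambda(\mathbb{A})S_{\lambda^c}(-\mathbb{B}))\circ\phi\approx\id$ only gives injectivity of $\phi$ on $H_P(\Gamma_0)$, which is weaker than what your inductive step requires. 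Both issues would need to be repaired before the argument goes through.
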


\begin{proof}
Let $\Gamma$ be a closed abstract MOY graph, and $\varphi$ a state of $\Gamma$. By \cite[Lemma 3.10]{Wu-color-equi}, we assume that $\Gamma$ is trivalent. We prove that $Q_\varphi H_P(\Gamma) \neq 0$ by an induction on the highest color appearing in $\Gamma$. 

If the highest color appearing in $\Gamma$ is $1$, then $\Gamma$ is a collection of circles colored by $1$. So $Q_\varphi H_P(\Gamma)\neq 0$ by Lemma \ref{lemma-decomp-circle}. Now assume that $Q_\varphi H_P(\Gamma)\neq 0$ for all $\varphi \in \mathcal{S} (\Gamma)$ if the colors of edges of $\Gamma$ are all less than $m+1$, where $1\leq m\leq N-1$. We claim that $Q_\varphi H_P(\Gamma)\neq 0$ for all $\varphi \in \mathcal{S} (\Gamma)$ if the colors of edges of $\Gamma$ are not greater than $m+1$. We do this by inducting on the number of edges in $\Gamma$ of color $m+1$.

If there are $0$ edges in $\Gamma$ of color $m+1$, then $Q_\varphi H_P(\Gamma)\neq 0$ for all $\varphi \in \mathcal{S} (\Gamma)$ by the induction assumption. Now assume that $Q_\varphi H_P(\Gamma)\neq 0$ for all $\varphi \in \mathcal{S} (\Gamma)$ if the colors appearing in $\Gamma$ are not greater than $m+1$ and there are no more than $k~(\geq 0)$ edges of color $m+1$. Assume that $\Gamma$ is a closed abstract MOY graph colored by $1,\dots m+1$ and have exactly $k+1$ edges of color $m+1$.

If $\Gamma$ contains a circle $\bigcirc_{m+1}$ colored by $m+1$, then 
\[
H_P(\Gamma) = H_P(\Gamma \setminus \bigcirc_{m+1}) \otimes_\C H_P(\bigcirc_{m+1}).
\]
Note that $\Gamma \setminus \bigcirc_{m+1}$ has at most $k$ edges of color $m+1$. Therefore, by induction assumption and Lemma \ref{lemma-decomp-circle}, we know that $Q_\varphi H_P(\Gamma)\neq 0$ for all $\varphi \in \mathcal{S} (\Gamma)$.

\begin{figure}[ht]
\[
\xymatrix{
\input{edge-m+1-1}
}
\]
\caption{}\label{thm-decomp-MOY-proof-fig-1} 

\end{figure}

If $\Gamma$ contains no circle colored by $m+1$, then it contains an edge $e$ of color $m+1$ with a neighborhood of the form given in Figure \ref{thm-decomp-MOY-proof-fig-1}. By admissibility, we get that $\varphi(e)=\varphi(e_1)\cup \varphi(e_2) = \varphi(e_3)\cup \varphi(e_4)$. So either $\varphi(e_1)\cap \varphi(e_3) \neq \emptyset$ or $\varphi(e_1)\cap \varphi(e_4) \neq \emptyset$.

\begin{figure}[ht]
\[
\xymatrix{
\input{edge-m+1-1} \ar@<1ex>[r]^{\phi}& \ar@<1ex>[l]^{\overline{\phi}} \input{edge-m+1-2} \ar@<1ex>[r]^{\chi^1 \otimes \chi^1} & \ar@<1ex>[l]^{\chi^0 \otimes \chi^0} \input{edge-m+1-3}
}
\]
\caption{}\label{thm-decomp-MOY-proof-fig-2} 

\end{figure}

If $\varphi(e_1)\cap \varphi(e_3) \neq \emptyset$, choose an $r \in \varphi(e_1)\cap \varphi(e_3)$. Then, by admissibility, $r \notin \varphi(e_2)\cup \varphi(e_4)$. Consider the morphisms in Figure \ref{thm-decomp-MOY-proof-fig-2}, where $\hat{\Gamma}$ and $\tilde{\Gamma}$ are closed abstract MOY graphs identical to $\Gamma$ outside the part shown in Figure \ref{thm-decomp-MOY-proof-fig-2}, $\phi$ and $\overline{\phi}$ are the morphisms induced by the apparent edge splitting and merging, and $\chi^1 \otimes \chi^1$, $\chi^0 \otimes \chi^0$ are the apparent $\chi$-morphisms. (See \cite[Section 4]{Wu-color-equi}.) Denote by $\hat{\varphi}$ and $\tilde{\varphi}$ the unique states of $\hat{\Gamma}$ and $\tilde{\Gamma}$ that agree with $\varphi$ on all the unchanged edges of $\Gamma$, including $e_1,e_2,e_3,e_4$, and take value $\{r\}$ on $e_5$. Note that $\mathbb{X}_{e_5}=\{x\}$ is an alphabet of a single indeterminate.

Note that $\tilde{\Gamma}$ has at most $k$ edges of color $m+1$. So $Q_{\tilde{\varphi}} H_P(\tilde{\Gamma})\neq 0$ by induction hypothesis. Pick a $\tilde{v} \in Q_{\tilde{\varphi}} H_P(\tilde{\Gamma})$ with $\tilde{v} \neq 0$. Define $\hat{v} \in H_P(\hat{\Gamma})$ by $\hat{v} = (\chi^0 \otimes \chi^0) (\tilde{v})$. Note that $\chi^0 \otimes \chi^0$ is linear with respect to $\Sym(\mathbb{X}_{e_1}|\cdots |\mathbb{X}_{e_6})$ and with respect to symmetric polynomials in alphabets assigned to edges outside the part shown in Figure \ref{thm-decomp-MOY-proof-fig-2}. By Lemma \ref{lemma-MOY-homology-multiplication}, one can see that $\hat{v} \in Q_{\hat{\varphi}} H_P(\hat{\Gamma})$. Moreover, by Lemma \ref{lemma-MOY-homology-multiplication} and \cite[Proposition 4.12 and Lemma 4.13]{Wu-color-equi}, 
\begin{eqnarray*}
(\chi^1 \otimes \chi^1) (\hat{v}) & = & (\chi^1 \otimes \chi^1)(\chi^0 \otimes \chi^0) (\tilde{v}) \\
& = & h_{m+1-j}(\{x\}-\mathbb{X}_{e_4}) \cdot h_{m+1-i}(\{x\}-\mathbb{X}_{e_2})\cdot \tilde{v} \\
& = & h_{m+1-j}(\{r\}-\varphi(e_4)) \cdot h_{m+1-i}(\{r\}-\varphi(e_2))\cdot \tilde{v} \\
& = & \left(\prod_{s \in \varphi(e_2)\cup \varphi(e_4)} (r-s) \right) \cdot \tilde{v} \neq 0.
\end{eqnarray*}
So $\hat{v} \neq 0$. 

Now let $v = \overline{\phi}(\hat{v})$. Note that $\overline{\phi}$ is linear with respect to $\Sym(\mathbb{X}_{e_1}|\cdots |\mathbb{X}_{e_4})$ and with respect to symmetric polynomials in alphabets assigned to edges outside the part shown in Figure \ref{thm-decomp-MOY-proof-fig-2}. By Lemma \ref{lemma-MOY-homology-multiplication}, one can see that $v \in Q_{\varphi} H_P(\Gamma)$. Moreover, by Lemmas \ref{lemma-MOY-homology-multiplication} and \ref{phibar-compose-phi}, we know that
\begin{eqnarray*}
\hat{v} & = & c \cdot \sum_{l=0}^m x^{m-l}\cdot \phi \circ \overline{\phi}((-1)^l X_{e_6,l}\cdot \hat{v}) \\
& = & c \cdot \sum_{l=0}^m x^{m-l}\cdot \phi \circ \overline{\phi}((-1)^l (X_{e_6,l}|_{\mathbb{X}_{e_6}=(\varphi(e_1)\cup\varphi(e_2))\setminus \{r\}})  \cdot \hat{v}) \\
& = & c \cdot \sum_{l=0}^m (-1)^l (X_{e_6,l}|_{\mathbb{X}_{e_6}=(\varphi(e_1)\cup\varphi(e_2))\setminus \{r\}})  \cdot x^{m-l}\cdot \phi \circ \overline{\phi}(\hat{v}) \\
& = & c \cdot (\sum_{l=0}^m (-1)^l (X_{e_6,l}|_{\mathbb{X}_{e_6}=(\varphi(e_1)\cup\varphi(e_2))\setminus \{r\}})  \cdot x^{m-l})\cdot \phi (v),
\end{eqnarray*}
where $c\in \C\setminus \{0\}$. This implies that $v\neq 0$ and, therefore, $Q_\varphi H_P(\Gamma) \neq 0$.

\begin{figure}[ht]
\[
\xymatrix{
\input{edge-m+1-1} \ar@{=}[r] & \input{edge-m+1-4}
}
\]
\caption{}\label{thm-decomp-MOY-proof-fig-3} 

\end{figure}

If $\varphi(e_1)\cap \varphi(e_4) \neq \emptyset$, we change the immersion of $\Gamma$ by the move in Figure \ref{thm-decomp-MOY-proof-fig-3}. Then the proof reduces to the case we have just dealt with.

Thus, we have shown that $Q_\varphi H_P(\Gamma)\neq 0$ for all $\varphi \in \mathcal{S} (\Gamma)$ if the colors appearing in $\Gamma$ are not greater than $m+1$. This completes the induction. Therefore, $Q_\varphi H_P(\Gamma)\neq 0$ for any state $\varphi$ of any closed abstract MOY graph $\Gamma$.

Next, let $\Gamma$ be a closed embedded MOY graph. Then by \cite[Proposition 9.8]{Wu-color-equi}, \cite[Theorem 14.7]{Wu-color} and the definition of the MOY graph polynomial $\left\langle \Gamma \right\rangle_N$ in \cite{MOY}, we know that 
\begin{equation}\label{thm-decomp-MOY-proof-eq-1} 
\dim_\C H_P(\Gamma) = \left\langle \Gamma \right\rangle_N|_{q=1} = |\mathcal{S}(\Gamma)|.
\end{equation} 
On the other hand, by Theorem \ref{thm-bar-R-Gamma-decompose}, we know $\overline{R}_{\Gamma} = \bigoplus_{\varphi \in \mathcal{S}(\Gamma)} \C \cdot Q_\varphi$, which implies that $H_P(\Gamma) = \sum_{\varphi \in \mathcal{S} (\Gamma)} Q_\varphi H_P(\Gamma)$. Using Lemma \ref{lemma-MOY-homology-multiplication} and Parts (ii) and (iii) of Proposition \ref{prop-def-Q-varphi}, it is easy to check that the sum on the right hand side is a direct sum. So 
\[
H_P(\Gamma) = \bigoplus_{\varphi \in \mathcal{S} (\Gamma)} Q_\varphi H_P(\Gamma). 
\]
But $Q_\varphi H_P(\Gamma)\neq 0$ for all $\varphi \in \mathcal{S} (\Gamma)$. So 
\begin{equation}\label{thm-decomp-MOY-proof-eq-2} 
\dim_\C H_P(\Gamma) = \sum_{\varphi \in \mathcal{S} (\Gamma)} \dim_\C Q_\varphi H_P(\Gamma) \geq |\mathcal{S}(\Gamma)|.
\end{equation}
In order for both of Equations \eqref{thm-decomp-MOY-proof-eq-1} and \eqref{thm-decomp-MOY-proof-eq-2} to be true, we must have $\dim_\C Q_\varphi H_P(\Gamma) = 1$ for all $\varphi \in \mathcal{S} (\Gamma)$. This completes the proof.
\end{proof}

\section{Morphisms Induced by Local Changes of MOY Graphs}\label{sec-morphisms}

\begin{definition}\label{def-basis-graph-homology}
Let $\Gamma$ be a closed embedded MOY graph, and $P(X)$ a generic polynomial of the form \eqref{def-P}. By Theorem \ref{thm-decomp-MOY}, for each $\varphi \in \mathcal{S}(\Gamma)$, $Q_\varphi H_P(\Gamma)$ is spanned by a single non-vanishing element $v_{\Gamma,\varphi} \in Q_\varphi H_P(\Gamma)$, where $v_{\Gamma,\varphi}$ is unique up to scaling. Moreover, $\{v_{\Gamma,\varphi}~|~\varphi \in \mathcal{S}(\Gamma)\}$ is a $\C$-linear basis for $H_P(\Gamma)$. This basis is well defined up to scaling each $v_{\Gamma,\varphi}$.

In particular, the decomposition in Theorem \ref{thm-decomp-MOY} can be written as 
\[
H_P(\Gamma) = \bigoplus_{\varphi\in \mathcal{S}(\Gamma)} \C \cdot v_{\Gamma,\varphi}.
\]
\end{definition}

In this section, we study effects of morphisms induced by local changes of MOY graphs on the above basis. Again, we fix a generic polynomial $P(X)$ of the form \eqref{def-P} and denote by $\Sigma=\Sigma(P)=\{r_1,\dots,r_N\}$ the set of roots of $P'(X)$.

\subsection{Compatible states} Before going into specific local changes, we first introduce the simple concept of compatible states, which will be useful later on.

\begin{definition}\label{def-sub-MOY-graph}
Let $\Gamma$ be an embedded MOY graph with a marking. Denote by $\{p_1,\dots,p_k\}$ the set of internal marked points on $\Gamma$ and by $\tilde{\Gamma}$ the embedded MOY graph resulted from cutting $\Gamma$ at $p_1,\dots,p_k$. Note that, up to homotopy, there is a natural continuous map $\xi:\tilde{\Gamma} \rightarrow \Gamma$ that is injective except at $p_1,\dots,p_k$.

A MOY subgraph of $\Gamma$ is the image under $\xi$ of the union of several connected components of $\tilde{\Gamma}$. Note that all MOY subgraphs of $\Gamma$ are embedded MOY graphs.

An edge $e$ of $\Gamma$ is said to be outside a MOY subgraph of $\Gamma$ if $e$ is not a subset of that subgraph (even if part of $e$ is contained in it.)
\end{definition}

\begin{definition}\label{def-compatible-states}
Assume that, for $i=1,2$, $\Gamma_i$ is a closed embedded MOY graph containing a MOY subgraph $\hat{\Gamma}_i$ such that $\Gamma_1 \setminus \hat{\Gamma}_1$ and $\Gamma_2 \setminus \hat{\Gamma}_2$ are identical. Let $\varphi_1$ and $\varphi_2$ be states of $\Gamma_1$ and $\Gamma_2$. We say that $\varphi_1$ and $\varphi_2$ are compatible outside $\hat{\Gamma}_1$ and $\hat{\Gamma}_2$ if their values coincide on every pair of corresponding edges in $\Gamma_1$ and $\Gamma_2$ outside $\hat{\Gamma}_1$ and $\hat{\Gamma}_2$.

If $\hat{\Gamma}_1$ and $\hat{\Gamma}_2$ are clear from the context, then we simply say that $\varphi_1$ and $\varphi_2$ are compatible.
\end{definition}

\begin{lemma}\label{lemma-morphism-compatible-states}
Assume $\Gamma_1$, $\Gamma_2$, $\hat{\Gamma}_1$ and $\hat{\Gamma}_2$ are as in Definition \ref{def-compatible-states}, and $\varphi \in \mathcal{S}(\Gamma_1)$. Any morphism $f \in \Hom_{\HMF}(C_P(\hat{\Gamma}_1), C_P(\hat{\Gamma}_2))$ induces a homomorphism $f: H_P(\Gamma_1) \rightarrow H_P(\Gamma_2)$. Then $f(v_{\Gamma_1,\varphi})$ is contained in the subspace of $H_P(\Gamma_2)$ spanned by $\{v_{\Gamma_2,\varphi'}~|~ \varphi' \in \mathcal{S}(\Gamma_2)$ is compatible with $\varphi$ outside $\hat{\Gamma}_2\}$. In particular, if none of the states of $\Gamma_2$ are compatible with $\varphi$ outside $\hat{\Gamma}_2$, then $f(v_{\Gamma_1,\varphi})=0$.
\end{lemma}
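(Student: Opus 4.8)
The plan is to characterize membership in $Q_\varphi H_P(\Gamma_i)$ purely in terms of the action of the elementary symmetric polynomials on edges, and then observe that $f$ intertwines these actions on edges outside $\hat\Gamma_1$ and $\hat\Gamma_2$. Concretely, by Lemma \ref{lemma-MOY-homology-multiplication}, for $\varphi' \in \mathcal S(\Gamma_2)$ and $w \in H_P(\Gamma_2)$ we have $w \in Q_{\varphi'}H_P(\Gamma_2)$ if and only if $X_{e,i}\cdot w = (X_{e,i}|_{\mathbb X_e=\varphi'(e)})\cdot w$ for every edge $e$ of $\Gamma_2$ and every $i$. Since $H_P(\Gamma_2) = \bigoplus_{\varphi' \in \mathcal S(\Gamma_2)} Q_{\varphi'}H_P(\Gamma_2)$ by Theorem \ref{thm-decomp-MOY}, and each $Q_{\varphi'}H_P(\Gamma_2)$ is one-dimensional, spanned by $v_{\Gamma_2,\varphi'}$, the subspace spanned by $\{v_{\Gamma_2,\varphi'} \mid \varphi' \text{ compatible with }\varphi \text{ outside }\hat\Gamma_2\}$ is exactly the common eigenspace
\[
W := \{w \in H_P(\Gamma_2) \mid X_{e,i}\cdot w = (X_{e,i}|_{\mathbb X_e=\varphi(e)})\cdot w \text{ for every edge } e \text{ outside } \hat\Gamma_2 \text{ and every } i\}.
\]
Indeed, decomposing $w \in W$ along the basis $\{v_{\Gamma_2,\varphi'}\}$ and comparing eigenvalues on outside edges forces every $\varphi'$ occurring to agree with $\varphi$ on those edges, and conversely each such $v_{\Gamma_2,\varphi'}$ lies in $W$.

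So it suffices to show $f(v_{\Gamma_1,\varphi}) \in W$, i.e. that $f(v_{\Gamma_1,\varphi})$ is an eigenvector for multiplication by $X_{e,i}$ with eigenvalue $X_{e,i}|_{\mathbb X_e=\varphi(e)}$, for each edge $e$ of $\Gamma_2$ outside $\hat\Gamma_2$ and each $i$. The key point is that such an edge $e$ corresponds to an edge of $\Gamma_1$ outside $\hat\Gamma_1$ (since $\Gamma_1\setminus\hat\Gamma_1$ and $\Gamma_2\setminus\hat\Gamma_2$ are identified), carrying the same alphabet $\mathbb X_e$, and that the morphism $f$, being induced by a morphism of matrix factorizations $C_P(\hat\Gamma_1)\to C_P(\hat\Gamma_2)$ that is the identity outside the modified region, is linear over $\Sym(\mathbb X_e)$ for every such outside edge $e$. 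Therefore, using that $v_{\Gamma_1,\varphi} \in Q_\varphi H_P(\Gamma_1)$ and hence (again by Lemma \ref{lemma-MOY-homology-multiplication}) $X_{e,i}\cdot v_{\Gamma_1,\varphi} = (X_{e,i}|_{\mathbb X_e=\varphi(e)})\cdot v_{\Gamma_1,\varphi}$, we get
\[
X_{e,i}\cdot f(v_{\Gamma_1,\varphi}) = f(X_{e,i}\cdot v_{\Gamma_1,\varphi}) = (X_{e,i}|_{\mathbb X_e=\varphi(e)})\cdot f(v_{\Gamma_1,\varphi}),
\]
which is exactly the condition for membership in $W$. The final sentence of the lemma is then immediate: if no state of $\Gamma_2$ is compatible with $\varphi$ outside $\hat\Gamma_2$, then $W=0$, so $f(v_{\Gamma_1,\varphi})=0$.

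The main obstacle is the $\Sym(\mathbb X_e)$-linearity of $f$ for edges $e$ outside the modified region. This is not a formal triviality: one must know that the morphism of matrix factorizations defining $f$ can be taken to act only on the tensor factors associated to $\hat\Gamma_1$ (resp. $\hat\Gamma_2$) and as the identity on the tensor factors associated to the unchanged part, including the alphabet $\mathbb X_e$ sitting on an outside edge. For the morphisms relevant to the paper — edge splitting/merging $\phi,\overline\phi$ and the $\chi^0,\chi^1$ maps — this follows from their explicit descriptions in \cite[Section 4]{Wu-color-equi} and \cite[Subsection 7.4]{Wu-color}; I would phrase the hypothesis of the lemma so that $f$ is assumed to arise from a morphism $C_P(\hat\Gamma_1)\to C_P(\hat\Gamma_2)$ of filtered matrix factorizations over the base ring of $\hat\Gamma_i$, so that the induced map on $H_P$ is tautologically linear over the symmetric polynomials in all alphabets on outside edges. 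With that framing in place, the argument above is essentially a bookkeeping exercise combining Lemma \ref{lemma-MOY-homology-multiplication} with the direct sum decomposition of Theorem \ref{thm-decomp-MOY}.
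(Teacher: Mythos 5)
Your proposal is correct and follows essentially the same route as the paper: both arguments rest on the $\Sym(\mathbb{X}_{e_1}|\cdots|\mathbb{X}_{e_l})$-linearity of $f$ over the alphabets on outside edges together with the eigenvalue characterization in Lemma \ref{lemma-MOY-homology-multiplication} and the basis from Theorem \ref{thm-decomp-MOY}. The only cosmetic difference is that the paper isolates the coefficient of an incompatible $v_{\Gamma_2,\varphi'}$ by multiplying $f(v_{\Gamma_1,\varphi})$ with an explicit separating interpolation polynomial $Q$, whereas you phrase the same computation as a common-eigenspace argument; your observation that the $\Sym(\mathbb{X}_e)$-linearity must be built into the hypotheses on $f$ is exactly the point the paper uses implicitly.
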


\begin{proof}
Note that $f(v_{\Gamma_1,\varphi}) = \sum_{\varphi' \in \mathcal{S}(\Gamma_2)} \alpha_{\varphi'}v_{\Gamma_2,\varphi'}$, where $\alpha_{\varphi'} \in \C$. Let $e_1,\dots,e_l$ be the edges of $\Gamma_1$ outside $\hat{\Gamma}_1$. Mark each $e_j$ and its corresponding edge $e_j'$ in $\Gamma_2$ by $\mathbb{X}_{e_j}$. Note that $f: H_P(\Gamma_1) \rightarrow H_P(\Gamma_2)$ is $\Sym(\mathbb{X}_{e_1}|\cdots|\mathbb{X}_{e_l})$-linear.  Assume a particular $\varphi' \in \mathcal{S}(\Gamma_2)$ is not compatible with $\varphi$ outside $\hat{\Gamma}_2$. Consider
\[
Q = \prod_{j=1}^l \left( \sfrac{\prod_{x \in \mathbb{X}_{e_j},~r\in \Sigma\setminus \varphi'(e_j')} (x-r)}{\prod_{s \in \varphi'(e_j'),~r\in \Sigma\setminus \varphi'(e_j')} (s-r)}\right) \in \Sym(\mathbb{X}_{e_1}|\cdots|\mathbb{X}_{e_l}).
\]
Clearly, 
\begin{eqnarray*}
Q|_{\mathbb{X}_{e_j}=\varphi'(e_j'),~\forall j =1,\dots,l}& = & 1, \\
Q|_{\mathbb{X}_{e_j}=\varphi(e_j),~\forall j =1,\dots,l}& = & 0.
\end{eqnarray*}
By Lemma \ref{lemma-MOY-homology-multiplication}, we have that
\[
Q \cdot f(v_{\Gamma_1,\varphi}) = f(Q \cdot v_{\Gamma_1,\varphi}) = f((Q|_{\mathbb{X}_{e_j}=\varphi(e_j),~\forall j =1,\dots,l}) \cdot v_{\Gamma_1,\varphi}) = 0
\]
and 
\begin{eqnarray*}
Q \cdot f(v_{\Gamma_1,\varphi}) & = & Q \cdot \sum_{\varphi'' \in \mathcal{S}(\Gamma_2)} \alpha_{\varphi''}v_{\Gamma_2,\varphi''} \\
& = & \alpha_{\varphi'}v_{\Gamma_2,\varphi'} + \sum_{\varphi'' \in \mathcal{S}(\Gamma_2), \varphi'' \neq \varphi'}  (Q|_{\mathbb{X}_{e_j}=\varphi''(e_j'),~\forall j =1,\dots,l}) \cdot \alpha_{\varphi''}v_{\Gamma_2,\varphi''}.
\end{eqnarray*}
But $\{v_{\Gamma_2,\varphi'}~|~ \varphi' \in \mathcal{S}(\Gamma_2)\}$ is a basis for $H_P(\Gamma_2)$. So the above two equations imply that $\alpha_{\varphi'}=0$. And the lemma follows.
\end{proof}

\begin{figure}[ht]

\setlength{\unitlength}{1pt}

\begin{picture}(360,100)(-180,-50)


\put(-100,25){$\Gamma_1$:}

\put(-60,10){\vector(0,1){10}}

\put(-60,20){\vector(-1,1){20}}

\put(-60,20){\vector(1,1){10}}

\put(-50,30){\vector(-1,1){10}}

\put(-50,30){\vector(1,1){10}}

\put(-75,3){\tiny{$i+j+k$}}

\put(-55,21){\tiny{$j+k$}}

\put(-80,42){\tiny{$i$}}

\put(-60,42){\tiny{$j$}}

\put(-40,42){\tiny{$k$}}


\put(-15,25){$\longleftrightarrow$}


\put(20,25){$\Gamma'_1$:}

\put(60,10){\vector(0,1){10}}

\put(60,20){\vector(1,1){20}}

\put(60,20){\vector(-1,1){10}}

\put(50,30){\vector(1,1){10}}

\put(50,30){\vector(-1,1){10}}

\put(45,3){\tiny{$i+j+k$}}

\put(38,21){\tiny{$i+j$}}

\put(80,42){\tiny{$k$}}

\put(60,42){\tiny{$j$}}

\put(40,42){\tiny{$i$}}


\put(-100,-25){$\Gamma_2$:}

\put(-60,-30){\vector(0,-1){10}}

\put(-80,-10){\vector(1,-1){20}}

\put(-50,-20){\vector(-1,-1){10}}

\put(-60,-10){\vector(1,-1){10}}

\put(-40,-10){\vector(-1,-1){10}}

\put(-75,-47){\tiny{$i+j+k$}}

\put(-55,-29){\tiny{$j+k$}}

\put(-80,-8){\tiny{$i$}}

\put(-60,-8){\tiny{$j$}}

\put(-40,-8){\tiny{$k$}}


\put(-15,-25){$\longleftrightarrow$}


\put(20,-25){$\Gamma'_2$:}

\put(60,-30){\vector(0,-1){10}}

\put(80,-10){\vector(-1,-1){20}}

\put(50,-20){\vector(1,-1){10}}

\put(60,-10){\vector(-1,-1){10}}

\put(40,-10){\vector(1,-1){10}}

\put(45,-47){\tiny{$i+j+k$}}

\put(38,-29){\tiny{$i+j$}}

\put(80,-8){\tiny{$k$}}

\put(60,-8){\tiny{$j$}}

\put(40,-8){\tiny{$i$}}

\end{picture}

\caption{Bouquet moves}\label{bouquet-move-figure-states}

\end{figure}
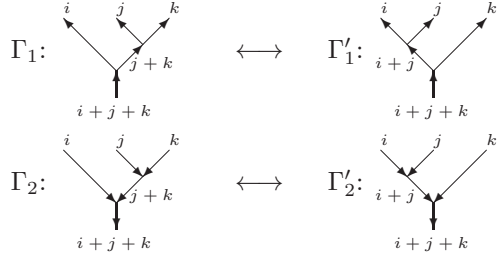

\subsection{Bouquet moves} For $i=1,2$, assume $\Gamma_i$ and $\Gamma_i'$ in Figure \ref{bouquet-move-figure-states} are closed embedded MOY graphs that are identical outside the part changed by the bouquet move. By \cite[Corollary 3.11 and Lemma 4.2]{Wu-color-equi}, the bouquet move induces, up to scaling, an isomorphism $h_i:H_P(\Gamma_i)\xrightarrow{\cong} H_P(\Gamma_i')$ and its inverse $h_i^{-1}:H_P(\Gamma_i')\xrightarrow{\cong} H_P(\Gamma_i)$.

\begin{lemma}\label{lemma-bouquet-states}
If $\varphi$ is a state of $\Gamma_i$, then there is a unique state $\varphi'$ of $\Gamma_i'$ that is compatible with $\varphi$ outside the part changed by the bouquet move. In particular, $h_i(v_{\Gamma_i,\varphi}) = c\cdot v_{\Gamma_i',\varphi'}$, where $c\in \C\setminus \{0\}$.

Similarly, if $\varphi'$ is a state of $\Gamma_i'$, then there is a unique state $\varphi$ of $\Gamma_i$ that is compatible with $\varphi'$ outside the part changed by the bouquet move. In particular, $h_i^{-1}(v_{\Gamma_i',\varphi'}) = c'\cdot v_{\Gamma_i,\varphi}$, where $c'\in \C\setminus \{0\}$.
\end{lemma}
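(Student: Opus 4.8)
\textbf{Proof proposal for Lemma \ref{lemma-bouquet-states}.} The plan is to reduce everything to the behavior of states under the purely combinatorial bouquet move and then invoke Lemma \ref{lemma-morphism-compatible-states} to control what the isomorphism $h_i$ does to the basis. First I would observe that a bouquet move is a local change involving only the two relevant internal vertices and the edges incident to them; all other edges are untouched. So the statement ``$\varphi$ is a state of $\Gamma_i$ that is compatible with $\varphi'$ outside the changed part'' is purely a constraint on the finitely many edges inside the bouquet region together with the common boundary edges, which for $\Gamma_1\leadsto\Gamma_1'$ are the four edges colored $i+j+k$, $i$, $j$, $k$ (with the middle edge colored $j+k$ or $i+j$ being the one that changes).

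The combinatorial core is the following. Given the values $\varphi$ assigns to the boundary edges of the bouquet region — call them $A$ (on the $i+j+k$ edge), $B_i$, $B_j$, $B_k$ (on the $i$-, $j$-, $k$-colored edges) — admissibility of $\varphi$ at the two vertices of $\Gamma_1$ forces $B_i,B_j,B_k$ to be pairwise disjoint with $B_i\cup B_j\cup B_k = A$ (the inner edge carrying $B_j\cup B_k$). For $\Gamma_1'$, admissibility at its two vertices forces exactly the same disjointness and union condition on $B_i,B_j,B_k$, with the inner edge now forced to carry $B_i\cup B_j$. Hence the boundary data of $\varphi$ satisfies the admissibility constraints of $\Gamma_1'$ if and only if it satisfies those of $\Gamma_1$, and in that case the value on the single changed inner edge is uniquely determined. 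Therefore there is exactly one state $\varphi'$ of $\Gamma_1'$ compatible with $\varphi$ outside the changed part. The same argument, with $\Gamma_2,\Gamma_2'$ and the orientation-reversed vertices, handles the second bouquet move; and running the argument backwards gives the ``similarly'' clause for $h_i^{-1}$. I expect this step to be entirely routine once the admissibility conditions of Definition \ref{def-state-MOY} are written out for the four vertices involved.

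With uniqueness of the compatible state in hand, the rest is immediate. By Lemma \ref{lemma-morphism-compatible-states} applied to the isomorphism $h_i:H_P(\Gamma_i)\to H_P(\Gamma_i')$ (which is induced by a morphism of matrix factorizations supported in the bouquet region), $h_i(v_{\Gamma_i,\varphi})$ lies in the span of $\{v_{\Gamma_i',\varphi''}\mid \varphi'' \text{ compatible with }\varphi\text{ outside the changed part}\}$, and by the combinatorial step this span is exactly $\C\cdot v_{\Gamma_i',\varphi'}$. Hence $h_i(v_{\Gamma_i,\varphi}) = c\cdot v_{\Gamma_i',\varphi'}$ for some $c\in\C$. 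Finally $c\neq 0$: since $h_i$ is an isomorphism it sends the basis $\{v_{\Gamma_i,\varphi}\}$ to a basis of $H_P(\Gamma_i')$, so no basis element can map to zero. The statement for $h_i^{-1}$ follows the same way, using that $h_i^{-1}$ is also an isomorphism induced by a local morphism and that compatibility is symmetric.

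The only place that requires genuine care — the ``main obstacle,'' though a mild one — is checking that the admissibility constraints of $\Gamma_i$ and $\Gamma_i'$ on the shared boundary edges really are equivalent, i.e. that the bouquet move does not enlarge or shrink the set of admissible boundary colorings. This is where one must be slightly careful about the direction of the edges in Figure \ref{bouquet-move-figure-states} (incoming versus outgoing at each vertex), and about the fact that for $\Gamma_2,\Gamma_2'$ the roles of ``in'' and ``out'' are reversed relative to $\Gamma_1,\Gamma_1'$; but in each case the condition reduces to the same pairwise-disjointness-plus-union statement on the three small alphabets, so the equivalence holds. Everything else is a formal consequence of Lemma \ref{lemma-morphism-compatible-states} and the fact that $h_i$ is an isomorphism.
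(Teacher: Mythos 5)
Your proposal is correct and follows essentially the same route as the paper: establish the one-to-one correspondence between states of $\Gamma_i$ and $\Gamma_i'$ via compatibility, apply Lemma \ref{lemma-morphism-compatible-states} to pin $h_i(v_{\Gamma_i,\varphi})$ into $\C\cdot v_{\Gamma_i',\varphi'}$, and use that $h_i$ is an isomorphism to rule out $c=0$. The paper's proof is just a terser version of this same argument.
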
 

\begin{proof}
Note that there is a natural one-to-one correspondence between states of $\Gamma_i$ and $\Gamma_i'$ that associate each state to the unique state compatible with it. Then the lemma follows easily from Lemma \ref{lemma-morphism-compatible-states}.
\end{proof}

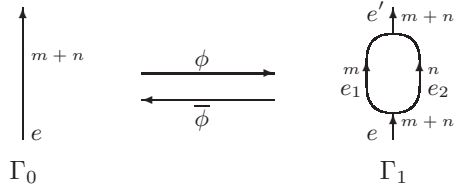
\begin{figure}[ht]

\setlength{\unitlength}{1pt}

\begin{picture}(360,75)(-180,-90)


\put(-67,-45){\tiny{$m+n$}}

\put(-70,-75){\vector(0,1){50}}

\put(-67,-75){\small{$e$}}

\put(-75,-90){$\Gamma_0$}


\put(-25,-50){\vector(1,0){50}}

\put(25,-60){\vector(-1,0){50}}

\put(-5,-47){\small{$\phi$}}

\put(-5,-70){\small{$\overline{\phi}$}}


\put(70,-75){\vector(0,1){10}}

\put(70,-35){\vector(0,1){10}}

\qbezier(70,-65)(60,-65)(60,-55)

\qbezier(70,-35)(60,-35)(60,-45)

\put(60,-55){\vector(0,1){10}}

\qbezier(70,-65)(80,-65)(80,-55)

\qbezier(70,-35)(80,-35)(80,-45)

\put(80,-55){\vector(0,1){10}}

\put(73,-30){\tiny{$m+n$}}

\put(73,-70){\tiny{$m+n$}}

\put(83,-50){\tiny{$n$}}

\put(51,-50){\tiny{$m$}}

\put(60,-30){\small{$e'$}}

\put(60,-75){\small{$e$}}

\put(50,-58){\small{$e_1$}}

\put(83,-58){\small{$e_2$}}

\put(65,-90){$\Gamma_1$}

\end{picture}

\caption{Edge splitting and merging}\label{edge-splitting-state}

\end{figure}

\subsection{Edge splitting and merging} Let $\Gamma_0$ and $\Gamma_1$ be closed embedded MOY graphs that are identical outside the part shown in Figure \ref{edge-splitting-state}. We call the local change $\Gamma_0 \leadsto \Gamma_1$ an edge splitting. It induces a homomorphism $\phi:H_P(\Gamma_0) \rightarrow H_P(\Gamma_1)$. We call the local change $\Gamma_1 \leadsto \Gamma_0$ an edge merging. It induces a homomorphism $\overline{\phi}:H_P(\Gamma_1) \rightarrow H_P(\Gamma_0)$. See \cite[Section 4]{Wu-color-equi} for more details.

\begin{lemma}\label{lemma-edge-splitting-states}
For each state $\varphi'$ of $\Gamma_1$ there is a unique state $\varphi$ of $\Gamma_0$ that is compatible with $\varphi$ outside the changed part. And 
\begin{equation}\label{lemma-edge-splitting-states-eq-1}
\overline{\phi}(v_{\Gamma_1,\varphi'}) = c \cdot v_{\Gamma_0,\varphi},
\end{equation}
where $c \in \C\setminus\{0\}$.

For every state $\varphi$ of $\Gamma_0$, there are exactly $\bn{m+n}{m}$ states of $\Gamma_1$ that are compatible with $\varphi$ outside the changed part. And 
\begin{equation}\label{lemma-edge-splitting-states-eq-2}
\phi(v_{\Gamma_0,\varphi})=\sum_{\varphi' \in \mathcal{S}_{\varphi}(\Gamma_1)} c_{\varphi'}\cdot v_{\Gamma_1,\varphi'},
\end{equation}
where $\mathcal{S}_{\varphi}(\Gamma_1)$ is the set of all states of $\Gamma_1$ compatible with $\varphi$, and $c_{\varphi'}\in \C\setminus\{0\}$ for every $\varphi'\in \mathcal{S}_{\varphi}(\Gamma_1)$.
\end{lemma}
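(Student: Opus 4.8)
The plan is to use the machinery of compatible states (Lemma \ref{lemma-morphism-compatible-states}) together with the explicit formulas for $\phi$ and $\overline\phi$ recalled in Lemma \ref{phibar-compose-phi}. First I would verify the combinatorial bijection claims. For the edge merging $\Gamma_1 \leadsto \Gamma_0$: a state $\varphi'$ of $\Gamma_1$ assigns disjoint subsets of $\Sigma$ of sizes $m$ and $n$ to $e_1$ and $e_2$, and by admissibility at the two new trivalent vertices the value on $e$ and on $e'$ must both equal $\varphi'(e_1)\cup\varphi'(e_2)$; conversely any state $\varphi$ of $\Gamma_0$ gives a value of size $m+n$ on $e$, and restricting back through $\Gamma_1\setminus\hat\Gamma_1 = \Gamma_0\setminus\hat\Gamma_0$ determines all the unchanged edges, so there is a unique compatible $\varphi$ of $\Gamma_0$ for each $\varphi'$. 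For the edge splitting, a state $\varphi$ of $\Gamma_0$ fixes the value $\varphi(e)$ of size $m+n$; a compatible state of $\Gamma_1$ must agree on all unchanged edges and in particular on $e$, and is then free to choose any $m$-element subset of $\varphi(e)$ for $e_1$ (with the complementary $n$-element subset going to $e_2$), giving exactly $\bn{m+n}{m}$ compatible states $\mathcal{S}_\varphi(\Gamma_1)$.

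Next I would establish \eqref{lemma-edge-splitting-states-eq-1}. By Lemma \ref{lemma-morphism-compatible-states} applied to $\overline\phi$ (which is induced by the morphism $C_P(\hat\Gamma_1)\to C_P(\hat\Gamma_0)$), $\overline\phi(v_{\Gamma_1,\varphi'})$ lies in the span of $v_{\Gamma_0,\varphi}$ for states $\varphi$ compatible with $\varphi'$; since there is exactly one such $\varphi$, we get $\overline\phi(v_{\Gamma_1,\varphi'}) = c\cdot v_{\Gamma_0,\varphi}$ for some scalar $c$, and everything reduces to showing $c\neq 0$. For \eqref{lemma-edge-splitting-states-eq-2}, Lemma \ref{lemma-morphism-compatible-states} applied to $\phi$ gives $\phi(v_{\Gamma_0,\varphi}) = \sum_{\varphi'\in\mathcal{S}_\varphi(\Gamma_1)} c_{\varphi'} v_{\Gamma_1,\varphi'}$ for scalars $c_{\varphi'}$; again the content is the non-vanishing of each $c_{\varphi'}$.

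The non-vanishing is the main obstacle, and I would handle it using the composition identities \eqref{phibar-compose-phi-eq} and \eqref{phi-compose-phibar}. From \eqref{phibar-compose-phi-eq} with $\mu=\lambda^c$ we have $\overline\phi\circ\mathfrak{m}(S_\lambda(\mathbb{A})\cdot S_{\lambda^c}(-\mathbb{B}))\circ\phi \approx \id$ on $H_P(\Gamma_0)$; applying this to $v_{\Gamma_0,\varphi}$ and using Lemma \ref{lemma-MOY-homology-multiplication} to evaluate the multiplication operators $\mathfrak{m}(S_\lambda(\mathbb{A})\cdot S_{\lambda^c}(-\mathbb{B}))$ on each $v_{\Gamma_1,\varphi'}$ (the alphabets $\mathbb{A},\mathbb{B}$ mark $e_1,e_2$, so $S_\lambda(\mathbb{A})$ acts by the scalar $S_\lambda(\varphi'(e_1))$, etc.), we get that $\sum_{\varphi'} c_{\varphi'}\, S_\lambda(\varphi'(e_1))\, S_{\lambda^c}(-\varphi'(e_2))\cdot v_{\Gamma_0,\varphi}$ equals a nonzero multiple of $v_{\Gamma_0,\varphi}$ for each $\lambda\in\Lambda_{m,n}$. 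If some $c_{\varphi'}$ vanished, one could choose $\lambda$ (using the Cauchy-type orthogonality of Schur polynomials in disjoint alphabets, or directly that the square matrix $\big(S_\lambda(\varphi'(e_1))S_{\lambda^c}(-\varphi'(e_2))\big)_{\lambda,\varphi'}$ is invertible — this is essentially the content of Proposition \ref{prop-circle-ring-dim-base} and Equation \eqref{eq-zeta-Sigma-bases}) to derive a contradiction with exactly one term on the left being nonzero. This simultaneously forces every $c_{\varphi'}\neq 0$. Then non-vanishing of $c$ in \eqref{lemma-edge-splitting-states-eq-1} follows from \eqref{phi-compose-phibar}: $v_{\Gamma_1,\varphi'} \approx \sum_\mu \mathfrak{m}(S_{\mu^c}(\mathbb{A}))\circ\phi\circ\overline\phi\circ\mathfrak{m}(S_\mu(-\mathbb{B})) (v_{\Gamma_1,\varphi'})$, and evaluating the multiplications and plugging in $\overline\phi(v_{\Gamma_1,\varphi'}) = c\cdot v_{\Gamma_0,\varphi}$ and then $\phi(v_{\Gamma_0,\varphi})$ from the already-proved \eqref{lemma-edge-splitting-states-eq-2}, the coefficient of $v_{\Gamma_1,\varphi'}$ on the right is a nonzero multiple of $c$, so $c\neq0$. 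I would be careful to note the alphabets assigned to $e_1$ and $e_2$ and that all maps are linear over symmetric functions in the unchanged edges so Lemma \ref{lemma-MOY-homology-multiplication} applies cleanly.
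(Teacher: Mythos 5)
Your reduction of both equations to non-vanishing statements via Lemma \ref{lemma-morphism-compatible-states}, and your count of compatible states, are correct and match the paper. The problem is your proof of the non-vanishing of the $c_{\varphi'}$ in \eqref{lemma-edge-splitting-states-eq-2}, which you place first and on which your proof of \eqref{lemma-edge-splitting-states-eq-1} then depends. Two things go wrong there. First, to evaluate $\overline{\phi}\circ\mathfrak{m}(S_\lambda(\mathbb{A})S_{\lambda^c}(-\mathbb{B}))\circ\phi(v_{\Gamma_0,\varphi})$ you must apply $\overline{\phi}$ to each $v_{\Gamma_1,\varphi'}$, which contributes an unknown scalar $d_{\varphi'}$ (with $\overline{\phi}(v_{\Gamma_1,\varphi'})=d_{\varphi'}v_{\Gamma_0,\varphi}$) that you have not yet shown to be non-zero at this stage; your displayed sum should read $\sum_{\varphi'}c_{\varphi'}d_{\varphi'}S_\lambda(\varphi'(e_1))S_{\lambda^c}(-\varphi'(e_2))$. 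Second, and more seriously, the inference ``the system $M\cdot(c_{\varphi'}d_{\varphi'})=(\kappa_\lambda)$ has invertible $M$ and non-zero right-hand side, hence every $c_{\varphi'}d_{\varphi'}\neq 0$'' is false as linear algebra: an invertible matrix can carry a vector with a zero entry to a vector with no zero entries. Moreover the $\kappa_\lambda$ are the unspecified non-zero scalars hidden in the ``$\approx$'' of \eqref{phibar-compose-phi-eq}, one for each $\lambda$, so you cannot compute the solution vector. The argument can be repaired (one can show the coefficient vector equals $(L(Q_{\varphi'}))_{\varphi'}$ for the functional $L$ determined by \eqref{phibar-compose-phi-eq}, and that $L(Q_{\varphi'})$ reduces to $\kappa_{\lambda_{m,n}}$ times the coefficient of $S_{\lambda_{m,n}}(\mathbb{A})$ in $Q_{\varphi'}$, which is $1$ up to the interpolation denominator), but none of this is in your sketch, and ``the matrix is invertible'' does not substitute for it.

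The paper sidesteps all of this by using only the second identity \eqref{phi-compose-phibar}, whose ``$\approx$'' involves a single overall non-zero scalar, and by proving \eqref{lemma-edge-splitting-states-eq-1} first: applying $\id_{C_P(\Gamma_1)}\approx\sum_\mu\mathfrak{m}(S_{\mu^c}(\mathbb{A}))\circ\phi\circ\overline{\phi}\circ\mathfrak{m}(S_\mu(-\mathbb{B}))$ to $v_{\Gamma_1,\varphi'}$ and pulling the scalar $S_\mu(-\mathbb{B})|_{\mathbb{B}=\varphi'(e_2)}$ through shows immediately that $\overline{\phi}(v_{\Gamma_1,\varphi'})=0$ would force $v_{\Gamma_1,\varphi'}=0$. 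Then, for \eqref{lemma-edge-splitting-states-eq-2}, the same identity together with the already-proved \eqref{lemma-edge-splitting-states-eq-1} shows that $c_{\varphi'}=0$ would place a non-zero multiple of $v_{\Gamma_1,\varphi'}$ in the span of the other basis vectors $v_{\Gamma_1,\varphi''}$, a contradiction. I suggest you reverse the order of your two non-vanishing arguments and run both through \eqref{phi-compose-phibar}; your argument for \eqref{lemma-edge-splitting-states-eq-1} is already essentially this, minus the unnecessary detour through \eqref{lemma-edge-splitting-states-eq-2}.
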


\begin{proof}
The compatibility states $\varphi$ of $\Gamma_0$ and $\varphi'$ of $\Gamma_1$ means that $\varphi$ and $\varphi'$ are identical outside the part shown in Figure \ref{edge-splitting-state} and $\varphi(e)=\varphi'(e)=\varphi'(e_1) \cup \varphi'(e_2) = \varphi'(e')$, which implies the conclusions about compatible states in the lemma. Next we prove the two equations in the lemma. 

We prove Equation \eqref{lemma-edge-splitting-states-eq-1} first. By Lemma \ref{lemma-morphism-compatible-states} and the fact $\varphi$ is the only state of $\Gamma_0$ compatible with $\varphi'$, we only need to show that $\overline{\phi}(v_{\Gamma_1,\varphi'}) \neq 0$. But, by Lemmas \ref{lemma-MOY-homology-multiplication} and \ref{phibar-compose-phi}, we have 
\begin{eqnarray*}
c'\cdot v_{\Gamma_1,\varphi'} & = & \sum_{\mu\in \Lambda_{m,n}} \mathfrak{m}(S_{\mu^c}(\mathbb{X}_{e_1})) \circ \phi \circ \overline{\phi}(S_\mu (-\mathbb{X}_{e_2})\cdot v_{\Gamma_1,\varphi'}) \\
& = & \sum_{\mu\in \Lambda_{m,n}} (S_\mu (-\mathbb{X}_{e_2})|_{\mathbb{X}_{e_2}=\varphi'(e_2)})\cdot  \mathfrak{m}(S_{\mu^c}(\mathbb{X}_{e_1})) \circ \phi \circ \overline{\phi}(v_{\Gamma_1,\varphi'}),
\end{eqnarray*}
where $c'$ is a non-zero scalar and $\mathfrak{m}(\ast)$ is the homomorphism given by multiplying $\ast$. (See Subsection \ref{subsec-sum-mf}.) This shows that $\overline{\phi}(v_{\Gamma_1,\varphi'}) \neq 0$ and proves Equation \eqref{lemma-edge-splitting-states-eq-1}.

Now we prove Equation \eqref{lemma-edge-splitting-states-eq-2}. By Lemma \ref{lemma-morphism-compatible-states}, we know that
\[
\phi(v_{\Gamma_0,\varphi})=\sum_{\varphi' \in \mathcal{S}_{\varphi}(\Gamma_1)} c_{\varphi'}\cdot v_{\Gamma_1,\varphi'}.
\]
It remains to show that $c_{\varphi'}\neq 0$ for every $\varphi'\in \mathcal{S}_{\varphi}(\Gamma_1)$. Assume $c_{\varphi'}= 0$ for a particular $\varphi'\in \mathcal{S}_{\varphi}(\Gamma_1)$. By Equation \eqref{lemma-edge-splitting-states-eq-1} and the computation in the previous paragraph, we know that
\begin{eqnarray*}
&& c'\cdot v_{\Gamma_1,\varphi'} \\
& = & \sum_{\mu\in \Lambda_{m,n}} (S_\mu (-\mathbb{X}_{e_2})|_{\mathbb{X}_{e_2}=\varphi'(e_2)})\cdot  \mathfrak{m}(S_{\mu^c}(\mathbb{X}_{e_1})) \circ \phi \circ \overline{\phi}(v_{\Gamma_1,\varphi'}) \\
& = & c \cdot \sum_{\mu\in \Lambda_{m,n}} (S_\mu (-\mathbb{X}_{e_2})|_{\mathbb{X}_{e_2}=\varphi'(e_2)})\cdot  \mathfrak{m}(S_{\mu^c}(\mathbb{X}_{e_1})) \circ \phi (v_{\Gamma_0,\varphi}) \\
& = & c \cdot \sum_{\mu\in \Lambda_{m,n}} (S_\mu (-\mathbb{X}_{e_2})|_{\mathbb{X}_{e_2}=\varphi'(e_2)})\cdot  \mathfrak{m}(S_{\mu^c}(\mathbb{X}_{e_1}))\cdot(\sum_{\varphi'' \in \mathcal{S}_{\varphi}(\Gamma_1),~ \varphi'' \neq \varphi'} c_{\varphi''}\cdot v_{\Gamma_1,\varphi''}) \\
& = & c \cdot \sum_{\mu\in \Lambda_{m,n}} (S_\mu (-\mathbb{X}_{e_2})|_{\mathbb{X}_{e_2}=\varphi'(e_2)})\cdot\sum_{\varphi'' \in \mathcal{S}_{\varphi}(\Gamma_1),~ \varphi'' \neq \varphi'} (S_{\mu^c}(\mathbb{X}_{e_1})|_{\mathbb{X}_{e_1}=\varphi''(e_1)})\cdot c_{\varphi''}\cdot v_{\Gamma_1,\varphi''} \\
& \in & \bigoplus_{\varphi'' \in \mathcal{S}_{\varphi}(\Gamma_1),~ \varphi'' \neq \varphi'} \C \cdot v_{\Gamma_1,\varphi''},
\end{eqnarray*}
where $c'$ and $c$ are non-zero scalars. This is a contradiction since $\{v_{\Gamma_1,\varphi'}~|~\varphi'\in \mathcal{S}(\Gamma_1)\}$ is a basis for $H_P(\Gamma_1)$. Thus, $c_{\varphi'}\neq 0$ for every $\varphi'\in \mathcal{S}_{\varphi}(\Gamma_1)$.
\end{proof}

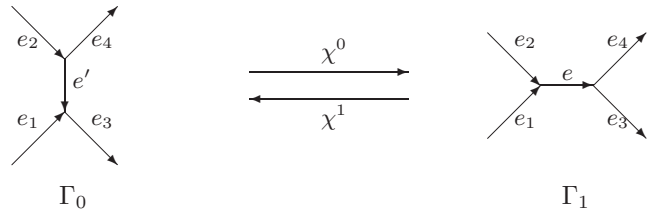
\begin{figure}[ht]

\setlength{\unitlength}{1pt}

\begin{picture}(360,75)(-180,-15)


\put(-120,0){\vector(1,1){20}}

\put(-100,20){\vector(1,-1){20}}

\put(-100,40){\vector(0,-1){20}}

\put(-100,40){\vector(1,1){20}}

\put(-120,60){\vector(1,-1){20}}

\put(-118,45){\small{$e_2$}}

\put(-118,15){\small{$e_1$}}

\put(-97,29){\small{$e'$}}

\put(-90,45){\small{$e_4$}}

\put(-90,15){\small{$e_3$}}

\put(-102,-15){$\Gamma_0$}


\put(-30,35){\vector(1,0){60}}

\put(30,25){\vector(-1,0){60}}

\put(-3,40){\small{$\chi^0$}}

\put(-3,15){\small{$\chi^1$}}


\put(60,10){\vector(1,1){20}}

\put(60,50){\vector(1,-1){20}}

\put(80,30){\vector(1,0){20}}

\put(100,30){\vector(1,1){20}}

\put(100,30){\vector(1,-1){20}}

\put(70,45){\small{$e_2$}}

\put(70,15){\small{$e_1$}}

\put(88,32){\small{$e$}}

\put(105,45){\small{$e_4$}}

\put(105,15){\small{$e_3$}}

\put(88,-15){$\Gamma_1$}

\end{picture}

\caption{$\chi$-morphisms}\label{figure-chi-maps-states}

\end{figure}

\subsection{$\chi$-morphisms} Let $\Gamma_0$ and $\Gamma_1$ be closed embedded MOY graphs that are identical except in the part shown in Figure \ref{figure-chi-maps-states}, where we assume that the color of $e_1$ is less than or equal to the color of $e_3$. By \cite[Proposition 4.12 and Lemma 4.13]{Wu-color-equi}, the local change $\Gamma_0 \leadsto \Gamma_1$ induces a homomorphism $\chi^0:H_P(\Gamma_0) \rightarrow H_P(\Gamma_1)$ and the local change $\Gamma_1 \leadsto \Gamma_0$ induces a homomorphism $\chi^1:H_P(\Gamma_1) \rightarrow H_P(\Gamma_0)$. See \cite[Section 4]{Wu-color-equi} for more details.

The goal of this subsection is to understand the actions of $\chi^0$ and $\chi^1$ on the basis of $H_P$. We need following technical results to do this.

\begin{lemma}\label{lemma-schur-vanish}
Suppose that $\mathbb{X}$ is an alphabet of $a$ indeterminates and $\mathbb{Y}$ is an alphabet of $b$ indeterminates. Assume $m>a$ and $n>b$. Then $S_{\lambda_{m,n}}(\mathbb{X}-\mathbb{Y}) = 0$.
\end{lemma}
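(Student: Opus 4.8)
The plan is to reduce the statement to the combinatorics of the rectangular partition $\lambda_{m,n}=(\underbrace{n\geq\cdots\geq n}_{m})$ by using the additivity of Schur polynomials under differences of alphabets. Write $a=|\mathbb{X}|$ and $b=|\mathbb{Y}|$. First I would invoke the branching (coproduct) formula for Schur polynomials from \cite{Lascoux-notes},
\[
S_{\lambda_{m,n}}(\mathbb{X}-\mathbb{Y})=\sum_{\mu}S_{\mu}(\mathbb{X})\cdot S_{\lambda_{m,n}/\mu}(-\mathbb{Y}),
\]
the sum running over partitions $\mu$ contained in the $m\times n$ rectangle. Because $\lambda_{m,n}$ is a rectangle, the skew Schur polynomial $S_{\lambda_{m,n}/\mu}$ equals $S_{\mu^{c}}$, where $\mu^{c}=(n-\mu_{m}\geq\cdots\geq n-\mu_{1})$ is the complement of $\mu$ in the rectangle (the $180^{\circ}$-rotation of the skew complement; this is the same complementation already used in Proposition \ref{prop-circle-ring-dim-base} and Lemma \ref{phibar-compose-phi}). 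Combining this with the negation duality $S_{\nu}(-\mathbb{Y})=(-1)^{|\nu|}S_{\nu'}(\mathbb{Y})$ gives
\[
S_{\lambda_{m,n}}(\mathbb{X}-\mathbb{Y})=\sum_{\mu\subseteq\lambda_{m,n}}(-1)^{|\mu^{c}|}\,S_{\mu}(\mathbb{X})\cdot S_{(\mu^{c})'}(\mathbb{Y}).
\]

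It then suffices to show that every summand vanishes, and for this I would use two elementary facts about ordinary Schur polynomials in finitely many variables: $S_{\mu}(\mathbb{X})=0$ unless $\mu$ has at most $a$ nonzero parts, and $S_{(\mu^{c})'}(\mathbb{Y})=0$ unless $(\mu^{c})'$ has at most $b$ nonzero parts, i.e. unless $(\mu^{c})_{1}\leq b$. Since $(\mu^{c})_{1}=n-\mu_{m}$, suppose a summand is nonzero: the first fact forces $\mu$ to have at most $a<m$ parts, hence $\mu_{m}=0$ and $(\mu^{c})_{1}=n$; the second fact then forces $n\leq b$, contradicting the hypothesis $n>b$. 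Therefore all summands are zero and $S_{\lambda_{m,n}}(\mathbb{X}-\mathbb{Y})=0$.

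I expect the only real care needed to be in matching conventions: one must check that in the paper's normalization of $S_{\lambda}(\mathbb{X}-\mathbb{Y})$ (via the $h_{k}(\mathbb{X}-\mathbb{Y})$ of Subsection \ref{subsec-sum-sym-poly}) the branching formula, the rectangular identity $S_{\lambda_{m,n}/\mu}=S_{\mu^{c}}$, and the negation duality all hold in exactly the forms used — all are in \cite{Lascoux-notes}, so this is bookkeeping rather than a genuine obstacle. As an equivalent alternative, if one prefers to avoid rectangular complementation, one can keep the general skew term and use $S_{\lambda_{m,n}/\mu}(-\mathbb{Y})=(-1)^{|\lambda_{m,n}/\mu|}S_{(\lambda_{m,n}/\mu)'}(\mathbb{Y})$, which vanishes as soon as some row of the skew diagram $\lambda_{m,n}/\mu$ has more than $b$ cells (its conjugate then has a column of length $>b$, admitting no column-strict filling by $\{1,\dots,b\}$); since $\mu_{m}=0$ when $\mu$ has fewer than $m$ parts, the bottom row of $\lambda_{m,n}/\mu$ has $n>b$ cells, yielding the same contradiction.
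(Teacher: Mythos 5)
Your proof is correct, but it takes a genuinely different route from the paper's. The paper works directly from the Jacobi--Trudi determinant $S_{\lambda_{m,n}}(\mathbb{X}-\mathbb{Y})=\det\bigl(h_{n-i+j}(\mathbb{X}-\mathbb{Y})\bigr)_{1\leq i,j\leq m}$: from the generating function it derives the recursion $h_k(\mathbb{X}-\mathbb{Y})=\sum_{j=1}^{a}(-1)^{j-1}X_j\,h_{k-j}(\mathbb{X}-\mathbb{Y})$ valid for all $k>b$, observes that every entry of the first row has index at least $n>b$, and concludes that the first row is a linear combination of the next $a$ rows (which exist because $m>a$), so the determinant vanishes. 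You instead expand via the coproduct $S_{\lambda_{m,n}}(\mathbb{X}-\mathbb{Y})=\sum_{\mu}S_{\mu}(\mathbb{X})S_{\lambda_{m,n}/\mu}(-\mathbb{Y})$, rewrite the skew factor by rectangular complementation and negation duality, and kill each summand by counting variables. Both arguments are sound. The paper's has the advantage of being essentially self-contained: it uses only the determinantal definition and one elementary generating-function identity, with no convention-matching to worry about. Yours is conceptually sharper --- it isolates the true obstruction, namely that a Schur function of a difference of alphabets of sizes $(a,b)$ vanishes exactly when the diagram contains the cell $(a+1,b+1)$, so your argument generalizes verbatim beyond rectangles --- but it imports three identities (branching, $S_{\lambda_{m,n}/\mu}=S_{\mu^c}$, and $S_{\nu}(-\mathbb{Y})=(-1)^{|\nu|}S_{\nu'}(\mathbb{Y})$) whose normalizations must be checked against the paper's conventions, a point you correctly flag as bookkeeping. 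Your closing alternative via column-strict fillings of the conjugate skew shape is also valid and avoids the complementation step.
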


\begin{proof}
Recall that
\begin{eqnarray*}
S_{\lambda_{m,n}}(\mathbb{X}-\mathbb{Y}) & = & \det (h_{n-i+j}(\mathbb{X}-\mathbb{Y}))_{1 \leq i,j \leq m} \\
& = & \left|%
\begin{array}{lcl}
  h_{n}(\mathbb{X}-\mathbb{Y}) & \cdots & h_{n-1+m}(\mathbb{X}-\mathbb{Y}) \\
  h_{n-1}(\mathbb{X}-\mathbb{Y}) & \cdots & h_{n-2+m}(\mathbb{X}-\mathbb{Y}) \\
  \cdots & \cdots & \cdots \\
  h_{n-m+1}(\mathbb{X}-\mathbb{Y}) & \cdots & h_{n}(\mathbb{X}-\mathbb{Y})
\end{array}%
\right|.
\end{eqnarray*}
Denote by $X_i$ and $Y_i$ the $i$-th elementary symmetric polynomials in $\mathbb{X}$ and $\mathbb{Y}$. For $k>b$, we have
\begin{eqnarray*}
h_k(\mathbb{X}-\mathbb{Y}) & = & \sum_{i=0}^b (-1)^iY_i h_{k-i}(\mathbb{X}) = \sum_{i=0}^b \sum_{j=1}^a(-1)^{i+j-1} X_jY_i h_{k-i-j}(\mathbb{X}) \\
& = & \sum_{j=1}^a (-1)^{j-1}X_j \sum_{i=0}^b (-1)^i Y_i h_{k-i-j}(\mathbb{X}) \\
& = & \sum_{j=1}^a (-1)^{j-1}X_j h_{k-j}(\mathbb{X}-\mathbb{Y}).
\end{eqnarray*}
Hence, the first row of the the above determinant is a combination of the next $a$ rows. This implies that $S_{\lambda_{m,n}}(\mathbb{X}-\mathbb{Y}) = 0$.
\end{proof}

\begin{corollary}\label{cor-schur-value-nonvanish}
Suppose that $\Omega_1, \Omega_2 \subset \C$, $|\Omega_1|=m$, $|\Omega_2|=n$ and $\Omega_1 \cap \Omega_2 =\emptyset$. Let $\mathbb{X}$ be an alphabet of $m$ indeterminates. Then 
\[
S_{\lambda_{m,n}}(\mathbb{X}-\Omega_2) = S_{\lambda_{m,n}}(\Omega_1-\Omega_2) \cdot \sfrac{\prod_{x\in \mathbb{X},~r\in\Omega_2}(x-r)}{\prod_{s\in\Omega_1,~r\in\Omega_2}(s-r)}.
\]
In particular, $S_{\lambda_{m,n}}(\Omega_1-\Omega_2) \neq 0$.
\end{corollary}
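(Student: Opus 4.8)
The plan is to prove Corollary \ref{cor-schur-value-nonvanish} by combining the vanishing result of Lemma \ref{lemma-schur-vanish} with the interpolation-type identity underlying the Chen--Louck formula (Theorem \ref{Chen-Louck-interpolation}), specialized to the single Schur polynomial $S_{\lambda_{m,n}}$. First I would record the key fact that $S_{\lambda_{m,n}}(\mathbb{X}-\mathbb{Y})$, viewed as a polynomial in the alphabet $\mathbb{X}$ of $m$ letters (with $\mathbb{Y}$ any auxiliary alphabet), is a symmetric polynomial whose partial degree in each variable of $\mathbb{X}$ is at most $n$; this is immediate from the Jacobi--Trudi determinant expansion $S_{\lambda_{m,n}}(\mathbb{X}-\mathbb{Y}) = \det(h_{n-i+j}(\mathbb{X}-\mathbb{Y}))_{1\le i,j\le m}$ written out in the proof of Lemma \ref{lemma-schur-vanish}, since each $h_k(\mathbb{X}-\mathbb{Y})$ is linear in each $x_i$ when expanded against the generating function, and the determinant structure prevents the degree in any single variable from exceeding $n$. (Equivalently, $S_{\lambda_{m,n}}(\mathbb{X}-\mathbb{Y})$ lies in the span of the Schur polynomials $S_\mu(\mathbb{X})$ with $\mu\in\Lambda_{m,n}$, whose partial degrees are bounded by $2n$ in our doubled convention.)

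Next I would apply Theorem \ref{Chen-Louck-interpolation} with the $N$ of that theorem taken to be $m+n$, the alphabet being $\mathbb{X}$ of size $m$, and the set of $N=m+n$ distinct complex numbers being $\Sigma' := \Omega_1 \cup \Omega_2$ (these are $m+n$ distinct numbers because $|\Omega_1|=m$, $|\Omega_2|=n$, and $\Omega_1\cap\Omega_2=\emptyset$). The hypothesis of that theorem — partial degrees of $f=S_{\lambda_{m,n}}(\mathbb{X}-\Omega_2)$ at most $2(N-m)=2n$ — is exactly what the previous paragraph established. The interpolation formula then expresses $S_{\lambda_{m,n}}(\mathbb{X}-\Omega_2)$ as a sum over $m$-element subsets $\Omega\subset\Sigma'$ of terms $S_{\lambda_{m,n}}(\Omega-\Omega_2)\cdot\frac{\prod_{x\in\mathbb{X},\,r\in\Sigma'\setminus\Omega}(x-r)}{\prod_{s\in\Omega,\,r\in\Sigma'\setminus\Omega}(s-r)}$. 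The crucial observation is that $S_{\lambda_{m,n}}(\Omega-\Omega_2)=0$ unless $\Omega=\Omega_1$: indeed, if $\Omega\neq\Omega_1$ then $\Omega$ meets $\Omega_2$, so we may split $\Omega$ (after reindexing, harmlessly, since Schur polynomials of unordered tuples are symmetric) as $\Omega=\Omega'\cup\Omega''$ with $\Omega''\subseteq\Omega_2$ nonempty and $\Omega'\subseteq\Omega_1$; then $S_{\lambda_{m,n}}(\Omega-\Omega_2)=S_{\lambda_{m,n}}(\Omega'-(\Omega_2\setminus\Omega''))$, which has $|\Omega'|<m$ letters in the positive alphabet and $|\Omega_2\setminus\Omega''|<n$ letters in the negative alphabet, hence vanishes by Lemma \ref{lemma-schur-vanish}. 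Here I am using that $h_k(\{a\}+\mathbb{A}-\{a\}-\mathbb{B})=h_k(\mathbb{A}-\mathbb{B})$, i.e.\ repeated elements in opposite "signs" cancel, so that the specialization $S_{\lambda_{m,n}}(\Omega-\Omega_2)$ genuinely reduces to a Schur polynomial in fewer variables. Only the single surviving term $\Omega=\Omega_1$ remains, giving precisely the claimed identity.

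Finally, the last sentence of the corollary, $S_{\lambda_{m,n}}(\Omega_1-\Omega_2)\neq 0$, follows by a direct evaluation: set $\mathbb{X}=\Omega_1$ in the identity just proved. The right-hand side becomes $S_{\lambda_{m,n}}(\Omega_1-\Omega_2)\cdot\frac{\prod_{x\in\Omega_1,\,r\in\Omega_2}(x-r)}{\prod_{s\in\Omega_1,\,r\in\Omega_2}(s-r)} = S_{\lambda_{m,n}}(\Omega_1-\Omega_2)$, which is consistent but circular; so instead I would argue non-vanishing directly from the generating-function identity $\sum_k h_k(\Omega_1-\Omega_2)t^k = \prod_{r\in\Omega_2}(1-rt)\big/\prod_{s\in\Omega_1}(1-st)$ together with the Jacobi--Trudi determinant, or — more cleanly — observe that the coefficient of $\prod_{i}x_i^{n}$ (the top monomial) in $S_{\lambda_{m,n}}(\mathbb{X}-\Omega_2)\in\Sym(\mathbb{X})$ is a nonzero constant (it equals the coefficient of the analogous top term in $S_{\lambda_{m,n}}(\mathbb{X})$, since lower-degree corrections from $\Omega_2$ cannot affect the leading symmetric monomial), and then compare leading terms on both sides of the identity: the fraction $\frac{\prod_{x\in\mathbb{X},\,r\in\Omega_2}(x-r)}{\prod_{s\in\Omega_1,\,r\in\Omega_2}(s-r)}$ has top term $\prod_i x_i^{n}$ divided by the nonzero scalar $\prod_{s\in\Omega_1,\,r\in\Omega_2}(s-r)$, forcing $S_{\lambda_{m,n}}(\Omega_1-\Omega_2)$ to equal that nonzero scalar times the nonzero leading coefficient, hence to be nonzero.

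The main obstacle I anticipate is the bookkeeping in the vanishing step — verifying cleanly that for $\Omega\neq\Omega_1$ the specialized value $S_{\lambda_{m,n}}(\Omega-\Omega_2)$ really does reduce (via cancellation of a common element appearing with opposite signs) to a Schur polynomial in strictly fewer variables so that Lemma \ref{lemma-schur-vanish} applies — and, relatedly, making sure the partial-degree bound needed to invoke Theorem \ref{Chen-Louck-interpolation} is correctly justified from the determinant formula. Both are routine but need to be stated carefully; everything else is formal manipulation of the interpolation identity.
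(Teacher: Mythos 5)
Your proposal is correct and follows essentially the same route as the paper: bound the partial degrees of $S_{\lambda_{m,n}}(\mathbb{X}-\Omega_2)$ by $2n$, apply the Chen--Louck interpolation over the $m+n$ points $\Omega_1\cup\Omega_2$, kill every term with $\Omega\neq\Omega_1$ via the cancellation $S_{\lambda_{m,n}}(\Omega-\Omega_2)=S_{\lambda_{m,n}}((\Omega\setminus\Omega_2)-(\Omega_2\setminus\Omega))$ together with Lemma \ref{lemma-schur-vanish}, and deduce non-vanishing of $S_{\lambda_{m,n}}(\Omega_1-\Omega_2)$ from the non-vanishing of the left-hand side. One small caution: the claim that each $h_k(\mathbb{X}-\mathbb{Y})$ is linear in each $x_i$ is false (e.g.\ $h_k(\{x\})=x^k$), so the partial-degree bound should rest on your parenthetical justification via the expansion of $S_{\lambda_{m,n}}(\mathbb{X}-\mathbb{Y})$ in the $S_\mu(\mathbb{X})$ with $\mu\in\Lambda_{m,n}$, not on the determinant entries alone.
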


\begin{proof}
Note that $S_{\lambda_{m,n}}(\mathbb{X}-\Omega_2)$ is a non-vanishing symmetric polynomial in $\mathbb{X}$ of total degree $2mn$ and all partial degrees of $S_{\lambda_{m,n}}(\mathbb{X}-\Omega_2)$ are not greater than $2n$. (Recall that our degree is twice the usual polynomial degree.) So, by Theorem \ref{Chen-Louck-interpolation}, 
\begin{eqnarray*}
&& S_{\lambda_{m,n}}(\mathbb{X}-\Omega_2) =\\
& & \sum_{\Omega \subset \Omega_1 \cup \Omega_2,~|\Omega|=m} S_{\lambda_{m,n}}(\Omega-\Omega_2) \sfrac{\prod_{x \in \mathbb{X}, ~r \in (\Omega_1 \cup \Omega_2) \setminus \Omega} (x-r)}{\prod_{s \in \Omega, ~r \in (\Omega_1 \cup \Omega_2) \setminus \Omega} (s-r)}.
\end{eqnarray*}
But $S_{\lambda_{m,n}}(\Omega-\Omega_2) = S_{\lambda_{m,n}}((\Omega\setminus\Omega_2)-(\Omega_2\setminus\Omega))$. By Lemma \ref{lemma-schur-vanish}, this implies that $S_{\lambda_{m,n}}(\Omega-\Omega_2) = 0$ unless $\Omega=\Omega_1$. And the corollary follows.
\end{proof}

\begin{lemma}\label{chi-maps-action-states}
\begin{enumerate}[(i)]
	\item Let $\varphi$ be a state of $\Gamma_1$. 
\begin{itemize}
	\item If $\varphi(e_1)\cap\varphi(e_4) \neq \emptyset$, then there is no state of $\Gamma_0$ compatible with $\varphi$ and $\chi^1(v_{\Gamma_1,\varphi})=0$.
	\item If $\varphi(e_1)\cap\varphi(e_4) = \emptyset$, then there is a unique state $\varphi'$ of $\Gamma_0$ compatible with $\varphi$ and $\chi^1(v_{\Gamma_1,\varphi})=c\cdot v_{\Gamma_0,\varphi'}$, where $c \in \C\setminus \{0\}$.
\end{itemize}
  \item Let $\varphi'$ be a state of $\Gamma_0$. 
\begin{itemize}
	\item If $\varphi'(e_1)\cap\varphi'(e_4) \neq \emptyset$, then there is no state of $\Gamma_1$ compatible with $\varphi'$ and $\chi^0(v_{\Gamma_0,\varphi'})=0$.
	\item If $\varphi'(e_1)\cap\varphi'(e_4) = \emptyset$, then there is a unique state $\varphi$ of $\Gamma_1$ compatible with $\varphi'$ and $\chi^0(v_{\Gamma_0,\varphi'})=c'\cdot v_{\Gamma_1,\varphi}$, where $c' \in \C\setminus \{0\}$.
\end{itemize}
\end{enumerate}
\end{lemma}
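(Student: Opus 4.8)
The plan is to deduce everything from the combination of Lemma \ref{lemma-morphism-compatible-states} (compatible states control which basis vectors can appear in the image of a morphism induced by a local change), the explicit formulas for the $\chi$-morphisms in \cite[Proposition 4.12 and Lemma 4.13]{Wu-color-equi}, and the standard relations $\chi^1\circ\chi^0 \approx \mathfrak{m}(h_{n-a}(\text{something}))$-type identities for the $\chi$-maps. First I would settle the combinatorial claims about compatible states. Writing $a = \mathfrak{c}(e_1)$, $b = \mathfrak{c}(e_3)$ (with $a\le b$), a state $\varphi$ of $\Gamma_1$ is compatible with a state $\varphi'$ of $\Gamma_0$ iff they agree on $e_1,e_2,e_3,e_4$ and all unchanged edges. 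In $\Gamma_1$ one has $\varphi(e_1)\cup\varphi(e_2) = \varphi(e) = \varphi(e_3)\cup\varphi(e_4)$ (with both unions disjoint), while in $\Gamma_0$ one has $\varphi'(e_1)\cup\varphi'(e_4) = \varphi'(e') = \varphi'(e_2)\cup\varphi'(e_3)$ (both disjoint). So if $\varphi(e_1)\cap\varphi(e_4)\ne\emptyset$ there can be no compatible $\varphi'$, since $\varphi'$ would require $\varphi'(e_1)=\varphi(e_1)$ and $\varphi'(e_4)=\varphi(e_4)$ to be disjoint. Conversely, if $\varphi(e_1)\cap\varphi(e_4)=\emptyset$, then setting $\varphi'$ equal to $\varphi$ on all these edges and defining $\varphi'(e') = \varphi(e_1)\cup\varphi(e_4)$ one checks admissibility at the two new vertices using $\varphi(e_1)\sqcup\varphi(e_2)=\varphi(e_3)\sqcup\varphi(e_4)$, which forces $\varphi(e_1)\sqcup\varphi(e_4) = \varphi(e_2)\sqcup\varphi(e_3)$ as sets; this is the unique such $\varphi'$. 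The same bookkeeping in the reverse direction handles part (ii).

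With the combinatorics in place, Lemma \ref{lemma-morphism-compatible-states} immediately gives the vanishing statements ($\chi^1(v_{\Gamma_1,\varphi})=0$ when no compatible state exists, and likewise for $\chi^0$) and tells us that in the remaining cases $\chi^1(v_{\Gamma_1,\varphi}) = c\cdot v_{\Gamma_0,\varphi'}$ and $\chi^0(v_{\Gamma_0,\varphi'}) = c'\cdot v_{\Gamma_1,\varphi}$ for \emph{some} scalars $c,c'$. The real content is then showing $c\ne0$ and $c'\ne0$. For this I would use the composition identities for the $\chi$-maps from \cite{Wu-color-equi}: the composite $\chi^1\circ\chi^0$ (resp.\ $\chi^0\circ\chi^1$) is homotopic to multiplication by an explicit symmetric polynomial $g$ in the alphabets of the edges, of the form $h_{b-a}$ of a difference of alphabets (a $\chi$-bigon evaluates to a complete-symmetric-polynomial multiplication, analogous to the edge splitting/merging identity used in Lemma \ref{lemma-edge-splitting-states}). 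Applying Lemma \ref{lemma-MOY-homology-multiplication}, this multiplication acts on $v_{\Gamma_0,\varphi'}$ (resp.\ $v_{\Gamma_1,\varphi}$) as the scalar obtained by evaluating $g$ at the state $\varphi'$ (resp.\ $\varphi$); using Corollary \ref{cor-schur-value-nonvanish} (or the elementary computation of $h_{b-a}$ of a difference of two sets of complex numbers that are disjoint, which is a nonzero product of differences) one checks this scalar is nonzero precisely because $\varphi(e_1)\cap\varphi(e_4)=\emptyset$ guarantees the relevant roots are distinct. Hence $c'c\ne 0$, so both $c$ and $c'$ are nonzero.

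The main obstacle I anticipate is pinning down exactly which polynomial the $\chi$-bigon composite equals and making the evaluation-at-a-state computation come out to a manifestly nonzero quantity — i.e., correctly identifying, from \cite[Proposition 4.12 and Lemma 4.13]{Wu-color-equi}, the alphabets and the index of the complete symmetric polynomial appearing in $\chi^1\circ\chi^0$ and $\chi^0\circ\chi^1$, and then verifying that its value at the state $\varphi'$ (or $\varphi$) is a nonzero product of terms $(r-s)$ over roots $r,s$ coming from disjoint pieces of the state. Once that identification is made, the nonvanishing is forced by the disjointness hypothesis via Corollary \ref{cor-schur-value-nonvanish}, and the argument closes. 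A secondary subtlety is making sure the ``unique compatible state'' bookkeeping is done symmetrically for $\Gamma_0$ and $\Gamma_1$, since the roles of the inner edges $e'$ and $e$ are swapped; but this is routine set theory given the admissibility conditions.
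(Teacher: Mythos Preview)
Your overall strategy matches the paper's proof exactly: settle the compatible-state combinatorics, invoke Lemma~\ref{lemma-morphism-compatible-states} to reduce to showing $c,c'\neq 0$, and then use the composition identity for $\chi^0\circ\chi^1$ (and $\chi^1\circ\chi^0$) together with Lemma~\ref{lemma-MOY-homology-multiplication} and Corollary~\ref{cor-schur-value-nonvanish} to conclude nonvanishing.

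Two points need correction. First, your description of $\Gamma_0$ is wrong: the inner edge $e'$ does \emph{not} satisfy $\varphi'(e')=\varphi'(e_1)\cup\varphi'(e_4)$. In $\Gamma_0$ the admissibility conditions read $\varphi'(e_3)=\varphi'(e_1)\sqcup\varphi'(e')$ and $\varphi'(e_2)=\varphi'(e_4)\sqcup\varphi'(e')$, so $\varphi'(e')=\varphi'(e_3)\setminus\varphi'(e_1)=\varphi'(e_2)\setminus\varphi'(e_4)$. Your conclusion that $\varphi(e_1)\cap\varphi(e_4)\neq\emptyset$ forces incompatibility is still correct, but for the paper's reason: compatibility would give $\varphi(e_4)=\varphi'(e_4)\subset\varphi'(e_2)=\varphi(e_2)$, contradicting $\varphi(e_1)\cap\varphi(e_2)=\emptyset$. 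Second, the polynomial appearing in the composite is not a single complete symmetric polynomial $h_{b-a}$ but the rectangular Schur polynomial $S_{\lambda_{m,n}}(\mathbb{X}_{e_1}-\mathbb{X}_{e_4})$ (with $m=\mathfrak{c}(e_1)$, $n=\mathfrak{c}(e_4)$); this is exactly what Corollary~\ref{cor-schur-value-nonvanish} is tailored to evaluate, and your anticipated ``main obstacle'' disappears once you use it.
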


\begin{proof}
Denote by $m$ and color of $e_1$ and by $n$ the color of $e_4$.

We prove part (i) first. Assume $\varphi(e_1)\cap\varphi(e_4) \neq \emptyset$ and there is a state $\varphi'$ of $\Gamma_0$ compatible with $\varphi$. Then $\varphi'(e_i)=\varphi(e_i)$ for $i=1,\dots,4$. So $\varphi'(e_1)\cap\varphi'(e_2)=\varphi(e_1)\cap\varphi(e_2) = \emptyset$. On the other hand, we have $\varphi(e_1)\cap\varphi(e_4)=\varphi'(e_1)\cap\varphi'(e_4)\subset \varphi'(e_4) \subset \varphi'(e_2)=\varphi(e_2)$. So $\emptyset \neq \varphi(e_1)\cap\varphi(e_4) \subset \varphi(e_1)\cap\varphi(e_2)$. This is a contradiction. So there is no state of $\Gamma_0$ compatible with $\varphi$. Thus, by Lemma \ref{lemma-morphism-compatible-states}, $\chi^1(v_{\Gamma_1,\varphi})=0$.

Assume $\varphi(e_1)\cap\varphi(e_4) = \emptyset$. Using $\varphi(e_1)\cup\varphi(e_2)=\varphi(e)=\varphi(e_3)\cup\varphi(e_4)$, we have that $\varphi(e_1)\subset\varphi(e_3)$, $\varphi(e_4)\subset\varphi(e_2)$ and $\varphi(e_3)\setminus\varphi(e_1)=\varphi(e_2)\setminus\varphi(e_4)$. Then it is easy to see that the only state $\varphi'$ of $\Gamma_0$ compatible with $\varphi$ is the unique one satisfying $\varphi'(e_i)=\varphi(e_i)$ for $i=1,\dots,4$, $\varphi'(e')=\varphi(e_3)\setminus\varphi(e_1)=\varphi(e_2)\setminus\varphi(e_4)$ and that $\varphi'$ agrees with $\varphi$ outside the part shown in Figure \ref{figure-chi-maps-states}. By Lemma \ref{lemma-morphism-compatible-states}, $\chi^1(v_{\Gamma_1,\varphi})=c\cdot v_{\Gamma_0,\varphi'}$. It remains to show that $c\neq 0$. By \cite[Proposition 4.12 and Lemma 4.13]{Wu-color-equi} and Lemma \ref{lemma-MOY-homology-multiplication}, we know that
\[
\chi^0 \circ \chi^1 (v_{\Gamma_1,\varphi}) = S_{\lambda_{m,n}}(\mathbb{X}_{e_1}-\mathbb{X}_{e_4}) \cdot v_{\Gamma_1,\varphi}= S_{\lambda_{m,n}}(\varphi(e_1)-\varphi(e_4)) \cdot v_{\Gamma_1,\varphi}.
\]
By Corollary \ref{cor-schur-value-nonvanish}, $S_{\lambda_{m,n}}(\varphi(e_1)-\varphi(e_4)) \neq 0$. So $\chi^0 \circ \chi^1 (v_{\Gamma_1,\varphi}) \neq 0$. This implies that $c \neq 0$.

Now we prove Part (ii). Assume $\varphi'(e_1)\cap\varphi'(e_4) \neq \emptyset$. Then $\varphi'(e_1)\cap\varphi'(e_2) \neq \emptyset$. This implies that no state of $\Gamma_1$ is compatible with $\varphi'$ and, by Lemma \ref{lemma-morphism-compatible-states}, $\chi^0(v_{\Gamma_0,\varphi'})=0$.

Assume $\varphi'(e_1)\cap\varphi'(e_4) = \emptyset$. Note that $\varphi'(e_1)\cap\varphi'(e') = \varphi'(e')\cap\varphi'(e_4) =\emptyset$. So 
\begin{eqnarray*}
\varphi'(e_1)\cap\varphi'(e_2) & = & \varphi'(e_1)\cap (\varphi'(e')\cup\varphi'(e_4)) =\emptyset, \\
\varphi'(e_4)\cap\varphi'(e_3) & = & \varphi'(e_4)\cap (\varphi'(e')\cup\varphi'(e_1)) =\emptyset, \\
\varphi'(e_1)\cup\varphi'(e_2) & = & \varphi'(e_3)\cup\varphi'(e_4) =\varphi'(e_1)\cup\varphi'(e')\cup\varphi'(e_4).
\end{eqnarray*}
Then it is easy to see that the only state $\varphi$ of $\Gamma_1$ compatible with $\varphi'$ is the unique one satisfying $\varphi(e_i)=\varphi'(e_i)$ for $i=1,\dots,4$, $\varphi(e)=\varphi'(e_1)\cup\varphi'(e')\cup\varphi'(e_4)$ and that $\varphi$ agrees with $\varphi'$ outside the part shown in Figure \ref{figure-chi-maps-states}. By Lemma \ref{lemma-morphism-compatible-states}, $\chi^0(v_{\Gamma_0,\varphi'})=c'\cdot v_{\Gamma_1,\varphi}$. It remains to show that $c'\neq 0$. By \cite[Proposition 4.12 and Lemma 4.13]{Wu-color-equi} and Lemma \ref{lemma-MOY-homology-multiplication}, we know that
\[
\chi^1 \circ \chi^0 (v_{\Gamma_0,\varphi'}) = S_{\lambda_{m,n}}(\mathbb{X}_{e_1}-\mathbb{X}_{e_4}) \cdot v_{\Gamma_0,\varphi'}= S_{\lambda_{m,n}}(\varphi(e_1)-\varphi(e_4)) \cdot v_{\Gamma_0,\varphi'}.
\]
By Corollary \ref{cor-schur-value-nonvanish}, $S_{\lambda_{m,n}}(\varphi(e_1)-\varphi(e_4)) \neq 0$. So $\chi^1 \circ \chi^0 (v_{\Gamma_0,\varphi'}) \neq 0$. This implies that $c' \neq 0$.
\end{proof}

\subsection{Circle creation and annihilation} Let $\Gamma$ be a closed embedded MOY graph and $\widetilde{\Gamma}= \Gamma \sqcup \bigcirc_m$, where $\bigcirc_m$ is a circle colored by $m$. We call the local change $\Gamma \leadsto \widetilde{\Gamma}$ a circle creation. It induces a homomorphism $\iota: H_P(\Gamma) \rightarrow H_P(\widetilde{\Gamma})$. We call the local change $\widetilde{\Gamma} \leadsto \Gamma$ a circle annihilation. It induces a homomorphism $\epsilon: H_P(\widetilde{\Gamma}) \rightarrow H_P(\Gamma)$. See \cite[Subsection 4.2]{Wu-color-equi} for more details.

\begin{lemma}\label{lemma-states-circle-creation}
\begin{enumerate}
	\item For every state $\varphi$ of $\Gamma$, there are $\bn{N}{m}$ states of $\widetilde{\Gamma}$ compatible with $\varphi$ under the circle creation. Moreover,
\[
\iota(v_{\Gamma,\varphi}) = \sum_{\widetilde{\varphi}\in \mathcal{S}_{\varphi}(\widetilde{\Gamma})} c_{\widetilde{\varphi}} \cdot v_{\widetilde{\Gamma},\widetilde{\varphi}},
\]
where $\mathcal{S}_{\varphi}(\widetilde{\Gamma})$ is the set of all states of $\widetilde{\Gamma}$ compatible with $\varphi$ under the circle creation and $c_{\widetilde{\varphi}} \in\C\setminus\{0\}$ for every $\widetilde{\varphi}\in \mathcal{S}_{\varphi}(\widetilde{\Gamma})$.
  \item For every state $\widetilde{\varphi}$ of $\widetilde{\Gamma}$, there is a unique state $\varphi$ of $\Gamma$ compatible with $\widetilde{\varphi}$ under the circle annihilation. Moreover, $\epsilon(v_{\widetilde{\Gamma},\widetilde{\varphi}}) = c \cdot v_{\Gamma,\varphi}$ for some $c \in\C\setminus\{0\}$.
\end{enumerate}
\end{lemma}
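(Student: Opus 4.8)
The plan is to reduce everything to the single colored circle $\bigcirc_m$ and then read off the answer from the known description of circle homology in Proposition \ref{prop-circle-module} together with the idempotent decomposition in Theorem \ref{thm-bar-R-Gamma-decompose}. Since $\widetilde{\Gamma}=\Gamma\sqcup\bigcirc_m$ is a disjoint union, $C_P(\widetilde{\Gamma})=C_P(\Gamma)\otimes_\C C_P(\bigcirc_m)$, so $H_P(\widetilde{\Gamma})\cong H_P(\Gamma)\otimes_\C H_P(\bigcirc_m)$. The circle creation and annihilation morphisms are supported on $\bigcirc_m$, so under this identification $\iota=\id_{H_P(\Gamma)}\otimes\iota_0$ and $\epsilon=\id_{H_P(\Gamma)}\otimes\epsilon_0$, where $\iota_0\colon\C\to H_P(\bigcirc_m)$ and $\epsilon_0\colon H_P(\bigcirc_m)\to\C$ are the creation and annihilation maps for one circle. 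Moreover every vertex of $\widetilde{\Gamma}$ lies in $\Gamma$ and constrains only edges of $\Gamma$, so $\mathcal{S}(\widetilde{\Gamma})=\mathcal{S}(\Gamma)\times\mathcal{S}(\bigcirc_m)$ with $|\mathcal{S}(\bigcirc_m)|=\binom{N}{m}$; writing a state of $\widetilde{\Gamma}$ as a pair $(\varphi,\omega)$, it is compatible with $\varphi'\in\mathcal{S}(\Gamma)$ under the circle creation/annihilation precisely when $\varphi=\varphi'$. This settles the counting assertions in both parts. Finally, since the defining ideals split across the tensor factors, $\overline{R}_{\widetilde{\Gamma}}=\overline{R}_\Gamma\otimes_\C\overline{R}_{\bigcirc_m}$ and $Q_{(\varphi,\omega)}=Q_\varphi\otimes Q_\omega$, hence $Q_{(\varphi,\omega)}H_P(\widetilde{\Gamma})=Q_\varphi H_P(\Gamma)\otimes_\C Q_\omega H_P(\bigcirc_m)$, so $v_{\widetilde{\Gamma},(\varphi,\omega)}$ is a nonzero scalar multiple of $v_{\Gamma,\varphi}\otimes v_{\bigcirc_m,\omega}$.

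Next I would pin down $\iota_0$ and $\epsilon_0$ on the basis $\{v_{\bigcirc_m,\omega}\}_\omega$ of $H_P(\bigcirc_m)$. Identify $H_P(\bigcirc_m)$ with the ring $R=\Sym(\mathbb{X})/(h_N(\mathbb{X}-\Sigma),\dots,h_{N+1-m}(\mathbb{X}-\Sigma))$ as in Proposition \ref{prop-circle-module} (the grading shifts are irrelevant here). From the explicit descriptions in \cite[Subsection 4.2]{Wu-color-equi}, $\iota_0$ sends $1$ to a nonzero multiple of the class of $1\in R$, and $\epsilon_0$ is a nonzero multiple of the functional $\zeta_\Sigma\colon R\to\C$ of Proposition \ref{prop-circle-ring-dim-base} (the matching of $\epsilon_0$ with $\zeta_\Sigma$ follows by comparing values on the basis $\{S_\lambda(\mathbb{X})\}$, using $\zeta_\Sigma(S_\lambda(\mathbb{X}))=\zeta_\Sigma(S_\lambda(\mathbb{X})\cdot S_\emptyset(\mathbb{X}-\Sigma))$). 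By Proposition \ref{prop-def-Q-varphi}(iv), $1=\sum_\omega Q_\omega$ in $R$, and by Lemma \ref{lemma-decomp-circle} and Theorem \ref{thm-bar-R-Gamma-decompose} each $Q_\omega$ is a nonzero multiple of $v_{\bigcirc_m,\omega}$; hence $\iota_0(1)=\sum_\omega c_\omega v_{\bigcirc_m,\omega}$ with all $c_\omega\neq 0$. For $\epsilon_0$, observe that the bilinear form $(f,g)\mapsto\zeta_\Sigma(fg)$ on $R$ is non-degenerate, since Proposition \ref{prop-circle-ring-dim-base} exhibits $\{S_\lambda(\mathbb{X})\}$ and $\{S_{\lambda^c}(\mathbb{X}-\Sigma)\}$ as dual bases for it. For a fixed $\omega$, Theorem \ref{thm-bar-R-Gamma-decompose}(b) gives $Q_\omega\cdot R=\C\cdot Q_\omega$, so $\zeta_\Sigma(Q_\omega f)$ is a scalar multiple of $\zeta_\Sigma(Q_\omega)$ for every $f\in R$; non-degeneracy then forces $\zeta_\Sigma(Q_\omega)\neq 0$, i.e.\ $\epsilon_0(v_{\bigcirc_m,\omega})\neq 0$ for every $\omega$.

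Putting these together proves the lemma. For part (1), $\iota(v_{\Gamma,\varphi})=v_{\Gamma,\varphi}\otimes\iota_0(1)=\sum_\omega c_\omega\,(v_{\Gamma,\varphi}\otimes v_{\bigcirc_m,\omega})$, and each $v_{\Gamma,\varphi}\otimes v_{\bigcirc_m,\omega}$ is a nonzero multiple of $v_{\widetilde{\Gamma},(\varphi,\omega)}$, so every coefficient $c_{\widetilde{\varphi}}$ is nonzero. For part (2), writing $\widetilde{\varphi}=(\varphi,\omega)$, the element $\epsilon(v_{\widetilde{\Gamma},\widetilde{\varphi}})$ is a nonzero multiple of $\epsilon(v_{\Gamma,\varphi}\otimes v_{\bigcirc_m,\omega})=\epsilon_0(v_{\bigcirc_m,\omega})\cdot v_{\Gamma,\varphi}$, which is a nonzero multiple of $v_{\Gamma,\varphi}$. (Alternatively, one may deduce the ``support'' half of each statement directly from Lemma \ref{lemma-morphism-compatible-states}, leaving only the non-vanishing of coefficients, which is exactly what the computation on $\bigcirc_m$ supplies.) The main obstacle is the second paragraph: one must unwind the definitions of circle creation and annihilation in \cite{Wu-color-equi,Wu-color} to see that $\iota_0$ is the unit and $\epsilon_0$ the Frobenius trace $\zeta_\Sigma$ of the circle algebra $R$; once that identification is in hand the non-vanishing is the short Frobenius-algebra argument above, and the rest is bookkeeping with the tensor decomposition and the idempotents $Q_\varphi$.
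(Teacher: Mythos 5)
Your proof is correct, and Part (1) follows the paper's argument exactly: the tensor decomposition $H_P(\widetilde{\Gamma})\cong H_P(\Gamma)\otimes_\C H_P(\bigcirc_m)$, the identification $\iota(v_{\Gamma,\varphi})=v_{\Gamma,\varphi}\otimes 1$, and the Chen--Louck expansion $1=\sum_\Omega Q_\Omega$ into the Lagrange idempotents. Where you diverge is Part (2). The paper establishes $\epsilon(v_{\widetilde{\Gamma},\widetilde{\varphi}})\neq 0$ by an explicit computation: it rewrites the idempotent attached to $\Omega$ as $S_{\lambda_{m,N-m}}(\mathbb{X}-(\Sigma\setminus\Omega))/S_{\lambda_{m,N-m}}(\Omega-(\Sigma\setminus\Omega))$ via Corollary \ref{cor-schur-value-nonvanish}, replaces the numerator by its top term $S_{\lambda_{m,N-m}}(\mathbb{X})$, and invokes \cite[Corollary 4.6]{Wu-color-equi} to evaluate $\epsilon$ on $S_{\lambda_{m,N-m}}(\mathbb{X})\cdot\iota(v_{\Gamma,\varphi})$. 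You instead identify $\epsilon_0$ (up to a nonzero scalar) with the trace $\zeta_\Sigma$ of Proposition \ref{prop-circle-ring-dim-base} and argue structurally: the pairing $(f,g)\mapsto\zeta_\Sigma(fg)$ is non-degenerate because $\{S_\lambda(\mathbb{X})\}$ and $\{S_{\lambda^c}(\mathbb{X}-\Sigma)\}$ are dual bases, and since $Q_\omega\cdot R=\C\cdot Q_\omega$ with $Q_\omega\neq 0$, non-degeneracy forces $\zeta_\Sigma(Q_\omega)\neq 0$. This Frobenius-algebra argument is clean and avoids Corollary \ref{cor-schur-value-nonvanish} entirely, but note that it rests on the same external input as the paper's computation: pinning down $\epsilon_0$ on the basis $\{S_\lambda(\mathbb{X})\}_{\lambda\in\Lambda_{m,N-m}}$ still requires the evaluation formula from \cite[Subsection 4.2]{Wu-color-equi}, so the two routes differ only in how that formula is converted into the non-vanishing statement. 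Both are valid; yours generalizes more readily to any situation where the circle algebra is known to be Frobenius with the idempotents spanning the simple summands, while the paper's stays closer to the explicit morphism descriptions it needs elsewhere.
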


\begin{proof}
We prove Part (1) first. There are exactly $\bn{N}{m}$ states on $\bigcirc_m$. So $|\mathcal{S}_{\varphi}(\widetilde{\Gamma})|=\bn{N}{m}$. Mark $\bigcirc_m$ by a single alphabet $\mathbb{X}$. Recall that, by Proposition \ref{prop-circle-module},
\begin{eqnarray*}
&& H_P(\widetilde{\Gamma}) \\
& \cong & H_P(\Gamma) \otimes_\C H_P(\bigcirc_m) \\
& \cong & H_P(\Gamma) \otimes_\C \sfrac{\Sym(\mathbb{X})}{(h_N(\mathbb{X}-\Sigma),h_{N-1}(\mathbb{X}-\Sigma),\dots,h_{N+1-m}(\mathbb{X}-\Sigma))} \{q^{-m(N-m)}\}.
\end{eqnarray*}
Under this isomorphism, we have $\iota(v_{\Gamma,\varphi}) = v_{\Gamma,\varphi} \otimes 1$. By Theorem \ref{Chen-Louck-interpolation}, we have 
\[
1 = \sum_{\Omega \subset \Sigma, ~|\Omega|=m} \left( \sfrac{\prod_{x \in \mathbb{X},~ r \in \Sigma\setminus\Omega} (x-r)}{\prod_{s \in \Omega,~ r \in \Sigma\setminus\Omega}(s-r)} \right)
\]
in $\Sym(\mathbb{X})$ and therefore in $$H_P(\bigcirc_m) \cong \sfrac{\Sym(\mathbb{X})}{(h_N(\mathbb{X}-\Sigma),h_{N-1}(\mathbb{X}-\Sigma),\dots,h_{N+1-m}(\mathbb{X}-\Sigma))} \{q^{-m(N-m)}\}.$$ So 
\[
\iota(v_{\Gamma,\varphi}) = v_{\Gamma,\varphi} \otimes 1 = \sum_{\Omega \subset \Sigma, ~|\Omega|=m} v_{\Gamma,\varphi} \otimes\left( \sfrac{\prod_{x \in \mathbb{X},~ r \in \Sigma\setminus\Omega} (x-r)}{\prod_{s \in \Omega,~ r \in \Sigma\setminus\Omega}(s-r)} \right).
\]
It is easy to see that the right hand side of the above equation is of the form $\sum_{\widetilde{\varphi}\in \mathcal{S}_{\varphi}(\widetilde{\Gamma})} c_{\widetilde{\varphi}} \cdot v_{\widetilde{\Gamma},\widetilde{\varphi}}$ with $c_{\widetilde{\varphi}} \neq 0$ for each $\widetilde{\varphi}\in \mathcal{S}_{\varphi}(\widetilde{\Gamma})$. This completes the proof of Part (1).

For Part (2), note that given $\widetilde{\varphi}\in \mathcal{S}(\widetilde{\Gamma})$, there are a unique $\varphi\in \mathcal{S}(\Gamma)$ compatible with $\widetilde{\varphi}$ and a unique $\Omega \subset \Sigma$ with $|\Omega|=m$, such that 
\[
v_{\widetilde{\Gamma},\widetilde{\varphi}}=c' \cdot v_{\Gamma,\varphi} \otimes \left( \sfrac{\prod_{x \in \mathbb{X},~ r \in \Sigma\setminus\Omega} (x-r)}{\prod_{s \in \Omega,~ r \in \Sigma\setminus\Omega}(s-r)} \right)
\]
for some $c'\neq 0$. By Corollary \ref{cor-schur-value-nonvanish}, we have that
\[
\sfrac{\prod_{x \in \mathbb{X},~ r \in \Sigma\setminus\Omega} (x-r)}{\prod_{s \in \Omega,~ r \in \Sigma\setminus\Omega}(s-r)} = \sfrac{S_{\lambda_{m,N-m}}(\mathbb{X}-(\Sigma\setminus\Omega))}{S_{\lambda_{m,N-m}}(\Omega-(\Sigma\setminus\Omega))}.
\]
So, using \cite[Corollary 4.6]{Wu-color-equi}, we have
\begin{eqnarray*}
\epsilon(v_{\widetilde{\Gamma},\widetilde{\varphi}}) & =& c'\cdot \epsilon (\left(\sfrac{S_{\lambda_{m,N-m}}(\mathbb{X}-(\Sigma\setminus\Omega))}{S_{\lambda_{m,N-m}}(\Omega-(\Sigma\setminus\Omega))} \right) \cdot \iota (v_{\Gamma,\varphi})) \\
& = & c'\cdot \epsilon (\left(\sfrac{S_{\lambda_{m,N-m}}(\mathbb{X})}{S_{\lambda_{m,N-m}}(\Omega-(\Sigma\setminus\Omega))} \right) \cdot \iota (v_{\Gamma,\varphi})) \\
& =& c \cdot v_{\Gamma,\varphi},
\end{eqnarray*}
where $c\neq 0$.
\end{proof}

\begin{figure}[ht]
\[
\xymatrix{
 \input{saddle-1} \ar@<1ex>[rr]^{\eta}  && \input{saddle-2} \ar@<1ex>[ll]^{\widetilde{\eta}}
}
\]
\caption{}\label{saddle-states-fig-1}

\end{figure}

\subsection{Saddle moves} Let $\Gamma$ and $\widetilde{\Gamma}$ be embedded MOY graphs identical except in the part shown in Figure \ref{saddle-states-fig-1}. Let $\eta:H_P(\Gamma)\rightarrow H_P(\widetilde{\Gamma})$ and $\widetilde{\eta}:H_P(\widetilde{\Gamma}) \rightarrow H_P(\Gamma)$  be the homomorphisms induced by the saddle moves performed along the dotted lines. (See \cite[Section 4]{Wu-color-equi} for more details.) Denote by $\bigcirc_m$ the circle of color $m$ in $\widetilde{\Gamma}$ in Figure \ref{saddle-states-fig-1}.

\begin{lemma}\label{lemma-saddle-circle}
\begin{enumerate}
	\item For a state $\varphi$ of $\Gamma$, there is a unique state $\widetilde{\varphi}$ of $\widetilde{\Gamma}$ compatible with $\varphi$ under $\eta$. Moreover, $\eta(v_{\Gamma,\varphi}) = c \cdot v_{\widetilde{\Gamma},\widetilde{\varphi}}$, where $c\in\C\setminus\{0\}$.
	\item Let $\widetilde{\varphi}$ be a state of $\widetilde{\Gamma}$. If $\widetilde{\varphi}(\widetilde{e}) \neq \widetilde{\varphi}(\bigcirc_m)$, then there is no state of $\Gamma$ compatible with $\widetilde{\varphi}$ under $\widetilde{\eta}$. If $\widetilde{\varphi}(\widetilde{e}) = \widetilde{\varphi}(\bigcirc_m)$, then there is a unique state $\varphi$ of $\Gamma$ compatible with $\widetilde{\varphi}$ under $\widetilde{\eta}$. Moreover, 
	\[
	\widetilde{\eta}(v_{\widetilde{\Gamma},\widetilde{\varphi}}) =  \begin{cases}
	0 & \text{if } \widetilde{\varphi}(\widetilde{e}) \neq \widetilde{\varphi}(\bigcirc_m), \\
	\widetilde{c} \cdot v_{\Gamma,\varphi} & \text{for some } \widetilde{c}\in\C\setminus\{0\} \text{ if } \widetilde{\varphi}(\widetilde{e}) = \widetilde{\varphi}(\bigcirc_m).
	\end{cases}
	\]
\end{enumerate}
\end{lemma}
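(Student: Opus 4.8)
The plan is to reduce the statement to two ingredients: the bookkeeping of compatible states supplied by Lemma~\ref{lemma-morphism-compatible-states}, and linearity of the saddle maps over the symmetric polynomials of the alphabet marking the circle $\bigcirc_m$. Mark $\bigcirc_m$ in $\widetilde{\Gamma}$ by an alphabet $\mathbb{X}$ of $m$ indeterminates, and mark the sub-arc of the edge $e$ of $\Gamma$ that the saddle turns into $\bigcirc_m$ by the same alphabet $\mathbb{X}$. Adding marked points does not change $H_P$, so we may work with these markings. The key point is that, by the construction of $\eta$ and $\widetilde{\eta}$ in \cite[Section~4]{Wu-color-equi}, each of $\eta$ and $\widetilde{\eta}$ is induced by a morphism of matrix factorizations whose common base ring contains $\Sym(\mathbb{X})$; hence $\eta$ and $\widetilde{\eta}$ are $\Sym(\mathbb{X})$-linear. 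Moreover, in $H_P(\Gamma)$ the point $\mathbb{X}$ lies on (a sub-arc of) $e$, so by Lemma~\ref{lemma-MOY-homology-multiplication} and admissibility of states, $g\cdot v_{\Gamma,\varphi}=g(\varphi(e))\cdot v_{\Gamma,\varphi}$ for every $g\in\Sym(\mathbb{X})$, while in $H_P(\widetilde{\Gamma})$ the point $\mathbb{X}$ lies on $\bigcirc_m$, so $g\cdot v_{\widetilde{\Gamma},\widetilde{\varphi}}=g(\widetilde{\varphi}(\bigcirc_m))\cdot v_{\widetilde{\Gamma},\widetilde{\varphi}}$.

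For Part~(1): By Lemma~\ref{lemma-morphism-compatible-states}, $\eta(v_{\Gamma,\varphi})$ is a $\C$-linear combination of basis vectors $v_{\widetilde{\Gamma},\widetilde{\varphi}'}$ indexed by states $\widetilde{\varphi}'$ agreeing with $\varphi$ on all edges of $\widetilde{\Gamma}$ outside the saddle region; since $\widetilde{e}$ extends outside this region, $\widetilde{\varphi}'(\widetilde{e})=\varphi(e)$, but $\widetilde{\varphi}'(\bigcirc_m)$ is a priori arbitrary, so $\eta(v_{\Gamma,\varphi})=\sum_{\Omega}\alpha_\Omega\, v_{\widetilde{\Gamma},\widetilde{\varphi}_\Omega}$, the sum over $m$-element subsets $\Omega\subset\Sigma$ with $\widetilde{\varphi}_\Omega(\bigcirc_m)=\Omega$. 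Applying $g\in\Sym(\mathbb{X})$ and using $\Sym(\mathbb{X})$-linearity gives $\sum_\Omega\alpha_\Omega(g(\varphi(e))-g(\Omega))\,v_{\widetilde{\Gamma},\widetilde{\varphi}_\Omega}=0$, whence $\alpha_\Omega=0$ whenever $\Omega\neq\varphi(e)$ (choose $g$ whose values at the two $m$-element sets differ). So $\eta(v_{\Gamma,\varphi})=\alpha_{\varphi(e)}\,v_{\widetilde{\Gamma},\widetilde{\varphi}}$ with $\widetilde{\varphi}(\widetilde{e})=\widetilde{\varphi}(\bigcirc_m)=\varphi(e)$, and this $\widetilde{\varphi}$ is visibly the unique state of $\widetilde{\Gamma}$ compatible with $\varphi$ under the saddle.

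For Part~(2): The same argument gives $(g(\widetilde{\varphi}(\bigcirc_m))-g(\widetilde{\varphi}(\widetilde{e})))\cdot\widetilde{\eta}(v_{\widetilde{\Gamma},\widetilde{\varphi}})=0$ for all $g\in\Sym(\mathbb{X})$. If $\widetilde{\varphi}(\bigcirc_m)\neq\widetilde{\varphi}(\widetilde{e})$, picking $g$ separating these two sets gives $\widetilde{\eta}(v_{\widetilde{\Gamma},\widetilde{\varphi}})=0$; and correspondingly there is no state of $\Gamma$ compatible with $\widetilde{\varphi}$ under $\widetilde{\eta}$, since such compatibility would require the values on $\widetilde{e}$ and $\bigcirc_m$ to coincide. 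If $\widetilde{\varphi}(\bigcirc_m)=\widetilde{\varphi}(\widetilde{e})$, then by Lemma~\ref{lemma-morphism-compatible-states} there is a unique compatible state $\varphi$ of $\Gamma$ (the restriction of $\widetilde{\varphi}$, with $\varphi(e)=\widetilde{\varphi}(\widetilde{e})$) and $\widetilde{\eta}(v_{\widetilde{\Gamma},\widetilde{\varphi}})=\widetilde{c}\cdot v_{\Gamma,\varphi}$ for some $\widetilde{c}\in\C$. To see $\widetilde{c}\neq0$ (and, going back, that $\alpha_{\varphi(e)}\neq0$ in Part~(1)): this $\widetilde{\varphi}$ is exactly the state produced by $\eta(v_{\Gamma,\varphi})$ in Part~(1), so $\widetilde{\eta}(\eta(v_{\Gamma,\varphi}))=\alpha_{\varphi(e)}\widetilde{c}\cdot v_{\Gamma,\varphi}$; on the other hand $\widetilde{\eta}\circ\eta\approx\mathfrak{m}(u)$ where $u=u(\mathbb{X})$ is the (unit) Euler element of the colour-$m$ strand, so by Lemma~\ref{lemma-MOY-homology-multiplication} $\widetilde{\eta}(\eta(v_{\Gamma,\varphi}))=u(\varphi(e))\cdot v_{\Gamma,\varphi}\neq0$, forcing $\alpha_{\varphi(e)}\neq0$ and $\widetilde{c}\neq0$.

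I expect the main obstacle to be justifying the two inputs above: first, that the matrix-factorization morphisms inducing $\eta$ and $\widetilde{\eta}$ are genuinely defined over a base ring containing $\Sym(\mathbb{X})$ — this must be checked against the explicit definitions in \cite[Section~4]{Wu-color-equi} after placing the marked point of $\bigcirc_m$ on the corresponding sub-arc of $e$; and second, the ``neck-cutting'' identity $\widetilde{\eta}\circ\eta\approx\mathfrak{m}(u)$ with $u$ a unit, which is implicit in the invariance arguments of \cite{Wu-color-equi} and can alternatively be extracted from Proposition~\ref{prop-circle-module}, Lemma~\ref{phibar-compose-phi} and the fact that $H_P(\bigcirc_m)=\bigoplus_{\Omega}\C\cdot Q_\Omega$ is separable (so its Euler element is a unit, with $u(\varphi(e))\neq0$ for every state). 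The remaining content is the routine state-bookkeeping already used in Lemmas~\ref{lemma-bouquet-states}--\ref{lemma-states-circle-creation}.
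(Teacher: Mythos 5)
Your state-counting and the reduction to ``only the state with $\widetilde{\varphi}(\bigcirc_m)=\widetilde{\varphi}(\widetilde{e})=\varphi(e)$ can appear'' are fine, and in fact parallel the paper's (terser) treatment via Lemma \ref{lemma-morphism-compatible-states}; your $\Sym(\mathbb{X})$-linearity argument is a legitimate way to make that bookkeeping explicit. The problem is the non-vanishing step. Everything you do there hangs on the neck-cutting identity $\widetilde{\eta}\circ\eta\approx\mathfrak{m}(u)$ with $u\in\Sym(\mathbb{X})$ a unit on states, and you do not prove it: saying it is ``implicit in the invariance arguments'' is not an argument, and the alternative extraction you sketch from Proposition \ref{prop-circle-module} and Lemma \ref{phibar-compose-phi} does not obviously apply, since Lemma \ref{phibar-compose-phi} computes $\overline{\phi}\circ\mathfrak{m}(\ast)\circ\phi$ for edge splitting/merging, not the composition of two saddle morphisms. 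Identifying $\widetilde{\eta}\circ\eta$ with multiplication by an explicit ``handle element'' is a genuine matrix-factorization computation in the colored setting, and nothing in this paper or in the cited parts of \cite{Wu-color-equi} supplies it. As written, this leaves both $c\neq 0$ in Part (1) and $\widetilde{c}\neq 0$ in Part (2) unproved.

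The paper closes exactly this hole differently, using only compositions that are already known to be (nonzero multiples of) the identity: with $\iota$ and $\epsilon$ the creation and annihilation of $\bigcirc_m$, one has $\epsilon\circ\eta\approx c_1\cdot\id$ and $\widetilde{\eta}\circ\iota\approx c_2\cdot\id$ with $c_1,c_2\neq 0$ by \cite[Propositions 4.17 and 4.19]{Wu-color-equi}. The first immediately gives $\eta(v_{\Gamma,\varphi})\neq 0$, hence $c\neq 0$. For $\widetilde{c}$, one expands $\iota(v_{\Gamma,\varphi})=\sum_{\varphi'}c_{\varphi'}v_{\widetilde{\Gamma},\varphi'}$ over all circle-states by Lemma \ref{lemma-states-circle-creation}, observes that $\widetilde{\eta}$ kills every summand except the one with $\varphi'(\bigcirc_m)=\varphi(e)$ (by your own compatibility analysis), and concludes $c_{\widetilde{\varphi}}\cdot\widetilde{\eta}(v_{\widetilde{\Gamma},\widetilde{\varphi}})=\widetilde{\eta}\circ\iota(v_{\Gamma,\varphi})=c_2\cdot v_{\Gamma,\varphi}\neq 0$. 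I suggest you either substitute this argument for your handle-element step, or supply an actual proof of $\widetilde{\eta}\circ\eta\approx\mathfrak{m}(u)$ together with the non-vanishing of $u$ on admissible states.
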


\begin{proof}
The statements about compatible states are easy to check, which implies that $\eta(v_{\Gamma,\varphi})$ and $\widetilde{\eta}(v_{\widetilde{\Gamma},\widetilde{\varphi}})$ must be of the forms given in the lemma. We only need to check that $c\neq 0$ and $\widetilde{c}\neq 0$ if $\widetilde{\varphi}(\widetilde{e}) = \widetilde{\varphi}(\bigcirc_m)$.

Denote by $\iota$ and $\epsilon$ the homomorphisms induced by the creation and annihilation of $\bigcirc_m$. By \cite[Propositions 4.17 and 4.19]{Wu-color-equi}, we have that $\epsilon \circ \eta =c_1 \cdot \id_{H_P(\Gamma)}$ and $\widetilde{\eta} \circ \iota = c_2 \cdot \id_{H_P(\widetilde{\Gamma})}$, where $c_1,c_2\neq 0$. So $\epsilon \circ \eta (v_{\Gamma,\varphi})= c_1 \cdot v_{\Gamma,\varphi} \neq 0$, which implies that $\eta(v_{\Gamma,\varphi}) \neq 0$ and $c\neq 0$. Next, assuming $\widetilde{\varphi}(\widetilde{e}) = \widetilde{\varphi}(\bigcirc_m)$, we prove that $\widetilde{\eta}(v_{\widetilde{\Gamma},\widetilde{\varphi}}) \neq 0$, which implies that $\widetilde{c}\neq 0$. Denote by $\varphi$ the state of $\Gamma$ compatible with $\widetilde{\varphi}$ under $\widetilde{\eta}$ and by $\mathcal{S}_\varphi(\widetilde{\Gamma})$ the set of states of $\widetilde{\Gamma}$ that are compatible to $\varphi$ under $\iota$ (that is, identical to $\widetilde{\varphi}$ outside $\bigcirc_m$.) Then $\widetilde{\eta} \circ \iota(v_{\Gamma,\varphi}) = c_2 \cdot v_{\Gamma,\varphi} \neq 0$. By Lemma \ref{lemma-states-circle-creation}, we have that $\iota(v_{\Gamma,\varphi}) = \sum_{\varphi' \in \mathcal{S}_\varphi(\widetilde{\Gamma})} c_{\varphi'} \cdot v_{\widetilde{\Gamma},\varphi'}$. But for any $\varphi' \in \mathcal{S}_\varphi(\widetilde{\Gamma})\setminus \{\widetilde{\varphi}\}$, there is no state of $\Gamma$ compatible with $\varphi'$ under $\widetilde{\eta}$. So $\widetilde{\eta}(v_{\widetilde{\Gamma},\varphi'})=0$. This implies that $c_{\widetilde{\varphi}} \cdot \widetilde{\eta}(v_{\widetilde{\Gamma},\widetilde{\varphi}}) = \widetilde{\eta} \circ \iota(v_{\Gamma,\varphi}) = c_2 \cdot v_{\Gamma,\varphi} \neq 0$. So $\widetilde{c} \neq 0$.
\end{proof}

\begin{figure}[ht]
\[
\xymatrix{
 \input{saddle-3} \ar[rr]^{\eta}  && \input{saddle-4}
}
\]
\caption{}\label{saddle-states-fig-2}

\end{figure}

\begin{lemma}\label{lemma-saddle-states}
Let $\Gamma_1$ and $\Gamma_2$ be embedded MOY graphs that are identical except in the part shown in Figure \ref{saddle-states-fig-2}. Denote by $\eta:H_P(\Gamma_1) \rightarrow H_P(\Gamma_1)$ the homomorphism induced by the saddle move along the dotted line. Suppose that $\varphi$ is a state of $\Gamma_1$.
\begin{enumerate}
	\item If $\varphi(e_1) \neq \varphi(e_2)$, then there is no state of $\Gamma_2$ compatible with $\varphi$ under $\eta$ and therefore $\eta(v_{\Gamma_1,\varphi})=0$.
	\item If $\varphi(e_1) = \varphi(e_2)$, then there is a unique state $\varphi'$ of $\Gamma_2$ compatible with $\varphi$ under $\eta$ and $\eta(v_{\Gamma_1,\varphi})= c \cdot v_{\Gamma_2,\varphi'}$, where $c\in\C\setminus\{0\}$.
\end{enumerate}
\end{lemma}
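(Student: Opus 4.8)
(First, the target of $\eta$ in the statement should read $H_P(\Gamma_2)$.) The plan is to separate the combinatorial bookkeeping of states from the one analytic point, namely the non-vanishing of the resulting scalar. I would start with the state analysis. The two arcs $e_1,e_2$ of $\Gamma_1$ (resp.\ $e_1',e_2'$ of $\Gamma_2$) meet the common part $\Gamma_1\setminus\hat\Gamma_1=\Gamma_2\setminus\hat\Gamma_2$ at four points, with four adjacent external edges $f_1,f_2$ (next to the ends of $e_1$, resp.\ $e_1'$) and $f_3,f_4$ (next to the ends of $e_2$, resp.\ $e_2'$). Admissibility in $\Gamma_1$ forces $\varphi(f_1)=\varphi(f_2)=\varphi(e_1)$ and $\varphi(f_3)=\varphi(f_4)=\varphi(e_2)$. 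If $\varphi'\in\mathcal S(\Gamma_2)$ is compatible with $\varphi$, it agrees with $\varphi$ on $f_1,\dots,f_4$, and admissibility in $\Gamma_2$ at the ends of $e_1'$ (resp.\ $e_2'$), which are now attached to one $f$-edge from each of $e_1,e_2$, gives $\varphi'(e_1')=\varphi(e_1)=\varphi(e_2)$ (resp.\ $\varphi'(e_2')=\varphi(e_1)=\varphi(e_2)$). Hence a compatible $\varphi'$ exists if and only if $\varphi(e_1)=\varphi(e_2)$, and when it exists it is unique: it agrees with $\varphi$ everywhere outside the changed part and takes the common value $\varphi(e_1)$ on $e_1'$ and $e_2'$.

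Given this, Lemma \ref{lemma-morphism-compatible-states} yields part (1) at once: if $\varphi(e_1)\neq\varphi(e_2)$ no state of $\Gamma_2$ is compatible with $\varphi$, so $\eta(v_{\Gamma_1,\varphi})=0$. In the case $\varphi(e_1)=\varphi(e_2)$, the same lemma gives $\eta(v_{\Gamma_1,\varphi})=c\cdot v_{\Gamma_2,\varphi'}$ for some $c\in\C$, and everything comes down to showing $c\neq 0$.

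To prove $c\neq 0$ I would reduce to Lemma \ref{lemma-saddle-circle}. The morphism $\eta$ is induced by a fixed morphism of matrix factorizations supported on the MOY subgraph formed by the two parallel $m$-colored arcs, and by Lemma \ref{lemma-MOY-homology-multiplication} the vectors $v_{\Gamma_1,\varphi}$ and $v_{\Gamma_2,\varphi'}$ are, near the changed region, determined only by the common state value $\Omega:=\varphi(e_1)=\varphi(e_2)$ carried by those arcs. Thus whether $\eta$ kills $v_{\Gamma_1,\varphi}$ does not depend on the rest of the graph, and I may compute $c$ in any convenient completion of the local picture. Choosing the completion in which the two arcs are joined outside the window into a single circle, $\Gamma_1$ becomes $\bigcirc_m$, $\Gamma_2$ becomes $\bigcirc_m\sqcup\bigcirc_m$, and $\eta$ becomes precisely the circle-splitting saddle obtained from Figure \ref{saddle-states-fig-1} by closing up the finger; Lemma \ref{lemma-saddle-circle}(1) (equivalently \cite[Propositions 4.17 and 4.19]{Wu-color-equi}) gives that the image of the corresponding basis vector is nonzero, so $c\neq 0$. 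As an alternative, one can compose with the reverse reconnection saddle $\widetilde\eta\colon H_P(\Gamma_2)\to H_P(\Gamma_1)$: the state carried by $v_{\Gamma_2,\varphi'}$ on $e_1',e_2'$ still satisfies $\varphi'(e_1')=\varphi'(e_2')$, so $\widetilde\eta(v_{\Gamma_2,\varphi'})=\widetilde c\cdot v_{\Gamma_1,\varphi}$, and $\widetilde\eta\circ\eta$ is the self-map of $H_P(\Gamma_1)$ attached to a cobordism with a handle, which by the composition/neck-cutting formulas of \cite{Wu-color-equi} (used exactly as in the proof of Lemma \ref{lemma-saddle-circle}) acts by a nonzero scalar on $v_{\Gamma_1,\varphi}$ whenever $\varphi(e_1)=\varphi(e_2)$; hence $c\widetilde c\neq 0$ and in particular $c\neq 0$.

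The state bookkeeping and part (1) are routine. The single nontrivial point, and the main obstacle, is the non-vanishing $c\neq 0$; the delicate step there is justifying that this non-vanishing is a local property, independent of the ambient graph, so that the model computation with circles already covered by Lemma \ref{lemma-saddle-circle} suffices.
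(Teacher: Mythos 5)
Your state bookkeeping and Part (1) are fine and agree with the paper (which dismisses them as "easy to verify"), and you correctly isolate the crux: showing $c\neq 0$ in Part (2). But neither of your two routes to $c\neq 0$ actually closes the argument. Route (a) rests on the claim that "whether $\eta$ kills $v_{\Gamma_1,\varphi}$ does not depend on the rest of the graph," so that you may recompute $c$ after closing the two arcs into a single circle. Lemma \ref{lemma-MOY-homology-multiplication} does not give you this: it characterizes $v_{\Gamma_1,\varphi}$ as the joint eigenvector for the $R_{\Gamma_1}$-action, but it does not produce a "local restriction" of that homology class, and $H_P(\Gamma_1)$ does not split as (local piece) $\otimes$ (rest) in a way that would make the scalar $c$ computable in a different ambient completion. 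Replacing $\Gamma_1$ by $\bigcirc_m$ changes the homology entirely; there is no a priori comparison between the two scalars. Route (b) asserts that the tube map $\widetilde\eta\circ\eta$ acts by a nonzero scalar on $v_{\Gamma_1,\varphi}$ "by the composition/neck-cutting formulas ... used exactly as in the proof of Lemma \ref{lemma-saddle-circle}." But the identities used there (\cite[Propositions 4.17 and 4.19]{Wu-color-equi}) are $\epsilon\circ\eta=c_1\,\id$ and $\widetilde\eta\circ\iota=c_2\,\id$, i.e.\ a saddle composed with a circle creation or annihilation; no formula for two saddles stacked between parallel colored strands is established in the paper or invoked from the references. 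That composite is multiplication by a positive-degree element, and its non-vanishing on the eigenvector is essentially equivalent to what you are trying to prove, so as written the argument is circular.

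The paper's actual device is different and worth noting: it introduces a second saddle $\eta_0$ \emph{transverse} to $\eta$, which pinches a small circle off the arc $e_1$ (Diagram \eqref{saddle-trick-diagram}). Since $\eta$ and $\eta_0$ act on disjoint local pieces, they commute up to a nonzero scalar. Going around the square the other way, the composite $\eta\circ\eta_0$ is a composition of two saddles each of the circle-creation/annihilation type already controlled by Lemma \ref{lemma-saddle-circle}, and the hypothesis $\varphi(e_1)=\varphi(e_2)$ is exactly what makes the second of these nonzero. Hence $\eta\circ\eta_0(v_{\Gamma_1,\varphi})\neq 0$, so $\eta_0\circ\eta(v_{\Gamma_1,\varphi})\neq 0$, so $\eta(v_{\Gamma_1,\varphi})\neq 0$. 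If you want to repair your proof, you should either adopt this commuting-square trick or supply an actual computation of the tube map between parallel $m$-colored strands.
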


\begin{proof}
It is easy to verify the statements about compatible states. Most of the lemma follows from this. The only thing left to prove is that $c\neq 0$ in Part (2).

\begin{equation}\label{saddle-trick-diagram}
\xymatrix{
 \setlength{\unitlength}{1pt}
\begin{picture}(60,75)(-30,-15)

\put(30,60){\vector(1,1){0}}

\put(-30,0){\vector(-1,-1){0}}

\qbezier(-30,60)(0,0)(30,60)

\qbezier(-30,0)(0,30)(30,0)

\multiput(0,15)(0,2){7}{\line(0,1){.6}}

\multiput(-20,45)(2,0){21}{\line(1,0){.6}}

\put(-20,55){$e_1$}

\put(15,0){$e_2$}

\put(-25,10){\tiny{$m$}}

\put(25,48){\tiny{$m$}}

\put(-3,-15){$\Gamma_1$}
\end{picture} \ar[rr]^{\eta} \ar[d]^{\eta_0} && \setlength{\unitlength}{1pt}
\begin{picture}(60,75)(-30,-15)

\put(30,60){\vector(1,1){0}}

\put(-30,0){\vector(-1,-1){0}}

\qbezier(-30,60)(0,30)(-30,0)

\qbezier(30,0)(0,30)(30,60)

\multiput(-18,45)(2,0){19}{\line(1,0){.6}}

\put(-20,55){$e_1'$}

\put(15,0){$e_2'$}

\put(-18,10){\tiny{$m$}}

\put(13,48){\tiny{$m$}}

\put(-3,-15){$\Gamma_2$}
\end{picture} \ar[d]^{\eta_0} \\
 \input{saddle-6} \ar[rr]^{\eta}  && \setlength{\unitlength}{1pt}
\begin{picture}(60,75)(-30,-15)

\put(30,60){\vector(1,1){0}}

\put(-30,0){\vector(-1,-1){0}}

\qbezier(-30,60)(0,30)(30,60)

\qbezier(-30,0)(0,30)(30,0)

\put(-20,55){$e_1$}

\put(15,0){$e_2$}

\put(-25,10){\tiny{$m$}}

\put(23,48){\tiny{$m$}}

\put(-3,-15){$\Gamma_1$}
\end{picture}
}
\end{equation}

Assume $\varphi(e_1) = \varphi(e_2)$. Consider Diagram \eqref{saddle-trick-diagram}, where $\eta$, as in the lemma, is induced by the saddle move along the vertical dotted line and $\eta_0$ is induced by the saddle move along the horizontal dotted line. At matrix factorization level, $\eta$ and $\eta_0$ commute up to homotopy and scaling since they act on disjoint parts of the MOY graphs. (Note that these morphisms are only defined up to homotopy and scaling.) So, as homomorphisms of homology, $\eta$ and $\eta_0$ commute up to scaling by a non-zero scalar.

There is a unique state $\varphi''$ of $\Gamma_3$ that is compatible with $\varphi$ under $\eta_0$. Using $\varphi(e_1) = \varphi(e_2)$ and Lemma \ref{lemma-saddle-circle}, it is easy to see that $\eta_0(v_{\Gamma_1,\varphi}) = c_1 \cdot v_{\Gamma_3,\varphi''}$ and $\eta(v_{\Gamma_3,\varphi''}) = c_2 \cdot v_{\Gamma_1,\varphi}$, where $c_1,c_2 \neq 0$. So $\eta \circ \eta_0 (v_{\Gamma_1,\varphi}) = c_1c_2 \cdot v_{\Gamma_1,\varphi} \neq 0$ which implies that $\eta_0 \circ \eta (v_{\Gamma_1,\varphi}) \neq 0$ and therefore $\eta (v_{\Gamma_1,\varphi}) \neq 0$. This proves that $c \neq 0$ in Part (2).
\end{proof}

\section{A Basis for Generic Deformations of the Link Homology}\label{sec-basis-link-construction}

We prove Theorem \ref{thm-basis} in this section. As before, we fix a generic polynomial $P(X)$ of the form \eqref{def-P} and denote by $\Sigma=\Sigma(P)=\{r_1,\dots,r_N\}$ the set of roots of $P'(X)$. Throughout this section, $D$ is a closed knotted MOY graph. We will construct a basis for $H_P(D)$, which is a slight generalization of Theorem \ref{thm-basis}.

\subsection{States of a closed knotted MOY graph} Remove all vertices and crossings of $D$. This gives a set $A(D)$ of arcs starting and ending in vertices or crossings of $D$ and containing no vertices or crossings of $D$ in the interiors.

\begin{definition}\label{states-knotted-MOY}
A pre-state of $D$ is a function $\psi:A(D)\rightarrow \mathcal{P}(\Sigma)$. 

Let $v$ be a vertex of $D$ of the form in Figure \ref{def-state-MOY-vertex-fig}. As in Definition \ref{def-state-MOY}, we say that a pre-state $\psi$ of $D$ is admissible at $v$ if 
\begin{enumerate}
	\item $\psi(e_1),\dots,\psi(e_k)$ are pairwise disjoint,
	\item $\psi(e'_1),\dots,\psi(e'_l)$ are pairwise disjoint,
	\item $\psi(e_1)\cup\cdots\cup\psi(e_k)=\psi(e'_1)\cup\dots\cup\psi(e'_l)$.
\end{enumerate}

\begin{figure}[ht]
\[
\xymatrix{
\input{crossing-def-states-+} && \text{or} && \input{crossing-def-states--}
}
\]
\caption{}\label{def-state-knotted-MOY-vertex-fig-2}

\end{figure}

Let $c$ be a crossing of $D$ (depicted by either of the two diagrams in Figure \ref{def-state-knotted-MOY-vertex-fig-2}.) We say that 
\begin{itemize}
	\item a pre-state $\psi$ of $D$ is quasi-admissible at $c$ if $\psi(a_1)\cap\psi(a_2)=\psi(a_3)\cap\psi(a_4)$ and $\psi(a_1)\cup\psi(a_2)=\psi(a_3)\cup\psi(a_4)$,
	\item a pre-state $\psi$ of $D$ is admissible at $c$ if $\psi(a_1)=\psi(a_3)$ and $\psi(a_2)=\psi(a_4)$.
\end{itemize}

A pre-state of $D$ is called a quasi-state if it is admissible at all vertices of $D$ and quasi-admissible at all crossings of $D$. Denote by $\mathcal{QS}(D)$ the set of all quasi-states of $D$.

A pre-state $D$ is called a state if it is admissible at all vertices and crossings of $D$. Denote by $\mathcal{S}(D)$ the set of all states of $D$. Clearly, $\mathcal{S}(D) \subset \mathcal{QS}(D)$.
\end{definition}

Recall that a resolution of $D$ is an embedded MOY graph obtained by replacing (a small neighborhood of) each crossing by an embedded MOY graph of the form in Figure \ref{crossing-res-state-fig}.

\begin{figure}[ht]
\[
\input{crossing-res-state}
\]
\caption{}\label{crossing-res-state-fig}
\end{figure}

\begin{definition}\label{def-resolution-states}
Let $\psi$ be a pre-state of $D$. A resolution of $\psi$ is a pair $(\Gamma,\varphi)$, where $\Gamma$ is a resolution of $D$ and $\varphi$ is a state of $\Gamma$ that agrees with $\psi$ outside the replaced parts of $D$. Denote by $\mathcal{R}(\psi)$ the set of all resolutions of $\psi$.

Clearly, for every resolution $\Gamma$ of $D$ and every state $\varphi$ of $\Gamma$, the pair $(\Gamma,\varphi)$ is a resolution of a unique pre-state of $D$.
\end{definition}

\begin{lemma}\label{lemma-unique-state-res}
Suppose that $\psi$ is a pre-state of $D$.
\begin{enumerate}
  \item $\mathcal{R}(\psi) = \emptyset$ if $\psi$ is not a quasi-state.
	\item $|\mathcal{R}(\psi)| = 1$ if $\psi$ is a state.
\end{enumerate}
\end{lemma}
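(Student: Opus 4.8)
The statement is a bookkeeping fact about how states of the knotted MOY graph $D$ interact with resolutions of crossings, so the plan is to work crossing by crossing using the local picture in Figure~\ref{crossing-res-state-fig}. First I would fix a pre-state $\psi$ and, at each crossing $c$ of $D$ with incident arcs $a_1,a_2,a_3,a_4$ labeled as in Figure~\ref{def-state-knotted-MOY-vertex-fig-2}, analyze which of the two MOY resolutions shown in Figure~\ref{crossing-res-state-fig} admit a state $\varphi$ agreeing with $\psi$ on $a_1,a_2,a_3,a_4$. The key local computation: in the resolution the new internal edges $e_1,e_2,e_3,e_4$ must satisfy the admissibility conditions at the two trivalent vertices, which force $\varphi(e_4)=\psi(a_1)\setminus(\psi(a_1)\cap\psi(a_2))$, $\varphi(e_2)=\psi(a_2)\setminus(\psi(a_1)\cap\psi(a_2))$, $\varphi(e_1)=\psi(a_1)\cap\psi(a_2)$ on the incoming side, and the analogous relations with $a_3,a_4$ on the outgoing side. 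Matching these forces $\psi(a_1)\cap\psi(a_2)=\psi(a_3)\cap\psi(a_4)$ and $\psi(a_1)\cup\psi(a_2)=\psi(a_3)\cup\psi(a_4)$, i.e.\ exactly quasi-admissibility at $c$; and when these hold, the internal edge labels are uniquely determined, so the resolution contributes at most one state.

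For part (1), if $\psi$ is not a quasi-state then it fails admissibility at some vertex or quasi-admissibility at some crossing. In the first case no resolution $\Gamma$ of $D$ can have a state extending $\psi$, since $\varphi$ is required to agree with $\psi$ on all arcs outside the replaced parts and in particular at the offending vertex, where $\varphi$ would have to be admissible. In the second case, the local analysis of the previous paragraph shows that neither resolution of the offending crossing admits a compatible $\varphi$. Either way $\mathcal{R}(\psi)=\emptyset$. I should be a little careful to treat the degenerate cases $k=0$ or $k$ equal to the relevant color, where one of the internal edges is colored $0$ and effectively disappears, but the same equalities are forced.

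For part (2), suppose $\psi$ is a state, so $\psi(a_1)=\psi(a_3)$ and $\psi(a_2)=\psi(a_4)$ at every crossing. I would argue that at each crossing exactly one of the two resolutions in Figure~\ref{crossing-res-state-fig} is compatible with $\psi$: the color of $e_1$ (or the side on which the wide edge appears) is dictated by $|\psi(a_1)\cap\psi(a_2)|$ together with the colors $m,n$ of the strands, and since $\psi$ is admissible the needed internal labels lie in the unique resolution whose combinatorics (the $k$ and $n+k-m$, $m-k$ in the figure) match, and in that resolution the state $\varphi$ is uniquely pinned down by the local formulas above. Assembling these choices over all crossings produces exactly one pair $(\Gamma,\varphi)\in\mathcal{R}(\psi)$, so $|\mathcal{R}(\psi)|=1$.

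The main obstacle I anticipate is the purely local crossing computation: carefully verifying, from the admissibility conditions at the two trivalent vertices inside Figure~\ref{crossing-res-state-fig}, that compatibility of a state of the resolution with $\psi$ is \emph{equivalent} to quasi-admissibility of $\psi$ at that crossing, and that admissibility of $\psi$ (the state condition $\psi(a_1)=\psi(a_3)$, $\psi(a_2)=\psi(a_4)$) selects precisely one of the two resolutions. Once this equivalence is nailed down, the global statement follows by taking the product over crossings, since there are no further constraints coupling distinct crossings.
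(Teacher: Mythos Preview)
Your overall strategy---work crossing by crossing, derive the local constraints from admissibility at the internal vertices of Figure~\ref{crossing-res-state-fig}, and then assemble---is exactly the paper's approach. However, your execution contains two errors that would need to be fixed.

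First, you repeatedly refer to ``the two MOY resolutions'' of a crossing. In the colored setting there is not a pair of resolutions but a one-parameter family indexed by $k$ (with $k$ ranging so that all edge colors $k$, $m-k$, $n+k-m$, $n+k$ are between $0$ and $N$). Figure~\ref{crossing-res-state-fig} depicts this single family, not two pictures. So in Part~(2) the task is to show that exactly one value of $k$ admits an extension of $\psi$, not to choose between two alternatives. Your parenthetical ``the side on which the wide edge appears'' suggests you may be importing intuition from the uncolored case.

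Second, your local formulas are wrong. From the vertex structure in Figure~\ref{crossing-res-state-fig} one reads off (at the lower-right vertex) $\psi(a_2)=\varphi(e_1)\sqcup\varphi(e_2)$, (lower-left) $\varphi(e_4)=\psi(a_1)\sqcup\varphi(e_1)$, (upper-left) $\varphi(e_4)=\psi(a_4)\sqcup\varphi(e_3)$, (upper-right) $\psi(a_3)=\varphi(e_2)\sqcup\varphi(e_3)$. In particular $\varphi(e_1)\cap\psi(a_1)=\emptyset$, so your assignment $\varphi(e_1)=\psi(a_1)\cap\psi(a_2)$ is impossible; likewise $\varphi(e_4)$ has cardinality $n+k\geq n$, so it cannot equal $\psi(a_1)\setminus\psi(a_2)$. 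For a state $\psi$ the correct (and unique) solution is $k=|\psi(a_2)\setminus\psi(a_1)|$, $\varphi(e_1)=\psi(a_2)\setminus\psi(a_1)$, $\varphi(e_2)=\psi(a_1)\cap\psi(a_2)$, $\varphi(e_3)=\psi(a_1)\setminus\psi(a_2)$, $\varphi(e_4)=\psi(a_1)\cup\psi(a_2)$, which is what the paper derives. Once you redo the local computation correctly, your outline goes through and matches the paper's proof.
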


\begin{proof}
To prove Part (1), assume $\mathcal{R}(\psi) \neq \emptyset$ and let $(\Gamma,\varphi)$ be an element of $\mathcal{R}(\psi)$. Since $\varphi$ is admissible at all vertices of $\Gamma$, $\psi$ must admissible at all vertices of $D$. Now consider the crossings. Assume $c$ is any crossing of $D$ (depicted in Figure \ref{def-state-knotted-MOY-vertex-fig-2}) and is replaced in $\Gamma$ by the MOY graph in Figure \ref{crossing-res-state-fig}. Then by the admissibility of $\varphi$, we have that 
\begin{eqnarray*}
\psi(a_1)\cup \psi(a_2) & = & \varphi(a_1)\cup \varphi(e_1) \cup \varphi(e_2) \\
& = & \varphi(e_4) \cup \varphi(e_2) \\
& = & \varphi(a_4)\cup \varphi(e_3) \cup \varphi(e_2) \\
& = & \psi(a_4)\cup \psi(a_3)
\end{eqnarray*}
Write $U=\psi(a_1)\cap \psi(a_2)$. Then $U\cap \varphi(e_1) = \emptyset$ and therefore $U \subset \varphi(e_2) \subset \psi(a_3)$. Note that $U \subset \psi(a_1) \subset \varphi(e_4) = \psi(a_4)\cup \varphi (e_3)$ and that $\varphi (e_3) \cap U = \emptyset$ since $\varphi (e_3) \cap \varphi(e_2) =\emptyset$ and $U \subset \varphi(e_2)$. So $U \subset \psi(a_4)$. This implies that $\psi(a_1)\cap \psi(a_2) =U \subset \psi(a_3)\cap\psi(a_4)$. Similarly, one can show that $\psi(a_3)\cap\psi(a_4) \subset \psi(a_1)\cap \psi(a_2)$. So $\psi(a_1)\cap \psi(a_2) = \psi(a_3)\cap\psi(a_4)$. This shows that $\psi$ is a quasi-state.

For Part (2), assume $\psi$ is a state. Suppose that $c$ is any crossing of $D$ (depicted in Figure \ref{def-state-knotted-MOY-vertex-fig-2}) and is replaced by the MOY graph in Figure \ref{crossing-res-state-fig}. Then $\psi(a_1)=\psi(a_3)$ and $\psi(a_2)=\psi(a_4)$. In order for there to be a state $\varphi$ of the resolution agreeing with $\psi$ on $a_1,a_2,a_3,a_4$, we must have that 
\begin{itemize}
	\item $k=|\psi(a_2)\setminus \psi(a_1)|$,
	\item $\varphi(e_1)=\psi(a_2)\setminus \psi(a_1)$,
	\item $\varphi(e_2)=\psi(a_2)\cap \psi(a_1)$,
	\item $\varphi(e_3)=\psi(a_1)\setminus \psi(a_2)$,
	\item $\varphi(e_4)=\psi(a_1)\cup \psi(a_2)$. 
\end{itemize}
These conditions uniquely characterize the resolution $(\Gamma,\varphi)$ near $c$. Therefore, $|\mathcal{R}(\psi)| = 1$.
\end{proof}

\subsection{A decomposition of the chain complex $H(C_P(D),d_{mf})$} \label{subsec-decom-chain-quasi-state}
As an object of $\hch(\hmf_{\C,0})$, $C_P(D)$ has two commutative differentials, $d_{mf}$ and $d$, where $d_{mf}$ is the differential of the matrix factorizations associated to the resolutions of $D$ and $d$ is the differential defined in \cite[Definitions 6.3, 6.11 and 6.12]{Wu-color-equi}. Recall that $H_P(D)$ is defined to be $H(H(C_P(D),d_{mf}),d)$. As a graded and filtered $\C$-linear space, we have 
\begin{equation}\label{eq-decomp-chain-complex-MOY}
H(C_P(D),d_{mf}) = \bigoplus_{\Gamma} H_P(\Gamma)\|\mathsf{h}_D(\Gamma)\| \{q^{\rho_D(\Gamma)}\},
\end{equation}
where $\Gamma$ runs through all resolutions of $D$, $\mathsf{h}_D(\Gamma)$ and $\rho_D(\Gamma)$ are shifts of the homological grading and the quantum filtration determined by $D$ and $\Gamma$. $\mathsf{h}_D(\Gamma)$ and $\rho_D(\Gamma)$ are given implicitly in \cite[Definitions 6.3, 6.11 and 6.12]{Wu-color-equi}. Explicit formulas for them can be found in \cite[Subsection 2.3]{Wu-color-MFW}, where the notation for $\mathsf{h}_D(\Gamma)$ is ``$\mathsf{s}_{h,N}(D;\Gamma)$" and the notation for $\rho_D(\Gamma)$ is ``$\mathsf{s}_{q,N}(D;\Gamma)$".

By Theorem \ref{thm-decomp-MOY}, we have
\[
H_P(\Gamma) = \bigoplus_{\varphi \in \mathcal{S} (\Gamma)} Q_\varphi H_P(\Gamma) = \bigoplus_{\varphi\in \mathcal{S}(\Gamma)} \C \cdot v_{\Gamma,\varphi},
\]
where the decomposition does not preserve the quantum filtration. Thus, as a space with a homological grading,  
\[
H(C_P(D),d_{mf}) = \bigoplus_{\Gamma} \bigoplus_{\varphi\in \mathcal{S}(\Gamma)} \C \cdot v_{\Gamma,\varphi} \|\mathsf{h}_D(\Gamma)\| = \bigoplus_{\psi \in \mathcal{QS}(D)} \bigoplus_{(\Gamma,\varphi) \in \mathcal{R}(\psi)} \C \cdot v_{\Gamma,\varphi} \|\mathsf{h}_D(\Gamma)\|.
\] 

\begin{definition}\label{def-subcomplex-state}
For any quasi-state $\psi$ of $D$,
\[
\mathbf{C}(\psi) := \bigoplus_{(\Gamma,\varphi) \in \mathcal{R}(\psi)} \C \cdot v_{\Gamma,\varphi} \|\mathsf{h}_D(\Gamma)\|.
\]
\end{definition}

Recall that the differential $d$ acts only on the small neighborhoods replacing the crossings. (See \cite[Definitions 6.3 6.11 and 6.12]{Wu-color-equi} for more details.) So, by Lemma \ref{lemma-morphism-compatible-states}, for any $(\Gamma_0,\varphi_0) \in \mathcal{R}(\psi)$, we have that $d(v_{\Gamma_0,\varphi_0}) \in \mathbf{C}(\psi)$. Thus, $\mathbf{C}(\psi)$ is a subcomplex of $H(C_P(D),d_{mf})$. More precisely, we have the following lemma.

\begin{lemma}\label{lemma-chain-decomp-states}
There is a decomposition of chain complex 
\[
H(C_P(D),d_{mf}) = \bigoplus_{\psi \in \mathcal{QS}(D)} \mathbf{C}(\psi)
\]
that preserves the homological decomposition. Consequently,
\[
H_P(D) \cong \bigoplus_{\psi \in \mathcal{QS}(D)} H(\mathbf{C}(\psi)).
\]
\end{lemma}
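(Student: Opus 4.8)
The plan is to establish the chain-complex decomposition by combining the $\zed$-graded (homological) decomposition of $H(C_P(D),d_{mf})$ already written out above with the fact that the differential $d$ respects the splitting into the pieces $\mathbf{C}(\psi)$. First I would record the starting point: as a graded $\C$-linear space,
\[
H(C_P(D),d_{mf}) = \bigoplus_{\Gamma} \bigoplus_{\varphi\in \mathcal{S}(\Gamma)} \C \cdot v_{\Gamma,\varphi} \|\mathsf{h}_D(\Gamma)\|,
\]
and, regrouping the index set $\{(\Gamma,\varphi) : \Gamma \text{ a resolution of } D,\ \varphi\in\mathcal{S}(\Gamma)\}$ according to the unique pre-state of $D$ that $(\Gamma,\varphi)$ resolves (this is the last sentence of Definition \ref{def-resolution-states}, together with Lemma \ref{lemma-unique-state-res}, part (1), which guarantees that the pre-states that actually occur are exactly the quasi-states), this becomes
\[
H(C_P(D),d_{mf}) = \bigoplus_{\psi \in \mathcal{QS}(D)} \mathbf{C}(\psi)
\]
as graded $\C$-linear spaces. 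So the only real content is that each $\mathbf{C}(\psi)$ is a subcomplex, i.e.\ that $d$ maps $\mathbf{C}(\psi)$ into $\mathbf{C}(\psi)$.

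The key step is the claim that $d(v_{\Gamma_0,\varphi_0}) \in \mathbf{C}(\psi)$ whenever $(\Gamma_0,\varphi_0)\in\mathcal{R}(\psi)$. For this I would invoke the structure of $d$: by \cite[Definitions 6.3, 6.11 and 6.12]{Wu-color-equi}, $d$ is a sum of morphisms each of which is induced by a local change (a $\chi$-morphism, an edge splitting/merging, etc.) performed inside a small neighborhood replacing a single crossing of $D$, and is the identity elsewhere. Consequently each such summand sends $v_{\Gamma_0,\varphi_0}$, by Lemma \ref{lemma-morphism-compatible-states}, into the span of $\{v_{\Gamma_1,\varphi_1} : \varphi_1\in\mathcal{S}(\Gamma_1) \text{ is compatible with }\varphi_0 \text{ outside the changed neighborhood}\}$. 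Since the changed neighborhood sits at a crossing of $D$ and the arcs of $A(D)$ lie outside all such neighborhoods, compatibility outside the changed part forces $\varphi_1$ to agree with $\varphi_0$ — hence with $\psi$ — on every arc of $A(D)$; therefore $(\Gamma_1,\varphi_1)\in\mathcal{R}(\psi)$. Summing over the crossings and over the summands of $d$ gives $d(v_{\Gamma_0,\varphi_0})\in\mathbf{C}(\psi)$.

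Having shown each $\mathbf{C}(\psi)$ is a subcomplex and that $H(C_P(D),d_{mf})$ is their direct sum, the decomposition of chain complexes is immediate, and it preserves the homological grading because each $\mathbf{C}(\psi)$ is a graded subspace by construction (every generator $v_{\Gamma,\varphi}\|\mathsf{h}_D(\Gamma)\|$ carries a well-defined homological degree). Taking homology of both sides and using additivity of homology over finite direct sums of complexes yields
\[
H_P(D) = H(H(C_P(D),d_{mf}),d) \cong \bigoplus_{\psi \in \mathcal{QS}(D)} H(\mathbf{C}(\psi)),
\]
which is the desired statement. I expect the main obstacle — really the only place demanding care — to be the bookkeeping in the compatibility argument: one must be sure that \emph{all} of the elementary morphisms comprising $d$ (not just the $\chi$-morphisms but any edge splittings/mergings and bouquet moves that enter the definition of the differential in \cite{Wu-color-equi}) act within neighborhoods disjoint from the arcs $A(D)$, so that Lemma \ref{lemma-morphism-compatible-states} applies uniformly and the resulting states remain compatible with $\psi$ on all of $A(D)$; once that is checked, the rest is formal.
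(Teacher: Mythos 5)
Your argument is correct and follows the same route as the paper: the paper likewise obtains the linear-space decomposition by regrouping the basis elements $v_{\Gamma,\varphi}$ according to the quasi-state they resolve, and then notes that since $d$ acts only in the small neighborhoods replacing the crossings, Lemma \ref{lemma-morphism-compatible-states} forces $d(v_{\Gamma_0,\varphi_0})\in\mathbf{C}(\psi)$, so each $\mathbf{C}(\psi)$ is a subcomplex. The extra care you flag about the elementary morphisms being supported away from $A(D)$ is exactly the observation the paper relies on, so there is no gap.
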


\subsection{A basis for $H_P(D)$} Next, we construct a basis for $H_P(D)$ that is in one-to-one correspondence with the set $\mathcal{S}(D)$ of all states of $D$. Theorem \ref{thm-basis} is a special case of this construction.

\begin{definition}\label{def-homological-grading-unique-state-res}
For a state $\psi$ of $D$, let $(\Gamma,\varphi)$ be its unique resolution. Define $\mathsf{h}(\psi):=\mathsf{h}_D(\Gamma)$. (See Decomposition \eqref{eq-decomp-chain-complex-MOY} above.)
\end{definition}

\begin{lemma}\label{lemma-state-subcomplex-1-dim}
If $\psi$ is a state of $D$, then $H(\mathbf{C}(\psi))$ is $1$-dimensional and is spanned by the homology class $v_{D,\psi}:=[v_{\Gamma,\varphi}]$, where $(\Gamma,\varphi)$ is the unique resolution of $\psi$.  Moreover, $H(\mathbf{C}(\psi))$ has homological grading $\mathsf{h}(\psi)$.
\end{lemma}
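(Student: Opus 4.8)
The plan is to show that for a state $\psi$ of $D$ with unique resolution $(\Gamma,\varphi)$, the subcomplex $\mathbf{C}(\psi)$ has homology exactly $\C\cdot v_{\Gamma,\varphi}$, concentrated in homological grading $\mathsf h(\psi)$. The statement about homological grading is immediate from Definition \ref{def-homological-grading-unique-state-res} once we know the homology is spanned by $[v_{\Gamma,\varphi}]$, since $v_{\Gamma,\varphi}$ lives in $H_P(\Gamma)\|\mathsf h_D(\Gamma)\|$ and $\mathsf h(\psi)=\mathsf h_D(\Gamma)$. So the real content is that $\dim_\C H(\mathbf{C}(\psi))=1$ with the named generator.

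First I would analyze the structure of $\mathbf{C}(\psi)$ as a chain complex. By definition $\mathbf{C}(\psi)=\bigoplus_{(\Gamma',\varphi')\in\mathcal{R}(\psi)}\C\cdot v_{\Gamma',\varphi'}\|\mathsf h_D(\Gamma')\|$, and by Lemma \ref{lemma-unique-state-res}(2) the index set $\mathcal{R}(\psi)$ is a singleton, namely $\{(\Gamma,\varphi)\}$. Hence $\mathbf{C}(\psi)$ is the one-dimensional complex $\C\cdot v_{\Gamma,\varphi}$ placed in a single homological degree, with zero differential (there is nothing for $d$ to map to). Therefore $H(\mathbf{C}(\psi))=\mathbf{C}(\psi)=\C\cdot v_{\Gamma,\varphi}$ is one-dimensional, spanned by $v_{D,\psi}=[v_{\Gamma,\varphi}]$, and sits in homological grading $\mathsf h_D(\Gamma)=\mathsf h(\psi)$. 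That really is the whole argument; everything reduces to the counting lemma already proved.

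The one point that deserves a sentence of care is why $d$ restricted to $\mathbf{C}(\psi)$ vanishes rather than merely being well defined: since $\mathcal{R}(\psi)=\{(\Gamma,\varphi)\}$, the target $\mathbf{C}(\psi)$ of $d|_{\mathbf{C}(\psi)}$ (which exists by the discussion preceding Lemma \ref{lemma-chain-decomp-states}, via Lemma \ref{lemma-morphism-compatible-states}) is itself one-dimensional and concentrated in the same homological degree as the source, while $d$ raises homological degree by one; hence $d(v_{\Gamma,\varphi})=0$. Equivalently, $d$ strictly increases homological grading, and $\mathbf{C}(\psi)$ is supported in the single grading $\mathsf h_D(\Gamma)$, so $d|_{\mathbf{C}(\psi)}=0$. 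There is no genuine obstacle here; the only thing to be careful about is to invoke Lemma \ref{lemma-unique-state-res}(2) correctly and to match the homological shift bookkeeping with Definition \ref{def-homological-grading-unique-state-res}. If one wanted, one could also remark that $v_{\Gamma,\varphi}\neq 0$ in $H_P(\Gamma)$ by Theorem \ref{thm-decomp-MOY}, so the class $v_{D,\psi}$ is genuinely nonzero, confirming $\dim H(\mathbf{C}(\psi))=1$ rather than $0$.
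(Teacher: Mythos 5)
Your proof is correct and follows exactly the paper's argument: Lemma \ref{lemma-unique-state-res}(2) makes $\mathbf{C}(\psi)$ one-dimensional and concentrated in a single homological degree, and since $d$ raises the homological grading by one, the restricted differential vanishes and the homology is the whole space. The extra remarks about the grading bookkeeping and the nonvanishing of $v_{\Gamma,\varphi}$ are fine but not needed beyond what the paper already records.
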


\begin{proof}
By Lemma \ref{lemma-unique-state-res}, $\psi$ has a unique resolution $(\Gamma,\varphi)$. So $\mathbf{C}(\psi)= \C \cdot v_{\Gamma,\varphi} \|\mathsf{h}_D(\Gamma)\|$ is $1$-dimensional. But $d$ raises the homological grading by $1$. So $d|_{\mathbf{C}(\psi)}=0$. And the lemma follows.
\end{proof}

\begin{lemma}\cite[Theorem 2]{Gornik}\label{lemma-vanish-quasi-state-1}
Assume that, at all crossings of $D$, the two branches are always both colored by $1$. Then $H(\mathbf{C}(\psi))=0$ if $\psi$ is a quasi-state but not a state of $D$.
\end{lemma}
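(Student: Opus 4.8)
The plan is to carry out Gornik's argument (\cite[Theorem 2]{Gornik}) in the present generality: analyze the resolutions of $\psi$ one crossing at a time, realize $\mathbf{C}(\psi)$ as a tensor product of two-term acyclic complexes, and conclude.

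First I would sort the crossings of $D$ according to $\psi$. At a crossing $c$, label the four arcs $a_1,a_2$ (incoming) and $a_3,a_4$ (outgoing) as in Figure \ref{def-state-knotted-MOY-vertex-fig-2}; by hypothesis both branches are colored by $1$, so $\psi(a_1),\dots,\psi(a_4)$ are singletons. Call $c$ \emph{good} if $\psi$ is admissible at $c$ and \emph{bad} otherwise. Quasi-admissibility of $\psi$ at $c$ gives $\psi(a_1)\cap\psi(a_2)=\psi(a_3)\cap\psi(a_4)$ and $\psi(a_1)\cup\psi(a_2)=\psi(a_3)\cup\psi(a_4)$, and a short case check with singletons shows that at a bad crossing one is forced to have $\psi(a_1)=\{r\}$, $\psi(a_2)=\{s\}$ with $r\neq s$, and $\psi(a_3)=\{s\}$, $\psi(a_4)=\{r\}$ (the two strands are transposed). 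Since $\psi$ is a quasi-state that is not a state, the set $B$ of bad crossings is nonempty. Because both branches at every crossing are colored by $1$, each crossing of $D$ has exactly two resolutions, the oriented one and the one with a wide edge of color $2$; inspecting Figure \ref{crossing-res-state-fig} with $m=n=1$, one checks that a good crossing forces exactly one of these (with a unique local extension of $\psi$), while a bad crossing allows both (each with a unique local extension). Hence $\mathcal{R}(\psi)$ is in canonical bijection with $\prod_{c\in B}\{0,1\}$, so $\mathbf{C}(\psi)$ is $2^{|B|}$-dimensional.

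Next I would identify the differential. Recall $d=\sum_c d_c$, where $d_c$ changes only the resolution at $c$; since the branches at $c$ are both colored by $1$, $d_c$ is, up to homotopy and nonzero scalars, a single $\chi$-morphism (possibly pre- and post-composed with bouquet-move isomorphisms, which by Lemma \ref{lemma-bouquet-states} carry basis vectors to nonzero multiples of basis vectors). By Lemma \ref{lemma-morphism-compatible-states}, $d_c(v_{\Gamma,\varphi})$ lies in the span of the $v_{\Gamma',\varphi'}$ whose states extend $\psi$; at a good crossing there is no such $\varphi'$ on the other resolution, so $d_c|_{\mathbf{C}(\psi)}=0$, while at $c\in B$ the map $d_c$ connects the two resolution-lines of $\psi$ that differ at $c$ and, by Lemma \ref{chi-maps-action-states}, sends the one basis vector to a \emph{nonzero} multiple of the other — here the hypothesis of Lemma \ref{chi-maps-action-states} holds because the two strands entering $c$ carry the distinct singletons $\{r\}$ and $\{s\}$, so the relevant intersection is empty. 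Using that the $d_c$, $c\in B$, anticommute as in the Khovanov--Rozansky cube and that the homological shifts $\mathsf{h}_D(\Gamma)$ are additive over crossings, this gives
\[
\mathbf{C}(\psi)\ \cong\ \Big(\bigotimes_{c\in B}K_c\Big)\|k\|
\]
for some overall shift $k$, where each $K_c$ is the two-term complex $\C\xrightarrow{\sim}\C$. Each $K_c$ is acyclic, $\C$ is a field, and $B\neq\emptyset$, so $H(\mathbf{C}(\psi))=0$.

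The main obstacle is the step matching $d_c|_{\mathbf{C}(\psi)}$ at a bad crossing to a $\chi$-morphism in exactly the form covered by Lemma \ref{chi-maps-action-states}: one must identify which internal edges of the resolution in Figure \ref{crossing-res-state-fig} (with $m=n=1$) play the roles of $e_1,\dots,e_4$ in Figure \ref{figure-chi-maps-states}, so that the transposed configuration of $\psi$ at a bad crossing becomes precisely the disjointness condition $\varphi(e_1)\cap\varphi(e_4)=\emptyset$. Everything else — the singleton case analysis, the count $|\mathcal{R}(\psi)|=2^{|B|}$, and the additivity of $\mathsf{h}_D$ — is routine bookkeeping. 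The hypothesis that both branches at each crossing are colored by $1$ is used exactly to guarantee two resolutions per crossing and this clean good/bad dichotomy; for higher colors the local resolutions and the family of $\chi$-morphisms involved are more complicated.
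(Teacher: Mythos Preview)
The paper does not supply its own proof of this lemma; it is stated with the citation \cite[Theorem 2]{Gornik} and used as a black box in the proof of Theorem \ref{thm-basis-knotted-MOY}. Your proposal is a faithful outline of Gornik's original argument, and the steps you describe---the good/bad dichotomy for $1$--$1$ crossings under quasi-admissibility, the identification $\mathcal{R}(\psi)\cong\{0,1\}^{B}$, and the acyclicity of the resulting cube---are all correct.

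One small comment on the acyclicity step: you do not actually need the clean tensor factorization $\bigotimes_{c\in B}K_c$, which would require the edge scalars to be ``consistent'' across the cube. It suffices that every edge map $d_c$ is an isomorphism between the two one--dimensional spaces it connects. Fixing any $c_0\in B$ and letting $h=d_{c_0}^{-1}$ on the $c_0=1$ summand (and $0$ on the $c_0=0$ summand), the anticommutativity $d_{c_0}d'+d'd_{c_0}=0$ (where $d'=\sum_{c\neq c_0}d_c$), which follows from $d^2=0$, gives $dh+hd=\id$ directly. This sidesteps any bookkeeping with scalars.

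On your flagged ``main obstacle'': the identification of $d_c$ with a $\chi$-morphism is, for $m=n=1$, literally the definition of the differential in the Khovanov--Rozansky chain complex (see \cite[Definition 6.3]{Wu-color-equi} or \cite{KR1}). With the labeling conventions of Figure \ref{figure-chi-maps-states}, the two color--$1$ strands of the oriented resolution play the roles of $e_1,e_4$ (and $e_2,e_3$), and at a bad crossing the transposed singleton configuration $\{r\},\{s\}$ with $r\neq s$ gives exactly the disjointness $\varphi(e_1)\cap\varphi(e_4)=\emptyset$ required in Lemma \ref{chi-maps-action-states}. So this is routine once the conventions are unpacked, and your invocation of that lemma is legitimate.
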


The following is the main theorem of this section. Note that Theorem \ref{thm-basis} is its special case.

\begin{theorem}\label{thm-basis-knotted-MOY}
Let $D$ be any knotted MOY graph. Then $\{v_{D,\psi}~|~\psi\in\mathcal{S}(D)\}$ is a $\C$-linear basis for $H_P(D)$. For each $\psi\in\mathcal{S}(D)$, $v_{D,\psi}$ has homological grading $\mathsf{h}(\psi)$, which is computed using the sum over all crossings of $D$ given in Definition \ref{def-states-homological-grading}.

Moreover, the decomposition 
\[
H_P(D) = \bigoplus_{\psi\in\mathcal{S}(D)} \C \cdot v_{D,\psi}
\]
is invariant under the Reidemeister moves and, up to shifts of the homological grading, invariant under fork sliding.
\end{theorem}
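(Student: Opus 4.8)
The plan is to deduce everything from the chain-level decomposition of Lemma~\ref{lemma-chain-decomp-states}, $H_P(D)\cong\bigoplus_{\psi\in\mathcal{QS}(D)}H(\mathbf{C}(\psi))$, together with Lemma~\ref{lemma-state-subcomplex-1-dim} (which already handles the summands indexed by honest states) and the behaviour of the basis under the elementary morphisms recorded in Section~\ref{sec-morphisms}. Thus the first task is a single vanishing statement: $H(\mathbf{C}(\psi))=0$ for every $\psi\in\mathcal{QS}(D)\setminus\mathcal{S}(D)$. Granting this, $H_P(D)=\bigoplus_{\psi\in\mathcal{S}(D)}H(\mathbf{C}(\psi))=\bigoplus_{\psi\in\mathcal{S}(D)}\C\cdot v_{D,\psi}$, so $\{v_{D,\psi}\}$ is a $\C$-linear basis, and by Lemma~\ref{lemma-state-subcomplex-1-dim} the class $v_{D,\psi}$ sits in homological degree $\mathsf{h}(\psi)=\mathsf{h}_D(\Gamma)$, where $(\Gamma,\varphi)$ is the unique resolution of $\psi$.

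To prove the vanishing in general I would reduce to the case already settled by Gornik, namely Lemma~\ref{lemma-vanish-quasi-state-1}. Each thick crossing of $D$ between strands colored $m$ and $n$ is, by the defining presentation of colored crossings in \cite{Wu-color,Wu-color-equi}, the composite in $\hch(\hmf)$ of edge splittings and bouquet moves that cable the two strands into thin strands, a ladder of thin crossings, and the inverse edge mergings and bouquet moves. Applying the induced morphisms of Section~\ref{sec-morphisms} term by term, the two-step complex $(H(C_P(D),d_{mf}),d)$ is carried, compatibly with the homological grading, to that of the thin diagram $D'$; and by Lemmas~\ref{lemma-bouquet-states}, \ref{lemma-edge-splitting-states} and \ref{chi-maps-action-states} each summand $\mathbf{C}(\psi)$ is carried quasi-isomorphically into $\bigoplus_{\psi'}\mathbf{C}(\psi')$, the sum over the thin quasi-states $\psi'$ of $D'$ restricting to $\psi$ outside the cabled parts, since all the matrix entries in play on basis vectors are nonzero scalars (the relevant Schur values being nonzero by Corollary~\ref{cor-schur-value-nonvanish}). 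As $\psi$ fails admissibility at some crossing of $D$, every such $\psi'$ fails admissibility at a thin crossing of $D'$, so $H(\mathbf{C}(\psi'))=0$ by Lemma~\ref{lemma-vanish-quasi-state-1}, hence $H(\mathbf{C}(\psi))=0$. The identity $\mathsf{h}(\psi)=\sum_c\mathsf{h}_\psi(c)$ of Definition~\ref{def-states-homological-grading} is then a bookkeeping check: for a state $\psi$ the local resolution at each crossing $c$ is pinned down by $k=|\psi(a_2)\setminus\psi(a_1)|$ as in Lemma~\ref{lemma-unique-state-res}, and substituting this into the explicit formula for $\mathsf{h}_D(\Gamma)$ from \cite[Subsection~2.3]{Wu-color-MFW} reproduces the stated local contribution $\mathsf{h}_\psi(c)$.

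For invariance under the Reidemeister moves, recall from \cite{Wu-color-equi} that each Reidemeister move induces an isomorphism of $H_P$ that is a composite of bouquet moves, edge splittings and mergings, $\chi$-morphisms, circle creations and annihilations, and saddle moves. By Lemmas~\ref{lemma-bouquet-states}, \ref{lemma-edge-splitting-states}, \ref{chi-maps-action-states}, \ref{lemma-states-circle-creation}, \ref{lemma-saddle-circle} and \ref{lemma-saddle-states}, each such elementary morphism sends $v_{D,\psi}$ into the span of the $v_{D',\psi'}$ with $\psi'$ compatible with $\psi$ outside the changed part, with nonzero coefficient precisely on the unique compatible surviving state; composing these facts along the move shows the isomorphism carries $\C\cdot v_{D,\psi}$ onto $\C\cdot v_{D',\psi^{\natural}}$, where $\psi\mapsto\psi^{\natural}$ is the natural bijection $\mathcal{S}(D)\to\mathcal{S}(D')$, the one under which $\mathsf{h}(\psi)$ was already seen to be invariant in Definition~\ref{def-states-homological-grading}. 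Hence the decomposition is preserved. Fork sliding is realized by the same elementary morphisms (together with the cabling relations for colored crossings), the only difference being that the shifts $\mathsf{h}_D(\Gamma)$ change by a global amount, which yields invariance up to the corresponding shift of the homological grading. The step I expect to be hardest is the reduction in the second paragraph: making precise the transport of the subcomplexes $\mathbf{C}(\psi)$, which are defined in terms of the basis $\{v_{\Gamma,\varphi}\}$ rather than intrinsically, through the thick-to-thin cabling, and in particular checking that the edge splittings, which send a single basis vector to a genuine sum of basis vectors by \eqref{lemma-edge-splitting-states-eq-2}, nonetheless induce an injection on the relevant $d$-homology; the remaining steps are citations or finite verifications.
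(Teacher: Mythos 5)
Your skeleton (Lemma \ref{lemma-chain-decomp-states} plus Lemma \ref{lemma-state-subcomplex-1-dim}, reducing the basis statement to the vanishing of $H(\mathbf{C}(\psi))$ for quasi-states $\psi$ that are not states) is sound, and your treatment of the grading formula, Reidemeister invariance and fork sliding is essentially the paper's. The gap is exactly where you flag it, and it is a genuine one rather than a finite verification. The complex of a colored crossing is not defined as a composite of edge splittings, bouquet moves and thin crossings; what exists is a homotopy equivalence $\hat C_P(\widetilde D)\simeq \hat C_P(D)^{\oplus\tau(D)}$ (with shifts) from \cite[Theorem 7.1]{Wu-color-equi}. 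To make your transport argument work you would need a chain map $C_P(D)\to C_P(\widetilde D)$ in $\hch(\hmf)$ that (i) is split injective, hence injective on $H_P$, and (ii) is realized on each resolution by the elementary morphisms of Section \ref{sec-morphisms} so that Lemmas \ref{lemma-bouquet-states}, \ref{lemma-edge-splitting-states} and \ref{chi-maps-action-states} force it to carry $\mathbf{C}(\psi)$ into $\bigoplus_{\psi'\mapsto\psi}\mathbf{C}(\psi')$; only with both does injectivity on total $d$-homology descend to injectivity of $H(\mathbf{C}(\psi))\to\bigoplus_{\psi'}H(\mathbf{C}(\psi'))$ and give the vanishing. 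Neither point is delivered by the lemmas you cite: the cabling map must commute with the crossing differential $d$, which on the thick side is a single $\chi$-morphism and on the cabled side a sum of many thin $\chi$-morphisms, and the fact that edge splitting sends one basis vector to a sum of $\bn{m+n}{m}$ basis vectors (Equation \eqref{lemma-edge-splitting-states-eq-2}) makes this compatibility nontrivial to verify. Also note that ``carried quasi-isomorphically into'' cannot be literally correct, since the target has larger homology; what you need and must prove is injectivity on homology.

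The paper avoids all of this with a dimension count. From the thin case (Gornik's Lemma \ref{lemma-vanish-quasi-state-1}) one gets $\dim_\C H_P(\widetilde D)=|\mathcal{S}(\widetilde D)|$; from \cite[Theorems 3.12 and 7.1]{Wu-color-equi} one gets $\dim_\C H_P(\widetilde D)=\tau(D)\cdot\dim_\C H_P(D)$ with $\tau(D)=\prod_c m_c!\,n_c!$; and a direct count gives $|\mathcal{S}(\widetilde D)|=\tau(D)\cdot|\mathcal{S}(D)|$. Hence $\dim_\C H_P(D)=|\mathcal{S}(D)|$, and since Lemmas \ref{lemma-chain-decomp-states} and \ref{lemma-state-subcomplex-1-dim} already show that $\{v_{D,\psi}\}$ is linearly independent, it is a basis; the vanishing of $H(\mathbf{C}(\psi))$ for non-states follows a posteriori. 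I recommend replacing your second paragraph with this counting argument.
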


\begin{proof}
The statement about the formula for $\mathsf{h}(\psi)$ follows directly from the convention of the homological grading given in \cite[Definitions 6.3, 6.11 and 6.12]{Wu-color-equi}. We leave its proof to the reader.

Next, we prove that $\{v_{D,\psi}~|~\psi\in\mathcal{S}(D)\}$ is a $\C$-linear basis for $H_P(D)$. If all crossings of $D$ are between branches colored by $1$, then, by Lemmas \ref{lemma-chain-decomp-states}, \ref{lemma-state-subcomplex-1-dim} and \ref{lemma-vanish-quasi-state-1}, we know that $\{v_{D,\psi}~|~\psi\in\mathcal{S}(D)\}$ is a $\C$-linear basis for $H_P(D)$. (This special case is essentially \cite[Theorem 2]{Gornik}.) 

\begin{figure}[ht]
\[
\xymatrix{
\input{crossing-before-split-+} \ar@{~>}[d] && \text{or} && \input{crossing-before-split--} \ar@{~>}[d]  \\
\input{crossing-split-+} \ar@{~>}[d] && && \input{crossing-split--} \ar@{~>}[d] \\
\input{crossing-split-slide-+} && && \input{crossing-split-slide--} \\
}
\]
\caption{}\label{split-near crossing-fig}

\end{figure}

Now assume that $D$ is a general knotted MOY graph. Near every crossing of $D$, split each branch into edges colored by $1$ and slide these new edges cross each other. (See Figure \ref{split-near crossing-fig}.) This gives us a new knotted MOY graph $\widetilde{D}$ in which all crossings are between branches colored by $1$. For a crossing $c$ of $D$ between two branches colored by $m$ and $n$, set $\tau(c)=m!\cdot n!$ and define $\tau(D)=\prod_{c} \tau(c)$, where $c$ runs through all crossings of $D$. It is easy to check that $|\mathcal{S}(\widetilde{D})|= \tau(D) \cdot |\mathcal{S}(D)|$. From the above special case of the theorem, we know that $\dim_\C H_P(\widetilde{D})= |\mathcal{S}(\widetilde{D})|$. Moreover, by \cite[Theorems 3.12 and 7.1]{Wu-color-equi}, we know that $\dim_\C H_P(\widetilde{D}) = \tau(D) \cdot \dim_\C H_P(D)$. Putting these together, we get $\dim_\C H_P(D) = |\mathcal{S}(D)|$. But, by Lemmas \ref{lemma-chain-decomp-states} and \ref{lemma-state-subcomplex-1-dim}, $\{v_{D,\psi}~|~\psi\in\mathcal{S}(D)\}$ is linearly independent over $\C$. Therefore, it is a basis for $H_P(D)$.

Let $D$ and $D'$ be knotted MOY graphs differ by a single Reidemeister move or fork sliding. Denote by $f:H_P(D) \rightarrow H_P(D')$ the isomorphism induced by this change. For a state $\psi$ of $D$, let $\psi'$ be the state of $D'$ that is identical to $\psi$ outside the part changed by the Reidemeister move or fork sliding. The map $\psi \mapsto \psi'$ is a one-to-one correspondence between $\mathcal{S}(D)$ and $\mathcal{S}(D')$. From the definition of $v_{D,\psi}$ and $v_{D',\psi'}$, it is easy to check by Lemma \ref{lemma-morphism-compatible-states} that $f(v_{D,\psi}) = a\cdot v_{D',\psi'}$ for some $a \in \C$. The fact that $f$ is an isomorphism implies that $a\neq 0$. This shows that the decomposition 
\[
H_P(D) = \bigoplus_{\psi\in\mathcal{S}(D)} \C \cdot v_{D,\psi}
\]
is invariant under the Reidemeister moves and, up to shifts of the homological grading, invariant under fork sliding. (The possible homological grading shift in the case of a fork sliding comes from the grading convention given in \cite[Definition 6.11]{Wu-color-equi}.)
\end{proof}

\begin{remark}
It is straightforward to check that Lemmas \ref{lemma-bouquet-states}, \ref{lemma-edge-splitting-states}, \ref{chi-maps-action-states}, \ref{lemma-states-circle-creation} and \ref{lemma-saddle-states} are also true for knotted MOY graphs.
\end{remark}

\section{Non-degenerate Pairings and Co-pairings}\label{sec-pairings}

We prove in this section Propositions \ref{thm-pairing-inverse} and \ref{thm-pairing-op}. Again, we fix a generic polynomial $P(X)$ of the form \eqref{def-P} and denote by $\Sigma=\Sigma(P)=\{r_1,\dots,r_N\}$ the set of roots of $P'(X)$.

\subsection{Pairings, co-pairings and dualities} To shows that a graded vector space is the graded dual of another graded vector space, one only needs to establish a non-degenerate pairing that preserves the grading. For filtered spaces, one needs a bit more. In this paper, we provide this extra bit in the form of a non-degenerate co-pairing. The goal of this subsection is to prove Lemma \ref{lemma-pairing-copairing-dual}, which will be used in the proofs of Propositions \ref{thm-pairing-inverse} and \ref{thm-pairing-op}.

\begin{lemma}\label{lemma-filtered-dual-dim}
Let $V$ be a finite dimensional ($\zed$-)filtered linear space over $\C$. For a linear function $f:V\rightarrow \C$, $f \in \fil^i (V^\ast)$ if and only if $f|_{\fil^{-i-1}V}=0$. Therefore, $\dim_\C \fil^i (V^\ast) + \dim_\C \fil^{-i-1}V = \dim_\C V$.
\end{lemma}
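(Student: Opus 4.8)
The plan is to unwind the definition of the filtration on $V^\ast=\Hom_\C(V,\C)$ and then apply the standard annihilator dimension count. Recall that $\C$ is regarded as a filtered space concentrated in degree $0$, i.e. $\fil^j\C=\C$ for $j\geq 0$ and $\fil^j\C=0$ for $j<0$, and that the filtration on $\Hom_\C(V,\C)$ is the one for which $f\in\fil^i(V^\ast)$ precisely when $f(\fil^j V)\subseteq\fil^{i+j}\C$ for every $j\in\zed$. The first step is then the trivial observation that the inclusion $f(\fil^j V)\subseteq\fil^{i+j}\C$ is automatic when $i+j\geq 0$ and becomes the condition $f(\fil^j V)=0$ exactly when $i+j\leq -1$, that is, when $j\leq -i-1$.

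Next I would use that $\fil^\bullet V$ is an \emph{increasing} filtration, so that $\fil^j V\subseteq\fil^{-i-1}V$ whenever $j\leq -i-1$. Hence the whole family of conditions ``$f(\fil^j V)=0$ for all $j\leq -i-1$'' collapses to the single condition $f(\fil^{-i-1}V)=0$, i.e. $f|_{\fil^{-i-1}V}=0$; conversely, $f|_{\fil^{-i-1}V}=0$ forces $f(\fil^j V)=0$ for all $j\leq -i-1$. Combining this with the first step yields the asserted equivalence: $f\in\fil^i(V^\ast)$ if and only if $f|_{\fil^{-i-1}V}=0$.

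For the dimension formula I would observe that the equivalence just proved identifies $\fil^i(V^\ast)$ with the annihilator in $V^\ast$ of the subspace $\fil^{-i-1}V\subseteq V$. Since $V$ is finite dimensional, restriction of linear functionals gives an isomorphism of that annihilator onto $(V/\fil^{-i-1}V)^\ast$, whence $\dim_\C\fil^i(V^\ast)=\dim_\C(V/\fil^{-i-1}V)=\dim_\C V-\dim_\C\fil^{-i-1}V$, which is exactly the claimed identity. I do not expect a genuine obstacle here, as the argument is elementary linear algebra; the one point that needs care is the bookkeeping of conventions — the direction (increasing) of $\fil^\bullet$ and the degree-$0$ placement of $\C$ — so that the index shift lands precisely on $-i-1$ rather than $-i$, after which the rest is automatic and finite-dimensionality enters only to match the annihilator with the dual of the quotient.
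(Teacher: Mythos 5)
Your proposal is correct and follows essentially the same route as the paper: both unwind the definition of the induced filtration on $V^\ast$ using that $\C$ sits in degree $0$ (so $\fil^{-1}\C=0$) to get the equivalence $f\in\fil^i(V^\ast)\Leftrightarrow f|_{\fil^{-i-1}V}=0$, and then the dimension identity is the standard annihilator count. Your write-up is slightly more explicit than the paper's about the annihilator/quotient-dual step, but there is no substantive difference.
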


\begin{proof}
If $f \in \fil^i (V^\ast)$, then $f(\fil^{-i-1}V) \subset \fil^{-1}\C=0$. So $f|_{\fil^{-i-1}V}=0$. 

If $f|_{\fil^{-i-1}V}=0$, then, for any $v\in V$,
\begin{itemize}
	\item $\deg v \leq -i-1$ $\Longrightarrow$ $f(v)=0$ $\Longrightarrow$ $\deg f(v) \leq i+ \deg v$,
	\item $\deg v \geq -i$ $\Longrightarrow$ $f(v)\in \C$ $\Longrightarrow$ $\deg f(v) = 0 \leq i+ \deg v$.
\end{itemize}
Thus, $f \in \fil^i (V^\ast)$.
\end{proof}

\begin{lemma}\label{lemma-pairing-copairing-dual}
Let $U$ and $V$ be two finite dimensional ($\zed$-)filtered linear spaces of the same dimension over $\C$. Assume there are a filtration preserving non-degenerate pairing $\nabla: U\otimes V \rightarrow \C$ and a filtration preserving non-degenerate co-pairing $\Delta:\C \rightarrow U\otimes V$. Then $U \cong V^\ast$ as filtered spaces.
\end{lemma}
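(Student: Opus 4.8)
The plan is to use the two structures to build mutually inverse filtration‑preserving maps between $V^\ast$ and $U$, and then dualize. First I would use the pairing $\nabla:U\otimes V\to\C$ to define a linear map $\Phi:U\to V^\ast$ by $\Phi(u)=\nabla(u\otimes -)$; since $\nabla$ preserves the filtration, $\Phi$ is filtration preserving, i.e. $\Phi(\fil^iU)\subset\fil^i(V^\ast)$. Because $\nabla$ is non-degenerate as a bilinear form and $\dim U=\dim V=\dim V^\ast$, $\Phi$ is a linear isomorphism of the underlying spaces. The content of the lemma is that $\Phi$ is an isomorphism of \emph{filtered} spaces, i.e. that $\Phi^{-1}$ is also filtration preserving; this does not follow from non-degeneracy of $\nabla$ alone, which is exactly why the co-pairing is needed.

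Next I would bring in $\Delta:\C\to U\otimes V$. Write $\Delta(1)=\sum_k u_k\otimes v_k\in U\otimes V$; the fact that $\Delta$ preserves the filtration means $\Delta(1)\in\fil^0(U\otimes V)$, and its dual $\Delta^\ast:U^\ast\otimes V^\ast\to\C$ is the non-degenerate bilinear form $(\xi\otimes\zeta)\mapsto\sum_k\xi(u_k)\zeta(v_k)$. I would use $\Delta^\ast$ to define a filtration preserving map $\Psi:V^\ast\to U$ in the ``opposite direction'' — concretely, $\Psi(\zeta)=\sum_k\zeta(v_k)\,u_k$. Using Lemma \ref{lemma-filtered-dual-dim}, the filtration-preservation of $\Delta$ translates into: if $\zeta\in\fil^i(V^\ast)$ then $\Psi(\zeta)\in\fil^i U$. (One checks this by pairing against an arbitrary $\xi\in U^\ast$: $\xi(\Psi(\zeta))=\Delta^\ast(\xi\otimes\zeta)$, and filtration preservation of $\Delta^\ast$ bounds the degree.) Then I would show $\Phi\circ\Psi$ and $\Psi\circ\Phi$ are the identity, or at least that the composite $\Psi\circ\Phi:U\to U$ is an isomorphism: this is the standard ``zig-zag''/snake identity type computation, using that $\nabla$ and $\Delta$ are built from the same duality data; a dimension count ($\Phi,\Psi$ both injective since non-degenerate, equal dimensions) then forces both composites to be isomorphisms.

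The cleanest way to finish is the following dimension argument, which avoids having to verify a snake identity. Since $\Phi:U\to V^\ast$ is filtration preserving, $\dim_\C\Phi(\fil^iU)\le\dim_\C\fil^i(V^\ast)$, hence $\dim_\C\fil^iU\le\dim_\C\fil^i(V^\ast)$ for every $i$. Since $\Psi:V^\ast\to U$ is filtration preserving and injective, $\dim_\C\fil^i(V^\ast)\le\dim_\C\fil^iU$ for every $i$. Therefore $\dim_\C\fil^iU=\dim_\C\fil^i(V^\ast)$ for all $i$. Combined with the fact that the bijective filtration‑preserving map $\Phi$ sends $\fil^iU$ into $\fil^i(V^\ast)$, equality of dimensions of these subspaces forces $\Phi(\fil^iU)=\fil^i(V^\ast)$ for all $i$, i.e. $\Phi$ is a filtered isomorphism and $U\cong V^\ast$ as filtered spaces.

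The main obstacle is the middle step: correctly extracting from ``$\Delta$ preserves the filtration'' (a statement about $\Delta(1)$ lying in $\fil^0(U\otimes V)$, equivalently about $\Delta^\ast$ being a degree $\le 0$ bilinear form) the inequality $\dim_\C\fil^i(V^\ast)\le\dim_\C\fil^iU$. This requires pinning down the filtration on $U\otimes V$ and on $U^\ast\otimes V^\ast$, invoking Lemma \ref{lemma-filtered-dual-dim} to identify $\fil^i(V^\ast)$ with the annihilator of $\fil^{-i-1}V$, and checking that $\Psi$ as defined really does carry $\fil^i(V^\ast)$ into $\fil^iU$ and is injective (injectivity is where non-degeneracy of $\Delta^\ast$ enters). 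Once the bookkeeping of filtration degrees on tensor products and duals is set up correctly, the rest is a short dimension count.
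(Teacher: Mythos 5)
Your proposal is correct and follows essentially the same strategy as the paper: both arguments squeeze $\dim_\C\fil^iU$ and $\dim_\C\fil^i(V^\ast)$ between two inequalities, one coming from the filtration-preserving isomorphism $U\to V^\ast$ induced by $\nabla$ and the other from the co-pairing, and conclude that the first map restricts to isomorphisms on each filtration level. The only (cosmetic) difference is that you extract the reverse inequality from an explicit contraction $\Psi:V^\ast\to U$ built from $\Delta(1)$, whereas the paper applies the same first-step argument to the dual pairing $\Delta^\ast:U^\ast\otimes V^\ast\to\C$ and converts $\dim_\C\fil^jU^\ast\le\dim_\C\fil^jV$ into the needed inequality via Lemma \ref{lemma-filtered-dual-dim}; you are also right to discard the zig-zag identity, which is not assumed and is not needed.
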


\begin{proof}
Define a linear map $F:U \rightarrow V^\ast$ by $F(u)(v)= \nabla(u\otimes v)$. Since $\nabla$ is non-degenerate, $F$ is a $\C$-linear isomorphism. Since $\nabla$ preserves the filtration, we have $F(\fil^i U) \subset \fil^i V^\ast$. In particular, $\dim_\C \fil^i U \leq \dim_\C \fil^i V^\ast$. Similarly, using the filtration preserving non-degenerate pairing $\Delta^\ast: U^\ast \otimes V^\ast  \rightarrow \C$, we get that $\dim_\C \fil^j U^\ast \leq \dim_\C \fil^j V$. When $j=-i-1$, by Lemma \ref{lemma-filtered-dual-dim}, this is $\dim_\C U - \dim_\C \fil^i U \leq \dim_\C V - \dim_\C \fil^i V^\ast$. So $ \dim_\C \fil^i U \geq \dim_\C \fil^i V^\ast$. Thus, $F|_{\fil^i U}: \fil^i U \rightarrow \fil^i V^\ast$ is an isomorphism.
\end{proof}

\begin{figure}[ht]
\[
\xymatrix{
\input{crossing-blue-up-+} \ar@{~>}[r] & \input{crossing-red-down--} \ar@{~>}[rr] && \input{crossing-blue-up-+-red--} \\
\input{crossing-blue-up--} \ar@{~>}[r] & \input{crossing-red-down-+} \ar@{~>}[rr] && \input{crossing-blue-up---red-+}
}
\]
\caption{}\label{L-and-L-bar-fig}

\end{figure}

\subsection{Proposition \ref{thm-pairing-inverse}} We are now ready to prove Proposition \ref{thm-pairing-inverse}. 

\begin{proof}[Proof of Proposition \ref{thm-pairing-inverse}]
Let $L$ be a colored link. We fix a diagram for $L$. Denote by $\overline{L}$ the colored link obtained from $L$ by reversing the orientation of every component of $L$ and switching the upper- and lower-branches at each crossing. Using Reidemeister moves, we put $\overline{L}$ just behind $L$. (See Figure \ref{L-and-L-bar-fig}, where the \textcolor{SkyBlue}{blue} part represents $L$ and the \textcolor{BrickRed}{red} part represents $\overline{L}$.) Then each crossing in $L$ becomes a set of four crossings in this diagram of $L\sqcup\overline{L}$ and each arc in $L$ becomes a pair of arcs of the same color but opposite orientations.

\begin{figure}[ht]
\[
\xymatrix{
 \input{crossing-blue-up-+-red--} \ar@{<~>}[r] & \input{crossing-blue-up-+-red--saddle}
}
\]
\caption{}\label{L-and-L-bar-saddle-fig}

\end{figure}

As in Figure \ref{L-and-L-bar-saddle-fig}, perform a saddle move in the middle of each such pair of arcs. This gives a knotted MOY graph $D$, which is a collection of unlinked unknots. Denote by $\Psi:H_P(L\sqcup\overline{L}) \rightarrow H_P(D)$ the homomorphism induced by these saddle moves. Near the location of each of these saddle moves in $D$, one can perform another saddle move that reverses the effect of the first saddle move. Altogether, these new saddle moves change $D$ back into $L\sqcup\overline{L}$. Denote by $\Phi:H_P(D) \rightarrow H_P(L\sqcup\overline{L})$ the homomorphism induced by these new saddle moves. 

One can use Reidemeister (II) moves to change $D$ into a collection of disjoint circles in the plane and then use circle annihilations to remove all of these circles. This changes $D$ into $\emptyset$ and induces a homomorphism $\mathfrak{e}: H_P(D) \rightarrow \C (\cong H_P(\emptyset))$. Starting from $\emptyset$, one can use the circle creations that reverse the above circle annihilations and then the Reidemeister (II) moves that reverses aforementioned Reidemeister (II) moves to obtain $D$. This induces a homomorphism $\mathfrak{i}: \C \rightarrow H_P(D)$.

Let $c$ be a crossing of $L$. Assume the two branches of $c$ are colored by $m$ and $n$. Define $\rho(c)=m(N-m)+n(N-n)$. Then define $\rho(L)=\sum_c \rho(c)$, where $c$ runs through all crossings of $L$. By the definitions of the homomorphisms induced by saddle moves, circle creations and annihilations in \cite[Section 4]{Wu-color-equi}, we have $\deg_q \Psi \leq \rho(L)$, $\deg_q \Phi \leq \rho(L)$, $\deg_q \mathfrak{e} \leq -\rho(L)$ and $\deg_q \mathfrak{i} \leq -\rho(L)$. Thus, $\deg_q \mathfrak{e} \circ \Psi \leq 0$ and $\deg_q \Phi \circ \mathfrak{i} \leq 0$. Note that $H_P(L\sqcup\overline{L}) \cong H_P(L) \otimes_\C H_P(\overline{L})$. So $\overline{\nabla}:=\mathfrak{e} \circ \Psi$ and $\overline{\Delta}:=\Phi \circ \mathfrak{i}$ are linear pairing and co-pairing of $H_P(L)$ and $H_P(\overline{L})$ that preserve the quantum filtration.

It remains to show that $\overline{\nabla}$ and $\overline{\Delta}$ are non-degenerate. To do so, we use the bases $\{v_{L,\psi}~|~\psi \in \mathcal{S}(L)\}$ and $\{v_{L,\varphi}~|~\varphi \in \mathcal{S}(\overline{L})\}$ of $H_P(L)$ and $H_P(\overline{L})$ from Theorem \ref{thm-basis}.

The change from diagram $L$ to diagram $\overline{L}$ gives a one-to-one correspondence of components of $L$ and $\overline{L}$. For a state $\psi$ of $L$, denote by $\overline{\psi}$ the state of $\overline{L}$ such that $\psi(K)=\overline{\psi}(\overline{K})$ for any component $K$ of $L$ and its corresponding component $\overline{K}$ of $\overline{L}$. Clearly, $\psi \mapsto \overline{\psi}$ is a one-to-one correspondence of states of $L$ and $\overline{L}$. By Lemmas \ref{lemma-states-circle-creation}, \ref{lemma-saddle-states} and the invariance part of Theorem \ref{thm-basis}, we know that:
\begin{enumerate}[(i)]
	\item For any state $\psi$ of $L$ and any state $\varphi$ of $\overline{L}$,
	\[
	\overline{\nabla} (v_{L,\psi} \otimes v_{\overline{L},\varphi}) \begin{cases}
	\neq 0 & \text{if } \varphi = \overline{\psi}, \\
	=0 & \text{if } \varphi \neq \overline{\psi}.
	\end{cases}
	\]
	So $\overline{\nabla}$ is non-degenerate.
	\item \[\overline{\Delta} (1) = \sum_{\psi \in \mathcal{S}(L)} c_\psi v_{L,\psi} \otimes v_{\overline{L},\overline{\psi}},
	\]
	where $c_\psi \in \C\setminus\{0\}$ for all $\psi \in \mathcal{S}(L)$. So $\overline{\Delta}$ is non-degenerate.
\end{enumerate}
Therefore, $H_P(\overline{L}) \cong H_P(L)^\ast$ by Lemma \ref{lemma-pairing-copairing-dual}.
\end{proof}

\subsection{Proposition \ref{thm-pairing-op}} Next, we prove Proposition \ref{thm-pairing-op}. The proof is similar to that of Proposition \ref{thm-pairing-inverse}. We need the following lemma.

\begin{lemma}\label{lemma-l-N-crossings}
\begin{equation}\label{eq-l-N-+}
\hat{C}_P (\setlength{\unitlength}{1pt}
\begin{picture}(40,40)(-20,0)

\put(-20,-20){\vector(1,1){40}}

\put(20,-20){\line(-1,1){15}}

\put(-5,5){\vector(-1,1){15}}

\put(-11,15){\tiny{$_l$}}

\put(8,15){\tiny{$_N$}}

\end{picture}) \cong \hat{C}_P (\setlength{\unitlength}{.5pt}
\begin{picture}(85,45)(-40,45)

\put(-20,0){\vector(0,1){45}}

\put(-20,45){\vector(0,1){45}}

\put(20,0){\vector(0,1){45}}

\put(20,45){\vector(0,1){45}}

\put(-20,45){\vector(1,0){40}}

\put(-35,20){\tiny{$_N$}}

\put(25,20){\tiny{$_{l}$}}

\put(-32,65){\tiny{$_l$}}

\put(25,65){\tiny{$_N$}}

\put(-13,38){\tiny{$_{N-l}$}}

\end{picture})\|l\|\{q^{-l}\}, \vspace{.5cm}
\end{equation}
\begin{equation}\label{eq-l-N--}
\hat{C}_P (\setlength{\unitlength}{1pt}
\begin{picture}(40,40)(-20,0)

\put(20,-20){\vector(-1,1){40}}

\put(-20,-20){\line(1,1){15}}

\put(5,5){\vector(1,1){15}}

\put(-11,15){\tiny{$_l$}}

\put(8,15){\tiny{$_N$}}

\end{picture}) \cong \hat{C}_P ()\|-l\|\{q^{l}\}, \vspace{.7cm}
\end{equation}
where $\hat{C}_P$ is the unnormalized chain complex. (See \cite[Definition 6.11]{Wu-color-equi}.) Consequently,
\begin{equation}\label{eq-l-N--to+}
\hat{C}_P () \cong \hat{C}_P ()\|2l\|\{q^{-2l}\}, \vspace{.5cm}
\end{equation}
\begin{equation}\label{eq-l-N-+to-}
\hat{C}_P () \cong \hat{C}_P ()\|-2l\|\{q^{2l}\}. \vspace{.7cm}
\end{equation}
Moreover, at homology level, the above isomorphisms map the base element associated to each state to a multiple of the base element associated to the unique compatible state. 
\end{lemma}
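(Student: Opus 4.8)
The plan is to reduce Lemma \ref{lemma-l-N-crossings} to the combinatorics of the crossing resolution in Figure \ref{crossing-res-state-fig} specialized to the case where one of the branches carries the maximal color $N$. The key observation is that when one branch is colored $N$, the crossing admits essentially only one resolution: in the notation of Figure \ref{crossing-res-state-fig}, if the incoming branch colored $N$ is paired with a branch colored $l$, then the ``wide'' edge $e_4$ would be colored $N+l$, which is impossible unless we take $k=l$ (so that $e_1$ is colored $l$, $e_3$ is colored $0$, and $e_4$ is colored $N$), or $k=0$ (so $e_2$ is colored $0$, $e_4$ colored $N$, and $e_3$ colored $l$). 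An edge colored $0$ can be deleted, so in each case the resolution MOY graph simplifies to a single vertex splitting, i.e. the right-hand side of \eqref{eq-l-N-+}. Thus the unnormalized complex $\hat{C}_P$ of the crossing collapses to a single term, up to the grading shifts recorded in \cite[Definition 6.11]{Wu-color-equi}, which is precisely what \eqref{eq-l-N-+} and \eqref{eq-l-N--} assert. I would first verify the chain-level isomorphisms \eqref{eq-l-N-+} and \eqref{eq-l-N--} by this resolution-counting argument, then obtain \eqref{eq-l-N--to+} and \eqref{eq-l-N-+to-} by composing.

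For the final statement — that at homology level these isomorphisms send a basis element $v_{D,\psi}$ to a nonzero multiple of $v_{D',\psi'}$ for the unique compatible state $\psi'$ — I would argue as follows. The isomorphisms in \eqref{eq-l-N-+}--\eqref{eq-l-N-+to-} are, after unwinding, composites of the elementary morphisms studied in Section \ref{sec-morphisms}: edge splittings/mergings ($\phi$, $\overline{\phi}$), $\chi$-morphisms, and (for the $0$-colored edge removal) bouquet moves and the degenerate edge-splitting where one factor is trivial. By the Remark at the end of Section \ref{sec-basis-link-construction}, Lemmas \ref{lemma-bouquet-states}, \ref{lemma-edge-splitting-states}, \ref{chi-maps-action-states}, \ref{lemma-states-circle-creation}, and \ref{lemma-saddle-states} all hold for knotted MOY graphs. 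Each of those lemmas says precisely that the morphism in question carries a basis element to a nonzero scalar times the basis element of a (in the relevant cases, unique) compatible state. The key point is that in the present situation — one branch colored $N$ — the ``compatible state'' is genuinely unique: since $|\psi(K)| = N$ forces $\psi(K) = \Sigma$ on the $N$-colored strand, the admissibility conditions at the crossing (Definition \ref{states-knotted-MOY}) pin down the values of $\psi$ on the auxiliary edges $e_1,\dots,e_4$ uniquely, by the same argument as in Lemma \ref{lemma-unique-state-res}. Composing the per-move statements then gives $f(v_{D,\psi}) = a\cdot v_{D',\psi'}$ with $a\neq 0$, and $\psi'$ the unique state of $D'$ agreeing with $\psi$ outside the changed region.

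Concretely, I would carry out the steps in this order. First, enumerate the resolutions of the $l/N$ crossing and show only one survives (with an explicit identification of the surviving MOY graph and its grading shift); this yields \eqref{eq-l-N-+} and \eqref{eq-l-N--} at the chain level, hence at homology. Second, deduce \eqref{eq-l-N--to+} and \eqref{eq-l-N-+to-} by composing \eqref{eq-l-N-+} with the inverse of \eqref{eq-l-N--} and vice versa, tracking grading shifts. Third, factor the chain-level isomorphism into the elementary local moves of Section \ref{sec-morphisms} (this is the bookkeeping step: write the collapse of the crossing as ``delete a $0$-colored edge'' $=$ bouquet move $+$ trivial edge split, possibly preceded by a $\chi$-morphism identifying the one nonzero resolution). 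Fourth, invoke Lemmas \ref{lemma-bouquet-states}--\ref{lemma-saddle-states} (in their knotted-MOY form) and the uniqueness of the compatible state to conclude the basis-element statement.

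The main obstacle I anticipate is the third step: correctly identifying which sequence of elementary morphisms the ``collapse'' isomorphism decomposes into, and checking that this decomposition is the one actually realized by the chain isomorphisms of \cite{Wu-color-equi} rather than merely \emph{an} isomorphism with the same source and target. If the chain isomorphism does not literally factor through the named morphisms, I would instead argue directly: the isomorphism $f$ is $\Sym$-linear over the alphabets on the external edges, and by Lemma \ref{lemma-MOY-homology-multiplication} a basis element $v_{D,\psi}$ is characterized inside $H_P(D)$ as the (up to scalar) unique element on which each external-edge symmetric polynomial acts by the scalar determined by $\psi$. Since $f$ intertwines these actions — external edges being unchanged by the local move — $f(v_{D,\psi})$ lies in the one-dimensional eigenspace cut out by the matching scalars in $H_P(D')$, which is exactly $\C\cdot v_{D',\psi'}$; nonvanishing of the scalar is then forced by $f$ being an isomorphism. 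This eigenspace argument sidesteps the factorization bookkeeping entirely and is the fallback I would use if the explicit decomposition proves awkward.
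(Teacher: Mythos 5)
Your proposal is correct and is essentially the paper's argument: the complex collapses to the single resolution with no edge colored above $N$ (the paper gets this from the fact that a MOY graph with an edge of color greater than $N$ has null-homotopic matrix factorization, i.e.\ \cite[Lemma 3.8]{Wu-color-equi}), and the grading shifts are read off from \cite[Definition 6.11]{Wu-color-equi}. For the final claim about basis elements, the paper does exactly what you call your ``fallback'': it invokes Lemma \ref{lemma-morphism-compatible-states} together with uniqueness of the compatible state, so the elaborate factorization into elementary moves in your third step is unnecessary.
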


\begin{proof}
By \cite[Lemma 3.8]{Wu-color-equi}, if a MOY graph has an edge with color greater than $N$, then its matrix factorization is homotopic to $0$. Thus, by \cite[Definitions 6.3 and 6.11]{Wu-color-equi}, 
\[
\hat{C}_P () \cong 0\rightarrow C_P ()\|l\|\{q^{-l}\} \rightarrow 0 \cong \hat{C}_P ()\|l\|\{q^{-l}\}. \vspace{.7cm}
\]
This proves Isomorphism \eqref{eq-l-N-+}. Isomorphism \eqref{eq-l-N--} follows similarly. Isomorphisms \eqref{eq-l-N--to+} and \eqref{eq-l-N-+to-} are obvious corollaries of \eqref{eq-l-N-+} and \eqref{eq-l-N--}. The conclusion about base elements of the homology follows easily from the definition of the basis and Lemma \ref{lemma-morphism-compatible-states}.
\end{proof}

\begin{figure}[ht]
\[
\xymatrix{
\input{crossing-blue-up-+} \ar@{~>}[r] & \input{crossing-red-up--} \ar@{~>}[rr] && \input{crossing-blue-up-+-red-up-} \\
\input{crossing-blue-up--} \ar@{~>}[r] & \input{crossing-red-up-+} \ar@{~>}[rr] && \input{crossing-blue-up---red-up+}
}
\]
\caption{}\label{L-and-L-op-fig}
\end{figure}

\begin{proof}[Proof of Proposition \ref{thm-pairing-op}]
Let $L$ be a colored link. We fix a diagram for $L$. Denote by $L^{op}$ the colored link obtained from $L$ by changing each color $k$ to $N-k$ and switching the upper- and lower-branches at each crossing. Using Reidemeister moves, we put $L^{op}$ just behind $L$. (See Figure \ref{L-and-L-op-fig}, where the \textcolor{SkyBlue}{blue} part represents $L$ and the \textcolor{BrickRed}{red} part represents $L^{op}$.) Then each crossing in $L$ becomes a set of four crossings in this diagram of $L\sqcup L^{op}$ and each arc of color $k$ in $L$ becomes a pair of parallel arcs in $L\sqcup L^{op}$ in the same direction with colors $k$ and $N-k$. 

\begin{figure}[ht]
\[
\xymatrix{
\input{v-vectors-red-blue} \ar@<1ex>[rr]^{\chi^0} && \input{v-vectors-red-blue-fussed} \ar@<1ex>[ll]^{\chi^1}
}
\]
\caption{}\label{L-and-L-op-arc-fusion-fig}
\end{figure}

In the middle of each such pair of arcs in $L\sqcup L^{op}$, perform the operation in Figure \ref{L-and-L-op-arc-fusion-fig}, which induces two homomorphisms $\chi^0$ and $\chi^1$ of the homology by \cite[Proposition 4.12 and Lemma 4.13]{Wu-color-equi}. All such operations together change $L\sqcup L^{op}$ into a knotted MOY graph $D$. Composing the $\chi$-homomorphisms corresponding to all such operations gives us homomorphisms $\nabla_1:C_P(L\sqcup L^{op})\rightarrow C_P(D)$ and $\Delta_1:C_P(D) \rightarrow C_P(L\sqcup L^{op})$. For a crossing $c$ of $L$ where the colors of the two branches are $m$ and $n$, let $\jmath(c)=m(N-m)+n(N-n)$. Define
\begin{equation}\label{eq-def-jmath}
\jmath(L)=\sum_c \jmath(c),
\end{equation}
where $c$ runs through all crossings of $L$. By \cite[Proposition 4.12 and Lemma 4.13]{Wu-color-equi}, one can easily check that $\nabla_1$ and $\Delta_1$ preserve the homological grading and their quantum degrees satisfy $\deg_q \nabla_1 \leq \jmath(L)$ and $\deg_q \Delta_1 \leq \jmath(L)$.

\begin{figure}[ht]
\[
\xymatrix{
\input{crossing-blue-up-+} \ar@{~>}[r] & \input{crossing-blue-up-+-red-up-} \ar@{~>}[rr] && \input{crossing-blue-up-+-red-up-fused} \\
\input{crossing-blue-up--} \ar@{~>}[r] & \input{crossing-blue-up---red-up+} \ar@{~>}[rr] && \input{crossing-blue-up---red-up+fused}
}\]
\caption{}\label{L-and-L-op-crossing-fusion-fig}
\end{figure}

Figure \ref{L-and-L-op-crossing-fusion-fig} shows what $D$ looks like near a crossing of $L$. If $n=m$, by \cite[Definition 6.11]{Wu-color-equi}, we have
\[
C_P(\input{crossing-blue-up---red-up+fused-m-m}) \cong \hat{C}_P(\input{crossing-blue-up---red-up+fused-m-m})\|2m-N\|\{q^{N-2m}\}. \vspace{1cm}
\]
By \cite[Theorem 7.1]{Wu-color-equi}, we have that
\[
\hat{C}_P(\input{crossing-blue-up---red-up+fused-m-m}) \simeq \hat{C}_P(\input{crossing-blue-up---red-up+fused-m-m-1}). \vspace{1cm}
\]
By Isomorphism \eqref{eq-l-N-+to-} in Lemma \ref{lemma-l-N-crossings}, we have
\[
\hat{C}_P(\input{crossing-blue-up---red-up+fused-m-m-1}) \cong \hat{C}_P(\input{crossing-blue-up---red-up+fused-m-m-2})\|-2m\|\{q^{2m}\}. \vspace{1cm}
\]
Applying \cite[Theorem 7.1]{Wu-color-equi} again, we get
\[
\hat{C}_P(\input{crossing-blue-up---red-up+fused-m-m-2}) \simeq\hat{C}_P(\input{crossing-blue-up---red-up+fused-m-m-3}).        \vspace{1cm}
\]
Next, applying Isomorphism \eqref{eq-l-N-+} in Lemma \ref{lemma-l-N-crossings}, we get
\[
\hat{C}_P(\input{crossing-blue-up---red-up+fused-m-m-3}) \cong  C_P(\input{crossing-blue-up---red-up+fused-m-m-4})\|N\|\{q^{-N}\}   \vspace{1cm}
\]
Putting all these together, we get
\begin{equation}\label{eq-neg-res-case1}
C_P(\input{crossing-blue-up---red-up+fused-m-m}) \simeq C_P(\input{crossing-blue-up---red-up+fused-m-m-4}). \vspace{1cm}
\end{equation}
Similarly, one can check that, if $n=N-m$, then
\begin{equation}\label{eq-neg-res-case2}
C_P(\input{crossing-blue-up---red-up+fused-m-N-m}) \simeq C_P(\input{crossing-blue-up---red-up+fused-m-N-m-4}), \vspace{1cm}
\end{equation}
and if $n\neq m$ or $N-m$, then
\begin{equation}\label{eq-neg-res-case3}
C_P(\input{crossing-blue-up---red-up+fused-m-n}) \simeq C_P(\input{crossing-blue-up---red-up+fused-m-n-4})\|N-2m\|\{q^{2m-N}\} . \vspace{1cm}
\end{equation}
Using the shifting $\mathsf{s}$ defined in Lemma \ref{lemma-grading-shift-pairing-op}, we get that, in all three cases,
\begin{equation}\label{eq-neg-res-general}
C_P(\input{crossing-blue-up---red-up+fused-m-n}) \simeq C_P(\input{crossing-blue-up---red-up+fused-m-n-4})\|-i\|\{q^{i}\} . \vspace{1cm}
\end{equation}
where $i=\mathsf{s}(\setlength{\unitlength}{1pt}
\begin{picture}(40,20)(-20,0)

\put(20,-20){\textcolor{SkyBlue}{\vector(-1,1){40}}}

\put(-20,-20){\textcolor{SkyBlue}{\line(1,1){15}}}

\put(5,5){\textcolor{SkyBlue}{\vector(1,1){15}}}

\put(-11,15){\textcolor{SkyBlue}{\tiny{$_m$}}}

\put(9,15){\textcolor{SkyBlue}{\tiny{$_n$}}}

\end{picture})$. \vspace{.7cm} Similarly, 
\begin{equation}\label{eq-pos-res-general}
C_P(\input{crossing-blue-up-+-red-up-fused-m-n}) \simeq C_P(\input{crossing-blue-up---red-up+fused-m-n-4})\|-j\|\{q^{j}\} , \vspace{1cm}
\end{equation}
where $j=\mathsf{s}(\setlength{\unitlength}{1pt}
\begin{picture}(40,20)(-20,0)

\put(-20,-20){\textcolor{SkyBlue}{\vector(1,1){40}}}

\put(20,-20){\textcolor{SkyBlue}{\line(-1,1){15}}}

\put(-5,5){\textcolor{SkyBlue}{\vector(-1,1){15}}}

\put(-11,15){\textcolor{SkyBlue}{\tiny{$_m$}}}

\put(9,15){\textcolor{SkyBlue}{\tiny{$_n$}}}

\end{picture})$. \vspace{.7cm}

Recall that each crossing in $L$ leads to a local configuration in $D$ of the form on the left hand side of \eqref{eq-neg-res-general} or \eqref{eq-pos-res-general}. (See Figure \ref{L-and-L-op-crossing-fusion-fig}.) One can change $D$ into an embedded MOY graph $\Gamma$ (that is, no crossings) by replacing each of these with the corresponding local configuration on the right hand side of \eqref{eq-neg-res-general} or \eqref{eq-pos-res-general}. The homotopy equivalence \eqref{eq-neg-res-general} and \eqref{eq-pos-res-general} imply that 
\begin{equation}\label{eq-D-to-Gamma}
C_P(D) \simeq C_P(\Gamma)\|-\mathsf{s}(L)\|\{q^{\mathsf{s}(L)}\}.
\end{equation}
The homotopy equivalence \eqref{eq-D-to-Gamma} induces isomorphisms $\nabla_2:H_P(D)\rightarrow H_P(\Gamma)$ and $\Delta_2:H_P(\Gamma) \rightarrow H_P(D)$, where $\nabla_2$ has homological grading $\mathsf{s}(L)$ and quantum degree $-\mathsf{s}(L)$, and $\Delta_2$ has homological grading $-\mathsf{s}(L)$ and quantum degree $\mathsf{s}(L)$. Moreover, the change from $D$ to $\Gamma$ comes with an obvious one-to-one correspondence between compatible states of $D$ and $\Gamma$. By Lemmas \ref{lemma-l-N-crossings} and \ref{lemma-morphism-compatible-states}, $\nabla_2$ and $\Delta_2$ map the base element associated to each state to a multiple of the base element associated to the unique compatible state.

Now apply edge merging to each \textcolor{BrickRed}{red}-\textcolor{SkyBlue}{blue} ``bubble" in $\Gamma$. This changes $\Gamma$ into a collection of disjoint circles of color $N$ in the plane. Then apply circle annihilation to each of these circles. This gives the empty MOY graph $\emptyset$. Together, these changes induce a homomorphism $\nabla_3:H_P(\Gamma) \rightarrow \C (= H_P(\emptyset)$.) The circle creations and edge splittings that reverse the above changes induce a homomorphism $\Delta_3:\C \rightarrow H_P(\Gamma)$. From the definitions of the homomorphisms induced by these local changes, one can see that $\nabla_3$ and $\Delta_3$ preserve the homological grading and have quantum degree $\leq -\jmath(L)$, where $\jmath(L)$ is defined by Equation \eqref{eq-def-jmath}.

Finally, define
\begin{eqnarray*}
\nabla^{op} & = & \nabla_3 \circ \nabla_2 \circ \nabla_1: H_P(L)\otimes H_P(L^{op}) (\cong H_P(L\sqcup L^{op})) \rightarrow \C, \\
\Delta^{op} & = & \Delta_1 \circ \Delta_2 \circ \Delta_3: \C \rightarrow  H_P(L)\otimes H_P(L^{op}).
\end{eqnarray*}
Then $\nabla^{op}$ is a pairing of homological grading $\mathsf{s}(L)$ and quantum degree $\leq -\mathsf{s}(L)$ and $\Delta^{op}$ is a co-pairing of homological grading $-\mathsf{s}(L)$ and quantum degree $\leq \mathsf{s}(L)$ For every state $\psi$ of $L$, define the state $\psi^{op}$ of $L^{op}$ by $\psi^{op}(K^{op})=\Sigma \setminus \psi(K)$ for every component $K$ of $L$, where $K^{op}$ is the component of $L^{op}$ that corresponds to $K$. The map $\psi\mapsto\psi^{op}$ is a one-to-one correspondence between the states of $L$ and $L^{op}$. By Lemmas \ref{lemma-edge-splitting-states}, \ref{chi-maps-action-states}, \ref{lemma-states-circle-creation} and the fact that $\nabla_2$ and $\Delta_2$ preserve states, one can easily check that
\begin{enumerate}[(i)]
	\item For any states $\psi$ of $L$ and $\varphi$ of $L^{op}$,
	\[
	\nabla^{op} (v_{L,\psi}\otimes v_{L^{op},\varphi}) \begin{cases}
	\neq 0 & \text{if } \varphi = \psi^{op}, \\
	=0 & \text{if } \varphi \neq \psi^{op}.
	\end{cases}
	\] 
	So $\nabla^{op}$ is non-degenerate.
	\item $\Delta^{op}(1) = \sum_{\psi \in \mathcal{S}(L)} c_\psi \cdot v_{L,\psi} \otimes v_{L^{op},\psi^{op}}$, where $c_\psi \in \C\setminus\{0\}$ for every $\psi \in \mathcal{S}(L)$. So $\Delta^{op}$ is non-degenerate.
\end{enumerate}
Thus, by Lemma \ref{lemma-pairing-copairing-dual}, we have $H_P(L^{op}) \cong \Hom_\C(H_P(L),\C) \|-\mathsf{s}(L)\| \{q^{\mathsf{s}(L)}\}$.
\end{proof}

\section{Bounds for the Slice Genus and the Self Linking Number}\label{sec-bounds}

We prove Theorems \ref{thm-ras-genus} and \ref{thm-ras-bennequin} in this section. The arguments we use are more or less straightforward generalizations of those in \cite{Ras1,Shumakovitch}. Again, we fix a generic polynomial $P(X)$ of the form \eqref{def-P} and denote by $\Sigma=\Sigma(P)=\{r_1,\dots,r_N\}$ the set of roots of $P'(X)$.

\subsection{Colored $\mathfrak{sl}(N)$-Rasmussen invariants for links} For convenience, we introduce the slightly more general ``colored $\mathfrak{sl}(N)$-Rasmussen invariants for links", which generalize the Rasmussen invariant for links defined in \cite[Subsection 7.1]{Beliakova-Wehrli}.

\begin{definition}\label{color-ras-links}
Let $L$ be an uncolored link. Denote by $L^{(m)}$ be the colored link obtained by coloring every component of $L$ by $m$. We call a state $\psi$ of $L^{(m)}$ a constant state if $\psi(K_1)=\psi(K_2)$ for any two components $K_1$ and $K_2$ of $L$. The subspace $\mathsf{K}(L^{(m)})$ of $H_P(L^{(m)})$ spanned by base elements associated to constant states inherits the quantum filtration $\fil$ of $H_P(L^{(m)})$.

Define
\[
s_P^{(m)}(L) = \frac{1}{2} (\max\deg_q \mathsf{K}(L^{(m)}) + \min\deg_q\mathsf{K}(L^{(m)})),
\]
where 
\begin{eqnarray*}
\max\deg_q \mathsf{K}(L^{(m)}) & = & \min\{j|\fil^j \mathsf{K}(L^{(m)}) = \mathsf{K}(L^{(m)})\}, \\
\min\deg_q \mathsf{K}(L^{(m)}) & = & \max\{j|\fil^{j-1} \mathsf{K}(L^{(m)}) = 0\}.
\end{eqnarray*}
\end{definition}

Clearly, if $L$ is a knot, then this definition coincides with Definition \ref{def-ras}.

\subsection{Smooth link cobordism and the slice genus} We prove in this subsection Theorem \ref{thm-ras-genus}. To do that, let us first recall some basic facts about smooth link cobordisms.

A smooth cobordism from link $L_0$ to link $L_1$ is a properly smoothly embedded compact oriented surface $S$ in $S^3\times[0,1]$ such that $\partial S = (L_0\times\{0\}) \cup (-L_1\times\{1\})$, where $-L_1$ is $L_1$ with the opposite orientation. A smooth link cobordism is said to be elementary if it is either a Reidemeister move or a Morse move (that is, a saddle move, a circle creation or annihilation.) Any smooth link cobordism admits a movie presentation, that is, a decomposition into elementary cobordisms. (See \cite{CS1,CS2} for more details.)

\begin{definition}\label{def-compatibility-states-cobordism}
Let $S$ be a smooth cobordism from link $L_0$ to link $L_1$. States $\psi_0$ of $L_0^{(m)}$ and $\psi_1$ of $L_1^{(m)}$ are said to be compatible via $S$ if, for every component $C$ of $S$, all boundary components of $C$ are assigned the same subset of $\Sigma$ by $\psi_0$ and $\psi_1$.
\end{definition}

For a smooth cobordism $S$ from link $L_0$ to link $L_1$, fix a movie presentation $(S_1,S_2,\cdots,S_l)$ of $S$. Let $D_{j-1}$ and $D_j$ be the initial and terminal ends of $S_j$. In particular, $D_0=L_0$ and $D_l=L_1$. If $S_j$ is a Reidemeister move, then define $F_{S_j}:H_P(D_{j-1}^{(m)})\rightarrow H_P(D_j^{(m)})$ to be the isomorphism given by \cite[Theorem 8.1]{Wu-color-equi}. If $S_j$ is a Morse move, then define $F_{S_j}:H_P(D_{j-1}^{(m)})\rightarrow H_P(D_j^{(m)})$ to be the homomorphism induced by the matrix factorization morphism associated to it. (See \cite[Section 4]{Wu-color-equi}.) Define $F=F_{S_l}\circ \cdots \circ F_{S_1}:H_P(L_0^{(m)})\rightarrow H_P(L_1^{(m)})$. The following is a straightforward generalization of \cite[Proposition 4.1]{Ras1}.

\begin{lemma}\label{lemma-cobordism-map-states}
Suppose that $S$ has no closed components. Then, for any state $\psi$ of $L_0^{(m)}$,
\[
F(v_{L_0^{(m)},\psi}) = \sum_{\varphi} c_\varphi \cdot v_{L_1^{(m)},\varphi},
\]
where $\varphi$ runs through all states of $L_1^{(m)}$ compatible with $\psi$ via $S$ and $c_\varphi \in \C\setminus\{0\}$ for each such $\varphi$.
\end{lemma}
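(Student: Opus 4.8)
\textbf{Proof proposal for Lemma \ref{lemma-cobordism-map-states}.}

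The plan is to induct on the length $l$ of the movie presentation $(S_1,\dots,S_l)$, reducing the statement to the single-elementary-cobordism case and then invoking the state-tracking lemmas already proved for Reidemeister moves and Morse moves. First I would observe that the class of formulas being asserted is stable under composition: if $F_{S_j}(v_{D_{j-1}^{(m)},\psi}) = \sum_{\varphi} c_\varphi v_{D_j^{(m)},\varphi}$ with $\varphi$ ranging over the states of $D_j^{(m)}$ compatible with $\psi$ via $S_j$ and all $c_\varphi \neq 0$, and the analogous statement holds for $F_{S_{j+1}}$, then $F_{S_{j+1}} \circ F_{S_j}$ has the same form with respect to the composite cobordism $S_j \cup S_{j+1}$, \emph{provided} compatibility composes correctly — that is, a state of $D_{j+1}^{(m)}$ is compatible with $\psi$ via $S_j \cup S_{j+1}$ if and only if it is compatible via $S_{j+1}$ with some state of $D_j^{(m)}$ that is itself compatible with $\psi$ via $S_j$. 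This last point is where the hypothesis that $S$ has no closed components is used: it guarantees that every component of the composite surface restricts to a (possibly empty, but never closed) subsurface of each $S_j$, so that the ``same subset of $\Sigma$ on all boundary components'' condition glues along the intermediate diagram $D_j$. I would state and check this compatibility-gluing fact explicitly, since it is the conceptual core of the induction.

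Next I would handle the base case, i.e.\ a single elementary cobordism $S_j$, by cases. If $S_j$ is a Reidemeister move, then $F_{S_j}$ is the isomorphism of \cite[Theorem 8.1]{Wu-color-equi} and the conclusion is exactly the Reidemeister-invariance statement in Theorem \ref{thm-basis-knotted-MOY} (and the remark following it), which says the basis decomposition is preserved, each $v_{D_{j-1}^{(m)},\psi}$ going to a nonzero multiple of $v_{D_j^{(m)},\psi'}$ where $\psi'$ is the unique state agreeing with $\psi$ away from the local move; this $\psi'$ is precisely the unique state compatible with $\psi$ via a Reidemeister cobordism. If $S_j$ is a circle creation or annihilation, the conclusion is Lemma \ref{lemma-states-circle-creation}: circle creation sends $v_{D_{j-1}^{(m)},\psi}$ to a sum over the $\binom{N}{m}$ states compatible with $\psi$ with nonzero coefficients, and these are exactly the states of the new diagram whose restriction to the new component is unconstrained by $\psi$ — matching the cobordism-compatibility condition since the new circle bounds a disk component of $S_j$ whose only other boundary is itself; circle annihilation is the dual statement, also in Lemma \ref{lemma-states-circle-creation}. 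If $S_j$ is a saddle move, the conclusion is Lemma \ref{lemma-saddle-states} (together with Lemma \ref{lemma-saddle-circle}): the saddle induces $\eta$, which kills $v_{D_{j-1}^{(m)},\psi}$ unless the two arcs being surgered carry the same subset of $\Sigma$ — which is exactly the cobordism-compatibility requirement for the pair of pants component of $S_j$ — and otherwise sends it to a nonzero multiple of $v_{D_j^{(m)},\psi'}$ for the unique compatible $\psi'$. In each case I must also note that compatibility away from the local move is automatic because the surface is a product cobordism (a collection of vertical annuli) outside the elementary move, forcing the subsets of $\Sigma$ to agree on matched arcs.

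The main obstacle I anticipate is bookkeeping rather than anything deep: one must make sure the notion of ``compatible via $S$'' in Definition \ref{def-compatibility-states-cobordism} is genuinely the composite of the per-move compatibility relations, and in particular that no spurious states appear or disappear when two elementary cobordisms are glued — for instance, a component of $S$ that passes through a circle creation at one stage and a saddle at a later stage must be shown to impose a single consistent subset-of-$\Sigma$ constraint. Here the no-closed-component hypothesis is essential, because a closed component of $S$ would correspond to a circle created and later annihilated (or a more complicated closed surface), and the corresponding composite $\epsilon \circ \iota$ or $\widetilde{\eta}\circ\iota$ is multiplication by a scalar that depends on the intermediate state (cf.\ the computations in the proofs of Lemmas \ref{lemma-states-circle-creation} and \ref{lemma-saddle-circle}), so the nonvanishing of coefficients could fail. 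Once the gluing of compatibility relations is established, the induction on $l$ closes immediately, with the nonvanishing of each $c_\varphi$ following because a product of nonzero complex numbers is nonzero, and no cancellation can occur between distinct basis elements $v_{D_l^{(m)},\varphi}$.
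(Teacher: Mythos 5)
Your proof is correct and follows essentially the same route as the paper: induction along the movie presentation, with the Reidemeister-invariance of the basis (Theorem \ref{thm-basis-knotted-MOY}) handling Reidemeister moves and Lemmas \ref{lemma-states-circle-creation} and \ref{lemma-saddle-states} handling Morse moves. You are in fact more explicit than the paper about the one delicate point — that compatibility via $\widehat{S}_j=\widehat{S}_{j-1}\cup S_j$ is the composite of the per-move compatibility relations and that, absent closed components, each compatible terminal state is reached through a unique chain of intermediate states, so each coefficient is a product of nonzero scalars rather than a sum that could cancel.
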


\begin{proof}
Define $\widehat{S}_j=S_1\cup\cdots\cup S_j$ and $\widehat{F}_j=F_{S_j}\circ \cdots \circ F_{S_1}:H_P(L_0^{(m)})\rightarrow H_P(D_j^{(m)})$. Note that $S=\widehat{S}_l$ and $F= \widehat{F}_l$. We prove by induction that 
\begin{equation}\label{eq-cobordism-map-states-induction}
\widehat{F}_j(v_{L_0^{(m)},\psi}) = \sum_{\varphi} c_\varphi \cdot v_{D_j^{(m)},\varphi},
\end{equation}
where $\varphi$ runs through all states of $D_j^{(m)}$ compatible with $\psi$ via $\widehat{S}_j$ and $c_\varphi \in \C\setminus\{0\}$ for each such $\varphi$.

Define $\widehat{F}_0=\id: H_P(L_0^{(m)})\rightarrow H_P(L_0^{(m)})$. When $j=0$, Equation \eqref{eq-cobordism-map-states-induction} is trivially true. Assume Equation \eqref{eq-cobordism-map-states-induction} is true for $j-1$. Consider $\widehat{F}_j = F_{S_j}\circ \widehat{F}_{j-1}$. If $S_j$ is a Reidemeister move, then by Theorem \ref{thm-basis}, Equation \eqref{eq-cobordism-map-states-induction} is true for $j$. If $S_j$ is a Morse move (a saddle move, a circle creation or annihilation,) then by Lemmas \ref{lemma-states-circle-creation} and \ref{lemma-saddle-states}, Equation \eqref{eq-cobordism-map-states-induction} is true for $j$. This completes the induction. So Equation \eqref{eq-cobordism-map-states-induction} is true for $j=0,1,\dots,l$. And the lemma follows.
\end{proof}

We call a component $C$ of $S$ semi-closed if $\partial C \subset L_0\times\{0\}$ or $\partial C \subset -L_1\times\{1\}$. The following corollary generalizes \cite[inequality (7.1)]{Beliakova-Wehrli}.

\begin{corollary}\label{coro-cobordism-map-constant-states}
Suppose that $S$ has neither closed components nor semi-closed components. Then $F|_{\mathsf{K}(L_0^{(m)})}:\mathsf{K}(L_0^{(m)})\rightarrow \mathsf{K}(L_1^{(m)})$ is an isomorphism, where $\mathsf{K}(L_i^{(m)})$ is the subspace of $H_P(L_i^{(m)})$ spanned by base elements associated to constant states. Consequently, 
\begin{equation}\label{eq-ras-deg-K-cobordism}
|s_P^{(m)}(L_1)-s_P^{(m)}(L_0)|\leq -m(N-m)\chi(S),
\end{equation}
where $\chi(S)$ is the Euler characteristic of $S$.
\end{corollary}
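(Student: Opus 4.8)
The plan is to split the statement into the isomorphism claim and the filtration inequality \eqref{eq-ras-deg-K-cobordism}, deducing the latter from the former essentially as in \cite{Ras1} (and \cite[Subsection 7.1]{Beliakova-Wehrli} in the link case).

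\emph{The isomorphism.} For a subset $\Omega\subset\Sigma$ with $|\Omega|=m$, let $\psi_\Omega$ denote the constant state of a diagram of $L_0^{(m)}$ that assigns $\Omega$ to every component, and likewise on $L_1^{(m)}$. Since $S$ has no closed and no semi-closed components, every component $C$ of $S$ has boundary circles both in $L_0\times\{0\}$ and in $-L_1\times\{1\}$, and every component of $L_1$ is the boundary circle of exactly one such $C$; tracing through the components of $S$ one sees that the constant state $\psi_\Omega$ of $L_1^{(m)}$ is the unique state of $L_1^{(m)}$ compatible via $S$ with $\psi_\Omega$ of $L_0^{(m)}$, and that it is indeed compatible. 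By Lemma \ref{lemma-cobordism-map-states}, $F(v_{L_0^{(m)},\psi_\Omega})$ is a non-zero scalar multiple of $v_{L_1^{(m)},\psi_\Omega}$. As $\Omega$ ranges over the $m$-element subsets of $\Sigma$ these vectors run through bases of $\mathsf{K}(L_0^{(m)})$ and of $\mathsf{K}(L_1^{(m)})$, so $F$ restricts to an isomorphism $\mathsf{K}(L_0^{(m)})\to\mathsf{K}(L_1^{(m)})$. The reversed cobordism from $L_1$ to $L_0$ again has no closed and no semi-closed components, so its induced map likewise restricts to an isomorphism $\mathsf{K}(L_1^{(m)})\to\mathsf{K}(L_0^{(m)})$.

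\emph{The filtration inequality.} Fix a movie presentation $(S_1,\dots,S_l)$ of $S$, so that $F=F_{S_l}\circ\cdots\circ F_{S_1}$ and $\chi(S)=\sum_j\chi(S_j)$, with $\chi(S_j)=0$ for a Reidemeister move, $-1$ for a saddle, and $+1$ for a circle creation or annihilation. Reidemeister-move maps preserve the quantum filtration, and, tracking the quantum degrees of the saddle, circle-creation and circle-annihilation maps exactly as in the proof of Proposition \ref{thm-pairing-inverse} (via the estimates of \cite[Section 4]{Wu-color-equi}), one gets $F_{S_j}(\fil^i)\subset\fil^{\,i-m(N-m)\chi(S_j)}$ for every $j$; composing, $F(\fil^i\mathsf{K}(L_0^{(m)}))\subset\fil^{\,i-m(N-m)\chi(S)}\mathsf{K}(L_1^{(m)})$. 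Since $F$ restricts to an isomorphism on $\mathsf{K}$, surjectivity gives
\[
\max\deg_q\mathsf{K}(L_1^{(m)})\le\max\deg_q\mathsf{K}(L_0^{(m)})-m(N-m)\chi(S),
\]
and injectivity applied to a lowest-degree vector gives the same inequality with $\min\deg_q$ in place of $\max\deg_q$; averaging yields $s_P^{(m)}(L_1)\le s_P^{(m)}(L_0)-m(N-m)\chi(S)$. Running the argument with the reversed cobordism gives $s_P^{(m)}(L_0)\le s_P^{(m)}(L_1)-m(N-m)\chi(S)$, and the two inequalities together are \eqref{eq-ras-deg-K-cobordism}.

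The routine but delicate step is the quantum-degree bookkeeping in the second part: in $H_P$ the Morse-move maps respect only the quantum filtration and not a grading, so one must keep careful track of the direction and size of each filtration shift, and it is essential to combine the inequality for $\deg_q F$ with the isomorphism property of $F|_{\mathsf{K}}$ (and to invoke the reversed cobordism) rather than to argue with exact degrees; everything else is a direct transcription of \cite{Ras1,Beliakova-Wehrli}. Note that the hypotheses force every component of $S$ to have at least two boundary circles, so $\chi(S)\le0$ and the right-hand side $-m(N-m)\chi(S)$ is non-negative; this estimate, applied to the complement of a disk in a smooth slice surface for a knot $K$ (capped off from the unknot, which has $s_P^{(m)}=0$), so that $\chi(S)=-2g_\ast(K)$, is the input for Theorem \ref{thm-ras-genus}.
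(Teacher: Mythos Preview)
Your proof is correct and follows essentially the same route as the paper's: identify the unique compatible state on $L_1^{(m)}$ for each constant state via the no-closed/no-semi-closed hypothesis, invoke Lemma~\ref{lemma-cobordism-map-states} to get the isomorphism on $\mathsf{K}$, bound $\deg_q F$ elementary-cobordism by elementary-cobordism, and combine surjectivity/injectivity with the reversed cobordism to obtain \eqref{eq-ras-deg-K-cobordism}. The only cosmetic difference is that the paper does not separately mention the reversed isomorphism before the filtration argument.
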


\begin{proof}
Let $\psi$ be a constant state of $L_0^{(m)}$. Since $S$ has no semi-closed components, a state $\varphi$ of $L_1^{(m)}$ is compatible via $S$ to $\psi$ if and only if it assigns to every component of $L_1$ the same constant $m$-element subset of $\Sigma$ that $\psi$ assigns to components of $L_0$. Since $S$ has no closed components, by Lemma \ref{lemma-cobordism-map-states}, $F(v_{L_0^{(m)},\psi}) = c_\psi \cdot v_{L_1^{(m)},\varphi}$ where $\varphi$ is the unique constant state of $L_1^{(m)}$ compatible to $\psi$ via $S$, and $c_\psi \in \C\setminus \{0\}$. This shows that $F(\mathsf{K}(L_0^{(m)})) \subset \mathsf{K}(L_1^{(m)})$ and $F|_{\mathsf{K}(L_0^{(m)})}:\mathsf{K}(L_0^{(m)})\rightarrow \mathsf{K}(L_1^{(m)})$ is an isomorphism.

From the definition of $F_{S_j}$ (see also \cite[Section 4 and Theorem 8.1]{Wu-color-equi},) we know that $\deg_q F_{S_j} \leq -m(N-m)\chi(S_j)$. So 
\[
\deg F \leq \sum_{j=1}^l \deg_q F_{S_j} \leq -\sum_{j=1}^l m(N-m)\chi(S_j) = -m(N-m)\chi(S).
\]

Assume the element $u\in \mathsf{K}(L_1^{(m)})$ attains the maximal quantum degree of $\mathsf{K}(L_1^{(m)})$. Since $F|_{\mathsf{K}(L_0^{(m)})}:\mathsf{K}(L_0^{(m)})\rightarrow \mathsf{K}(L_1^{(m)})$ is surjective, there is a $v\in \mathsf{K}(L_0^{(m)})$ such that $F(v)=u$. So
\[
\max\deg_q \mathsf{K}(L_1^{(m)}) = \deg_q u \leq \deg_q F + \deg_q v \leq -m(N-m)\chi(S) + \max\deg_q \mathsf{K}(L_0^{(m)}).
\]
Therefore,
\begin{equation}\label{eq-max-deg-K-cobordism}
\max\deg_q \mathsf{K}(L_1^{(m)}) - \max\deg_q \mathsf{K}(L_0^{(m)}) \leq -m(N-m)\chi(S).
\end{equation}
Assume $v'\in \mathsf{K}(L_0^{(m)})$ attains the minimal quantum degree of $\mathsf{K}(L_0^{(m)})$. Since $F|_{\mathsf{K}(L_0^{(m)})}:\mathsf{K}(L_0^{(m)})\rightarrow \mathsf{K}(L_1^{(m)})$ is injective, $F(v')\neq 0$ and hence
\[
\min\deg_q \mathsf{K}(L_1^{(m)}) \leq \deg_q F(v') \leq \deg_q F + \deg_q v' \leq -m(N-m)\chi(S) + \min\deg_q \mathsf{K}(L_0^{(m)}).
\]
So
\begin{equation}\label{eq-min-deg-K-cobordism}
\min\deg_q \mathsf{K}(L_1^{(m)}) - \min\deg_q \mathsf{K}(L_0^{(m)}) \leq -m(N-m)\chi(S).
\end{equation}
Inequalities \eqref{eq-max-deg-K-cobordism} and \eqref{eq-min-deg-K-cobordism} give us that
\begin{equation}\label{eq-ras-deg-K-cobordism-1}
s_P^{(m)}(L_1)-s_P^{(m)}(L_0) \leq -m(N-m)\chi(S).
\end{equation}
Looking at $S$ ``up-side-down", we can view $S$ as a smooth cobordism from $L_1$ to $L_0$. Applying the same argument, we get
\begin{equation}\label{eq-ras-deg-K-cobordism-2}
s_P^{(m)}(L_0)-s_P^{(m)}(L_1) \leq -m(N-m)\chi(S).
\end{equation}
Inequalities \eqref{eq-ras-deg-K-cobordism-1} and \eqref{eq-ras-deg-K-cobordism-2} imply Inequality \eqref{eq-ras-deg-K-cobordism}.
\end{proof}

\begin{lemma}\label{lemma-res-unknot}
Denote by $\bigcirc$ the unknot. Then $s_P^{(m)}(\bigcirc)=0$.
\end{lemma}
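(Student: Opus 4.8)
The plan is to reduce everything to the explicit filtered description of $H_P(\bigcirc_m)$ furnished by Proposition \ref{prop-circle-module} and then simply read off the two extremal quantum degrees. Since $\bigcirc^{(m)}=\bigcirc_m$ has a single component, every state of $\bigcirc_m$ is a constant state, so $\mathsf{K}(\bigcirc^{(m)})=H_P(\bigcirc_m)$ and it suffices to prove that $\max\deg_q H_P(\bigcirc_m)+\min\deg_q H_P(\bigcirc_m)=0$.

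First I would invoke Proposition \ref{prop-circle-module}, which gives, as filtered $\C$-spaces (ignoring the $\zed_2$-shift, which is irrelevant here),
\[
H_P(\bigcirc_m)\cong R\{q^{-m(N-m)}\},\qquad R=\sfrac{\Sym(\mathbb{X})}{(h_N(\mathbb{X}-\Sigma),\dots,h_{N+1-m}(\mathbb{X}-\Sigma))},
\]
where $\mathbb{X}$ is an alphabet of $m$ indeterminates. So the claim reduces to showing that, as a filtered space, $R$ has top quantum degree $2m(N-m)$ and bottom quantum degree $0$. The bottom degree is immediate, since $\fil^{-1}R=0$ while $1\neq 0$ in $R$.

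For the top degree, the key point is that $R$ is a filtered deformation of the graded ring $R_0=\Sym(\mathbb{X})/(h_N(\mathbb{X}),\dots,h_{N+1-m}(\mathbb{X}))$: the homogeneous leading term of $h_{N+1-k}(\mathbb{X}-\Sigma)$ is exactly $h_{N+1-k}(\mathbb{X})$, and by Lemma \ref{lemma-regular-complete-Sigma} and \cite[Proposition 6.2]{Wu-color} both defining sequences are $\Sym(\mathbb{X})$-regular, so both quotients have dimension $\bn{N}{m}$; hence the associated graded of $R$ is $R_0$. By Proposition \ref{prop-circle-ring-dim-base}, $\{S_\lambda(\mathbb{X})\mid\lambda\in\Lambda_{m,N-m}\}$ is a $\C$-basis of $R$, and the same set is a $\C$-basis of $R_0$ (the standard Schur basis of $H^\ast(\mathrm{Gr}(m,N))$). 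Since $\deg S_\lambda(\mathbb{X})=2|\lambda|$, any nonzero element $\sum_\lambda c_\lambda S_\lambda(\mathbb{X})$ of $R$ has quantum degree at most $\max\{2|\lambda|\mid c_\lambda\neq 0\}$, and it cannot be strictly smaller: otherwise its top homogeneous component would be a nontrivial combination of the $S_\lambda$ lying in the leading-term ideal $(h_N(\mathbb{X}),\dots,h_{N+1-m}(\mathbb{X}))$, contradicting the linear independence of $\{S_\lambda(\mathbb{X})\}$ in $R_0$. Therefore the quantum degrees realized in $R$ are precisely $\{0,2,\dots,2m(N-m)\}$, so $\max\deg_q R=2m(N-m)$, attained by $S_{\lambda_{m,N-m}}(\mathbb{X})$.

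Combining with the shift $\{q^{-m(N-m)}\}$ gives $\max\deg_q H_P(\bigcirc_m)=m(N-m)$ and $\min\deg_q H_P(\bigcirc_m)=-m(N-m)$, whence $s_P^{(m)}(\bigcirc)=\tfrac12\bigl(m(N-m)+(-m(N-m))\bigr)=0$. The only mildly delicate ingredient is the associated-graded comparison showing that passing from $R_0$ to the deformation $R$ lowers no quantum degree; this is the step I expect to be the main (though routine) obstacle. Alternatively, one can sidestep it entirely: the filtered dimension of $H_P(\bigcirc_m)$ equals the MOY evaluation $\left\langle\bigcirc_m\right\rangle_N=\qb{N}{m}$ by \cite[Proposition 9.8]{Wu-color-equi} and \cite[Theorem 14.7]{Wu-color}, a Laurent polynomial in $q$ symmetric about $0$ with extremal degrees $\pm m(N-m)$; or one may simply cite Corollary \ref{coro-amphicheiral}, since the unknot is amphicheiral.
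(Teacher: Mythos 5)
Your proof is correct. The paper itself disposes of this lemma in one line by citing \cite[Corollary 6.1]{Wu-color} and \cite[Proposition 9.8]{Wu-color-equi}, which together say that the filtered dimension of $H_P(\bigcirc^{(m)})$ is the quantum binomial $\qb{N}{m}$ (suitably normalized), a Laurent polynomial symmetric about $q^0$ with extremal degrees $\pm m(N-m)$; your first ``alternative'' is therefore exactly the paper's route. Your main argument reaches the same conclusion through the in-paper Proposition \ref{prop-circle-ring-dim-base} and Proposition \ref{prop-circle-module}, making explicit the one point the citations hide: that passing from the graded ring $\Sym(\mathbb{X})/(h_N(\mathbb{X}),\dots,h_{N+1-m}(\mathbb{X}))$ to its filtered deformation $R$ does not lower the top quantum degree. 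You handle this correctly --- the equality $\dim_\C R=\bn{N}{m}=\dim_\C R_0$ forces the initial ideal of $(h_N(\mathbb{X}-\Sigma),\dots,h_{N+1-m}(\mathbb{X}-\Sigma))$ to coincide with $(h_N(\mathbb{X}),\dots,h_{N+1-m}(\mathbb{X}))$, so the associated graded of $R$ is the Grassmannian cohomology and the degrees $0,2,\dots,2m(N-m)$ are all realized --- and this is precisely the content of the regularity argument in Lemma \ref{lemma-regular-complete-Sigma}. Your second alternative, citing Corollary \ref{coro-amphicheiral} since the unknot is amphicheiral, is also legitimate and non-circular: that corollary rests only on Sections \ref{sec-changing-P} and \ref{sec-pairings}, not on anything in Section \ref{sec-bounds}. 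What your explicit computation buys over the paper's citation is self-containedness and the precise values $\max\deg_q=m(N-m)$, $\min\deg_q=-m(N-m)$ rather than just their sum; what the paper's (and your first alternative's) route buys is brevity.
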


\begin{proof}
This follows easily from \cite[Corollary 6.1]{Wu-color} and \cite[Proposition 9.8]{Wu-color-equi}.
\end{proof}

Theorem \ref{thm-ras-genus} is now just a special case of Corollary \ref{coro-cobordism-map-constant-states}.

\begin{proof}[Proof of Theorem \ref{thm-ras-genus}]
Let $\overline{S}$ be any connected compact smooth surface in $D^4$ bounded by the knot $K \subset S^3=\partial D^4$. Removing a small disc from the interior of $\overline{S}$, we get a connected smooth cobordism $S$ from $K$ to the unknot $\bigcirc$. Then by Corollary \ref{coro-cobordism-map-constant-states} and Lemma \ref{lemma-res-unknot}, 
\begin{eqnarray*}
|s_P^{(m)}(K)| & = & |s_P^{(m)}(K)-s_P^{(m)}(\bigcirc)| \\
& \leq & -m(N-m)\chi(S) = -m(N-m)(\chi(\overline{S})-1) = 2m(N-m)g(\overline{S}),
\end{eqnarray*}
where $g(\overline{S})$ is the genus of $\overline{S}$. And Theorem \ref{thm-ras-genus} follows.
\end{proof}

\subsection{The self linking number} We prove in this subsection Theorem \ref{thm-ras-bennequin}. In fact, with colored $\mathfrak{sl}(N)$-Rasmussen invariants for links, we give a version of Theorem \ref{thm-ras-bennequin} for all links instead of just for knots.

\begin{lemma}\label{lemma-ras-unlink-estimation}
Denote by $\bigcirc^{\sqcup b}$ the unlink with $b$ components. Then 
\[
|s_P^{(m)}(\bigcirc^{\sqcup b})| \leq m(N-m)(b-1).
\] 
\end{lemma}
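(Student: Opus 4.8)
The plan is to bound $s_P^{(m)}(\bigcirc^{\sqcup b})$ by comparing the unlink with the unknot through an explicit cobordism, exactly in the spirit of the proof of Theorem \ref{thm-ras-genus}. First I would build a smooth cobordism $S$ in $S^3\times[0,1]$ from the unknot $\bigcirc$ to the $b$-component unlink $\bigcirc^{\sqcup b}$ consisting of $b-1$ saddle moves, each splitting one circle into two, followed by a sequence of Reidemeister moves putting the result into the standard diagram of $\bigcirc^{\sqcup b}$. Since a saddle move changes the Euler characteristic by $-1$ and a Reidemeister move changes it by $0$, we get $\chi(S)=-(b-1)$.

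The key point to check is that $S$ is admissible for Corollary \ref{coro-cobordism-map-constant-states}, i.e. that it has neither closed nor semi-closed components. This is clear: realizing $S$ as an annulus $\bigcirc\times[0,1]$ with $b-1$ bands attached (each band splitting one boundary circle into two) exhibits $S$ as connected, with exactly one boundary circle on the initial end and exactly $b$ boundary circles on the terminal end; in particular no component of $S$ has all of its boundary on a single end, and there are no closed components. (As a sanity check, a connected orientable surface with $b+1$ boundary circles and $\chi(S)=-(b-1)$ has genus $0$, which matches the expected picture of a repeatedly split disk.)

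With this in hand, Corollary \ref{coro-cobordism-map-constant-states} gives
\[
|s_P^{(m)}(\bigcirc^{\sqcup b}) - s_P^{(m)}(\bigcirc)| \leq -m(N-m)\chi(S) = m(N-m)(b-1),
\]
and Lemma \ref{lemma-res-unknot} supplies $s_P^{(m)}(\bigcirc)=0$, whence $|s_P^{(m)}(\bigcirc^{\sqcup b})| \leq m(N-m)(b-1)$. There is no serious obstacle here; the only thing requiring a moment's care is the bookkeeping of the previous paragraph (connectedness of $S$ and the absence of semi-closed components, which is precisely the hypothesis of Corollary \ref{coro-cobordism-map-constant-states}). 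An alternative, more computational route would use $H_P(\bigcirc^{\sqcup b}) \cong H_P(\bigcirc_m)^{\otimes b}$ together with Proposition \ref{prop-circle-module} and the description of the basis in Definition \ref{def-basis-graph-homology} to compute the quantum filtration on the constant-state subspace $\mathsf{K}(\bigcirc^{\sqcup b})$ directly, but this is messier, and the cobordism argument is cleaner and consistent with the method used elsewhere in this section.
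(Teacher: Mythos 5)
Your proof is correct and is essentially the paper's argument: the paper likewise takes the obvious cobordism from the unknot to $\bigcirc^{\sqcup b}$ (a disc with $b$ punctures, i.e.\ your annulus with $b-1$ bands, with $\chi(S)=1-b$) and applies Corollary \ref{coro-cobordism-map-constant-states} together with $s_P^{(m)}(\bigcirc)=0$ from Lemma \ref{lemma-res-unknot}. Your extra bookkeeping (connectedness, absence of semi-closed components) is a correct verification of the hypotheses that the paper leaves implicit.
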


\begin{proof}
There is an obvious smooth cobordism $S$ from the unknot $\bigcirc$ to $\bigcirc^{\sqcup b}$ that is homeomorphic to a disc with $b$ punctures. Apply Corollary \ref{coro-cobordism-map-constant-states} to this cobordism, we get $|s_P^{(m)}(\bigcirc^{\sqcup b})| \leq -m(N-m)\chi(S) = m(N-m)(b-1)$.
\end{proof}

\begin{lemma}\label{lemma-ras-neg-braids}
Let $B$ be a closed negative braid with $b$ strands and $l$ crossings. Then 
\[
s_P^{(m)}(B)=m(N-m)(b-l-1).
\]
\end{lemma}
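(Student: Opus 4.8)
The plan is to establish the two inequalities $s_P^{(m)}(B)\ge m(N-m)(b-l-1)$ and $s_P^{(m)}(B)\le m(N-m)(b-l-1)$.

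\emph{Lower bound.} We may assume $B$ is non--split (so the Seifert surface below is connected); otherwise $B$ is a split union of closed negative braids on fewer strands with fewer crossings and one induces on the number of split factors. Applying Seifert's algorithm to the standard diagram of $B$ gives a connected Seifert surface $F$ made of $b$ discs and $l$ bands, so $\chi(F)=b-l$. Removing an open disc from the interior of $F$ produces a smooth cobordism $S$ from $B$ to the unknot $\bigcirc$ with $\chi(S)=b-l-1$ and with neither closed nor semi--closed components. By Corollary~\ref{coro-cobordism-map-constant-states} and Lemma~\ref{lemma-res-unknot},
\[
|s_P^{(m)}(B)|=|s_P^{(m)}(B)-s_P^{(m)}(\bigcirc)|\le -m(N-m)\chi(S)=m(N-m)(l-b+1),
\]
which yields $s_P^{(m)}(B)\ge m(N-m)(b-l-1)$.

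\emph{Upper bound.} Color every component of $B$ by $m$ and let $\psi$ be a constant state of $B^{(m)}$, assigning the $m$--element set $\Omega\subset\Sigma$ to each component. At every crossing of $B^{(m)}$ both branches are colored $m$ and carry the value $\Omega$, so in the notation of Lemma~\ref{lemma-unique-state-res}(2) one has $k=|\psi(a_2)\setminus\psi(a_1)|=0$ at each crossing. Hence the unique resolution of $\psi$ is, independently of $\psi$, the oriented (Seifert) resolution $\Gamma_o=\bigcirc_m^{\sqcup b}$ of all $l$ crossings. Thus, in the decomposition \eqref{eq-decomp-chain-complex-MOY}, each constant--state basis element $v_{B^{(m)},\psi}$ is the $d$--homology class of the corresponding constant--state generator living in the single summand $H_P(\Gamma_o)\|\mathsf{h}_B(\Gamma_o)\|\{q^{\rho_B(\Gamma_o)}\}$. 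For an all--negative diagram $\Gamma_o$ occupies the extremal homological degree, so these generators are genuine $d$--cycles, and by Theorem~\ref{thm-basis-knotted-MOY} the resulting class map from the constant--state subspace $\mathsf{K}((\bigcirc^{\sqcup b})^{(m)})\subset H_P(\Gamma_o)$ to $H_P(B^{(m)})$ is an injection of $\bn{N}{m}$--dimensional spaces with image $\mathsf{K}(B^{(m)})$. Since passing from a filtered cycle to its homology class can only lower the quantum filtration, this forces
\begin{eqnarray*}
\max\deg_q \mathsf{K}(B^{(m)}) & \le & \max\deg_q \mathsf{K}((\bigcirc^{\sqcup b})^{(m)})+\rho_B(\Gamma_o),\\
\min\deg_q \mathsf{K}(B^{(m)}) & \le & \min\deg_q \mathsf{K}((\bigcirc^{\sqcup b})^{(m)})+\rho_B(\Gamma_o),
\end{eqnarray*}
the second being obtained by applying the class map to a nonzero element of $\mathsf{K}((\bigcirc^{\sqcup b})^{(m)})$ that realizes its minimal filtration level. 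Consequently $s_P^{(m)}(B)\le s_P^{(m)}(\bigcirc^{\sqcup b})+\rho_B(\Gamma_o)$.

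It then remains to evaluate the two quantities on the right. The shift $\rho_B(\Gamma_o)$ is a sum of local contributions over the $l$ negative crossings, each of which equals $-m(N-m)$ by the grading conventions of \cite[Definition 6.11]{Wu-color-equi} (equivalently \cite[Subsection 2.3]{Wu-color-MFW}), so $\rho_B(\Gamma_o)=-m(N-m)l$. For the unlink, I would show $s_P^{(m)}(\bigcirc^{\sqcup b})=m(N-m)(b-1)$ directly: using Proposition~\ref{thm-linear-change-roots} we may choose $\Sigma$ so that $0\notin\Sigma$ and, moreover, generic; Propositions~\ref{prop-circle-module} and~\ref{prop-circle-ring-dim-base} then identify $H_P(\bigcirc_m)$ with $\overline{R}_{\bigcirc_m}\{q^{-m(N-m)}\}$ and the constant--state generators with the idempotents $Q_\varphi$, whose top degree part is the common element $S_{\lambda_{m,N-m}}(\mathbb{X})$ and whose degree zero part is a nonzero, $\Omega$--dependent constant; a Vandermonde--type computation using the Chen--Louck interpolation formula (Theorem~\ref{Chen-Louck-interpolation}) gives $\max\deg_q \mathsf{K}((\bigcirc^{\sqcup b})^{(m)})=m(N-m)b$ and $\min\deg_q \mathsf{K}((\bigcirc^{\sqcup b})^{(m)})=m(N-m)(b-2)$. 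Hence $s_P^{(m)}(B)\le m(N-m)(b-1)-m(N-m)l=m(N-m)(b-l-1)$, which together with the lower bound gives the lemma. The main obstacle is exactly this filtration bookkeeping in the upper bound: because the state decomposition of Theorem~\ref{thm-basis} does not respect the quantum filtration, one cannot simply read off degrees, and one must argue directly with the filtration on $H_P(\bigcirc_m^{\sqcup b})$, pin down $\min\deg_q \mathsf{K}((\bigcirc^{\sqcup b})^{(m)})$ and the shift $\rho_B(\Gamma_o)$ precisely, and check that the class map does not collapse the filtration beyond the stated inequalities.
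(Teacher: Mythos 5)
Your identification of $\mathsf{K}(B^{(m)})$ with a $q$-shifted copy of $\mathsf{K}((\bigcirc^{\sqcup b})^{(m)})$ via the oriented resolution is exactly the paper's first step (and your filtration inequalities are correct, indeed equalities, since the oriented resolution sits in the minimal homological degree, so there are no boundaries there; note that the generators are cycles because each $\mathbf{C}(\psi)$ is one-dimensional, as in Lemma \ref{lemma-state-subcomplex-1-dim}, not because of extremality of the homological degree, which only rules out boundaries). The divergence, and the gap, is in how you evaluate $s_P^{(m)}(\bigcirc^{\sqcup b})$. You assert that ``a Vandermonde-type computation using the Chen--Louck interpolation formula'' gives $\max\deg_q\mathsf{K}((\bigcirc^{\sqcup b})^{(m)})=m(N-m)b$ and $\min\deg_q\mathsf{K}((\bigcirc^{\sqcup b})^{(m)})=m(N-m)(b-2)$. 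The $\max$ statement is easy, but the $\min$ statement is the entire content of the unlink computation: you must exhibit a nonzero linear combination of the diagonal idempotents $Q_\varphi^{\otimes b}$ whose filtration degree drops all the way to $2m(N-m)(b-1)$ in $R^{\otimes b}$, and show none drops further. For $m=1$ this is the classical Lee/Gornik Vandermonde argument, but for general $m$ the generators are products of the interpolation polynomials $\prod_{x\in\mathbb{X},r\in\Sigma\setminus\Omega}(x-r)$ and the cancellation pattern is not a one-line Vandermonde identity; as written this step is not a proof. The paper avoids it entirely: it only needs the easy upper bound $|s_P^{(m)}(\bigcirc^{\sqcup b})|\le m(N-m)(b-1)$ from the $b$-punctured disc cobordism (Lemma \ref{lemma-ras-unlink-estimation}), and then pins down the exact values of both $s_P^{(m)}(B)$ and $s_P^{(m)}(\bigcirc^{\sqcup b})$ simultaneously by sandwiching a negative braid \emph{knot} between the slice-genus bound of Theorem \ref{thm-ras-genus} and that estimate. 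You could close your gap the same way: your upper bound only needs $s_P^{(m)}(\bigcirc^{\sqcup b})\le m(N-m)(b-1)$, which is Lemma \ref{lemma-ras-unlink-estimation}.

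A second, smaller gap is the reduction of your lower bound to non-split braids by ``induction on the number of split factors.'' The constant-state subspace of a split union $B_1\sqcup B_2$ is the \emph{diagonal} span $\{v_{B_1,\psi_\Omega}\otimes v_{B_2,\psi_\Omega}\}$, not the tensor product $\mathsf{K}(B_1^{(m)})\otimes\mathsf{K}(B_2^{(m)})$, so $s_P^{(m)}$ is not obviously additive (up to the $m(N-m)$ correction) under split union; establishing the additivity you need is essentially the unlink computation again. The connected case of your lower bound is fine and is a genuine (mild) generalization of the paper's argument, which invokes the genus bound only for knots; but the split case has to be handled, since the lemma includes, e.g., the trivial braid.
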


\begin{proof}
Recall that $H_P(B^{(m)})$ is defined to be the homology of a chain complex of the form
\[
\bigoplus_{\Gamma} H_P(\Gamma)\|\mathsf{h}_{B^{(m)}}(\Gamma)\|\{q^{\rho_{B^{(m)}}(\Gamma)}\},
\]
where $\Gamma$ runs through all MOY resolutions of $B^{(m)}$, $\mathsf{h}_{B^{(m)}}(\Gamma)$ and $\rho_{B^{(m)}}(\Gamma)$ are the shifts of the homological grading and the quantum filtration. (See Subsection \ref{subsec-decom-chain-quasi-state} for more about $\mathsf{h}_{B^{(m)}}(\Gamma)$ and $\rho_{B^{(m)}}(\Gamma)$.) Since $B$ has only negative crossings, it is easy to check that the minimum of $\mathsf{h}_{B^{(m)}}(\Gamma)$ is $0$ and is only attained by the MOY resolution $(\bigcirc^{\sqcup b})^{(m)}$. One can check that $\rho_{B^{(m)}}((\bigcirc^{\sqcup b})^{(m)})=-lm(N-m)$. So the chain complex used to define $H_P(B^{(m)})$ is of the form
\[
0\rightarrow H_P((\bigcirc^{\sqcup b})^{(m)})\{q^{-lm(N-m)}\} \rightarrow \cdots
\]
Recall that the base vector associated to each state of $B^{(m)}$ is realized by unique a state of a unique MOY resolution of $B^{(m)}$. (See Lemmas \ref{lemma-unique-state-res} and \ref{lemma-state-subcomplex-1-dim}.) It is easy to check that, for any constant state $\psi$ of $B^{(m)}$, its resolution is $((\bigcirc^{\sqcup b})^{(m)}, \varphi)$, where $\varphi$ is the constant state of $(\bigcirc^{\sqcup b})^{(m)}$ assigning the same subset of $\Sigma$ as $\psi$. According to Lemma \ref{lemma-state-subcomplex-1-dim}, $v_{B^{(m)},\psi}=[v_{(\bigcirc^{\sqcup b})^{(m)}, \varphi}]$. Also note that $H_P((\bigcirc^{\sqcup b})^{(m)})\{q^{-lm(N-m)}\}$ contains no non-zero boundary elements. So this implies that
\[
\mathsf{K}(B^{(m)}) \cong \mathsf{K}((\bigcirc^{\sqcup b})^{(m)})\{q^{-lm(N-m)}\},
\]
where the isomorphism preserve the quantum filtration. Thus,
\begin{equation}\label{eq-ras-neg-braid-to-unlink} 
s_P^{(m)}(B) = s_P^{(m)}(\bigcirc^{\sqcup b}) - lm(N-m).
\end{equation}

We first prove the lemma in the special case when $B$ is a knot.

Assume $B$ is a knot. By Theorem \ref{thm-ras-genus}, we have 
\begin{equation}\label{eq-ras-neg-braid-knot}
s_P^{(m)}(B) \geq -2m(N-m)g_\ast(B).
\end{equation}
Since $B$ has only negative crossings, we know that
\begin{equation}\label{eq-genus-neg-braid=knot}
2g_\ast(B) =l+1-b.
\end{equation}
(See for example \cite[Subsection 5.2]{Ras1} for a proof of Equation \eqref{eq-genus-neg-braid=knot}.) Now using \eqref{eq-ras-neg-braid-to-unlink}, \eqref{eq-ras-neg-braid-knot}, \eqref{eq-genus-neg-braid=knot} and Lemma \ref{lemma-ras-unlink-estimation}, we get
\begin{eqnarray*}
-2m(N-m)g_\ast(B) &\leq & s_P^{(m)}(B) = s_P^{(m)}(\bigcirc^{\sqcup b}) - lm(N-m) \\
& \leq & m(N-m)(b-l-1) = -2m(N-m)g_\ast(B).
\end{eqnarray*}
So the above inequality must be an equation. Therefore, we get 
\[
s_P^{(m)}(B) =  m(N-m)(b-l-1),
\]
which proves the lemma in the special case when $B$ is a knot.

As a by-product of the above argument, we get that 
\begin{equation}\label{eq-ras-unlink}
s_P^{(m)}(\bigcirc^{\sqcup b}) = m(N-m)(b-1).
\end{equation}
It is clear that, for a general closed negative braid $B$, the lemma follows from Equations \eqref{eq-ras-neg-braid-to-unlink} and \eqref{eq-ras-unlink}.
\end{proof}

\begin{theorem}\label{thm-ras-bennequin-links}
For a closed braid $B$ with writhe $w$ and $b$ strands, 
\[
s_P^{(m)}(B) \leq  m(N-m) (w+b-1).
\]
\end{theorem}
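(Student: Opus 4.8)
The plan is to prove Theorem \ref{thm-ras-bennequin-links} by induction on the number $w_+$ of positive crossings of $B$, using Lemma \ref{lemma-ras-neg-braids} for the base case and a single-crossing-change estimate for the inductive step. Write $l$ for the total number of crossings of $B$ and $w_-$ for the number of negative crossings, so that $l=w_++w_-$ and $w=w_+-w_-$. When $w_+=0$, the braid $B$ is a closed negative braid with $b$ strands and $l=w_-$ crossings, and $w=-l$; Lemma \ref{lemma-ras-neg-braids} then gives $s_P^{(m)}(B)=m(N-m)(b-l-1)=m(N-m)(w+b-1)$, so the asserted inequality holds with equality.

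The inductive step rests on the following estimate: if $B_+$ is a closed braid with a chosen positive crossing and $B_-$ is obtained from $B_+$ by switching that crossing to a negative one, then
\[
|s_P^{(m)}(B_+)-s_P^{(m)}(B_-)|\leq 2m(N-m).
\]
To prove this I would observe that a positive crossing is converted to its oriented resolution by a single saddle move, and the oriented resolution is then converted to a negative crossing by a single saddle move; composing these two elementary cobordisms yields a smooth cobordism $S$ in $S^3\times[0,1]$ from $B_+$ to $B_-$ with $\chi(S)=-2$. I would then check that $S$ has neither closed nor semi-closed components: each of the two saddle cobordisms is a disjoint union of trivial cylinders over the unchanged components of the diagram together with a single ``band'' piece, and every one of these pieces meets both the bottom and the top ends; gluing the two cobordisms along the intermediate diagram therefore produces only components that meet both $B_+$ and $B_-$. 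With this verified, Corollary \ref{coro-cobordism-map-constant-states} applies to $S$ and gives the displayed inequality at once.

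It then remains to assemble the induction. If $w_+\geq 1$, pick a positive crossing of $B$ and switch it to a negative one to obtain a closed braid $B'$ with the same number $b$ of strands, writhe $w-2$, and $w_+-1$ positive crossings. By the inductive hypothesis $s_P^{(m)}(B')\leq m(N-m)((w-2)+b-1)$, and by the crossing-change estimate $s_P^{(m)}(B)\leq s_P^{(m)}(B')+2m(N-m)$; adding these gives $s_P^{(m)}(B)\leq m(N-m)(w+b-1)$, which completes the induction.

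The step I expect to be the main obstacle is the verification that the crossing-change cobordism $S$ genuinely satisfies the hypotheses of Corollary \ref{coro-cobordism-map-constant-states}, namely that it has no closed and no semi-closed components. This is the point where one must be careful about how the number of link components behaves at the oriented resolution of a self-crossing as opposed to a crossing between two distinct components, and about how components of the two glued saddle cobordisms are identified along the intermediate diagram. Once this is in hand, the rest of the argument is routine bookkeeping with Euler characteristics and writhes.
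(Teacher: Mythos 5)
Your proof is correct, and it reaches the bound by a mildly different route than the paper. The paper's argument is a one-shot version of yours: at every positive crossing simultaneously it performs a single saddle move (followed by a Reidemeister I move to remove the resulting kink), replacing each positive crossing by its oriented resolution. This produces a closed negative braid $B_-$ with $b$ strands and $l_-$ crossings via a cobordism $S$ with $\chi(S)=-l_+$ and no closed or semi-closed components; Corollary \ref{coro-cobordism-map-constant-states} gives $|s_P^{(m)}(B)-s_P^{(m)}(B_-)|\leq m(N-m)l_+$, and Lemma \ref{lemma-ras-neg-braids} finishes. Your inductive crossing-change version costs $2m(N-m)$ per positive crossing instead of $m(N-m)$, but the comparison braid keeps all $l$ crossings and so has writhe lower by $2$ at each step, and the two bookkeepings land on the same bound. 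What your route buys is the standalone estimate $|s_P^{(m)}(B_+)-s_P^{(m)}(B_-)|\leq 2m(N-m)$ for a crossing change, the colored analogue of Rasmussen's inequality, which the paper never isolates. Two small remarks: the ``single saddle'' from a braid crossing to its oriented resolution is really a saddle plus a Reidemeister I move (an oriented band attached at a braid crossing necessarily introduces a curl, as in Figure \ref{braid-remove-positive-crossings-fig}), though this affects neither $\chi$ nor the component structure; and the verification you flag as the main obstacle is indeed as easy as you sketch, since every component of a single saddle cobordism meets both ends and this property persists under composition with product and saddle cobordisms, so Corollary \ref{coro-cobordism-map-constant-states} applies. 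The same verification is implicitly needed, and likewise left to the reader, in the paper's proof.
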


\begin{figure}[ht]
\[
\xymatrix{
\input{crossing-m-m-+braid} \ar@{~>}[rr] && \input{crossing-m-m-+braid-saddle} \ar@{~>}[rr] && \input{crossing-m-m-+braid-saddle-final}
}\]
\caption{}\label{braid-remove-positive-crossings-fig}
\end{figure}

\begin{proof}
Suppose that $B$ has $l_+$ positive crossings and $l_-$ negative crossings. Then $w=l_+-l_-$. At each positive crossing of $B$, perform the saddle move and the Reidemeister I move in Figure \ref{braid-remove-positive-crossings-fig}. This give a braid $B_-$ of $b$ strands with $l_-$ negative crossings and no positive crossings. Moreover, this also gives a smooth link cobordism $S$ from $B$ to $B_-$ with Euler characteristic $\chi(S)=-l_+$ that has no closed or semi-closed components.

By Lemma \ref{lemma-ras-neg-braids} and Corollary \ref{coro-cobordism-map-constant-states}, we have
\begin{eqnarray*}
s_P^{(m)}(B_-) & = & m(N-m)(b-l_- -1), \\
|s_P^{(m)}(B) - s_P^{(m)}(B_-)| & \leq & m(N-m)l_+,
\end{eqnarray*}
which imply that
\[
s_P^{(m)}(B) \leq m(N-m)(b-l_- -1) + m(N-m)l_+ = m(N-m) (w+b-1).
\]
\end{proof}

Theorem \ref{thm-ras-bennequin} is a special case of Theorem \ref{thm-ras-bennequin-links}.

\begin{proof}[Proof of Theorem \ref{thm-ras-bennequin}]
Let $B$ be a closed braid representation of $K$ such that $SL(B)=\overline{SL}(K)$. Assume $B$ has $b$ strands and writhe $w$. Denote by $B_{mir}$ the mirror image of $B$, which has $b$ strands and writhe $-w$. By Theorem \ref{thm-ras-bennequin-links} and Corollary \ref{coro-Ras-equations}, 
\begin{eqnarray*}
s_P^{(m)}(B_{mir}) & \leq & m(N-m)(b-w-1), \\
s_P^{(m)}(B_{mir}) & = & -s_P^{(m)}(B) = -s_P^{(m)}(K).
\end{eqnarray*}
Thus, 
\begin{eqnarray*}
s_P^{(m)}(K) & \geq & m(N-m)(w-b+1)  = m(N-m)(SL(B)+1) \\
& = & m(N-m)(\overline{SL}(K)+1).
\end{eqnarray*}
\end{proof}

\end{document}